\newtheorem{theorem}{Theorem}[section]
\newtheorem{lemma}[theorem]{Lemma}
\newtheorem{corollary}[theorem]{Corollary}
\newtheorem{proposition}[theorem]{Proposition}
\newtheorem{assumption}[theorem]{Assumption}
\theoremstyle{definition}
\newtheorem{definition}[theorem]{Definition}
\newtheorem{example}[theorem]{Example}
\newtheorem{remark}[theorem]{Remark}
\numberwithin{equation}{section}
\newcommand{\R}{{\mathbb R}}
\newcommand{\C}{{\mathbb C}}
\renewcommand{\Re}{\operatorname{Re}}
\newcommand{\sign}{\operatorname{sign}}
\newcommand{\figref}[1]{Figure~\ref{#1}}
\renewcommand{\epsilon}{\varepsilon}
\newcommand{\supp}{\operatorname{supp}}
\newcommand{\Emb}{\operatorname{Emb}}
\newcommand{\e}{\operatorname{e}}
\newcommand{\with}{\,:\,}
\newcommand{\dd}{{\mathrm{d}}}
\newcommand{\cL}{{\mathcal{L}}}
\renewcommand{\labelenumi}{\roman{enumi})}
\begin{document}

\title[Maximal regularity for operators with mixed boundary conditions]
{Maximal parabolic regularity for divergence operators including mixed boundary conditions}

\author{Robert Haller-Dintelmann}
\address{Technische Universit\"at Darmstadt, Fachbereich
Mathematik, Schlossgartenstr. 7, D-64298 Darmstadt, Germany}
\email{haller@mathematik.tu-darmstadt.de}

\author{Joachim Rehberg}
\address{Weierstrass Institute for Applied Analysis and Stochastics,
 Mohrenstr. 39, D-10117 Berlin, Germany}
\email{rehberg@wias-berlin.de}

\subjclass[2000]{Primary 35A05, 35B65; Secondary 35K15/20}
\date{}
\keywords{maximal parabolic regularity, quasilinear parabolic equations,
 mixed Dirichlet-Neumann conditions}

\begin{abstract}
We show that elliptic second order operators $A$ of divergence type fulfill
maximal parabolic regularity on distribution spaces, even if the underlying
domain is highly non-smooth, the coefficients of $A$ are discontinuous and $A$
is complemented with mixed boundary conditions. Applications to quasilinear
parabolic equations with non-smooth data are presented.
\end{abstract}

\maketitle
\section{Introduction}
It is known that divergence operators fulfill maximal parabolic regularity on
$L^p$ spaces -- even if the underlying domain is non-smooth, the coefficients
are discontinuous and the boundary conditions are mixed, see \cite{arendt} and
also \cite{hie/reh}. This provides a powerful tool for the treatment of linear
and nonlinear parabolic equations in $L^p$ spaces, see \cite{pruess, cle/li,
mazelresc, hie/reh}. The only disadvantage of this concept is that the
appearing Neumann conditions have to be homogeneous and that distributional
right hand sides (e.g. surface densities) are not admissible. Confronted with
these phenomena, it seems an adequate alternative to consider the equations in
distribution spaces, what we will do in this paper. Pursuing this idea, one
has, of course, to prove that the occurring elliptic operators satisfy
parabolic regularity on those spaces in an appropriate sense.

In fact, we show that, under very mild conditions on the domain $\Omega$, the
Dirichlet boundary part $\partial \Omega \setminus \Gamma$ and the coeffcient
function, elliptic divergence operators with real
$L^\infty$-coefficients satisfy maximal parabolic regularity
on a huge variety of spaces, among which are Sobolev, Besov and
Lizorkin-Triebel spaces, provided that the differentiability index is between
$0$ and $-1$ (cf. Theorem~\ref{t-randterm}). We consider this as the first
main result of this work, also interesting in itself. Up to now, the only
existing results for mixed boundary conditions in distribution spaces (apart
from the Hilbert space situation) are, to our knowledge, that of Gr\"oger
\cite{groegerpar} and the recent one of Griepentrog \cite{griepar}. Concerning
the Dirichlet case, compare \cite{byun} and references therein.

Having this first result at hand, the second aim of this work is the treatment
of quasilinear parabolic equations of the formal type
\begin{equation} \label{e-quasiformal}
  \left\{ \begin{aligned}
        \bigl (\mathcal F(u) \bigr )'- \nabla \cdot \mathcal G(u) \mu
                \nabla u &= \mathcal R(t,u), \\
        u(T_0) &= u_0,
        \end{aligned}\right.
\end{equation}
combined with mixed, nonlinear boundary conditions:
\begin{equation} \label{e-boundcond77}
  \nu \cdot \mathcal G(u) \mu \nabla u + b(u) = g \text{ on } \Gamma \qquad
        \text{and} \qquad u = 0 \text{ on } \partial \Omega \setminus \Gamma.
\end{equation}
Let us point out some ideas, which will give a certain guideline for the
paper: Our analysis is based on a regularity result for the square root
$(- \nabla \cdot \mu \nabla )^{1/2}$ on $L^p$ spaces. It has already been
remarked in the introduction of \cite{ausch/tcha01} that estimates between
$\|(-\nabla \cdot \mu \nabla)^{1/2} f \|_p$ and $\|\nabla f \|_p$ should
provide powerful tools for the treatment of elliptic and parabolic problems
involving divergence form operators. It seems, however, that this idea has not
yet been developed to its full strength, cf. \cite[Ch.~5]{e/r/s}.

Originally, our strategy for proving maximal parabolic regularity for
divergence operators on $H^{-1,q}_\Gamma$ was to show an analog of the
central result of \cite{ausch/tcha01}, this time in case of mixed boundary
conditions, namely that
\begin{equation} \label {e-topwurziso}
  \bigl( -\nabla \cdot \mu \nabla + 1 \bigr)^{-1/2}: L^q \to H^{1,q}_\Gamma 
\end{equation}
provides a topological isomorphism for suitable $q$. This would give the
possibility of carrying over the maximal parabolic regularity, known for $L^q$,
 to the dual of $H^{1,q'}_\Gamma$, because, roughly spoken, $( -\nabla \cdot \mu \nabla + 1
)^{-1/2}$ commutes with the corresponding parabolic solution operator.
Unfortunately, we were only able to prove the continuity of
\eqref{e-topwurziso} within the range $q \in [2,\infty[$, due to a result of
Duong and $\rm M^c$Intosh \cite{DM99}, but did not succeed in proving the
continuity of the inverse in general. Let us explicitely mention that the
proof of the isomorphism property of \eqref{e-topwurziso} would be a great
achievement. In particular, this would allow here to avoid the localization
procedure we had to introduce in Section~\ref{sec-Consequences} in order to
prove maximal parabolic regularity, and to generalize our results to higher
dimensions. The isomorphism property is known for the Hilbert space case $L^2$
(see \cite{mcintosh}) in case of mixed boundary conditions and even complex
coefficients, but the proof fundamentally rests on the Hilbert space structure,
so that we do not see a possibility of directly generalizing this to the $L^p$
case.

It turns out, however, that \eqref{e-topwurziso} provides a topological
isomorphism, if $\Omega \cup \Gamma$ is the image under a volume-preserving and
bi-Lipschitz mapping of one of Gr\"oger's model sets \cite{groeger89},
describing the geometric configuration in neighborhoods of boundary points of
$\Omega$. Thus, in these cases one may carry over the maximal parabolic
regularity from $L^q$ to $H^{-1,q}_\Gamma.$ Knowing this, we localize the
linear parabolic problem, use the 'local' maximal parabolic information and
interpret this again in the global context at the end. Interpolation with the
$L^p$ result then yields maximal parabolic regularity on the corresponding
interpolation spaces.

Let us explicitely mention that the concept of Gr\"oger's regular sets, where
the domain itself is a Lipschitz domain, seems adequate to us, because it
covers many realistic geometries that fail to be domains with Lipschitz
boundary. The price one has to pay is that the problem of optimal elliptic
regularity becomes much more delicate and, additionally, trace theorems for
this situation are scarcely to be found in the literature.

\medskip

The strategy for proving that \eqref{e-quasiformal}, \eqref{e-boundcond77}
admit a unique local solution is as follows. We reformulate
\eqref{e-quasiformal} into a usual quasilinear equation, where the time
derivative directly affects the unknown function. Assuming additionally that
the elliptic operator $-\nabla \cdot \mu \nabla+1: H^{1,q}_\Gamma \to
H^{-1,q}_\Gamma$ provides a topological isomorphism for a $q $ larger than the
space dimension $d$, the existence and uniqueness results for abstract
quasilinear equations of Pr\"uss (see \cite{pruess}, see also \cite{cle/li})
apply to the resulting quasilinear parabolic equation. The detailed discussion
how to assure all requirements of \cite{pruess}, including the adequate choice
of the Banach space, is presented in Section~6. The crucial point is that the
linear elliptic operator which corresponds to the initial value satisfies
maximal parabolic regularity, which has been proved before. Let us further
emphasize that the presented setting allows for coefficient functions that
really jump at hetero interfaces of the material and permits mixed boundary
conditions, as well as domains which do not possess a Lipschitz boundary, see
Section~\ref{sec-Examples}. It is well known that this is required when
modelling real world problems, see e.g. \cite{sommer2, cars} for problems from
thermodynamics or \cite{franz, bere} concerning biological models. Last but
not least, heterostructures are the determining features of many fundamental
effects in semiconductors, see for instance \cite{selberherr84, band, kupon}.

One further advantage is that nonlinear, nonlocal boundary conditions are
admissible in our concept, despite the fact that the data is highly
non-smooth, compare \cite{amannonlinearb}. The calculus of maximal parabolic
$L^s(\left] T_0, T \right[;X)$ regularity is preferable to the concept of
H\"older continuity in time, because it allows for reaction terms $\mathcal R$
which discontinously depend on time. This is important in many examples (see
\cite{{ung/troe}, {heink/troe}, krej}), in particular in the control theory of
parabolic equations. Alternatively, the reader should think e.g. of a
manufacturing process for semiconductors, where light is switched on/off at a
sharp time point and, of course, parameters in the chemical process then
change abruptly. It is remarkable that, nevertheless, the solution is
H\"older continuous simultaneously in space and time, see
Corollary~\ref{c-hoel} below.

We finish these considerations by looking at the special case of semilinear problems. It turns out that here satisfactory results may be achieved even
without the additional continuity condition on $- \nabla \cdot \mu \nabla + 1$
mentioned above, see Corollary~\ref{t-semil}.

\medskip

In Section~\ref{sec-Examples} we give examples for geometries, Dirichlet
boundary parts and coefficients in three dimensions for which our additional
supposition, the isomorphy $-\nabla \cdot \mu \nabla + 1 : H^{1,q}_\Gamma \to
H^{-1,q}_\Gamma$ really holds for a $q > d$. In Subsection~\ref{subsec-balks}
we take a closer look at the special geometry of two crossing beams, which
provides a geometrically easy example of a domain $\Omega$ that does not have
a Lipschitz boundary and thus cannot be treated by former theories, but which
is covered by our results.

Finally, some concluding remarks are given in Section~\ref{sec-Remarks}.
%

%
%
\section{Notation and general assumptions}
%
%
%
%
Throughout this article the following assumptions are valid.
\begin{itemize}
\item $\Omega \subseteq \R^d$ is a bounded Lipschitz domain and $\Gamma$ is an
        open subset of $\partial \Omega$.
\item The coefficient function $\mu$ is a Lebesgue measurable, bounded
        function on $\Omega$ taking its values in the set of real, symmetric,
        positive definite $d \times d$ matrices, satisfying the usual
        ellipticity condition.
\end{itemize}
\begin{remark} \label{r-defnlip}
Concerning the notions 'Lipschitz domain' and 'domain with Lipschitz boundary'
(synonymous: strongly Lipschitz domain) we follow the terminology of Grisvard
\cite{grisvard85}, see also \cite{mazsob}.
\end{remark}
For $\varsigma \in \left] 0, 1\right]$ and $1 < q < \infty$ we define
$H^{\varsigma,q}_\Gamma (\Omega)$ as the closure of
\begin{equation} \label{e-defC}
   C^\infty_\Gamma(\Omega) := \{ \psi|_\Omega : \psi \in
        C^\infty(\R^d), \; \supp(\psi) \cap (\partial \Omega \setminus \Gamma )
        = \emptyset \}
\end{equation}
in the Sobolev space $H^{\varsigma, q}(\Omega)$. Of course, if $\Gamma =
\emptyset$, then $H_\Gamma^{\varsigma, q}(\Omega) =
H_0^{\varsigma, q}(\Omega)$ and if $\Gamma = \partial \Omega$, then
$H_\Gamma^{\varsigma, q}(\Omega) = H^{\varsigma, q}(\Omega)$. This last point
follows from the fact that $\Omega$, as a Lipschitz domain, admits a
continuous extension operator from $H^{1,q}(\Omega)$ into $H^{1,q}(\R^d)$, see
\cite[Thm.~3.10]{giusti}. Thus, the set $C^\infty(\Omega) := \{
\psi|_\Omega : \psi \in C^\infty(\R^d) \}$ is dense in $H^{1,q}(\Omega)$.
Concerning the dual of $H^{\varsigma,q}_\Gamma(\Omega)$, we have to
distinguish between the space of linear and the space of anti-linear forms on this space. We define $H^{-\varsigma,q}_\Gamma(\Omega)$ as the space of
continuous, linear forms on $H^{\varsigma,q'}_\Gamma(\Omega)$ and $\breve H^{-\varsigma,q}_\Gamma(\Omega)$
as the space of
anti-linear forms on $H^{\varsigma,q'}_\Gamma(\Omega)$ if $1/q + 1/q' =1$.
Note that $L^p$ spaces may be viewed as part of $\breve H^{-\varsigma,q}_\Gamma$ for
suitable $\varsigma, q$ via the identification of an element $f\in L^p$ with
the anti-linear form $H^{\varsigma,q'}_\Gamma \ni \psi \mapsto \int_\Omega f
\overline{\psi} \, \dd \mathrm x$.

If misunderstandings are not to be expected, we drop the $\Omega$ in the
notation of spaces, i.e. function spaces without an explicitely given domain
are to be understood as function spaces on $\Omega$.

By $K$ we denote the open unit cube in $\R^d$, by $K_-$ the lower half cube
$K \cap \{\mathrm x \with x_d < 0 \}$, by $\Sigma = {K} \cap \{
\mathrm x  \with x_d = 0 \}$ the upper plate of $K_-$ and by $\Sigma_0$
the left half of $\Sigma$, i.e. $\Sigma_0 = \Sigma \cap \{\mathrm x \with
x_{d-1} < 0 \}$.

As in the preceding paragraph, we will throughout the paper use $\mathrm{x},
\mathrm{y},\dots$ for vectors in $\R^d$, whereas the components of
$\mathrm{x}$ will be denoted by italics $x_1, x_2, \dots, x_d$ or in three
dimensions also by $x, y, z$.

If $B$ is a closed operator on a Banach space $X$, then we denote by $dom_X(B)$
the domain of this operator. $\mathcal L(X,Y)$ denotes the space of linear,
continuous operators from $X$ into $Y$; if $X = Y$, then we abbreviate
$\mathcal L(X)$. Furthermore, we will write $\langle \cdot, \cdot \rangle_{X'}$
for the dual pairing of elements of $X$ and the space $X'$ of anti-linear
forms on $X$.

Finally, the letter $c$ denotes a generic constant, not always of the same
value.
%
%
%
%
\section{Preliminaries}
%
%
%
%
In this section we will properly define the elliptic divergence operator and
afterwards collect properties of the $L^p$ realizations of this operator which
will be needed in the subsequent chapters. First of all we establish the
following extension property for function spaces on Lipschitz domains, which
will be used in the sequel.
%
\begin{proposition} \label{p-extend}
There is a continuous extension operator $\mathrm{Ext} : L^1(\Omega) \to
L^1(\R^d)$, whose restriction to any space $H^{1,q}(\Omega)$ ($q \in
\left] 1, \infty \right[$) maps this space continuously into $H^{1,q}(\R^d)$.
Moreover, $\mathrm{Ext}$ maps $L^p(\Omega)$ continuously into $L^p(\R^d)$ for
$p \in \left] 1, \infty \right]$.
\end{proposition}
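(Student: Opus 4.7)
My plan is to construct $\mathrm{Ext}$ as a Stein-type universal extension operator, assembled from local reflections across the Lipschitz boundary via a finite partition of unity. The aim is to produce one single linear operator that simultaneously satisfies all of the stated mapping properties, using that bi-Lipschitz maps act continuously on both Lebesgue and first order Sobolev spaces.

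First I exploit the Lipschitz property of $\Omega$ to cover $\partial\Omega$ by finitely many open balls $B_1,\ldots,B_N\subset\R^d$, on each of which there is a bi-Lipschitz map $\Phi_j:B_j\to K$ sending $B_j\cap\Omega$ onto the lower half cube $K_-$ and $B_j\cap\partial\Omega$ onto $\Sigma$. Adding one interior open set $B_0\Subset\Omega$ yields an open cover of $\overline{\Omega}$, and I fix a smooth partition of unity $\eta_0,\ldots,\eta_N$ with $\supp(\eta_j)\subset B_j$. For $j\ge 1$ I set
\begin{equation*}
  E_j f \df f\circ\Phi_j^{-1}\circ R\circ\Phi_j,
\end{equation*}
where $R(x',x_d)\df(x',-x_d)$ denotes reflection across $\Sigma$. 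In the flat coordinates $R$ is an $L^p$-isometry for every $p\in[1,\infty]$ and preserves $H^{1,q}$ for every $q\in\left]1,\infty\right[$, since weak derivatives merely acquire a sign change in the last variable. Conjugation by the bi-Lipschitz $\Phi_j$ changes the bounds only by constants depending on $\Phi_j$, using Rademacher's theorem and the chain rule for Sobolev functions composed with bi-Lipschitz homeomorphisms.

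The global extension is then
\begin{equation*}
  \mathrm{Ext}(f) \df \eta_0\,\tilde f + \sum_{j=1}^{N}\eta_j\,\widetilde{E_j f},
\end{equation*}
where the tildes denote extension by zero to all of $\R^d$; note that because $\eta_j$ vanishes near $\partial B_j$, the products $\eta_j \widetilde{E_j f}$ retain the local Sobolev regularity after this trivial extension. Since $\sum_{j=0}^{N}\eta_j\equiv 1$ on $\overline{\Omega}$ and each $E_j f$ restricts to $f$ on $B_j\cap\Omega$, the resulting function indeed extends $f$. The continuity estimates $L^p(\Omega)\to L^p(\R^d)$ for $p\in[1,\infty]$ and $H^{1,q}(\Omega)\to H^{1,q}(\R^d)$ for $q\in\left]1,\infty\right[$ follow from the local estimates for each $E_j$ together with the product rule applied to the bounded smooth factors $\eta_j$.

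The principal technical point is the behaviour of Sobolev functions under bi-Lipschitz maps, which underlies the $H^{1,q}$ boundedness of each $E_j$; this is classical. A secondary point worth mentioning is that the very same operator must work for $L^1$, for $L^p$ with $p\in\left]1,\infty\right]$, and for $H^{1,q}$ with $q\in\left]1,\infty\right[$. Since the constituent steps (reflection, bi-Lipschitz change of variables, multiplication by fixed smooth cut-offs, and extension by zero) are bounded on each of these scales with constants independent of the exponent once the geometric data are fixed, this causes no obstruction.
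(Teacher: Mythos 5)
Your construction is correct and is essentially the same argument the paper invokes: it simply cites Giusti and Maz'ya, noting that their proofs proceed "via localization, Lipschitz diffeomorphism and symmetric reflection" and that the resulting operator is simultaneously bounded on the $L^p$ scales, which is exactly what you carry out. One small slip worth fixing: as written, $E_jf=f\circ\Phi_j^{-1}\circ R\circ\Phi_j$ pulls values from $B_j\setminus\overline{\Omega}$ when $x\in B_j\cap\Omega$, so it is undefined rather than equal to $f$ there; you should define $E_jf$ piecewise (equal to $f$ on $B_j\cap\Omega$ and given by the reflected formula on $B_j\setminus\overline{\Omega}$) and record the standard fact that the glued function produces no distributional jump across the flattened boundary, so that it indeed lies in $H^{1,q}(B_j)$.
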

%
\begin{proof}
The assertion is proved for the spaces $H^{1,q}$ in \cite[Thm.~3.10]{giusti}
see also \cite[Ch.~1.1.16]{mazsob}. Inspecting the corresponding proofs (which
are given via localization, Lipschitz diffeomorphism and symmetric reflection)
one easily recognizes that the extension mapping at the same time continuously
extends the $L^p$ spaces.
\end{proof}
Let us introduce an assumption on $\Omega$ and $\Gamma$ which will define the
geometrical framework relevant for us in the sequel.
%
\begin{assumption} \label{a-groegerregulaer}
\begin{itemize}
\item[a)] For any point $\mathrm x \in \partial \Omega$ there is an open
        neighborhood $\Upsilon_{\mathrm{x}}$ of $\mathrm x$ and a
        bi-Lipschitz mapping $\phi_{\mathrm{x}}$ from $\Upsilon_\mathrm x$
        into $\R^d$, such that $\phi_{\mathrm{x}}(\mathrm{x}) = 0$
	and $\phi_{\mathrm{x}} \bigl( (\Omega \cup \Gamma)
        \cap \Upsilon_{\mathrm{x}} \bigr) = \alpha K_-$ or $\alpha (K_- \cup
        \Sigma)$ or $\alpha (K_- \cup \Sigma_0)$ for some positive $\alpha =
        \alpha(\mathrm x)$.
\item[b)] Each mapping $\phi_{\mathrm{x}}$ is, in addition, volume-preserving.
\end{itemize}
\end{assumption}
%
%
\begin{remark} \label{r-groegerreg}
Assumption~\ref{a-groegerregulaer}~a) exactly characterizes Gr\"oger's
regular sets, introduced in his pioneering paper \cite{groeger89}. Note that
the additional property 'volume-preserving' also has been required in several
contexts (see \cite{ggkr} and \cite{groegerpar}).

It is not hard to see that every Lipschitz domain and also its closure is
regular in the sense of Gr\"oger, the corresponding model sets are then $K_-$
or $K_-\cup \Sigma$, respectively, see \cite[Ch~1.2]{grisvard85}. A simplifying topological
 characterization of Gr\"oger's regular sets for $d=2$ and $d=3$ will be given in
 Section~\ref{sec-Remarks}.

In particular, all domains with Lipschitz boundary (strongly Lipschitz
domains) satisfy Assumption~\ref{a-groegerregulaer}: if, after a shift and an 
orthogonal transformation, the domain lies locally beyond a graph of a Lipschitz
 function $\psi$, then one can define $\phi(x_1, \ldots, x_d) = 
(x_1 - \psi(x_2, \ldots, x_d) ,x_2,
\ldots, x_d)$. Obviously, the mapping $\phi$ is then bi-Lipschitz and the
determinant of its Jacobian is identically $1$. For further examples see Section~\ref{sec-Examples}.
\end{remark}
%
Next we have to introduce a boundary measure on $\partial \Omega$. Since in
our context $\Omega$ is not necessarily a domain with Lipschitz boundary, this
is not canonic. Let, according to the definition of a Lipschitz domain, for
every point $\mathrm x \in \partial \Omega$ an open neighborhood
$\Upsilon_\mathrm x$ of $\mathrm x$ and a bi-Lipschitz function
$\phi_\mathrm x : \Upsilon_\mathrm x \to \R^d$ be given, which satisfy
$\phi_\mathrm x(\Upsilon_\mathrm x \cap \Omega) = K_-$, $\phi_\mathrm x
( \Upsilon_\mathrm x \cap \partial \Omega ) = \Sigma$ and $\phi_\mathrm x
(\mathrm x) = \mathrm 0$. Let $\Upsilon_{\mathrm x_1}, \ldots,
\Upsilon_{\mathrm x_l}$ be a finite subcovering of $\partial \Omega$. Define
on $\partial \Omega \cap \Upsilon_{\mathrm x_j}$ the measure $\sigma_j$ as the
$\phi_{\mathrm x_j}^{-1}$-image of the $(d-1)$-dimensional Lebesgue measure on
$\Sigma$. Clearly, this measure is a positive, bounded Radon measure. Finally,
define the measure $\sigma$ on $\partial \Omega$ by
\[ \int_{\partial \Omega}  f \, d\sigma := \sum_{j=1}^l \int_{\partial \Omega
        \cap \Upsilon_{\mathrm x_j}} f \,d\sigma_j, \quad f \in C(\partial \Omega).
\]
Clearly, $\sigma$ also is a bounded, positive Radon measure. Furthermore, it is
not hard to see that the measure $\sigma$ -- simultaneously viewed as a
measure on $\R^d$ -- satisfies
\[ \sup_{\mathrm x \in \R^d} \sup_{r \in \left] 0, 1 \right[}
        \sigma(B(\mathrm x, r)) r^{1-d} < \infty,
\]
where, here and in the sequel, $B(\mathrm x, r)$ denotes the ball centered at 
$\mathrm x$ with radius $r$, compare \cite[Ch.~II.1]{jons}, in particular Example 1 there.

Later we will repeatedly need the following interpolation results from
\cite{ggkr}.
%
\begin{proposition} \label{p-interpol}
Let $\Omega$ and $\Gamma$ satisfy Assumption~\ref{a-groegerregulaer} a) and let
$\theta \in \left] 0, 1 \right[$.
\begin{enumerate}
\item Then for $q_0, q_1
	\in \left] 1, \infty \right[$ and $\frac 1q = \frac{1-\theta}{q_0} +
	\frac{\theta}{q_1}$ one has
\begin{alignat}{2} \label{e-interpol01}
  H_\Gamma^{\theta,q} &= \bigl[ L^{q_0},H_\Gamma^{1,q_1} \bigr]_{\theta}, \quad
	&& \text{if } \theta \neq \frac{1}{q}, \\
  \label{e-interpol02}
	H_\Gamma^{-\theta,q} &= \bigl[ L^{q_0},H_\Gamma^{-1,q_1}
	\bigr]_{\theta} \quad && \text{if } \theta \neq 1 - \frac{1}{q}
  \intertext{and}
  \label{e-interpol03}
  H^{\pm 1,q}_\Gamma &= \bigl[ H^{\pm 1,q_0}_\Gamma,
	H^{\pm 1,q_1}_\Gamma]_\theta.
\end{alignat}
\item
If additionally Assumption~\ref{a-groegerregulaer} b) is fulfilled and
$\frac{1}{q} \neq \theta \neq 1 - \frac{1}{q}$, then
\begin{equation} \label{e-interpol04}
  H_\Gamma^{\pm \theta,q} = \bigl[ H_\Gamma^{-1,q}, H_\Gamma^{1,q}
	\bigr]_\frac{1\pm \theta}{2}.
\end{equation}
\end{enumerate}
\end{proposition}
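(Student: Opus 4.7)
The plan is to reduce the identities to the corresponding classical Bessel-potential interpolation statements on $\R^d$ by a retraction-coretraction scheme, using the extension operator of Proposition~\ref{p-extend} and the local straightening provided by Assumption~\ref{a-groegerregulaer}.

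First I would establish (\ref{e-interpol01}) and (\ref{e-interpol03}). The operator $\mathrm{Ext}$ from Proposition~\ref{p-extend} together with the restriction $f \mapsto f|_\Omega$ is a retraction-coretraction pair that is bounded on each of the couples $(L^{q_0}, H^{1,q_1})$ and $(H^{1,q_0}, H^{1,q_1})$. Combined with the known identities on $\R^d$, this immediately gives (\ref{e-interpol01}) for $\Gamma = \partial\Omega$ (no boundary condition) and, via extension by zero through $\partial \Omega$, for $\Gamma = \emptyset$ (pure Dirichlet). For general $\Gamma$ I would invoke Assumption~\ref{a-groegerregulaer}~a): after a partition of unity subordinate to a finite subcover by the charts $\phi_{\mathrm{x}}$, the problem reduces to the model configurations $K_-$, $K_- \cup \Sigma$ and $K_- \cup \Sigma_0$, where the Dirichlet and Neumann faces are flat and one can use extension by reflection across the Neumann face and extension by zero across the Dirichlet face. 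The exclusion $\theta \neq 1/q$ is precisely the trace exponent: above it, the zero-trace condition on $\partial \Omega \setminus \Gamma$ enters the interpolation and must be respected, while at the exponent itself the extension-by-zero step fails. Identity (\ref{e-interpol03}) is proved analogously, using only the $H^{1,q}$-boundedness of the same pair.

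Next I would deduce (\ref{e-interpol02}) by duality. The spaces $H^{\theta, q'}_\Gamma$ are closed subspaces of reflexive Bessel-potential spaces and hence themselves reflexive, so the duality theorem for the complex method applies; applying it to (\ref{e-interpol01}) at the conjugate exponents $q_0', q_1'$ yields (\ref{e-interpol02}), with the forbidden point $\theta = 1/q'$ translating to $\theta = 1 - 1/q$. For part (b), the volume-preserving property of the charts ensures that the anti-linear pairing $\int_\Omega f \overline{\psi}\,\dd\mathrm{x}$ is invariant under pullback, so that the same localization scheme identifies $L^q$ as the $\frac{1}{2}$-midpoint of $(H^{-1,q}_\Gamma, H^{1,q}_\Gamma)$. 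Reiterating the complex method on this couple and combining with (\ref{e-interpol01})-(\ref{e-interpol02}) then yields (\ref{e-interpol04}) with parameter $(1 \pm \theta)/2$, the exclusions propagating from the base identities.

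The main obstacle is the interaction between the interpolation functor and the zero-trace condition on $\partial \Omega \setminus \Gamma$, which standard whole-space extension operators do not respect; this forces the local straightening by Gröger charts and a patching procedure that must be controlled simultaneously in every member of the interpolation couple. The volume-preserving hypothesis in part (b) is the technical ingredient that makes the anti-linear dual pairing transform cleanly under the charts, which is what makes the symmetric identity (\ref{e-interpol04}) between negative- and positive-order spaces available.
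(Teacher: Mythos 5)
The paper does not prove this proposition at all: it is quoted verbatim from \cite{ggkr}, so there is no internal argument to compare against. Your sketch does follow the strategy of that reference -- localization by a partition of unity subordinate to the Gr\"oger charts, reduction to the model sets $K_-$, $K_-\cup\Sigma$, $K_-\cup\Sigma_0$, retraction/coretraction against the classical Bessel-potential scale on $\R^d$, and duality for the negative-order statement -- and the identification of $\theta=1/q$ (resp.\ $\theta=1-1/q$) as the trace exponent where extension by zero breaks down is correct. Parts \eqref{e-interpol01}--\eqref{e-interpol03} and the duality step for \eqref{e-interpol02} are adequately outlined (the duality theorem for the complex method applies since $C^\infty_\Gamma$ is dense in both endpoint spaces and all spaces involved are reflexive).

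The genuine thin spot is \eqref{e-interpol04}. Your reduction of it to the midpoint identity $[H^{-1,q}_\Gamma, H^{1,q}_\Gamma]_{1/2}=L^q$ plus reiteration and \eqref{e-interpol01}--\eqref{e-interpol02} is the right skeleton, but the midpoint identity is precisely the substantive content of part (ii) and cannot be obtained by ``the same localization scheme'' as in part (i) without further work: one needs a \emph{single} extension--restriction pair that is simultaneously bounded on $H^{1,q}_\Gamma$ and on $H^{-1,q}_\Gamma$. The reflection operator you use for $H^{1,q}_\Gamma$ acts on functions, while the natural coretraction for $H^{-1,q}_\Gamma$ is the adjoint of a restriction acting on test functions; these two only coincide (on the common part $L^2$) when the chart pullback $\Phi$ satisfies $(\Phi^*)^{-1}=\Phi$ on $L^2$, which is exactly the content of Proposition~\ref{p-transform}~vi) and is where the volume-preserving hypothesis enters. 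You correctly name this ingredient, but asserting that the pairing ``transforms cleanly'' does not yet produce the common retraction for the couple $(H^{-1,q}_\Gamma, H^{1,q}_\Gamma)$ on the model sets (where, in addition, the even/odd reflection across $\Sigma$ must be checked to be bounded on the negative-order space, cf.\ the operator $\mathfrak{S}$ of Proposition~\ref{p-spiegel}). To close the argument you would have to construct this operator explicitly, as is done in \cite{ggkr}.
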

\begin{corollary} \label{c-antili}
Under the same assumptions as for \eqref{e-interpol03} one has
\begin{equation}  \label{e-interpol0003}
  \breve H^{-1,q}_\Gamma = \bigl[ \breve H^{- 1,q_0}_\Gamma,
 \breve	H^{-1,q_1}_\Gamma]_\theta.
\end{equation}
\end{corollary}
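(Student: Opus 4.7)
The plan is to derive the anti-linear statement from the linear one in \eqref{e-interpol03} by constructing a canonical $\mathbb{C}$-linear isometric isomorphism between $H^{-1,q}_\Gamma$ and $\breve H^{-1,q}_\Gamma$ arising from complex conjugation on the test function side, and then invoking the functoriality of the complex interpolation method under isomorphisms of Banach couples.

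First I would observe that pointwise complex conjugation $\kappa: \psi \mapsto \overline{\psi}$ preserves the defining set $C^\infty_\Gamma(\Omega)$ (since $\supp \overline{\psi} = \supp\psi$) and satisfies $|\overline{\psi}| = |\psi|$, $|\nabla\overline{\psi}| = |\nabla\psi|$ pointwise. Hence $\kappa$ extends to a conjugate-linear isometric bijection on $H^{1,q'}_\Gamma$ for every $q' \in \left]1,\infty\right[$. Using this, for each $q$ with $1/q + 1/q' = 1$ define
\[
  J_q : H^{-1,q}_\Gamma \to \breve H^{-1,q}_\Gamma, \qquad (J_q f)(\psi) := f(\overline{\psi}), \quad \psi \in H^{1,q'}_\Gamma.
\]
One checks directly that $J_q f$ is anti-linear in $\psi$ (the twist from conjugation of scalars passes out of $f$), that $J_q$ is $\mathbb{C}$-linear in $f$, and that $\|J_q f\| = \|f\|$ by isometry of $\kappa$. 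The map $g \mapsto g \circ \kappa$ provides a two-sided inverse, so $J_q$ is an isometric isomorphism of complex Banach spaces.

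Applied to the two endpoints $q_0, q_1$, the maps $J_{q_0}$ and $J_{q_1}$ are defined by the same formula, hence agree on $H^{-1,q_0}_\Gamma \cap H^{-1,q_1}_\Gamma$. Therefore they assemble into a morphism of interpolation couples
\[
  (J_{q_0}, J_{q_1}) : \bigl(H^{-1,q_0}_\Gamma, H^{-1,q_1}_\Gamma\bigr) \longrightarrow \bigl(\breve H^{-1,q_0}_\Gamma, \breve H^{-1,q_1}_\Gamma\bigr),
\]
which, being isomorphic on each endpoint with inverse given by the same formula, is an isomorphism of Banach couples. By the functoriality of the complex interpolation functor $[\,\cdot\,,\,\cdot\,]_\theta$, it induces an isomorphism on the interpolates. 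Combined with the minus-sign case of \eqref{e-interpol03}, this yields
\[
  \breve H^{-1,q}_\Gamma \;\cong\; J_q\bigl(H^{-1,q}_\Gamma\bigr) \;\cong\; J_q\bigl([H^{-1,q_0}_\Gamma, H^{-1,q_1}_\Gamma]_\theta\bigr) \;\cong\; [\breve H^{-1,q_0}_\Gamma, \breve H^{-1,q_1}_\Gamma]_\theta,
\]
which is \eqref{e-interpol0003}.

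There is no real obstacle here; the only point that needs attention is verifying that $J_q$ is genuinely $\mathbb{C}$-linear as a map between dual spaces (it is, because the conjugation of scalars introduced by the anti-linearity is absorbed by the conjugation on the test function side). Once this is checked, the corollary is immediate from what has already been proven.
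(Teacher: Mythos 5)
Your proposal is correct and follows essentially the same route as the paper: the paper also deduces \eqref{e-interpol0003} from \eqref{e-interpol03} via the conjugation-twist map $f \mapsto \bigl(\psi \mapsto \langle f, \overline{\psi}\rangle\bigr)$, invoking the retraction/coretraction theorem of \cite[Ch.~1.2.4]{triebel}, which in this invertible case is exactly the functoriality of complex interpolation under isomorphisms of couples that you use.
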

\begin{proof}
\eqref{e-interpol0003} may be deduced from \eqref{e-interpol03} by means of the retraction/coretraction
theorem (see \cite[Ch.~1.2.4]{triebel}), where the coretraction is the mapping which assigns to
$f \in \breve H^{-1,r}_\Gamma$ the linear form $ H^{1,r'}_\Gamma \ni \psi \to \langle f,
\overline \psi\rangle _{\breve H^{-1,r}_\Gamma}$. 
\end{proof}
%
Having this at hand, we can prove the following trace theorem.
%
\begin{theorem} \label{t-embedbound}
Assume $q \in \left] 1, \infty \right[$ and $\theta \in \bigl] \frac {1}{q}, 1\bigr[$.
 Let $\Pi$ be a Lipschitz hypersurface in $\overline{\Omega}$ and let $\varpi$ be any measure
 on $\Pi$ which satisfies
\[ \sup_{\mathrm x \in \R^d} \sup_{r \in \left] 0, 1\right[}
        \varpi(B(\mathrm x, r)) r^{1-d} < \infty.
\]
Then the trace operator $\mathrm{Tr}$ from $H^{\theta,q}(\Omega)$ to $L^{q}(\Pi,\varpi)$ is
continuous.
\end{theorem}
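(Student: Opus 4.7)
The plan is to reduce the statement to a classical restriction/trace theorem for Bessel potential spaces on $\R^d$ via an extension argument. First, I would upgrade the extension operator of Proposition~\ref{p-extend}, which is available on $L^q(\Omega)$ and on $H^{1,q}(\Omega)$, to the intermediate fractional scale. Since $\mathrm{Ext}$ is simultaneously bounded from $L^q(\Omega)$ into $L^q(\R^d)$ and from $H^{1,q}(\Omega)$ into $H^{1,q}(\R^d)$, complex interpolation together with the identifications $[L^q, H^{1,q}]_\theta = H^{\theta,q}$ (valid both on $\R^d$ and on the Lipschitz domain $\Omega$) yields a bounded operator
\[
   \mathrm{Ext}: H^{\theta,q}(\Omega) \to H^{\theta,q}(\R^d).
\]
On $\R^d$ this interpolation identity is classical, and on $\Omega$ it follows by retraction/coretraction directly from the existence of the common extension operator.

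Second, I would appeal to the trace theorem of Jonsson--Wallin. The measure $\varpi$, extended by zero to all of $\R^d$, satisfies
\[
   \sup_{\mathrm x \in \R^d}\, \sup_{r \in \left] 0, 1 \right[} \varpi(B(\mathrm x,r))\, r^{1-d} < \infty,
\]
which is exactly the Ahlfors-type hypothesis making the support of $\varpi$ a $(d-1)$-set in the sense of \cite{jons}. The corresponding trace theorem (see \cite[Ch.~V]{jons}) supplies, under the sharp condition $\theta q > 1$, a bounded restriction $H^{\theta,q}(\R^d) \to L^q(\varpi)$; our assumption $\theta > 1/q$ is precisely this condition. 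Composing this restriction with $\mathrm{Ext}$ from the first step produces the desired continuous trace operator $\mathrm{Tr}: H^{\theta,q}(\Omega) \to L^q(\Pi, \varpi)$; that it agrees with the classical trace is checked on the dense subset of functions restricted from $C^\infty(\R^d)$ and then extended by continuity.

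The genuinely delicate point is the interpolation identification $[L^q(\Omega), H^{1,q}(\Omega)]_\theta = H^{\theta,q}(\Omega)$ on a general Lipschitz domain. One has to reconcile the extension-based definition of the fractional scale with the intrinsic one, but this is by now standard and is again a consequence of Proposition~\ref{p-extend} via retraction/coretraction. Everything else --- the Ahlfors bound on $\varpi$, the Jonsson--Wallin trace theorem, and density --- is of a textbook nature.
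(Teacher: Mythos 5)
Your argument is correct in substance, but it takes a genuinely different route from the paper. The paper never extends to $\R^d$ at the fractional level: it starts from Maz'ya's multiplicative trace inequality \eqref{e-mazyaS68}, $\| u \|_{L^q(\overline \Omega,\varpi)} \le c \, \| u \|_{H^{1,q}(\Omega)}^{1/q} \| u \|_{L^{q}(\Omega)}^{1-1/q}$ (valid because $\Omega$ is a simultaneous extension domain for $L^q$ and $H^{1,q}$, and requiring only the upper Ahlfors bound on $\varpi$), feeds this into the general interpolation principle of \cite[Ch.~5, Prop.~2.10]{bennet} to get continuity of the trace on the endpoint real interpolation space $\bigl( L^q(\Omega), H^{1,q}(\Omega) \bigr)_{\frac{1}{q},1}$, and then uses \eqref{e-interpol01} together with the embedding $[X,Y]_\theta \hookrightarrow (X,Y)_{\frac{1}{q},1}$ for $\theta > 1/q$. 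Your route instead interpolates the extension operator to obtain $\mathrm{Ext}: H^{\theta,q}(\Omega) \to H^{\theta,q}(\R^d)$ and composes with a whole-space restriction theorem. Both arguments rest on the same two pillars (the extension property of Proposition~\ref{p-extend} and the identification $H^{\theta,q}(\Omega) = [L^q(\Omega), H^{1,q}(\Omega)]_\theta$ from \eqref{e-interpol01}); the paper's version is slightly sharper in that it yields continuity on the larger space $\bigl( L^q, H^{1,q} \bigr)_{\frac{1}{q},1}$ in one stroke, while yours has the advantage of isolating the measure-theoretic input in a single well-known whole-space theorem.

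One point of care in your second step: the hypothesis on $\varpi$ is only the \emph{upper} Ahlfors-type bound, which does not make $\supp \varpi$ a $(d-1)$-set in the sense of \cite{jons} (that notion requires a matching lower bound). The full Jonsson--Wallin trace theorem, which identifies the trace space as a Besov space on the set, is therefore not directly applicable; what you need, and what does hold under the upper bound alone, is the embedding $H^{\theta,q}(\R^d) \hookrightarrow L^q(\varpi)$ for $\theta q > 1$ (a Maz'ya--Adams type inequality; it is precisely the one-sided estimate that \cite[Ch.~1.4.7]{mazsob} provides and that the paper invokes). With the citation adjusted accordingly, your proof goes through.
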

%
\begin{proof}
Since $\Omega$ is an extension domain for $H^{1,q}$ and $L^q$ simultaneously,
one has the inequality
\begin{equation} \label{e-mazyaS68}
  \| u|_{\Pi} \|_{L^q(\Pi,\varpi)} = \| u \|_{L^q(\overline \Omega,\varpi)}
        \le c \| u \|_{H^{1,q}(\Omega)}^{1/q} \| u \|_{L^{q}(\Omega)}^{1-1/q}
        \le c \| u \|_{H^{1,q}(\Omega)}, \quad u \in H^{1,q}(\Omega),
\end{equation}
for $q \in \left] 1, \infty \right[$, see \cite[Ch.~1.4.7]{mazsob}. But due to
a general interpolation principle (see \cite[Ch.~5, Prop.~2.10]{bennet})
this yields a continuous mapping
\begin{equation} \label{e-interppol1}
  \bigl( L^q(\Omega), H^{1,q}(\Omega) \bigr)_{\frac {1}{q},1} \ni u \mapsto
        u|_{\Pi} \in L^q(\Pi,\varpi).
\end{equation}
Since $\Omega$ is a Lipschitz domain, \eqref{e-interpol01} in particular
yields the equality $H^{\theta,q}(\Omega) = [ L^q(\Omega), H^{1,q}(\Omega)
]_\theta$ in view of $\theta > 1/q$. Thus, we have the continuous embedding
\[ H^{\theta,q}(\Omega) = \bigl[ L^q(\Omega), H^{1,q}(\Omega) \bigr]_\theta
        \hookrightarrow \bigl( L^q(\Omega), H^{1,q}(\Omega)
        \bigr)_{\frac {1}{q},1},
\]
see \cite[Ch.~1.10.3, Thm.~1 and Ch.~1.3.3]{triebel}. This, together with
\eqref{e-interppol1}, proves the theorem.
\end{proof}
We define the operator $A : H^{1,2}_\Gamma \to \breve H^{-1,2}_\Gamma$ by 
\begin{equation} \label{e-defellip}
  \langle A\psi, \varphi\rangle_{\breve H^{-1,2}_\Gamma} := \int_\Omega \mu
	\nabla \psi \cdot \nabla \overline \varphi \; \dd \mathrm{x} +
	\int_\Gamma \varkappa \, \psi\, \overline \varphi \; \dd \sigma, \quad
	\psi, \varphi \in H^{1,2}_\Gamma,
\end{equation}
where $\varkappa \in L^\infty(\Gamma, \dd\sigma)$. Note that in view
of \eqref{e-mazyaS68} the form in \eqref{e-defellip} is well defined.

In the special case $\varkappa = 0$, we write more suggestively $-\nabla \cdot
\mu \nabla$ instead of $A$.

The $L^2$ realization of $A$, i.e. the maximal restriction of $A$ to the space $L^2$, we
 denote by the same symbol $A$; clearly this is identical with the operator which is 
induced by the form on the right hand side of \eqref{e-defellip}. If $B$ is a selfadjoint
operator on $L^2$, then by the $L^p$ realization of $B$ we mean its restriction to $L^p$ if
 $p > 2$ and the $L^p$ closure of $B$ if $p \in \left[1, 2 \right[$.

We decided not to use different symbols for all these (and lateron also other) realizations
 of our operators in this paper, since we think that the gain in
exacteness would be largely outweighed by the resulting complexity of
notation. Naturally, this means that we have to pay attention to domains even
more thoroughly.

\begin{remark} \label{r-antili}
Following \cite[Ch.~1.4.2]{Ouh05} (see also \cite[Ch.~1]{berez}), we did not
define $A$ as an operator with values in the space of linear forms on
$H^{1,2}_\Gamma$, but in the space of anti-linear forms. This guarantees
that the restriction of this operator to $L^2$ equals the usual selfadjoint
operator that is induced by the sesquilinear form in \eqref{e-defellip}, which
is crucial for our analysis.
In this spirit, the duality between $\breve H^{-1,q}_\Gamma$ and
$H^{1,q'}_\Gamma$ is to be considered as the extended $L^2$ duality $L^2 \times L^2 \ni
(\psi,\varphi) \to \int_\Omega \psi \overline \varphi \; d\mathrm x$, where
$L^2$ acts as the set of anti-linear forms on itself.
Especially, all occurring adjoint operators are to be understood with respect to this
dual pairing.
\end{remark}

First, we collect some basic facts on $A$.
%
\begin{proposition} \label{p-basicl2}
\begin{enumerate}
\item $\nabla \cdot \mu \nabla$ generates an analytic semigroup on
        $\breve H^{-1,2}_\Gamma$.
\item $-\nabla \cdot \mu \nabla$ is selfadjoint on $L^2$ and bounded by $0$
        from below. The restriction of $- A$ to $L^2$ is densely defined and
        generates an analytic semigroup there.
\item If $\lambda > 0$ then the operator $( -\nabla \cdot \mu \nabla + \lambda
        )^{1/2} : H^{1,2}_\Gamma \to L^2$ provides a topological isomorphism;
        in other words: the domain of  $( -\nabla \cdot \mu \nabla + \lambda
        )^{1/2}$ on $L^2$ is the form domain $H^{1,2}_\Gamma$.
\item The form domain $H^{1,2}_\Gamma$ is invariant under multiplication with
        functions from $H^{1,q}$, if $q > d$.
\item Assume $\varkappa \ge 0$. Then, under Assumption \ref{a-groegerregulaer} a), for
       all $p \in \left] 1, \infty \right[$ the operator $-A$
       generates a semigroup of contractions on $L^p$. Additionally, it
	satisfies
\[  \| (A + \lambda)^{-1}
        \|_{\mathcal L(L^p)} \le \frac{c}{|\lambda|},
        \quad \Re \lambda \ge 0.
\]

\item Under Assumption \ref{a-groegerregulaer} a) $dom_{\breve H^{-1,q}_\Gamma}(-\nabla \cdot \mu \nabla)$
 embeds compactly into $\breve H^{-1,q}_\Gamma$ for every $q \in [2,\infty[$,
 i.e. the resolvent of $(-\nabla \cdot \mu \nabla)$ is compact on $\breve H^{-1,q}_\Gamma$.
\end{enumerate}
\end{proposition}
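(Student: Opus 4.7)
The plan is to treat items (i)–(iii) together via sesquilinear form theory, then handle (iv)–(vi) one by one.

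Introduce the form $a(\psi,\varphi) := \int_\Omega \mu\nabla\psi\cdot\nabla\overline\varphi\,\dd \mathrm x + \int_\Gamma \varkappa\,\psi\,\overline\varphi\,\dd\sigma$ on $H^{1,2}_\Gamma$. The trace inequality \eqref{e-mazyaS68} shows that the boundary term is continuous on $H^{1,2}_\Gamma$ and, via Young's inequality, form-subordinate to the principal part with arbitrarily small relative bound. So $a$ is bounded and sectorial on $H^{1,2}_\Gamma$, and after a shift $a(\cdot,\cdot)+\lambda(\cdot,\cdot)_{L^2}$ becomes coercive. Lax--Milgram then yields the topological isomorphism $A+\lambda : H^{1,2}_\Gamma \to \breve H^{-1,2}_\Gamma$ for large $\lambda$, whence $-A$ is sectorial on $\breve H^{-1,2}_\Gamma$ with domain $H^{1,2}_\Gamma$; this is (i). Restricting $a$ to its natural $L^2$-domain and applying Kato's form-representation theorem gives (ii): a selfadjoint, non-negative operator when $\varkappa\equiv 0$, and a sectorial operator in general, in each case generating an analytic semigroup on $L^2$. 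The second half of the representation theorem then delivers (iii), because the square root of the non-negative selfadjoint operator $-\nabla\cdot\mu\nabla+\lambda$ has domain equal to the form domain $H^{1,2}_\Gamma$ with equivalent norms.

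For (iv), fix $\eta\in H^{1,q}$ with $q>d$, so that $\eta\in L^\infty$ by Sobolev embedding. Leibniz' rule gives $\nabla(\eta\psi) = \eta\nabla\psi + \psi\nabla\eta$; the first summand is controlled by $\|\eta\|_\infty\|\nabla\psi\|_{L^2}$, and Hölder together with the Sobolev embedding $H^{1,2}\hookrightarrow L^{2q/(q-2)}$ (valid since $q>d$) bounds the second by $c\,\|\psi\|_{H^{1,2}}\|\nabla\eta\|_{L^q}$. Combined with the trivial estimate $\|\eta\psi\|_{L^2}\le\|\eta\|_\infty\|\psi\|_{L^2}$, this shows that multiplication by $\eta$ is continuous on $H^{1,2}$. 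Preservation of the Dirichlet condition is obtained by approximating $\psi\in H^{1,2}_\Gamma$ by test functions $\psi_n\in C^\infty_\Gamma$: each product $\eta\psi_n$ is supported in $\supp\psi_n$, hence away from $\partial\Omega\setminus\Gamma$, so the limit $\eta\psi$ remains in $H^{1,2}_\Gamma$.

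Item (v) falls under the Beurling--Deny / Ouhabaz framework applied to $a$. Because $\mu$ is pointwise positive definite and $\varkappa\ge 0$, the form $a$ is closed, non-negative, and Markovian: the truncations $\psi^+,\,\psi\wedge 1$ remain in $H^{1,2}_\Gamma$ (they preserve the vanishing of the trace on $\partial\Omega\setminus\Gamma$), and the inequalities $a(\psi^+,\psi^-)\le 0$ and $a(\psi\wedge 1,(\psi-1)^+)\ge 0$ are verified by a direct computation in which the boundary term only contributes non-negatively. Ouhabaz's theorems then yield a positive, $L^\infty$-contractive semigroup on $L^2$, which by Riesz--Thorin extrapolates to a contraction semigroup on every $L^p$. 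Stein interpolation between the selfadjoint analytic-contraction semigroup on $L^2$ (angle $\pi/2$) and the $L^\infty$-contractive one gives analyticity on each $L^p$, $p\in\,]1,\infty[$, together with a sector of angle strictly less than $\pi/2$; this produces the resolvent estimate uniformly on $\Re\lambda\ge 0$.

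Finally, for (vi), begin with $q=2$: by (i), $dom_{\breve H^{-1,2}_\Gamma}(-\nabla\cdot\mu\nabla)=H^{1,2}_\Gamma$, which embeds compactly into $L^2$ by Rellich and continuously into $\breve H^{-1,2}_\Gamma$. For $q>2$, the continuous inclusion $\breve H^{-1,q}_\Gamma \hookrightarrow \breve H^{-1,2}_\Gamma$ (dual to $H^{1,2}_\Gamma\hookrightarrow H^{1,q'}_\Gamma$) allows us to realize $-\nabla\cdot\mu\nabla$ on $\breve H^{-1,q}_\Gamma$ as the part of its $\breve H^{-1,2}_\Gamma$-realization; by (i) its domain still sits continuously in $H^{1,2}_\Gamma$. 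It remains to observe that $L^r\hookrightarrow\breve H^{-1,q}_\Gamma$ compactly for suitable $r\ge 1$, which we obtain by Schauder duality from the compact Rellich embedding $H^{1,q'}_\Gamma\hookrightarrow\hookrightarrow L^{q'}$; composing with Sobolev embeddings of $H^{1,2}_\Gamma$ into $L^r$ for $r$ below the Sobolev exponent closes the argument. The most delicate point here is the case when $q$ exceeds $2d/(d-2)$, where $L^2$ no longer embeds into $\breve H^{-1,q}_\Gamma$ directly; but factoring through $L^{2^*}$ (and, if necessary, a single bootstrap using the $L^p$-resolvent boundedness from~(v)) circumvents this obstacle.
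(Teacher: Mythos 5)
Your treatment of (i)--(iv) follows essentially the same form-theoretic path as the paper (Kato's first and second representation theorems, subordination of the boundary form via \eqref{e-mazyaS68} and Young, continuity of the multiplication $H^{1,2}_\Gamma\times H^{1,q}\to H^{1,2}$ plus density), so there is nothing to add there. For (v) you take a genuinely different route: the paper simply cites the $L^p$-theory of \cite{gkr}, whereas you rederive it from the Beurling--Deny/Ouhabaz criteria (sub-Markovianity of the form, noting that the boundary term contributes with the right sign when $\varkappa\ge 0$), extrapolate contractivity by Riesz--Thorin and obtain analyticity on $L^p$ by Stein interpolation, which then yields the resolvent bound on $\Re\lambda\ge 0$. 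That is a correct and self-contained alternative, at the price of re-proving what the citation supplies.

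Item (vi) is where there is a genuine gap. Your scheme is: the resolvent maps $\breve H^{-1,q}_\Gamma\hookrightarrow\breve H^{-1,2}_\Gamma$ into $H^{1,2}_\Gamma$, which embeds compactly into $L^r$ for $r<2^*$, and $L^r\hookrightarrow\breve H^{-1,q}_\Gamma$ by duality and Sobolev. The last embedding requires $\frac1r\le\frac1q+\frac1d$, while $H^{1,2}_\Gamma\hookrightarrow L^r$ requires $\frac1r\ge\frac12-\frac1d$; these are compatible only if $\frac1q\ge\frac12-\frac2d$, i.e. for all $q$ when $d\le 4$, but only for $q\le 2d/(d-4)$ when $d\ge 5$. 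You flag this, but the proposed repair --- ``a single bootstrap using the $L^p$-resolvent boundedness from (v)'' --- does not close the argument: boundedness of $(A+\lambda)^{-1}$ on $L^p$ provides no gain in integrability, so composing it with the $H^{1,2}$-smoothing step returns you to the same $L^r$ you started from; moreover, even if iterating resolvents did improve integrability (as it would via the ultracontractivity encoded in the Gaussian bounds of Theorem~\ref{t-gausss}), compactness of a power of the resolvent is not what assertion (vi) claims. What is actually needed for large $q$ is a genuine elliptic regularity statement for a \emph{single} application of the resolvent, and this is exactly where the paper invokes Assumption~\ref{a-groegerregulaer}: by \cite{griehoel}, $(-\nabla\cdot\mu\nabla+1)^{-1}$ maps $\breve H^{-1,q}_\Gamma$ boundedly into $L^\infty$ for $q\ge d+1$, which embeds compactly into $\breve H^{-1,q}_\Gamma$, and the intermediate range $q\in\left]2,d+1\right[$ is then handled by interpolation (Corollary~\ref{c-antili}). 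Your argument as written proves (vi) only for $d\le 4$ (where, incidentally, it does not even use Assumption~\ref{a-groegerregulaer}); for general $d$ you must import an $L^\infty$- (or at least $L^q$-) regularity result of Gr\"oger--Griepentrog type.
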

%
\begin{proof}
\begin{enumerate}
\item is proved in \cite[Thm.~1.55]{Ouh05}, see also \cite{groe/reh}.
\item The first assertion follows from a classical representation theorem for
        forms, see \cite[Ch.~VI.2.1]{kato}. Secondly, one verifies that the
        form $ H^{1,2}_\Gamma \ni \psi \mapsto \int_\Gamma \varkappa |\psi|^2
        \, \dd \sigma $ is form subordinated to the  -- positive -- form
        $H^{1,2}_\Gamma \ni \psi \mapsto  \int_\Omega \nabla \psi \cdot \mu
        \nabla \overline \psi + \psi \overline \psi \; \dd \mathrm x$ with
        arbitrarily small relative bound. In fact, thanks to
        \eqref{e-mazyaS68},
        \begin{align*}
          \biggl| \int_\Gamma \varkappa |\psi|^2 \dd \sigma \biggr| &\le
                \| \varkappa \|_{L^\infty(\Gamma)} \| \psi
                \|_{L^2(\partial \Omega)}^2 \le \| \varkappa
                \|_{L^\infty(\Gamma)} \| \psi \|_{H^{1,2}_\Gamma(\Omega)}
                \| \psi \|_{L^2(\Omega)} \\
          &\le \epsilon \| \psi \|^2_{H^{1,2}_\Gamma(\Omega)} +
                \frac{1}{\epsilon} \| \varkappa \|^2_{L^\infty(\Gamma)}
                \|\psi\|^2_{L^2(\Omega)}.
        \end{align*}
        Thus, the form \eqref{e-defellip} is also closed on $H^{1,2}_\Gamma$
        and sectorial. Moreover, the operator $-A$ generates an analytic
        semigroup by the representation theorem for sectorial forms, see also
        \cite[Ch.~VI.2.1]{kato}.
\item This follows from the second representation theorem of forms (see
        \cite[Ch.~VI.2.6]{kato}), applied to the operator $-\nabla \cdot \mu
        \nabla + \lambda$.
\item First, for $u \in C^\infty_\Gamma$ and $v \in C^\infty$ the product $u
        v$ is obviously in $C^\infty_\Gamma \subseteq H^{1,2}_\Gamma$. But,
        by definition of $H^{1,2}_\Gamma$, the set $C^\infty_\Gamma$ (see
        \eqref{e-defC}) is dense in $H^{1,2}_\Gamma$ and $C^\infty$ is dense
        in $H^{1,q}$. Thus, the assertion is implied by the continuity of the
        mapping
        \[ H^{1,2}_\Gamma \times H^{1,q} \ni (u,v) \mapsto uv \in H^{1,2},
        \]
        because $H^{1,2}_\Gamma$ is closed in $H^{1,2}$.
\item This is proved in \cite[Thm.~4.11, Thm.~5.2]{gkr}.
\item The operator $(-\nabla \cdot \mu \nabla +1)^{-1} $ has the following -- continuous --
	mapping properties
	\begin{equation} \label{e-inversA1}
	  (-\nabla \cdot \mu \nabla + 1)^{-1} : \breve H^{-1,2}_\Gamma \to
		H^{1,2}_\Gamma \hookrightarrow L^2
	\end{equation}
	and
	\begin{equation} \label{e-inversA2}
	  (-\nabla \cdot \mu \nabla + 1)^{-1} : \breve H^{-1,q}_\Gamma \to L^\infty
		\hookrightarrow L^{d+1} \quad \text {for } q \ge d + 1
	\end{equation}
	(see \cite{griehoel}). This shows that the resolvent is compact for
	$q = 2$ and for $q \ge d + 1$. If one takes in \eqref{e-inversA2}
	$q = d + 1$ and interpolates between \eqref{e-inversA1} and
	\eqref{e-inversA2}, one obtains a continuous mapping $(-\nabla \cdot
	\mu \nabla + 1)^{-1} : \breve H^{-1,q}_\Gamma \to L^q$ for every $q \in
	\left] 2, d+1 \right[$, see Corollary \ref{c-antili}.
        \qedhere
\end{enumerate}
\end{proof}
%
One essential instrument for our subsequent considerations are (upper) Gaussian estimates.
%
\begin{theorem} \label{t-gausss} 
The semigroup generated by $\nabla \cdot \mu \nabla$ in $L^2$ satisfies upper
Gaussian estimates, precisely:
\[ (\e^{t \nabla \cdot \mu \nabla} f)(\mathrm x) = \int_\Omega
	K_t(\mathrm x, \mathrm y) f(\mathrm y) \; \dd \mathrm y, \quad
	\mathrm x \in \Omega, \; f \in L^2,
\]
for some measurable function $K_t : \Omega \times \Omega \to \R_+$ and for all
$\epsilon > 0$ there exist constants $c, b > 0$,
such that
\begin{equation}\label{5.5}
  0 \leq K_t(\mathrm x, \mathrm y) \le \frac{c}{t^{d/2}} \;
        \e^{- b \frac{|\mathrm x - \mathrm y|^2}{t}} \e^{\epsilon t}, \quad t
	> 0, \; a.a. \; \mathrm x, \mathrm y
        \in \Omega.
\end{equation}
\end{theorem}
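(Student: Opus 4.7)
The plan is to obtain \eqref{5.5} in the classical three-step manner: first establish that the semigroup is sub-Markovian, then upgrade to an ultracontractive $L^1 \to L^\infty$ bound of order $t^{-d/2}$ via a Nash-type inequality, and finally extract the Gaussian off-diagonal decay by Davies' perturbation (exponential conjugation) trick. The whole argument fits into the abstract framework of Ouhabaz \cite{Ouh05}, and the task is essentially to verify the hypotheses in our geometric setting.

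For the first step I would check the Beurling--Deny criteria for the symmetric form $\mathfrak a(\psi,\varphi) = \int_\Omega \mu\nabla\psi\cdot\nabla\overline\varphi\, \dd\mathrm x$ on $H^{1,2}_\Gamma$: the space $H^{1,2}_\Gamma$ is invariant under complex conjugation and under the truncation $u\mapsto (0\vee u)\wedge 1$, and $\mathfrak a$ is decreased by such truncations. Hence $\e^{t\nabla\cdot\mu\nabla}$ is positivity preserving and $L^\infty$-contractive, so extends consistently to a contraction semigroup on every $L^p$, $1\le p\le\infty$ (this is essentially Proposition~\ref{p-basicl2}(v) applied with $\varkappa=0$).

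For the ultracontractive bound I would exploit that $\Omega$ is a Lipschitz domain and carry the Sobolev embedding $H^{1,2}(\R^d)\hookrightarrow L^{2d/(d-2)}(\R^d)$ to $\Omega$ via the extension operator of Proposition~\ref{p-extend}; restricted to $H^{1,2}_\Gamma\subset H^{1,2}(\Omega)$ this yields $\|u\|_{2d/(d-2)}\le c\,\|u\|_{H^{1,2}}$, and combined with ellipticity gives the Nash inequality $\mathfrak a(u,u)+\epsilon\|u\|_2^2\ge c_\epsilon\|u\|_2^{2+4/d}\|u\|_1^{-4/d}$ (with minor modifications in dimensions $d=1,2$). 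Nash's well-known ODE argument applied to $t\mapsto\|\e^{t(\nabla\cdot\mu\nabla-\epsilon)}f\|_2^2$ then produces $\|\e^{t\nabla\cdot\mu\nabla}\|_{L^1\to L^\infty}\le c\,t^{-d/2}\e^{\epsilon t}$, which by the Dunford--Pettis theorem yields the existence of a measurable non-negative kernel $K_t$ with $K_t(\mathrm x,\mathrm y)\le c\,t^{-d/2}\e^{\epsilon t}$.

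To put in the Gaussian factor I would perform Davies' conjugation: for a bounded $C^\infty$ function $\phi:\R^d\to\R$ with $|\nabla\phi|\le 1$ and any $\rho\in\R$, the multiplier $\e^{\rho\phi}$ preserves $H^{1,2}_\Gamma$ (multiplication by a bounded function with bounded gradient maps $C^\infty_\Gamma$ into itself and is continuous in the $H^{1,2}$-norm, so passes to the closure). The conjugated form
\[
  \mathfrak a_\rho(u,v) := \mathfrak a(\e^{-\rho\phi}u,\e^{\rho\phi}v)
\]
differs from $\mathfrak a$ only by first-order terms proportional to $\rho$ and a zero-order term bounded by $\rho^2\|\mu\|_\infty$, hence $\e^{-\rho\phi}\e^{t\nabla\cdot\mu\nabla}\e^{\rho\phi}$ is again an analytic contraction-type semigroup satisfying an ultracontractive bound $c\,t^{-d/2}\e^{(\epsilon+C\rho^2)t}$. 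Translated back, this means $K_t(\mathrm x,\mathrm y)\le c\,t^{-d/2}\e^{C\rho^2 t+\rho(\phi(\mathrm y)-\phi(\mathrm x))}\e^{\epsilon t}$; choosing $\phi$ to approximate $\mathrm y\mapsto|\mathrm x-\mathrm y|$ and optimizing in $\rho$ gives \eqref{5.5}.

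The only non-routine point is the Davies conjugation in the mixed boundary situation, but because $\e^{\rho\phi}$ is smooth with all derivatives bounded and $H^{1,2}_\Gamma$ is defined as the closure of $C^\infty_\Gamma$, the multiplication operator maps this space into itself, which is all that is needed; the rest is an application of Ouhabaz's abstract machinery \cite[Ch.~6]{Ouh05}.
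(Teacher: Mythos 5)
Your proposal is correct and follows essentially the same route as the paper: the paper simply verifies the hypotheses a)--e) of Ouhabaz's criterion (Proposition~\ref{p-gauss}) for $V=H^{1,2}_\Gamma$ --- the extension property from Proposition~\ref{p-extend}, the truncation invariance, and the invariance under multiplication by $\e^{\rho\phi}$ --- and these are exactly the three points you check, with the Nash iteration and Davies conjugation being the inner workings of that abstract result which you have unpacked. No gap; the additional detail you supply is the content of \cite[Ch.~6]{Ouh05} rather than anything new.
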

%
This follows from the following simplified version of Theorem~6.10 in
\cite{Ouh05} (see also \cite{arel}).
%
\begin{proposition}[Ouhabaz] \label{p-gauss}
Assume that $-\nabla \cdot \omega \nabla$, with $\omega \in L^\infty(\Omega;
\cL(\R^d))$ uniformly elliptic, is defined on the form domain $V \subseteq
H^{1,2}$ that satisfies
\begin{enumerate}
\item[a)] $V$ is closed in $ H^{1,2}$,
\item[b)] $H^{1,2}_0 \subseteq V$,
\item[c)] $V$ has the $L^1$-$H^{1,2}$ extension property,
\item[d)] $u \in V$ implies $\sign(u) \inf(1, |u|) \in V$, where $\sign(u) =
	u/|u|$ if $u \neq 0$ and $\sign(u) = 0$ else.
\item[e)] $u \in V$ implies $\e^\psi u \in V$ for every $\psi \in
        C^\infty(\R^d)$, such that $\psi$ and $|\nabla \psi|$ are bounded in
        $\R^d$.
\end{enumerate}
Then $\e^{t \nabla \cdot \omega \nabla}$ satisfies an upper Gaussian estimate as in \eqref{5.5}.
\end{proposition}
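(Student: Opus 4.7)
The proposition is due to Ouhabaz, so the plan is to follow the architecture of \cite[Thm.~6.10]{Ouh05} and merely to indicate where each hypothesis enters. The first step combines (a)--(c) with the Sobolev embedding on $\R^d$: the $L^1$-$H^{1,2}$ extension property in (c) allows any $u \in V$ to be extended to $H^{1,2}(\R^d)$ while respecting the $L^1$ norm, so that the classical Sobolev/Nash inequality on $\R^d$ transfers back to $V$. Combined with the uniform ellipticity of $\omega$, this yields a Nash-type inequality
\[
\|u\|_{L^2}^{2+4/d}\le c\,\|u\|_{L^1}^{4/d}\int_\Omega \omega\,\nabla u\cdot\nabla\overline u\,\dd\mathrm x,\qquad u\in V\cap L^1,
\]
still without any reference to (d) or (e).

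The second step invokes the Beurling--Deny--Ouhabaz criteria: hypothesis (d) is precisely what is needed for $\e^{t\nabla\cdot\omega\nabla}$ to be positivity-preserving and $L^\infty$-contractive (see \cite[Ch.~2--3]{Ouh05}). Coupling sub-Markovianity with the Nash inequality, a standard Nash--Moser iteration produces the ultracontractivity bound $\|\e^{t\nabla\cdot\omega\nabla}\|_{\cL(L^1,L^\infty)}\le c\,t^{-d/2}\e^{\epsilon t}$, in which the factor $\e^{\epsilon t}$ absorbs the absence of strict coercivity of the form; by standard arguments for positive semigroups this translates into the pointwise bound $K_t(\mathrm x,\mathrm y)\le c\,t^{-d/2}\e^{\epsilon t}$.

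To insert the Gaussian factor I would apply Davies' perturbation trick. For $\psi\in C^\infty(\R^d)$ with $\psi$ and $|\nabla\psi|$ bounded, hypothesis (e) ensures that $u\mapsto\e^\psi u$ maps $V$ continuously into itself, so that the conjugated semigroup $T_\psi(t)u:=\e^{-\psi}\e^{t\nabla\cdot\omega\nabla}(\e^\psi u)$ is associated to a closed sectorial form on $V$. A direct computation shows that the perturbed form differs from the original one by a lower-order term controlled by $\|\nabla\psi\|_\infty^2\|u\|_{L^2}^2$, and rerunning the Nash argument on $T_\psi$ gives
\[
\|T_\psi(t)\|_{\cL(L^1,L^\infty)}\le c\,t^{-d/2}\e^{(\epsilon + c\|\nabla\psi\|_\infty^2)t}.
\]
Reading this at the kernel level yields $K_t(\mathrm x,\mathrm y)\le c\,t^{-d/2}\e^{\psi(\mathrm x)-\psi(\mathrm y)+(\epsilon+c\|\nabla\psi\|_\infty^2)t}$, and choosing $\psi(\mathrm z)=\lambda\,\rho(\mathrm z)$ with $\rho$ a smoothed distance to $\mathrm y$ and $\lambda\sim|\mathrm x-\mathrm y|/t$ then optimizes to the decay factor $\e^{-b|\mathrm x-\mathrm y|^2/t}$ claimed in \eqref{5.5}.

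The only step requiring genuine care is the perturbation argument: one must verify, relying solely on (e), that multiplication by $\e^\psi$ is bicontinuous on $V$ and that the resulting commutator $[\nabla,\e^\psi]=\e^\psi\nabla\psi$ contributes only the announced bounded lower-order perturbation of the form. The remaining ingredients -- Nash--Moser iteration, the Beurling--Deny characterization of sub-Markovianity, and the equivalence between $L^1$-$L^\infty$ bounds and pointwise kernel estimates for positive semigroups -- are classical and require no additional input in the present framework.
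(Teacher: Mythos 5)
The paper offers no proof of this proposition at all: it is stated as a quoted, simplified version of Theorem~6.10 in \cite{Ouh05} (see also \cite{arel}), and the only work the paper actually does is the verification of hypotheses a)--e) for $V=H^{1,2}_\Gamma$ in the proof of Theorem~\ref{t-gausss}. Your outline correctly reconstructs the architecture of the cited proof -- Nash inequality via the extension property in a)--c), sub-Markovianity via the invariance criterion d), ultracontractivity by Nash iteration, and Davies' exponential perturbation via e) followed by optimization in the twisting parameter -- so there is no conflict of method; you are simply unpacking the black box the paper chose to cite. One inaccuracy worth fixing: the displayed Nash inequality cannot hold as written, since for Neumann-type $V$ the constant functions lie in $V\cap L^1$ and annihilate the right-hand side; the correct inequality carries the additional term $\|u\|_{L^2}^2$ on the right (equivalently, one runs the argument for the shifted form $a(u,u)+\epsilon\|u\|_{L^2}^2$), which is in fact consistent with your own later remark that the factor $\e^{\epsilon t}$ absorbs the lack of coercivity. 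With that repair your sketch is a faithful outline of Ouhabaz's argument, though the Nash--Moser iteration and the kernel-level reading of the $L^1$--$L^\infty$ bounds are invoked rather than executed, so as written it is a proof plan rather than a complete proof.
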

%
\begin{proof}[Proof of Theorem~\ref{t-gausss}]
We have to verify conditions a) -- e) from Proposition~\ref{p-gauss} for $V =
H^{1,2}_\Gamma$. a) and b) are obvious. For c) see Proposition~\ref{p-extend}
and d) is covered by \cite[Proposition~4.11]{Ouh05}. Finally, e) follows from
Proposition~\ref{p-basicl2}~iv).
\end{proof}
Another notion in our considerations will be the bounded holomorphic
functional calculus that we want to introduce briefly. Let $X$ be a
Banach space and $- B$ the generator of a bounded analytic semigroup on $X$.
Denoting, for $\kappa \in \left] 0, \pi \right]$,
\[ \Sigma_\kappa := \{ z \in \C \setminus \{ 0 \} : |\arg(z)| < \kappa \},
\]
we then have for some $\theta \in \left] 0, \pi/2 \right[$
\[ \sigma(B) \subseteq \Sigma_\theta \cup \{ 0 \}\quad \text{and} \quad
        \|R(\lambda, B) \|_{\cL(X)} \le \frac{M}{|\lambda|}, \quad \lambda \in
        \C \setminus \overline{\Sigma_\theta}.
\]
Following \cite{McI86} (see also \cite{DHP03}), for any angle $\kappa \in
\left] 0, \pi \right]$ we define the function spaces
\begin{align*}
\mathcal H^\infty(\Sigma_\kappa) &:= \{ \psi : \Sigma_\kappa \to \C, \text{ holomorphic and
       bounded} \} \quad \text{and} \\
\mathcal H^\infty_0(\Sigma_\kappa) &:= \bigl\{ \psi \in \mathcal H^\infty(\Sigma_\kappa) :
       \text{ there exist } C, \epsilon > 0 \text{ s.t. } |\psi(z)| \le C
       \frac{|z|^\epsilon}{(1 + |z|)^{2\epsilon}} \bigr\},
\end{align*}
both equipped with the norm $\| \psi \|_{\mathcal H^\infty_\kappa} := \sup_{z \in
\Sigma_\kappa} |\psi(z)|$. Then for $\psi \in \mathcal H^\infty_0(\Sigma_\kappa)$ with $\kappa > \theta$,
we may compute $\psi(B)$, using the Cauchy integral formula
\[ \psi(B) = \frac{1}{2 \pi i} \int_\angle \psi(z) R(z, B) \; \dd z,
\]
where the path $\angle$ is given by the two rays $t \e^{\pm i \varphi}$, $t >
0$, for some $\theta < \varphi < \kappa$. Note that this integral is absolutely
convergent in $\cL(X)$. We now say that $B$ has a \emph{bounded $\mathcal H^\infty$-calculus}, if there is a constant $C \ge 0$, such that
\[ \| \psi(B) \|_{\cL(X)} \le C \|\psi\|_{\mathcal{H}^\infty_\kappa}, \quad
	\psi \in \mathcal H^\infty_0(\Sigma_\kappa),
\]
for some $\kappa > \theta$. The infimum of all angles $\kappa$, for which this holds,
is called the \emph{$\mathcal H^\infty$-angle} $\varphi_B^\infty$ of $B$.

If $B$ admits a bounded $\mathcal H^\infty$-calculus for some $\kappa > \theta$, then the
mapping $\mathcal H^\infty_0(\Sigma_\kappa) \ni \psi \mapsto \psi(B) \in \cL(X)$ can be
extended uniquely to an algebra homomorphism between $\mathcal H^\infty(\Sigma_\kappa)$
and $\cL(X)$.
%
\begin{proposition} \label{p-infcalc}
Let $\partial \Omega \setminus \Gamma$ have nonzero boundary measure. Then the
following assertions hold for every $p \in \left] 1, \infty \right[$.
\begin{enumerate}
\item For sufficiently small $\gamma > 0$, the operator $-\nabla \cdot \mu
        \nabla - \gamma$ has a bounded $\mathcal H^\infty$-calculus on $L^p$ with
        $\mathcal H^\infty$-angle $\varphi^\infty_{-\nabla \cdot \mu \nabla - \gamma} =
        0$.
\item The set $\{ (-\nabla \cdot \mu \nabla)^{is} : s \in \R \}$ forms a
        strongly continuous group on $L^p$ admitting the estimate
        \[ \| ( -\nabla \cdot \mu \nabla )^{is} \|_{\mathcal L(L^p)} \le
                c_p \e^{|s| \vartheta},\quad s \in \R,
        \]
        with $0 \le  \vartheta < \pi/2$.
\end{enumerate}
\end{proposition}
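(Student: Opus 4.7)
The plan is to derive both assertions from the Duong--McIntosh theorem \cite{DM99}, which converts Gaussian semigroup bounds together with self-adjointness on $L^2$ into a bounded $\mathcal H^\infty$-calculus on every $L^p$, $1 < p < \infty$. The Gaussian estimate is available from Theorem~\ref{t-gausss}, and self-adjointness with non-negativity on $L^2$ from Proposition~\ref{p-basicl2}~ii), so the structural input is ready; the main work is to place the operators into a sector strictly in the open right half-plane, which is where the hypothesis that $\partial \Omega \setminus \Gamma$ has positive boundary measure is decisive.

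First I would show that the spectrum of $-\nabla \cdot \mu \nabla$ is separated from $0$ on every $L^p$. Because $\partial\Omega \setminus \Gamma$ carries positive $\sigma$-measure, the Poincar\'e inequality $\|u\|_{L^2} \le C\|\nabla u\|_{L^2}$ is valid on $H^{1,2}_\Gamma$ (Lipschitz domain, non-trivial Dirichlet part), so the form in \eqref{e-defellip} with $\varkappa = 0$ is strictly coercive and the self-adjoint $L^2$-realisation has spectrum in $[\delta,\infty)$ for some $\delta > 0$. Consistency of the $L^p$- and $L^2$-semigroups on $L^p \cap L^2$, which is a direct consequence of the kernel representation from Theorem~\ref{t-gausss}, combined with the contractivity on $L^p$ from Proposition~\ref{p-basicl2}~v), transfers the gap to each $L^p$, so that $0 \in \rho(-\nabla \cdot \mu \nabla)$ with a resolvent bound uniform in $p$.

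For (i), Duong--McIntosh now produces a bounded $\mathcal H^\infty$-calculus for $-\nabla \cdot \mu \nabla$ on $L^p$; the angle is $0$ since the $L^2$-angle vanishes by self-adjointness and the transference mechanism preserves this. Choosing $\gamma \in (0,\delta)$, the kernel $e^{\gamma t} K_t$ of the shifted semigroup still satisfies a Gaussian estimate of the form \eqref{5.5} (at the cost of enlarging the admissible $\epsilon$), and the spectrum of $-\nabla\cdot\mu\nabla - \gamma$ lies in $[\delta-\gamma,\infty) \subset (0,\infty)$. Hence the same argument applied to the shifted operator yields a bounded $\mathcal H^\infty$-calculus of angle $0$, which is precisely the content of~(i).

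For (ii), once the $\mathcal H^\infty$-calculus of angle $0$ is established and the spectrum avoids $0$, the imaginary powers $(-\nabla\cdot\mu\nabla)^{is}$ are defined unambiguously via the calculus, and
\[
\|(-\nabla\cdot\mu\nabla)^{is}\|_{\mathcal L(L^p)} \le c_p \, \|z \mapsto z^{is}\|_{\mathcal H^\infty(\Sigma_\vartheta)} = c_p \, e^{|s|\vartheta}
\]
for any $\vartheta \in (0,\pi/2)$. Strong continuity of $s \mapsto (-\nabla\cdot\mu\nabla)^{is}$ is then a standard byproduct: on the dense range of $(-\nabla\cdot\mu\nabla)(1+(-\nabla\cdot\mu\nabla))^{-2}$ the convergence $z^{is}\to 1$ as $s \to 0$ is dominated inside the calculus, and the uniform exponential bound propagates this to all of $L^p$. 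The only non-routine ingredient in this whole scheme is the spectral-gap argument above, since it is what legitimises the shift by $-\gamma$ in~(i) and the direct definition of the imaginary powers in~(ii); all remaining steps are mechanical applications of the Duong--McIntosh machinery.
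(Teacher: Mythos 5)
Your overall architecture (self-adjointness and a spectral gap on $L^2$, Gaussian kernel bounds, transference of the $\mathcal H^\infty$-calculus to $L^p$) is the same as the paper's, but there is a genuine gap at the point where you apply the transference theorem to the \emph{shifted} operator. The only kernel bound available from Theorem~\ref{t-gausss} is $K_t \le c\,t^{-d/2}\e^{-b|\mathrm x-\mathrm y|^2/t}\e^{\epsilon t}$ for every $\epsilon>0$, so the kernel of the semigroup generated by $\nabla\cdot\mu\nabla+\gamma$ is only controlled by $c\,t^{-d/2}\e^{-b|\mathrm x-\mathrm y|^2/t}\e^{(\epsilon+\gamma)t}$, an exponentially \emph{growing} bound. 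The Gaussian-bound-to-$\mathcal H^\infty$-calculus transference (Duong--Robinson \cite{duong/robin}; note that \cite{DM99} is the Riesz-transform result, not the functional-calculus one) requires a non-growing Gaussian bound, and your remark that one can absorb the shift ``at the cost of enlarging the admissible $\epsilon$'' goes in the wrong direction: enlarging $\epsilon$ only worsens the growth. The paper resolves exactly this by first shifting \emph{up}: for $\delta>\gamma$ the operator $-\nabla\cdot\mu\nabla-\gamma+\delta$ has a kernel bound with $\epsilon=0$, the transference applies, and then the nontrivial shifting result \cite[Prop.~6.10]{KKW06} (together with the $p$-independence of the spectrum from \cite{Kun99}) is invoked to remove the $+\delta$ without losing the calculus. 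Your argument is missing either this two-step manoeuvre or, alternatively, a proof that the spectral gap upgrades \eqref{5.5} to a genuinely \emph{decaying} Gaussian bound (e.g.\ via ultracontractivity for large $t$ combined with the $L^2$ decay); neither is supplied, and without one of them the application of the transference theorem to $-\nabla\cdot\mu\nabla-\gamma$ is unjustified.

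Two smaller points. First, your transfer of the spectral gap from $L^2$ to $L^p$ via ``consistency plus contractivity'' is only a sketch: what is actually needed is a Riesz--Thorin interpolation between the exponential $L^2$-decay and the $L^p$-boundedness of the semigroup (or, as the paper does, the $p$-independence of the spectrum under heat-kernel bounds from \cite{Kun99}); also, Proposition~\ref{p-basicl2}~v) is stated under Assumption~\ref{a-groegerregulaer}~a), which Proposition~\ref{p-infcalc} does not presuppose. Second, part (ii) is fine once (i) is in place; your density argument for strong continuity replaces the paper's citation of \cite[Thm.~III.4.7.1]{amannbuch} and is acceptable.
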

%
\begin{proof}
Since the boundary measure of $\partial \Omega \setminus \Gamma$ is nonzero,
the operator $- \nabla \cdot \mu \nabla$ is continuously invertible in $L^2$,
i.e. $0$ does not belong to the spectrum. Hence, for sufficiently small
$\gamma > 0$, $- \nabla \cdot \mu \nabla -\gamma$ is still self-adjoint and
bounded by $0$ from below, cf. Proposition~\ref{p-basicl2}~ii). Thus, for
every $\delta \ge 0$ the operator $- \nabla \cdot \mu \nabla - \gamma +
\delta$ has a bounded $\mathcal H^\infty$-calculus on $L^2$ with $\mathcal H^\infty$-angle $0$.
Furthermore, taking $\delta > \gamma$, the semigroup generated by $\nabla
\cdot \mu \nabla + \gamma - \delta$ obeys the Gaussian estimate \eqref{5.5}
with $\epsilon = 0$. Thus, $- \nabla \cdot \mu \nabla - \gamma + \delta$ also
has a bounded $\mathcal H^\infty$-calculus on $L^p$ with $\mathcal H^\infty$-angle $0$ for all
$1 < p < \infty$ by \cite{duong/robin}.

In order to eliminate the `$\strut+\delta$', we observe that the spectrum of
$- \nabla \cdot \mu \nabla$ is $p$-independent, thanks to the Gaussian
estimates, see \cite{Kun99}. Thus, also in $L^p$ the spectrum of $- \nabla
\cdot \mu \nabla - \gamma$ is contained in the positive real axis. It was
shown in \cite[Prop. 6.10]{KKW06}, that in such a case, we may shift back the
operator without losing the bounded $\mathcal H^\infty$-calculus, as long as the
spectrum does not reach zero. This shows i).

As the functions $z \mapsto z^{is}$ belong to $\mathcal H^\infty(\Sigma_\phi)$ for
every $s \in \R$ and every $\phi \in \left] 0, \pi \right[$, part i) of this
proof yields $(- \nabla \cdot \mu \nabla)^{is} \in {\mathcal{L}}(L^p)$ with
$\|(-\nabla \cdot \mu \nabla)^{is}\|\le c $ for all $-1 \le s \le 1$. Thus,
ii) follows by \cite[Thm. III.4.7.1 and Cor. III.4.7.2]{amannbuch}.
\end{proof}
%
%
%
%
\section{Mapping properties for $(-\nabla \cdot \mu \nabla)^{1/2}$} \label{sec-WurzelIso}
%
%
%
%
In this chapter we prove that, under certain topological conditions on $\Omega$
and $\Gamma$, the mapping
\[ ( -\nabla \cdot \mu \nabla )^{1/2} : H^{1,q}_\Gamma \to L^q
\]
is a topological isomorphism for $q \in \left] 1, 2 \right[$. We abbreviate
$-\nabla \cdot \mu \nabla$ by $A_0$ throughout this chapter. Let us introduce
the following
%
\begin{assumption} \label{l-locmodel}
There is a bi-Lipschitz, volume-preserving mapping $\phi$ from a neighborhood
of $\overline \Omega$ into $\R^d$ such that $\phi(\Omega \cup \Gamma) = \alpha
K_-$ or $\alpha (K_- \cup \Sigma)$ or $\alpha (K_- \cup \Sigma_0)$ for some
 $\alpha >0$.
\end{assumption}
%
%
\begin{remark} \label{r-volume}
It is known that for a bi-Lipschitz mapping the property of being volume-preserving is
equivalent to the property that the absolute value of the determinant of the
Jacobian is one almost everywhere (see \cite[Ch.~3]{ev/gar}).
\end{remark}
%
The main results of this section are the following two theorems.
%
\begin{theorem} \label{t-mainsect}
Under the general assumptions made in Section 2 the following holds true:
If $\partial \Omega \setminus \Gamma$ has nonzero boundary measure, then,
for every $q \in \left] 1, 2 \right]$, the operator $A_0^{-1/2}$ is a
continuous operator from $L^q$ into $H^{1,q}_\Gamma$. Hence, it continuously
maps $\breve H^{-1,q}_\Gamma$ into $L^q$ for any $q \in \left[ 2, \infty \right[$.
\end{theorem}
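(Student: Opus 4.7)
The plan is to reduce the first assertion to a Riesz-type bound $\nabla A_0^{-1/2}:L^q\to L^q$ for $q\in(1,2]$, and then to obtain the second by duality. The case $q=2$ is immediate from Proposition~\ref{p-basicl2}~iii), after noting that the nonzero boundary measure of $\partial\Omega\setminus\Gamma$ keeps $0$ out of the $L^2$-spectrum of $A_0$ (cf.\ the proof of Proposition~\ref{p-infcalc}); thus $A_0^{-1/2}$ is a well-defined isomorphism from $L^2$ onto $H^{1,2}_\Gamma$. For $q\in(1,2)$, the $L^q$-boundedness of $A_0^{-1/2}$ itself follows either from the $\mathcal H^\infty$-calculus of Proposition~\ref{p-infcalc} or directly from the subordination formula $A_0^{-1/2}=c\int_0^\infty t^{-1/2}e^{-tA_0}\,\dd t$ combined with the strictly positive, $p$-independent spectral bound of $A_0$ on $L^q$ coming from the Gaussian estimate.

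The decisive step is the Riesz transform bound $\nabla A_0^{-1/2}:L^q\to L^q$. Its $L^2$-boundedness is immediate from the functional calculus; combined with the upper Gaussian estimate of Theorem~\ref{t-gausss} -- which in particular yields Davies--Gaffney type off-diagonal bounds for $\sqrt{t}\,\nabla e^{-tA_0}$ -- the singular-integral machinery of Duong and McIntosh produces a weak type $(1,1)$ estimate for $\nabla A_0^{-1/2}$. Marcinkiewicz interpolation with the $L^2$-bound then delivers strong $L^q$-boundedness for every $q\in(1,2)$, and together with the $L^q$-bound of $A_0^{-1/2}$ this gives $A_0^{-1/2}:L^q\to H^{1,q}$. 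To refine the target to $H^{1,q}_\Gamma$, I would approximate $f\in L^q$ by $f_n\in L^2\cap L^q$: each $A_0^{-1/2}f_n$ lies in $H^{1,2}_\Gamma\subseteq H^{1,q}_\Gamma$ (on the bounded $\Omega$ the $H^{1,q}$-norm is controlled by the $H^{1,2}$-norm, so any $C^\infty_\Gamma$-approximation in $H^{1,2}$ is also one in $H^{1,q}$), and the limit remains in the closed subspace $H^{1,q}_\Gamma$.

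The second assertion then drops out by duality: because $A_0$ is self-adjoint on $L^2$ with respect to the antilinear pairing adopted throughout the paper (Remark~\ref{r-antili}), $A_0^{-1/2}$ is self-dual, and the adjoint of $A_0^{-1/2}:L^{q}\to H^{1,q}_\Gamma$ for $q\in(1,2]$ is precisely $A_0^{-1/2}:\breve H^{-1,q'}_\Gamma\to L^{q'}$ with $q'\in[2,\infty)$. The principal obstacle I anticipate lies in the Riesz transform step: one has to verify that the Gaussian bound and the $L^2$-Riesz bound really do activate the abstract Duong--McIntosh theorem in this mixed-boundary setting on an irregular Lipschitz domain. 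In principle these two ingredients are exactly what the theorem requires, but care is needed with the off-diagonal $L^2$-estimates for $\sqrt{t}\,\nabla e^{-tA_0}$ and with the interaction between the Dirichlet condition on $\partial\Omega\setminus\Gamma$ and the Calder\'on--Zygmund covering arguments.
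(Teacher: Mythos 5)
Your proposal is correct and follows essentially the same route as the paper: the Duong--McIntosh weak type $(1,1)$ result for the Riesz transform combined with the Gaussian estimates of Theorem~\ref{t-gausss}, an approximation by $L^2\cap L^q$ to capture the correct boundary behaviour in $H^{1,q}_\Gamma$, and self-adjoint duality for the second assertion. The only cosmetic difference is that the paper applies Proposition~\ref{p-duongMcI} to the shifted operator $A_0+\epsilon$ (so that the factor $\e^{\epsilon t}$ in \eqref{5.5} disappears and the kernel bound \eqref{e-kernbound} holds for all $t$) and then removes the shift by noting that $(A_0+\epsilon)^{1/2}A_0^{-1/2}$ is bounded on $L^q$ via the $\mathcal H^\infty$-calculus of Proposition~\ref{p-infcalc}, which is exactly the role your ``strictly positive spectral bound'' remark is meant to play.
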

%
%
\begin{theorem} \label{t-mainsectdown}
If in addition Assumption~\ref{l-locmodel} is fulfilled and $q \in \left] 1, 2
\right]$, then $A_0^{1/2}$ maps $H^{1,q}_\Gamma$ continuously into $L^q$.
Hence, it continuously maps $L^q$ into $\breve H^{-1,q}_\Gamma$ for any $q \in \left[
2, \infty \right[$.
\end{theorem}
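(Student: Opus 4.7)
The strategy is to transfer an analogous square root estimate on $\R^d$ to the bounded domain $\Omega$, exploiting Assumption~\ref{l-locmodel}. First, I would use the pullback by the bi-Lipschitz map $\phi$: since $\phi$ is volume-preserving, the map $v \mapsto v \circ \phi^{-1}$ is an isomorphism $L^q(\Omega) \to L^q(\phi(\Omega))$ and $H^{1,q}_\Gamma(\Omega) \to H^{1,q}_{\phi(\Gamma)}(\phi(\Omega))$ for every $q \in \left]1, \infty\right[$, and under it $A_0 = -\nabla\cdot\mu\nabla$ transforms into $\tilde A_0 = -\nabla\cdot\tilde\mu\nabla$ on the model set $\Omega^* \in \{\alpha K_-, \alpha(K_- \cup \Sigma), \alpha(K_- \cup \Sigma_0)\}$, with $\tilde\mu = (D\phi)\,\mu\,(D\phi)^T \circ \phi^{-1}$. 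By Remark~\ref{r-volume} the Jacobian weight is trivial, so $\tilde\mu$ remains real, symmetric, $L^\infty$ and uniformly elliptic. It therefore suffices to prove $\|\tilde A_0^{1/2} u\|_{L^q(\Omega^*)} \leq c\|\nabla u\|_{L^q(\Omega^*)}$ for $u \in H^{1,q}_{\Gamma^*}(\Omega^*)$ with $\Gamma^* = \phi(\Gamma) \in \{\emptyset, \alpha\Sigma, \alpha\Sigma_0\}$.

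Next, I would extend $u$ and $\tilde\mu$ from $\Omega^*$ by a chain of reflections so that the resulting divergence form operator $\bar L = -\nabla\cdot\bar\mu\nabla$ on a larger ambient space (either $\R^d$ or a torus $\mathbb T^d$ obtained by periodic tiling) carries the boundary information of $\Omega^*$. Across each Dirichlet face one reflects $u$ oddly, and across the Neumann part $\Gamma^*$ one reflects $u$ evenly; in both cases one reflects $\tilde\mu$ by the conjugation rule $\bar\mu(\rho_j(\mathrm x)) = R_j \tilde\mu(\mathrm x) R_j$, where $R_j$ is the orthogonal reflection matrix across the corresponding hyperplane and $\rho_j$ the associated reflection on $\R^d$. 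A direct computation shows that $\bar\mu$ stays real, symmetric, measurable, $L^\infty$ and uniformly elliptic; iterating these reflections until $\R^d$ is tiled produces a periodic pair $(\bar u, \bar\mu)$, and one obtains a bounded extension $E : H^{1,q}_{\Gamma^*}(\Omega^*) \to H^{1,q}(\mathbb T^d)$ landing in the closed subspace of functions satisfying all the imposed reflection symmetries. Because $\bar\mu$ was constructed with the dual symmetries, $\bar L$ preserves this subspace, hence so does its functional calculus, so the restriction of $\bar L^{1/2}(Eu)$ to $\Omega^*$ coincides with $\tilde A_0^{1/2} u$.

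It then suffices to invoke the Auscher--Tchamitchian Kato-type bound
\[
 \|\bar L^{1/2} v\|_{L^q} \leq c\, \|\nabla v\|_{L^q}, \qquad q \in \left]1, 2\right],
\]
valid for divergence form operators with real symmetric $L^\infty$ coefficients on $\R^d$ (\cite{ausch/tcha01}, transferable to $\mathbb T^d$ by localization and the Gaussian estimate of Theorem~\ref{t-gausss}), applied to $v = Eu$, and to combine it with the boundedness of $E$ on $H^{1,q}$ and the pullback from the first step. The second assertion of the theorem then follows by dualizing the first with respect to the antilinear $L^2$-pairing, using Corollary~\ref{c-antili} and the selfadjointness of $A_0$ on $L^2$. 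The principal technical difficulty I expect is the mixed case $\Omega^* = \alpha(K_- \cup \Sigma_0)$, where the top face of $K_-$ splits into a Neumann part $\Sigma_0$ and a Dirichlet part $\Sigma \setminus \Sigma_0$ meeting along $\{x_{d-1} = 0 = x_d\}$: a single reflection across $\Sigma$ cannot simultaneously implement both the even symmetry on $\Sigma_0$ and the odd symmetry on $\Sigma \setminus \Sigma_0$, so one has to reflect in two stages (first oddly across $\{x_{d-1} = 0\}$ to turn the top face into a pure Neumann face of a subcube, then evenly across $\Sigma$), verifying at each stage that the reflected coefficient matrix keeps all its structural properties and that the resulting symmetry group acts compatibly with the $\mathcal H^\infty$-calculus of $\bar L$.
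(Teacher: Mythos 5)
Your first step (pulling back by the volume-preserving bi-Lipschitz $\phi$ so that it suffices to treat the three model sets) and your even reflection across $\Sigma$ in the pure-Neumann-face case are essentially the paper's Lemma~\ref{l-wurzel} and Proposition~\ref{p-spiegel}, and the duality argument for the second assertion is also the paper's (Remark~\ref{r-dualsuff}). Note, though, that after one even reflection you already land on the cube $K$ with pure Dirichlet conditions, which is a strongly Lipschitz domain, so Proposition~\ref{p-ausch/tcha} applies directly; the further chain of odd reflections tiling $\R^d$ or $\mathbb{T}^d$ is unnecessary and only adds the burden of verifying that the whole $\mathcal H^\infty$-calculus of the periodized operator respects the symmetry group. (Also be aware that the paper does not prove the estimate $\|A_0^{1/2}u\|_q\le c\|u\|_{H^{1,q}}$ by intertwining $A_0^{1/2}$ directly; it shows that $A_0^{-1/2}=\mathfrak R\,(-\nabla\cdot\hat\mu\nabla)^{-1/2}\,\mathfrak E$ is a continuous surjection of $L^p(K_-)$ onto $H^{1,p}_\Sigma(K_-)$ and invokes the open mapping theorem. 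Your direct route is viable but requires you to actually establish the intertwining of the square roots with the extension operator, which the paper does via the resolvent integral \eqref{e-wurxel}.)

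The genuine gap is in the mixed model case $\Gamma^*=\alpha(K_-\cup\Sigma_0)$, which is the case the theorem is really about. Your proposed first stage, ``reflect oddly across $\{x_{d-1}=0\}$'', is a reflection across a hyperplane that cuts through the \emph{interior} of $K_-$; an element of $H^{1,q}_{\Sigma_0}(K_-)$ is already defined on both sides of that hyperplane and has no reason to vanish on it, so there is nothing to extend and no odd reflection to perform. More structurally, the Dirichlet part $\Sigma\setminus\overline{\Sigma_0}$ and the Neumann part $\Sigma_0$ of the top face meet along the codimension-two edge $\{x_{d-1}=x_d=0\}$, and no composition of reflections of $\R^d$ can convert this half-Dirichlet/half-Neumann face into a pure face while acting compatibly on an arbitrary element of $H^{1,q}_{\Sigma_0}(K_-)$. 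The paper resolves this by an entirely different device: Lemma~\ref{l-umform} constructs an explicit volume-preserving bi-Lipschitz homeomorphism of $\R^d$ (a composition of piecewise linear shears and a rotation) that maps $\alpha(K_-\cup\Sigma_0)$ onto $\alpha(K_-\cup\Sigma)$, i.e.\ it ``unfolds'' the Dirichlet/Neumann interface into the edge of the cube, and then Lemma~\ref{l-wurzel} transports the already settled pure-Neumann-face case back. Without this, or some substitute for it, your argument does not cover the mixed constellation.
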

%
\begin{remark} \label{r-dualsuff}
In both theorems the second assertion follows from the first by the selfadjointness of
 $A_0$ on $L^2$ and duality (see Remark \ref{r-antili}); thus we focus on the proof of
 the first assertions in the sequel.
\end{remark}
Let us first prove the continuity of the operator $A_0^{-1/2} : L^q \to
H^{1,q}_\Gamma$. In order to do so, we observe that this follows, whenever
\begin{itemize}
\item[1.] The Riesz transform $\nabla A_0^{- 1/2}$ is a bounded operator on
	$L^q$, and, additionally,
\item[2.] $A_0^{-1/2}$ maps $L^q$ into $H^{1,q}_\Gamma$.
\end{itemize}
The first item can be deduced from the following result of Duong and
$\rm M^c$Intosh (see \cite[Thm.~2]{DM99}) that is even true in a much more
general setting.
%
\begin{proposition} \label{p-duongMcI}
Let $B$ be a positive, selfadjoint operator on $L^2$, having the space $W$ as
its form domain and admitting the estimate $\|\nabla \psi \|_{L^2} \le c
\|B^{1/2}\psi \|_{L^2}$ for all $\psi \in W$. Assume that $W$ is invariant
under multiplication by bounded functions with bounded, continuous first
derivatives and that the kernel $K_t$ of the semigroup $\e^{-t B}$ satisfies
bounds
\begin{equation} \label{e-kernbound}
 |K_t(\mathrm x, \mathrm y)| \le \frac{C}{t^{d/2}} \biggl( 1 +
	\frac{| \mathrm x - \mathrm y|^2}{t} \biggr)^{-\beta}
\end{equation}
for some $\beta > d/2$. Then the operator $\nabla B^{-1/2}$ is of weak type~(1,1), and, thus
can be extended from $L^2$ to a bounded operator on $L^q$ for all $q \in \left] 1, 2 \right[$.
\end{proposition}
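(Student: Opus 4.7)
The plan is to establish weak-type $(1,1)$ for $T := \nabla B^{-1/2}$; $L^2$--boundedness of $T$ is immediate (for $f \in L^2$, set $\psi := B^{-1/2}f \in W$; then $\|Tf\|_{L^2} = \|\nabla \psi\|_{L^2} \le c\|B^{1/2}\psi\|_{L^2} = c\|f\|_{L^2}$), so the asserted $L^q$--boundedness for $q \in \left]1,2\right[$ will follow from Marcinkiewicz interpolation once the endpoint is in hand. The weak-type estimate I would prove by a Calder\'on--Zygmund argument in the style of Duong and $\rm M^c$Intosh, replacing the missing pointwise kernel regularity of $T$ by semigroup mollification at the natural scale of each bad cube.

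Fix $f \in L^1 \cap L^2$ and $\alpha > 0$, and apply the classical Calder\'on--Zygmund decomposition $f = g + \sum_i b_i$: the $b_i$ are supported on pairwise disjoint dyadic cubes $Q_i$ with $\int b_i = 0$ and $\|b_i\|_{L^1} \le C\alpha|Q_i|$, while $\|g\|_\infty \le C\alpha$ and $\sum_i |Q_i| \le C\alpha^{-1}\|f\|_{L^1}$. Set $t_i := \ell(Q_i)^2$, let $Q_i^{*}$ be a fixed geometric dilate of $Q_i$, and put $E := \bigcup_i Q_i^{*}$, so that $|E| \le C\alpha^{-1}\|f\|_{L^1}$. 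The good part is controlled by $L^2$--boundedness of $T$ and Chebyshev, while the bad-part task reduces, again by Chebyshev, to
\[
  \sum_i \int_{\R^d \setminus E} |Tb_i(\mathrm x)|\,\dd\mathrm x \;\le\; C\|f\|_{L^1}.
\]

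For each $i$ I would use the semigroup-mollification split
\[
  Tb_i \;=\; T\e^{-t_iB}b_i \;+\; T\bigl(I - \e^{-t_iB}\bigr)b_i.
\]
For the smoothed term, the kernel of $T\e^{-tB}$ inherits Gaussian-type off-diagonal bounds from \eqref{e-kernbound} combined with the analytic-semigroup decay $\|T\e^{-tB}\|_{L^2 \to L^2} \le Ct^{-1/2}$; pointwise application against $b_i$ and summation over dyadic annuli surrounding each $Q_i$ yield the desired control. For the remainder one uses the identity $B^{-1/2}(I - \e^{-t_iB}) = \int_0^{t_i} B^{1/2}\e^{-sB}\,\dd s$ together with the cancellation $\int b_i = 0$, which for $\mathrm x \in \R^d \setminus Q_i^{*}$ extracts an extra factor of $\ell(Q_i)/|\mathrm x - \mathrm y_i|$ (with $\mathrm y_i$ a reference point in $Q_i$); this cancellation, combined with off-diagonal estimates of the same Gaussian-type flavour for $\nabla B^{1/2}\e^{-sB}$, produces an analogous $L^1$--type bound upon integration in $s$ and $\mathrm x$.

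The genuine difficulty lies in producing the required pointwise off-diagonal bound on $T\e^{-tB}$ from only the scalar estimate \eqref{e-kernbound} on $\e^{-tB}$, since differentiation does not for free preserve Gaussian decay; this is where the form-theoretic hypotheses on $W$ play the decisive role. Closedness of $W$ in $H^{1,2}$, the gradient inequality, and stability of $W$ under multiplication by $C^1$--bounded functions together legitimize Davies' exponential perturbation trick $\psi \mapsto \e^{\rho \cdot \mathrm{x}} \psi$ within the quadratic form associated to $B$, yielding Gaffney-type $L^2 \to L^2$ off-diagonal decay for $\e^{-tB}$ and, through the $L^2$--boundedness of $T$, also for $T\e^{-tB}$. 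Extrapolation of these off-diagonal $L^2$ bounds against \eqref{e-kernbound} in the manner of Coulhon--Sikora then promotes them to the pointwise kernel estimate required above, completing the argument.
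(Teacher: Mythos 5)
The paper offers no proof of this proposition: it is quoted as Theorem~2 of Duong and $\rm M^c$Intosh \cite{DM99}, so there is no in-paper argument to compare yours against. Your sketch reproduces the architecture of the original: Calder\'on--Zygmund decomposition at the parabolic scale $t_i=\ell(Q_i)^2$, mollification by $\e^{-t_iB}$, the subordination formula for $B^{-1/2}$, and Davies--Gaffney off-diagonal $L^2$ estimates obtained by exponential perturbation of the form -- which is exactly where the invariance of $W$ under multiplication by bounded functions with bounded derivatives enters. One structural difference: Duong--$\rm M^c$Intosh lump $T\e^{-t_iB}b_i$ with the good function and dispatch it by $L^2$ theory, and they never invoke the cancellation $\int b_i=0$, since the factor $I-\e^{-t_iB}$ supplies the cancellation itself; your variant, which estimates both pieces off-diagonally and uses the mean-zero property for the remainder, can also be made to work.

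The one step that does not survive scrutiny is the closing claim that the $L^2\to L^2$ off-diagonal bounds for $\nabla\e^{-tB}$ can be promoted to a \emph{pointwise} kernel estimate for $T\e^{-tB}$. Coulhon--Sikora-type extrapolation upgrades Davies--Gaffney bounds to pointwise Gaussian bounds only in combination with on-diagonal $L^2\to L^\infty$ bounds for the operator in question, and $\nabla\e^{-tB}$ admits no such bounds for merely measurable coefficients: pointwise Gaussian control of $\nabla_{\mathrm x}K_t$ fails in general, which is precisely why the Riesz transform is only accessible for $q\le 2$ here. Fortunately your argument never needs a pointwise bound. What the summation over dyadic annuli actually requires is the integrated estimate $\sup_{\mathrm y}\int_{|\mathrm x-\mathrm y|\ge c\sqrt t}|k_t(\mathrm x,\mathrm y)|\,\dd\mathrm x\le C$ for the kernels $k_t$ of $T\e^{-tB}$ and of $T(I-\e^{-tB})$, and this does follow from what you already have: write $T\e^{-2tB}=(T\e^{-tB})\e^{-tB}$, apply the Gaffney bound for $T\e^{-tB}$ to the $L^2$ function $K_t(\cdot,\mathrm y)$, and combine Cauchy--Schwarz on annuli with \eqref{e-kernbound}. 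With that weakening the proof closes and coincides in substance with the published one.
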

%
\begin{proof}[Proof of Theorem~\ref{t-mainsect}]
According to Theorem~\ref{t-gausss} the semigroup kernels corresponding to
the operator $A_0$ satisfy the estimate~\eqref{5.5}. Thus, considering the 
operator $A_0 + \epsilon$ for some $\epsilon > 0$, the corresponding kernels
satisfy again \eqref{5.5}, but without the factor $\e^{\epsilon t}$ now.
Next, we verify that $B := A_0 + \epsilon$ and $W := H^{1,2}_\Gamma$ satisfy
the assumptions of Proposition~\ref{p-duongMcI}. By Proposition~\ref{p-basicl2}, $W = H^{1,2}_\Gamma$ is the domain for $(A_0 +
\epsilon)^{1/2}$, thus $\| \nabla \psi \|_{L^2} \le c \| (A_0 + \epsilon
)^{1/2} \psi \|_{L^2}$ holds for all $\psi \in W$. The invariance property of
$W$ under multiplication is ensured by Proposition~\ref{p-basicl2}. Concerning
the bound \eqref{e-kernbound}, it is easy to see that the resulting Gaussian
bounds from Theorem~\ref{t-gausss} are even much stronger, since the function
$r \mapsto (1 + r)^\beta \e^{- b r}$, $r \ge 0$, is bounded for every $\beta >
0$. All this shows that $(A_0 + \epsilon)^{-1/2} : L^q \to H^{1,q}$ is
continuous for $q \in \left] 1, 2 \right]$.

Writing
\[ A_0^{-1/2} = (A_0 + \epsilon)^{-1/2} (A_0 + \epsilon)^{1/2} A_0^{-1/2},
\]
the assertion 1. follows, if we know that $(A_0 + \epsilon)^{1/2} A_0^{-1/2} :
L^q \to L^q$ is continuous. In order to see this, choose $\epsilon$ so small
that Proposition~\ref{p-infcalc}~i) ensures a bounded $\mathcal H^\infty$-calculus on
$L^q$ for $A_0 - \epsilon$, and observe that the function $z \mapsto (z +
2\epsilon)^{1/2} (z + \epsilon)^{-1/2}$ is in $\mathcal H^\infty(\Sigma_\phi)$ for any
$\phi \in \left]0, \pi \right[$.

It remains to show 2. The first point makes clear that $A_0^{-1/2}$ maps $L^q$
continuously into $H^{1,q}$, thus one has only to verify the correct boundary
behavior of the images. If $f \in L^2 \hookrightarrow L^q$, then one has
$A_0^{-1/2}f \in H^{1,2}_\Gamma \hookrightarrow H^{1,q}_\Gamma$. Thus, the
assertion follows from 1. and the density of $L^2$ in $L^q$. 
\end{proof}
%
\begin{remark} \label{r-nottrue}
Theorem \ref{t-mainsect} is not true for other values of $q$ in general: If it were, then,
 due to the case $q\le 2$ and duality, $A_0^{-1/2} : H^{-1,q}_\Gamma \to L^q$ and
$A_0^{-1/2} : L^q \to H^{1,q}_\Gamma$ would be continuous for a $q>2$. But for
any $q>2$ one can find a coefficient function $\mu$ such that the
corresponding operator $A_0^{-1}$ does not map $\breve H^{-1,q}_\Gamma$ into
$H^{1,q}_\Gamma$, see \cite{mey, e/k/r/s, e/r/s}, see also
\cite{auschmem} and the references therein.
\end{remark}
%
It follows the proof of Theorem \ref{t-mainsectdown}. It will be deduced from
the subsequent deep result on divergence operators with Dirichlet boundary
conditions and some permanence principles.
%
\begin{proposition}[Auscher/Tchamitchian, \cite{ausch/tcha01}] \label{p-ausch/tcha}
Let $q \in \left] 1, \infty \right[$ and $\,\Omega$ be  a strongly Lipschitz
domain. Then the root of the operator $A_0$, combined with a homogeneous
Dirichlet boundary condition, maps $H^{1,q}_0(\Omega)$ continuously into
$L^q(\Omega)$.
\end{proposition}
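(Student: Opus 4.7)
The plan is to work outward from the self-adjoint $L^2$-case in three stages, relying on material already in place.

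\textbf{Stage 1 ($q = 2$).} Proposition~\ref{p-basicl2}~iii), applied to $A_0 + 1$ and then stripped of the shift via the bounded $\mathcal H^\infty$-calculus of Proposition~\ref{p-infcalc}, already shows that $H^{1,2}_0$ is the domain of $A_0^{1/2}$ on $L^2$. Hence $A_0^{1/2}: H^{1,2}_0 \to L^2$ is a topological isomorphism, giving the case $q = 2$ free of charge and, equivalently, the Kato estimate $\|A_0^{1/2} u\|_{L^2} \simeq \|\nabla u\|_{L^2}$.

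\textbf{Stage 2 ($1 < q < 2$).} Here I would exploit the subordination identity
\[
  A_0^{1/2} u = \frac{1}{\sqrt{\pi}} \int_0^\infty \e^{-tA_0} A_0 u \; t^{-1/2} \, \dd t, \quad u \in H^{1,2}_0 \cap H^{1,q}_0,
\]
and use $A_0 u = -\nabla \cdot \mu \nabla u$ together with the Dirichlet condition to move the divergence onto the semigroup kernel, representing $A_0^{1/2} u$ as the action of an integral kernel on the vector field $\mu \nabla u$. By Stage~1 the resulting operator is bounded from $L^2(\Omega;\R^d)$ to $L^2(\Omega)$. Combined with the Gaussian upper bound of Theorem~\ref{t-gausss}, this allows a Calderón--Zygmund decomposition of $\mu \nabla u$ at level $\lambda$ in the spirit of Proposition~\ref{p-duongMcI} (Duong--$\mathrm{M}^c$Intosh): the good part is controlled by the $L^2$ bound of Stage~1, and the bad part by off-diagonal $L^2$ estimates of $\sqrt{t}\, \nabla \e^{-tA_0}$ obtained via Davies-type perturbation of the diagonal Gaussian bound. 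This yields a weak-$(1,1)$ estimate of the form $|\{|A_0^{1/2} u| > \lambda\}| \lesssim \lambda^{-1} \|\nabla u\|_{L^1}$, and Marcinkiewicz interpolation with Stage~1 supplies the $L^q$-bound for $q \in \left] 1, 2 \right[$.

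\textbf{Stage 3 ($2 < q < \infty$).} For $q > 2$ I would argue by duality against the Riesz bound already at hand. For $u \in H^{1,q}_0 \hookrightarrow H^{1,2}_0$ and $v \in C_c^\infty(\Omega)$, set $w := A_0^{-1/2} v \in H^{1,2}_0$. Self-adjointness of $A_0^{1/2}$ on $L^2$, the identity $A_0^{1/2} v = -\nabla \cdot \mu \nabla w$, and integration by parts (both $u$ and $w$ vanish on $\partial \Omega$) give
\[
  \int_\Omega (A_0^{1/2} u) \overline v \, \dd \mathrm{x} = \int_\Omega u \, \overline{A_0^{1/2} v} \, \dd \mathrm{x} = \int_\Omega \nabla u \cdot \mu \, \overline{\nabla w} \, \dd \mathrm{x}.
\]
H\"older's inequality together with the Riesz bound $\|\nabla A_0^{-1/2} v\|_{L^{q'}} \lesssim \|v\|_{L^{q'}}$ from Proposition~\ref{p-duongMcI} at exponent $q' \in \left] 1, 2 \right[$ then yield $|\int_\Omega (A_0^{1/2} u)\overline v \, \dd \mathrm{x}| \lesssim \|\nabla u\|_{L^q} \|v\|_{L^{q'}}$. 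Taking the supremum over $v$ with $\|v\|_{L^{q'}} \le 1$, and invoking density of $C_c^\infty(\Omega)$ in $L^{q'}$, closes the range $q \in \left] 2, \infty \right[$.

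\textbf{Main obstacle.} Stage~1 comes essentially for free in the self-adjoint setting, and Stage~3 is clean once the Duong--$\mathrm{M}^c$Intosh bound is at hand. The real technical weight sits in Stage~2: one must establish H\"ormander-type regularity estimates for the kernel obtained after pulling the divergence onto $\e^{-tA_0}$ and integrating against $t^{-1/2} \, \dd t$, starting only from the diagonal Gaussian bound of Theorem~\ref{t-gausss} (no pointwise gradient Gaussian bounds are available under the weak coefficient assumptions here). The Dirichlet condition helps by making $\e^{-tA_0}$ preserve zero boundary values, so the integration by parts is harmless; the off-diagonal $L^2$ analysis of $\sqrt{t}\, \nabla \e^{-tA_0}$ needed to close the Calder\'on--Zygmund decomposition is the true technical heart of the Auscher--Tchamitchian argument.
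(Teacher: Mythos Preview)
The paper does not prove this proposition; it is quoted as an external result of Auscher and Tchamitchian and used as a black box (via Corollary~\ref{c-iso}) in the proof of Theorem~\ref{t-mainsectdown}. There is thus no ``paper's own proof'' to compare against.

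That said, your sketch is a sound outline of the argument in the self-adjoint setting of this paper (real symmetric $\mu$), where Stage~1 is indeed trivial. Stage~3 is clean: it reduces the range $q>2$ by duality to the Riesz bound $\nabla A_0^{-1/2}\in\mathcal L(L^{q'})$ for $q'\in\left]1,2\right[$, and you are right that this is exactly the content of Proposition~\ref{p-duongMcI} (equivalently, Theorem~\ref{t-mainsect} specialized to $\Gamma=\emptyset$). In other words, the $q>2$ half of Proposition~\ref{p-ausch/tcha} is, in the Dirichlet case, already a consequence of material the paper proves independently. The genuine content lies in Stage~2, and your identification of the obstacle is accurate: lacking pointwise gradient heat-kernel bounds under merely bounded measurable $\mu$, one runs a Calder\'on--Zygmund argument driven only by $L^2$ off-diagonal (Davies--Gaffney) estimates for $\sqrt{t}\,\nabla\e^{-tA_0}$, which is precisely the machinery of \cite{ausch/tcha01}. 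One caveat: the result in \cite{ausch/tcha01} is stated for complex, non-symmetric coefficients, where already Stage~1 is the Kato square root problem on domains and far from free; your sketch legitimately exploits the simplification coming from the paper's standing hypothesis of real symmetric $\mu$.
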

%
For further reference we mention the following immediate consequence of Theorem~\ref{t-mainsect}
and Proposition~\ref{p-ausch/tcha}.
%
\begin{corollary} \label{c-iso}
Under the hypotheses of Proposition \ref{p-ausch/tcha} the operator
$A_0^{-1/2}$ provides a topological isomorphism between $L^q$ and $H^{1,q}_0$,
if $q \in \left] 1, 2 \right]$.
\end{corollary}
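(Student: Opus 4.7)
The plan is to assemble the corollary directly from the two continuity statements already in place, bridging them by a density argument. Under the hypotheses of Proposition~\ref{p-ausch/tcha} one takes $\Gamma=\emptyset$, so $\partial\Omega\setminus\Gamma=\partial\Omega$ has positive boundary measure (since $\Omega$ is a strongly Lipschitz domain). Theorem~\ref{t-mainsect} then yields continuity of $A_0^{-1/2}:L^q\to H^{1,q}_0$ for every $q\in\left]1,2\right]$, while Proposition~\ref{p-ausch/tcha} delivers continuity of $A_0^{1/2}:H^{1,q}_0\to L^q$ for every $q\in\left]1,\infty\right[$. It remains only to check that these two continuous extensions invert each other.

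For this step I would start from the $L^2$ picture. Because the Dirichlet boundary part has positive measure, $A_0$ is strictly positive on $L^2$ (cf.\ the argument opening the proof of Proposition~\ref{p-infcalc}), so Proposition~\ref{p-basicl2}~iii), read in the limit $\lambda\downarrow 0$ justified by this strict positivity, identifies $H^{1,2}_0$ with $dom_{L^2}(A_0^{1/2})$; on this common domain $A_0^{1/2}$ and $A_0^{-1/2}$ are reciprocal bijections between $H^{1,2}_0$ and $L^2$ through the self-adjoint functional calculus. Any $\psi\in C^\infty_\Gamma(\Omega)$ (see~\eqref{e-defC} with $\Gamma=\emptyset$) lies in $H^{1,2}_0\cap H^{1,q}_0$, whence both compositions $A_0^{-1/2}\circ A_0^{1/2}$ and $A_0^{1/2}\circ A_0^{-1/2}$ return $\psi$. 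By the very definition of $H^{1,q}_0$ the set $C^\infty_\Gamma(\Omega)$ is dense in $H^{1,q}_0$, and it is plainly dense in $L^q$ as well; together with the continuity of the two extensions, this propagates the identities to all of $H^{1,q}_0$ and all of $L^q$. Consequently $A_0^{-1/2}:L^q\to H^{1,q}_0$ is a bijection with continuous inverse $A_0^{1/2}$, which is the claimed topological isomorphism.

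The step I would regard as the main — though genuinely mild — obstacle is the bookkeeping of target spaces when forming the compositions: one must verify that the range of $A_0^{-1/2}:L^q\to H^{1,q}_0$ really lies in the space on which $A_0^{1/2}$ is defined (and conversely), so that the compositions are meaningful before density is invoked. All the substantive analysis — Gaussian kernel bounds, the Duong/M\textsuperscript{c}Intosh Riesz transform estimate, and the Auscher/Tchamitchian square-root theorem — has already been absorbed into Theorem~\ref{t-mainsect} and Proposition~\ref{p-ausch/tcha}, which is why the statement is genuinely a corollary.
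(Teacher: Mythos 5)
Your proposal is correct and follows exactly the route the paper intends: the corollary is stated there as an immediate consequence of Theorem~\ref{t-mainsect} and Proposition~\ref{p-ausch/tcha}, and your density argument (using the $L^2$ functional calculus on $C^\infty_\Gamma(\Omega)$ and continuity of the two extensions) is the natural way to make the "immediate consequence" precise.
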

%
In view of Assumption~\ref{l-locmodel} it is a natural idea to reduce our
considerations to the three model constellations mentioned there. In order to
do so, we have to show that the assertion of Theorem~\ref{t-mainsectdown} is
invariant under volume-preserving bi-Lipschitz transformations of the domain.
%
\begin{proposition} \label{p-transform}
Assume that $\phi$ is a mapping from a neighborhood of $\overline \Omega$
into $\R^d$ that is additionally bi-Lipschitz. Let us denote $\phi(\Omega) =
\Omega_\vartriangle$ and $\phi(\Gamma) = \Gamma_\vartriangle$. Define for any
function $f \in L^1(\Omega_\vartriangle)$
\[ (\Phi f) (\mathrm x) = f(\phi(\mathrm x)) = (f\circ \phi) (\mathrm x), \quad
        \mathrm x \in \Omega.
\]
Then
\begin{enumerate}
\item The restriction of $\Phi$ to any $L^p(\Omega_\vartriangle)$, $1 \le p <
        \infty$, provides a linear, topological isomorphism between this space
        and $L^p(\Omega)$.
\item For any $p \in \left]1, \infty\right[$, the mapping $\Phi$ induces a
        linear, topological isomorphism
        \[ \Phi_p : H_{\Gamma_\vartriangle}^{1,p}(\Omega_\vartriangle) \to
                H^{1,p}_{\Gamma}(\Omega).
        \]
\item $\Phi_{p'}^*$ is a linear, topological isomorphism between
        $\breve H_\Gamma^{-1,p}(\Omega)$ and
        $\breve H^{-1,p}_{\Gamma_\vartriangle}(\Omega_\vartriangle)$ for any $p \in
	\left] 1, \infty \right[$.
\item One has
        \begin{equation}\label{e-transformat} 
          \Phi_{p'}^* A_0 \Phi_p = -\nabla \cdot \mu_\vartriangle \nabla
        \end{equation}
        with
        \begin{equation}\label{e-transf}
          \mu_\vartriangle (\mathrm{y}) =
                \frac{1}{\big| \det(D \phi) (\phi^{-1}({\mathrm{y})}) \big|}
                (D \phi) (\phi^{-1}(\mathrm{y})) \;
                \mu(\phi^{-1}(\mathrm{y})) \; \bigl( D \phi \bigr)^T
                (\phi^{-1} (\mathrm{y}))
        \end{equation}
        for almost all $\mathrm{y} \in \Omega_\vartriangle$. Here, $D\phi$
        denotes the Jacobian of $\phi$ and $\det(D\phi)$ the corresponding
        determinant.
\item \label{p-transform:enum-iv} $\mu_\vartriangle$ also is bounded, Lebesgue
        measurable, elliptic and takes real, symmetric matrices as values.
\item The restriction of $\Phi_{2}^* \Phi$  to $L^2(\Omega_\vartriangle)$
	equals the multiplication operator which is induced by the function
        $\big| \det (D\phi)(\phi^{-1}(\cdot)) \big|^{-1}$. Consequently,
        if $|\det(D\phi)| = 1$ a.e., then the restriction of $\Phi_{2}^*\Phi$
        to $L^2(\Omega_\vartriangle)$ is the identity operator on
        $L^2(\Omega_\vartriangle)$, or, equivalently,
        $(\Phi_{2}^*)^{-1}|_{L^2(\Omega_\vartriangle)} =
        \Phi|_{L^2(\Omega_\vartriangle)}$.
\end{enumerate}
\end{proposition}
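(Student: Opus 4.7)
The plan is to dispatch the six assertions in logical order, with parts (ii) and (iv) carrying the substantive content and the rest following by change of variables or duality. For (i), I would apply the classical change-of-variables formula $\int_\Omega |f\circ\phi|^p\,\dd\mathrm x = \int_{\Omega_\vartriangle} |f|^p\,|\det D\phi^{-1}|\,\dd\mathrm y$. Since $\phi$ is bi-Lipschitz, Rademacher's theorem gives $D\phi, D\phi^{-1} \in L^\infty$, and the bi-Lipschitz property forces $|\det D\phi|$ to be bounded away from $0$ and $\infty$; this yields a two-sided norm estimate, hence an isomorphism on every $L^p$.

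For (ii), the Sobolev chain rule for Lipschitz compositions gives $\nabla(u\circ\phi) = (D\phi)^T (\nabla u)\circ\phi$ a.e.\ for $u \in H^{1,p}(\Omega_\vartriangle)$; combined with (i) applied componentwise, this yields a bounded operator $H^{1,p}(\Omega_\vartriangle) \to H^{1,p}(\Omega)$, and the symmetric argument with $\phi^{-1}$ produces the inverse bound. I expect the main obstacle of the whole proof to be transporting the Dirichlet-type boundary condition through $\phi$, since $\phi$ is only Lipschitz and composition leaves $C^\infty$. My remedy is to fix $\tilde u \in C^\infty(\R^d)$ with $\supp\tilde u \cap (\partial\Omega_\vartriangle \setminus \Gamma_\vartriangle) = \emptyset$ and note, using that $\phi$ extends to a bi-Lipschitz homeomorphism on a neighborhood of $\overline\Omega$, that
\[
\supp(\tilde u \circ \phi) \cap (\partial\Omega \setminus \Gamma) = \phi^{-1}\bigl(\supp\tilde u \cap (\partial\Omega_\vartriangle \setminus \Gamma_\vartriangle)\bigr) = \emptyset.
\]
As the closed sets $\supp(\tilde u \circ \phi) \cap \overline\Omega$ and $\partial\Omega \setminus \Gamma$ are then disjoint and compact, they have positive distance, so mollifying $\tilde u \circ \phi$ with a sufficiently small parameter produces a sequence in $C^\infty_\Gamma(\Omega)$ converging in $H^{1,p}(\Omega)$. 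Hence $\tilde u \circ \phi|_\Omega \in H^{1,p}_\Gamma(\Omega)$, and density of $C^\infty_{\Gamma_\vartriangle}$ in $H^{1,p}_{\Gamma_\vartriangle}$ extends this to the whole space.

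Part (iii) is then pure duality: the anti-dual of a linear topological isomorphism is again one. For (iv), I would test $\Phi_{p'}^* A_0 \Phi_p u$ against $v \in H^{1,p'}_{\Gamma_\vartriangle}$, unfold the dual pairing via the form definition of $A_0$ to reach
\[
\int_\Omega \mu(\mathrm x)\,\nabla(u \circ \phi) \cdot \overline{\nabla(v \circ \phi)}\,\dd\mathrm x,
\]
and then use the chain rule, the identity $\mu (D\phi)^T\xi \cdot (D\phi)^T\eta = (D\phi)\mu(D\phi)^T\xi \cdot \eta$, and the substitution $\mathrm y = \phi(\mathrm x)$ (which contributes exactly the Jacobian factor $|\det D\phi(\phi^{-1}(\mathrm y))|^{-1}$) to arrive at $\int_{\Omega_\vartriangle} \mu_\vartriangle \nabla u \cdot \overline{\nabla v}\,\dd\mathrm y$ with $\mu_\vartriangle$ given by \eqref{e-transf}.

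For (v), boundedness, measurability and symmetry of $\mu_\vartriangle$ are immediate from its defining formula, while ellipticity follows from the chain of inequalities $\mu_\vartriangle(\mathrm y)\xi \cdot \xi \geq c\,|(D\phi)^T\xi|^2 \geq c'|\xi|^2$, the last step using that the Lipschitz inverse $\phi^{-1}$ forces $(D\phi)^T$ to have a uniformly bounded inverse. Finally, for (vi), pairing $\Phi_2^*\Phi u$ against $v \in L^2(\Omega_\vartriangle)$ and changing variables gives
\[
\langle \Phi_2^*\Phi u, v\rangle = \int_\Omega (u\circ\phi)\overline{(v\circ\phi)}\,\dd\mathrm x = \int_{\Omega_\vartriangle} \frac{u(\mathrm y)\overline{v(\mathrm y)}}{|\det D\phi(\phi^{-1}(\mathrm y))|}\,\dd\mathrm y,
\]
so that $\Phi_2^*\Phi$ acts as multiplication by $|\det D\phi(\phi^{-1}(\cdot))|^{-1}$, which reduces to the identity on $L^2(\Omega_\vartriangle)$ precisely when this Jacobian equals $1$ a.e.
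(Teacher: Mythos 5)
Your proof is correct and follows essentially the same route as the paper, which handles (i), (ii), (iv) and (v) by citation (Maz'ya, Griepentrog et al., and Haller-Dintelmann et al.) and carries out only (iii) by duality and (vi) by the same change-of-variables computation you give. The one genuinely delicate point — transporting the boundary condition in (ii), where composition with the merely Lipschitz $\phi$ destroys smoothness — is dealt with correctly by your mollification argument, using that $\phi$ maps $\partial\Omega\setminus\Gamma$ onto $\partial\Omega_\vartriangle\setminus\Gamma_\vartriangle$ and that disjoint compact sets have positive distance.
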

%
\begin{proof}
For i) see \cite[Ch.~1.1.7]{mazsob}. The proof of ii) is contained in \cite[Thm.~2.10)]{ggkr}
 and iii) follows from ii) by duality (see Remark \ref{r-antili}). Assertion iv) is well
 known, see \cite{hall} for an explicit verification, while v) is implied by \eqref{e-transf}
 and the fact that for a bi-Lipschitz mapping $\phi$ the Jacobian $D \phi$ and its inverse
 $\bigl (D \phi)^{-1}$ are essentially bounded (see \cite[Ch.~3.1]{ev/gar}).
We prove vi). For every $f \in L^2(\Omega_\vartriangle)$ and $g \in
H_{\Gamma_\vartriangle}^{1,2}(\Omega_\vartriangle)$ we calculate:
\begin{align*}
  \langle \Phi_{2}^* \Phi f, g
        \rangle_{\breve H_{\Gamma_\vartriangle}^{-1,2}(\Omega_\vartriangle)}
        &= \langle \Phi f, \Phi g \rangle_{\breve H_{\Gamma}^{-1,2}(\Omega)} =
        \langle  f \circ \phi, g \circ \phi
        \rangle_{\breve H_{\Gamma}^{-1,2}(\Omega)} = \int_\Omega f(\phi(\mathrm{x}))
        \overline g ( \phi (\mathrm{x})) \;\dd\mathrm{x} \\
  &= \int_{\Omega_\vartriangle} f(\mathrm{y}) \overline g (\mathrm{y})
        \frac{1}{\big| \det (D\phi)(\phi^{-1}{(\mathrm{y})})\big|}
        \;\dd\mathrm{y}.
\end{align*}
Thus, the anti-linear form $\Phi_{2}^*  \Phi f$ on
$H_{\Gamma_\vartriangle}^{1,2}(\Omega_\vartriangle)$ is represented by
$\big| \det (D\phi)(\phi^{-1}({\mathrm{\cdot}})) \big|^{-1} \in
L^\infty(\Omega_\vartriangle)$.
\end{proof}
%
\begin{lemma} \label{l-wurzel}
Let $p \in \left] 1, \infty \right[$. Suppose further that $\partial \Omega
\setminus \Gamma$ does not have boundary measure zero and that $|\det(D\phi)|
= 1$ almost everywhere in $\Omega$. Then, in the notation of the preceding
proposition, the operator $\bigl( -\nabla \cdot \mu_\vartriangle \nabla
\bigr)^{1/2}$ maps $H^{1,p}_{\Gamma_\vartriangle}(\Omega_\vartriangle)$
continuously into $L^p(\Omega_\vartriangle)$ if and only if $A_0^{1/2}$ maps
$H^{1,p}_{\Gamma}(\Omega)$ continuously into $L^p(\Omega)$.
\end{lemma}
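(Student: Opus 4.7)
The strategy is to deduce the lemma from the transformation formula in Proposition~\ref{p-transform}~iv) by observing that, in the volume-preserving case, the map $\Phi$ implements a similarity between $A_0$ and $-\nabla \cdot \mu_\vartriangle \nabla$ at the $L^2$ level, and that this similarity is inherited by the square roots. My first step is to combine parts iv) and vi) of Proposition~\ref{p-transform}. The identity $\Phi_{p'}^* A_0 \Phi_p = -\nabla \cdot \mu_\vartriangle \nabla$ is formulated between $H^{1,p}_{\Gamma_\vartriangle}$ and $\breve H^{-1,p}_{\Gamma_\vartriangle}$. Specializing to $p=2$ and using that $|\det D\phi|=1$ a.e. — so that $(\Phi_2^*)^{-1}|_{L^2(\Omega_\vartriangle)}=\Phi|_{L^2(\Omega_\vartriangle)}$ by vi) — I obtain, on the domain of the $L^2$-realization of $-\nabla \cdot \mu_\vartriangle \nabla$,
\begin{equation*}
   \Phi^{-1}\, A_0\, \Phi \;=\; -\nabla \cdot \mu_\vartriangle \nabla,
\end{equation*}
with both sides understood as non-negative self-adjoint operators on the respective $L^2$ spaces (Proposition~\ref{p-basicl2}~ii)). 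Under the current hypothesis on $\partial\Omega\setminus\Gamma$, both operators are moreover boundedly invertible, hence their square roots are well defined.

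Next I would transfer the similarity to the square roots. Since $\Phi$ and $\Phi^{-1}$ are $L^2$-isomorphisms, the identity above implies $R(\lambda, -\nabla \cdot \mu_\vartriangle \nabla) = \Phi^{-1} R(\lambda, A_0)\Phi$ for every $\lambda$ in the common resolvent set, and inserting this into the Dunford–Cauchy representation of $z^{1/2}$ yields
\begin{equation*}
   (-\nabla \cdot \mu_\vartriangle \nabla)^{1/2} \;=\; \Phi^{-1}\, A_0^{1/2}\, \Phi
\end{equation*}
as operators on $L^2(\Omega_\vartriangle)$. Crucially, $\Phi$ and $\Phi_p$ agree as point-wise operators (both are pullback by $\phi$), and they restrict/extend consistently between the scale $L^2\cap L^p$, $H^{1,2}_\Gamma \cap H^{1,p}_\Gamma$, etc.

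Now I would perform the $L^p$ argument. Assume that $A_0^{1/2}$ maps $H^{1,p}_\Gamma(\Omega)$ continuously into $L^p(\Omega)$. For $u$ in the dense subspace $H^{1,2}_{\Gamma_\vartriangle}\cap H^{1,p}_{\Gamma_\vartriangle}$ of $H^{1,p}_{\Gamma_\vartriangle}(\Omega_\vartriangle)$, the $L^2$ identity gives
\begin{equation*}
   (-\nabla \cdot \mu_\vartriangle \nabla)^{1/2} u \;=\; \Phi^{-1} \bigl( A_0^{1/2}\,\Phi_p u\bigr),
\end{equation*}
and the right-hand side is the composition of three bounded maps $H^{1,p}_{\Gamma_\vartriangle}\xrightarrow{\Phi_p} H^{1,p}_\Gamma \xrightarrow{A_0^{1/2}} L^p \xrightarrow{\Phi^{-1}} L^p(\Omega_\vartriangle)$, where the outer two are topological isomorphisms by Proposition~\ref{p-transform}~i),ii). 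Hence $(-\nabla \cdot \mu_\vartriangle \nabla)^{1/2}$ extends by density to a bounded operator $H^{1,p}_{\Gamma_\vartriangle}\to L^p(\Omega_\vartriangle)$, and by closedness this extension coincides with the $L^p$-realization of the square root. The converse implication follows by symmetry: since $|\det D\phi|=1$ a.e., the inverse mapping $\phi^{-1}$ is again bi-Lipschitz and volume-preserving, and swapping the roles of $\Omega$ and $\Omega_\vartriangle$ applies the same argument in reverse.

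The main technical nuisance — rather than a deep obstacle — will be keeping the various realizations straight: the formula $\Phi_{p'}^* A_0 \Phi_p = -\nabla \cdot \mu_\vartriangle \nabla$ lives in the dual setting, whereas the square root identity is natural on $L^2$. The volume-preserving hypothesis is exactly what is needed to glue these two pictures together via vi), and the $L^2$-to-$L^p$ extension relies on the consistency of the functional calculus of the self-adjoint $A_0$ with its $L^p$-closure, which is standard once the bounded $\mathcal H^\infty$-calculus of Proposition~\ref{p-infcalc}~i) is available.
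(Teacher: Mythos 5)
Your proposal is correct and follows essentially the same route as the paper: both pass the similarity $\Phi^{-1}A_0\Phi = -\nabla\cdot\mu_\vartriangle\nabla$ (obtained from Proposition~\ref{p-transform}~iv) and vi) via the volume-preserving hypothesis) through the resolvents into an integral representation of the square root, and then run the three-factor boundedness estimate on a dense subspace, with the converse by symmetry. The only cosmetic difference is that the paper works with the explicit formula $B^{-1/2}=\frac{1}{\pi}\int_0^\infty t^{-1/2}(B+t)^{-1}\,\dd t$ and inverts at the end, whereas you invoke the functional-calculus representation of $z^{1/2}$ directly; both are standard and equivalent here.
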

%
\begin{proof}
We will employ the formula
\begin{equation} \label{e-wurxel}
  B^{-1/2} = \frac{1}{\pi} \int_0^\infty t^{-1/2} (B + t)^{-1} \; \dd t,
\end{equation}
$B$ being a positive operator on a Banach space $X$, see
\cite[Ch.~1.14/1.15]{triebel} or \cite[Ch.~2.6]{pazy}. Obviously, the integral
converges in the $\mathcal L(X)$-norm.

It is clear that our hypotheses of $\partial \Omega \setminus \Gamma$ not
having boundary measure zero implies that $\partial \Omega_\vartriangle
\setminus \Gamma_\vartriangle$ also has positive boundary measure. Thus, both,
$A_0$ and $-\nabla \cdot \mu_\vartriangle \nabla$ do not have spectrum in zero
and are positive operators in the sense of \cite[Ch.~1.14]{triebel} on any
$L^p$ (see Proposition \ref{p-basicl2}). From \eqref{e-transformat} and vi) of the
 preceding proposition one deduces
\begin{equation} \label{e-transformat1}
  \Phi_{2}^* \bigl( A_0 + t \bigr) \Phi_2 = -\nabla \cdot \mu_\vartriangle
        \nabla + t
\end{equation}
for every $t > 0$. This leads to
\[ \Phi_{2}^{-1} \bigl( A_0 + t \bigr)^{-1} \bigl( \Phi_2^* \bigr)^{-1} =
        \bigl( -\nabla \cdot \mu_\vartriangle \nabla + t \bigr)^{-1}.
\]
Restricting this last equation to elements from $L^2(\Omega_\vartriangle)$ and
making once more use of vi) in Proposition~\ref{p-transform}, we get the following
operator equation on $L^2(\Omega_\vartriangle)$:
\[ \Phi^{-1} \bigl( A_0 + t \bigr)^{-1} \Phi|_{L^2(\Omega_\vartriangle)} =
        \bigl( -\nabla \cdot \mu_\vartriangle \nabla + t \bigr)^{-1}.
\]
Integrating this equation with weight $\frac{t^{-1/2}}{\pi}$, one obtains,
according to \eqref{e-wurxel},
\begin{equation}\label{e-transformat4} 
  \Phi^{-1} A_0^{-1/2} \Phi|_{L^2(\Omega_\vartriangle)} = \bigl( -\nabla \cdot
        \mu_\vartriangle \nabla \bigr)^{-1/2},
\end{equation} 
again as an operator equation on $L^2(\Omega_\vartriangle)$. We recall that
the operators $A_0^{-1/2} : L^2(\Omega) \to H^{1,2}_\Gamma(\Omega)$, $( -\nabla
\cdot \mu_\vartriangle \nabla )^{-1/2} : L^2(\Omega_\vartriangle) \to
H^{1,2}_{\Gamma_\vartriangle}(\Omega_\vartriangle)$, $\Phi_2 :
H_{\Gamma_\vartriangle}^{1,2}(\Omega_\vartriangle) \to
H^{1,2}_{\Gamma}(\Omega)$ and $\Phi : L^2(\Omega_\vartriangle) \to
L^2(\Omega)$ all are topological isomorphisms. In particular, for any $f \in
L^2(\Omega_\vartriangle)$ the element $A_0^{-1/2} \Phi f$ is from
$H^{1,2}_\Gamma(\Omega)$. Thus, we may write \eqref{e-transformat4} as
\begin{equation}\label{e-transformat04} 
  \Phi^{-1}_2 A_0^{-1/2} \Phi|_{L^2(\Omega_\vartriangle)} = \bigl( -\nabla
        \cdot \mu_\vartriangle \nabla \bigr)^{-1/2}
\end{equation} 
and afterwards invert \eqref{e-transformat04}. We get the following operator
equation on $H_{\Gamma_\vartriangle}^{1,2}(\Omega_\vartriangle)$:
\[ \Phi^{-1} A_0^{1/2} \Phi_2 = \bigl( -\nabla \cdot \mu_\vartriangle \nabla
        \bigr )^{1/2}.
\]
In the sequel we make use of the fact that $\Phi_p :
H_{\Gamma_\vartriangle}^{1,p}(\Omega_\vartriangle) \to
H^{1,p}_{\Gamma}(\Omega)$ and $\Phi : L^p(\Omega_\vartriangle) \to
L^p(\Omega)$ are topological isomorphisms for all $p \in \left] 1, \infty
\right[$. Thus, first considering the case $p \in \left] 1, 2 \right[$ and
assuming that $A_0^{1/2}$ maps $H^{1,p}_\Gamma(\Omega)$ continuously into
$L^p(\Omega)$, we may estimate for all $\psi \in
H^{1,2}_{\Gamma_\vartriangle}(\Omega_\vartriangle)$
\begin{align} \label{e-estim0}
  \| \bigl( -\nabla \cdot \mu_\vartriangle \nabla \bigr)^{1/2} \psi
        \|_{L^p(\Omega_\vartriangle)} &= \| \Phi^{-1} A_0^{1/2} \Phi_2 \psi
        \|_{L^p(\Omega_\vartriangle)} \\
  &\le \| \Phi_{p}^{-1} \|_{\mathcal L(L^p(\Omega); L^p(\Omega_\vartriangle))}
        \| A_0^{1/2} \|_{\mathcal L(H^{1,p}_{\Gamma}(\Omega);L^p(\Omega))} \|
        \Phi_{2} \psi \|_{H_{\Gamma}^{1,p}(\Omega)}. \nonumber
\end{align}
Observing that $\Phi_2$ is only the restriction of $\Phi_p$, one may estimate
the last factor in \eqref{e-estim0}:
\begin{equation} \label{e-lastfac}
  \| \Phi_2 \psi \|_{H_{\Gamma}^{1,p}(\Omega)} \le \| \Phi_{p}
        \|_{\mathcal L(H_{\Gamma_\vartriangle}^{1,p}(\Omega_\vartriangle);
        H^{1,p}_{\Gamma}(\Omega))} \| \psi
        \|_{H_{\Gamma_\vartriangle}^{1,p}(\Omega_\vartriangle)}.
\end{equation}
This means that $( -\nabla \cdot \mu_\vartriangle \nabla )^{1/2}$ maps 
$H_{\Gamma_\vartriangle}^{1,2}(\Omega_\vartriangle)$, equipped with the induced 
$H_{\Gamma_\vartriangle}^{1,p}(\Omega_\vartriangle)$-norm, continuously into
 $L^p(\Omega_\vartriangle)$ and, consequently, extends to a continuous mapping from
 the whole $H_{\Gamma_\vartriangle}^{1,p}(\Omega_\vartriangle)$ into $L^p(\Omega_\vartriangle)$
 by density.

If $p \in \left] 2, \infty \right[$, one has the same estimates
\eqref{e-estim0} and \eqref{e-lastfac}, in this case only for elements $\psi \in
H^{1,p}_{\Gamma_\vartriangle}(\Omega_\vartriangle) \subseteq
H^{1,2}_{\Gamma_\vartriangle}(\Omega_\vartriangle)$.

Finally, the equivalence stated in the assertion follows by simply
interchanging the roles of $\mu$ and $\mu_\vartriangle$.
\end{proof}
%
\begin{remark} \label{r-volinv}
It is the property of 'volume-preserving' which leads, due to vi) of
Proposition~\ref{p-transform}, to \eqref{e-transformat1} and then to
\eqref{e-transformat4} and thus allows to hide the complicated geometry of
the boundary in $\Phi$ and $\mu_\vartriangle$.

It turns out that 'bi-Lipschitz' together with 'volume-preserving' is
not a too restrictive condition. In particular, there are such mappings --
although not easy to construct -- which map the ball onto the cylinder, the
ball onto the cube and the ball onto the half ball, see \cite{ghkr}, see also
\cite{fonse}. The general message is that this class has enough flexibility
 to map 'non-smooth objects' onto smooth ones.
\end{remark}
%
Lemma \ref{l-wurzel} allows to reduce the proof of Theorem \ref{t-mainsectdown} to
$\Omega = \alpha K_-$ and the three cases $\Gamma = \emptyset$, $\Gamma =
\alpha \Sigma$ or $\Gamma = \alpha \Sigma_0$. The first case, $\Gamma =
\emptyset$, is already contained in Proposition \ref{p-ausch/tcha}. In order
to treat the second one, we will use a reflection argument.

To this end we define for any $\mathrm{x} = (x_1, \dots, x_d) \in \R^d$ the
symbol $\mathrm{x}_- := (x_1, \dots, x_{d-1}, - x_d)$ and for a $d \times d$
matrix $\omega$, the matrix $\omega^-$ by
\[ \omega^-_{j,k} := \begin{cases}
                \omega_{j,k}, & \text{if } j,k < d,\\
                -\omega_{j,k}, & \text{if } j = d \text{ and } k \neq d
                        \text{ or } k = d \text{ and } j\neq d,\\
                \omega_{j,k} , & \text{if } j = k = d.
        \end{cases}
\]
Corresponding to the coefficient function $\mu$ on $K_-$, we then define the
coefficient function $\hat \mu$ on $K$ by
\[ \hat \mu (\mathrm{x}) := \begin{cases}
                \mu(\mathrm{x}), &\text{if } \mathrm{x} \in K_-, \\
                \bigl( \mu( \mathrm{x}_-)\bigr )^-, &\text{if } \mathrm{x_-}
                        \in K_-,\\
                0, & \text{if }  \mathrm{x} \in \Sigma.
        \end{cases}
\]
Finally, we define for $\varphi \in L^1(K)$ the reflected function $\varphi_-$
by $\varphi_-(x) = \varphi(x_-)$ and, using this, the extension and
restriction operators
\begin{align*}
  \mathfrak{E} &: L^1(K_-) \to L^1(K), &(\mathfrak{E} f) (x) &=
	\begin{cases}
	 f(x), \quad & \text{if } x \in K_-,\\
	 f(x_-), \quad & \text{if } x_- \in K_-,
	\end{cases} \\
  \mathfrak{S} &: \breve H^{-1,2}_\Sigma(K_-) \to \breve H^{-1,2}(K), &\left\langle
	\mathfrak{S} f, \varphi \right\rangle_{\breve H^{-1,2}(K)} &= \left\langle
	f, \varphi|_{K_-} + \varphi_-|_{K_-}
	\right\rangle_{\breve H^{-1,2}_\Sigma(K_-)}, \\
  \mathfrak{R} &: L^1(K) \to L^1(K_-), &\mathfrak{R} f &= f|_{K_-}.
\end{align*}
%
%
\begin{proposition}\label{p-spiegel}
\begin{enumerate}
\item If $\psi \in H^{1,2}_\Sigma(K_-)$ satisfies $A_0 \psi = f \in
        \breve H^{-1,2}_\Sigma(K_-)$, then
        \[ -\nabla \cdot \hat \mu \nabla \mathfrak{E} \psi = \mathfrak{S} f \in
		\breve H^{-1,2}(K).
        \]
\item \label{p-spiegel:enum-ii} The
	operator $\mathfrak{S} : \breve H_\Sigma^{-1,2}(K_-) \to \breve H^{-1,2}(K)$ is continuous.
\end{enumerate}
\end{proposition}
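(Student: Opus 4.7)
My plan is to handle (ii) first, since (i) will naturally use the same map in reverse.

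\textbf{Part (ii).} The key observation is that $\mathfrak{S}$ is, up to the natural antilinear identification, the transpose of the map $R: \varphi \mapsto \varphi|_{K_-} + \varphi_-|_{K_-}$ acting on the predual of $\breve H^{-1,2}(K)$. I would prove continuity of $R$ into $H^{1,2}_\Sigma(K_-)$. For $R(\varphi)$ to vanish on the sides and bottom of $K_-$, the function $\varphi$ must vanish on all of $\partial K$; consequently "$\breve H^{-1,2}(K)$" here is to be read as $(H^{1,2}_0(K))'$. For $\varphi \in H^{1,2}_0(K)$, the restriction $\varphi|_{K_-}$ has trace zero on the sides and bottom of $K_-$ immediately, while $\varphi_-|_{K_-}$ inherits zero trace on those faces because the reflection $x \mapsto x_-$ carries them to the sides and top of $K$, where $\varphi$ vanishes. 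Since $x\mapsto x_-$ is an isometric bi-Lipschitz map, $\|\varphi_-|_{K_-}\|_{H^{1,2}(K_-)}=\|\varphi|_{K_+}\|_{H^{1,2}(K_+)}$, and so $R$ is continuous with operator norm at most $2$. Dualizing yields continuity of $\mathfrak{S}$.

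\textbf{Part (i).} First, $\mathfrak{E}\psi \in H^{1,2}_0(K)$: symmetric reflection across the interior hyperplane $\{x_d=0\}$ preserves $H^{1,2}$-regularity (the traces from the two sides of $\Sigma$ coincide by construction), and the vanishing of $\psi$ on the sides and bottom of $K_-$ yields vanishing of $\mathfrak{E}\psi$ on all of $\partial K$. Hence $-\nabla\cdot\hat\mu\nabla\mathfrak{E}\psi$ makes sense as an element of $(H^{1,2}_0(K))'$, tested against $\varphi \in H^{1,2}_0(K)$. I would split $\int_K \hat\mu\nabla\mathfrak{E}\psi\cdot\nabla\bar\varphi\,\mathrm{d}x$ into integrals over $K_-$ and $K_+$ and, in the $K_+$ piece, substitute $y=x_-$. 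The chain rule gives $(\nabla\mathfrak{E}\psi)(x_-) = (\nabla\psi(x))^{\star}$, where $v^\star:=(v_1,\dots,v_{d-1},-v_d)$ denotes negation of the last component; similarly $((\nabla\bar\varphi)(x_-))^{\star} = \nabla[\bar\varphi(x_-)] = \nabla\overline{\varphi_-|_{K_-}}(x)$. Combined with $\hat\mu(x_-)=(\mu(x))^-$, the pointwise integrand becomes $(\mu(x))^{-}(\nabla\psi(x))^{\star}\cdot(\nabla\bar\varphi)(x_-)$. The definition of $\omega^-$ has been tailored precisely so that the elementary identity
\[
   (M^- v^{\star})\cdot w \;=\; (Mv)\cdot w^{\star}
\]
holds for every matrix $M$ and vectors $v,w$. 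Applying it reduces the $K_+$ integral to $\int_{K_-}\mu\nabla\psi\cdot\nabla\overline{\varphi_-|_{K_-}}\,\mathrm{d}x$. Summing with the $K_-$ piece gives $\langle A_0\psi,R(\varphi)\rangle_{\breve H^{-1,2}_\Sigma(K_-)} = \langle f,R(\varphi)\rangle$, which by definition of $\mathfrak{S}$ equals $\langle\mathfrak{S}f,\varphi\rangle_{\breve H^{-1,2}(K)}$.

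The main obstacle is not depth but bookkeeping: verifying the matrix identity $(M^- v^{\star})\cdot w = (Mv)\cdot w^{\star}$ requires tracking how the mixed sign changes in $M^-$ (negative only when exactly one of the indices equals $d$) combine with the sign flips in $v^{\star}$ and $w^{\star}$. Once this identity and the chain-rule computation for the reflected gradient are in hand, everything else is a direct change of variables, and no distributional contribution appears across $\Sigma$ because $\mathfrak{E}\psi$ is globally $H^{1,2}$ and $\Sigma$ has $d$-dimensional measure zero.
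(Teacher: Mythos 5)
Your proposal is correct and follows essentially the same route as the paper: part (ii) is exactly the paper's argument ($\mathfrak{S}$ as the adjoint of $H^{1,2}_0(K)\ni\varphi\mapsto\varphi|_{K_-}+\varphi_-|_{K_-}$), and part (i) carries out in detail the "straightforward calculation" the paper refers to, namely $\mathfrak{E}\psi\in H^{1,2}_0(K)$ plus the change of variables $\mathrm{x}\mapsto\mathrm{x}_-$, for which your identity $(M^-v^\star)\cdot w=(Mv)\cdot w^\star$ is precisely the transformation formula of Proposition~\ref{p-transform}~iv) specialized to the reflection. The bookkeeping checks out.
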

%
\begin{proof}
\begin{enumerate}
\item It is known that $\mathfrak{E}\psi$ belongs to
	$H^{1,2}_0(K)$, see \cite[Lemma~3.4]{giusti}. Thus, the assertion is
	obtained by the
        definitions of $\mathfrak{E}\psi$, $\mathfrak{S}f$, $A_0$, $-\nabla
	\cdot \hat \mu \nabla$ and straightforward calculations, based on
        Proposition~\ref{p-transform} when applied to the transformation
        $\mathrm {x} \mapsto \mathrm {x}_-$.
\item The operator under consideration is the adjoint of $ H^{1,2}_0(K) \ni \varphi \mapsto
        (\varphi|_{K_-} + \varphi_-|_{K_-}) \in  H_\Sigma^{1,2}(K_-)$.
        \qedhere
\end{enumerate}
\end{proof}
We are now in the position to prove Theorem \ref{t-mainsectdown} for the case
$\Gamma = \alpha \Sigma$. Up to a homothety we may focus on the case $\alpha
= 1$. First, we note that for any function $\varphi \in L^2(K_-)$ one finds
$\mathfrak E \varphi = \mathfrak S \varphi$, where we identified the functions
$\varphi$ and $\mathfrak E \varphi$ with the corresponding regular
distributions. Thus, one obtains from Proposition~\ref{p-spiegel}~i) that
$\bigl(A_0 + t \bigr)\psi = f \in \breve H^{-1,2}_\Sigma(K_-)$ implies
\[ \bigl(-\nabla \cdot \hat \mu \nabla + t \bigr) \mathfrak E \psi =
	\mathfrak S f,
\]
or, equivalently,
\[ \mathfrak E \psi = \bigl( -\nabla \cdot \hat \mu \nabla + t \bigr)^{-1}
        \mathfrak S f
\]
for every $t \in \left[ 0, \infty \right[$. Expressing $\psi = \bigl(A_0 + t
\bigr)^{-1} f$, this yields
\[ \mathfrak E \bigl(A_0 + t \bigr)^{-1} f = \bigl( -\nabla \cdot \hat \mu
        \nabla + t \bigr)^{-1} \mathfrak S f.
\]
Multiplying this by $\frac{t^{-1/2}}{\pi}$ and integrating over $t$, one
obtains in accordance with \eqref{e-wurxel}
\begin{equation} \label{e-wurstel}
  \mathfrak E A_0^{-1/2}f = \bigl( -\nabla \cdot \hat \mu \nabla \bigr)^{-1/2}
        \mathfrak S f , \quad f \in \breve H^{-1,2}_\Sigma(K_-).
\end{equation}
Applying the restriction operator $\mathfrak R$ to both sides of \eqref{e-wurstel}, we get
\begin{equation} \label{e-wurstel028}  
A_0^{-1/2}f = \mathfrak R \bigl( -\nabla \cdot \hat \mu \nabla \bigr)^{-1/2}
        \mathfrak S f, \quad f \in \breve H^{-1,2}_\Sigma(K_-).
\end{equation}
Considering in particular elements $ f \in L^2(K_-)$ and taking for these into account
$\mathfrak E f = \mathfrak S f$, \eqref{e-wurstel028} implies
\begin{equation} \label{e-wurstel05}  
A_0^{-1/2}f = \mathfrak R \bigl( -\nabla \cdot \hat \mu \nabla \bigr)^{-1/2}
        \mathfrak E f, \quad f \in L^2(K_-).
\end{equation}
Since both operators $- A_0$ and $\nabla \cdot \hat \mu \nabla$
generate contraction semigroups on any $L^p$, and $0$ does not belong to the
spectrum for both of them, the operators $A_0^{-1/2}$ and 
$\bigl( -\nabla \cdot \hat \mu \nabla \bigr)^{-1/2}$ are bounded also on $L^p(K_-)$
 and $L^p(K)$, respectively. 
Hence, \eqref{e-wurstel05} remains true for any $f \in L^p(K_-)$ with $p \in \left]1, 2 \right[$.
Now, on one hand it is clear that $\mathfrak E (L^p(K_-))$ equals the symmetric
part of $L^p(K)$, i.e. the set of functions which satisfy $\varphi =
\varphi_-$. Using the definition of the coefficient function $\hat \mu$ and
formula \eqref{e-transformat}, one recognizes that the resolvent of $-\nabla
\cdot \hat \mu \nabla$ commutes with the mapping $\varphi \mapsto \varphi_-$.
Again exploiting formula \eqref{e-wurxel}, this shows that $( -\nabla
\cdot \hat \mu \nabla )^{-1/2}$ also commutes with the mapping $\varphi
\mapsto \varphi_-$. Thus, $\bigl( -\nabla \cdot \hat \mu \nabla \bigr)^{-1/2}$
maps the set of symmetric functions, satisfying $\varphi = \varphi_-$, into
itself and also the set of antisymmetric functions, satisfying $\varphi =
-\varphi_-$. Consequently, $\bigl( -\nabla \cdot \hat \mu \nabla \bigr)^{-1/2}
\mathfrak E (L^p(K_-))$ must equal the symmetric part of $H^{1,p}_0(K)$ because
$\bigl( -\nabla \cdot \hat \mu \nabla \bigr)^{-1/2}$ is a surjection onto the
whole $H^{1,p}_0(K)$ by Corollary \ref{c-iso}. But, it is known (see
\cite[Thm.~3.10]{giusti}) that for any given function $h \in
H^{1,p}_\Sigma(K_-)$ the symmetric extension belongs to $H^{1,p}_0(K)$. Thus
$\mathfrak R \bigl( -\nabla \cdot \hat \mu \nabla \bigr)^{-1/2} \mathfrak E =
A_0^{-1/2}$ is a surjection onto $H^{1,p}_\Sigma(K_-)$. Since, by
Theorem~\ref{t-mainsect} $A_0^{-1/2} : L^p(K_-) \to H^{1,p}_\Sigma(K_-)$ is
continuous, the continuity of the inverse is implied by the open mapping
theorem.

\bigskip

In order to prove the same for the third model constellation, i.e. $\Gamma = \Sigma_0$, we show
%
\begin{lemma} \label{l-umform}
For every $\alpha >0$ there is a volume-preserving, bi-Lipschitz mapping $\phi
: \R^d \to \R^d$ that maps $\alpha (K_- \cup \Sigma_0)$ onto $\alpha(K_- \cup
\Sigma)$.
\end{lemma}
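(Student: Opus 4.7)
By the dilation $\mathrm{y} \mapsto \mathrm{y}/\alpha$ it suffices to prove the lemma for $\alpha = 1$, i.e.\ to produce a bi-Lipschitz volume-preserving $\phi : \R^d \to \R^d$ with $\phi(K_- \cup \Sigma_0) = K_- \cup \Sigma$. The plan is to reduce the problem to the plane and then construct an explicit planar map. Setting
\[
\phi(x_1, \ldots, x_d) := (x_1, \ldots, x_{d-2}, \psi(x_{d-1}, x_d))
\]
for a planar map $\psi: \R^2 \to \R^2$, the Jacobian $D\phi$ is block-diagonal with identity block in the first $d-2$ coordinates and $D\psi$ in the last two, so $|\det D\phi| = |\det D\psi|$ and $\phi$ is bi-Lipschitz precisely when $\psi$ is. Moreover $K_- \cup \Sigma_0$ and $K_- \cup \Sigma$ factor as $(-1,1)^{d-2} \times r$ and $(-1,1)^{d-2} \times r'$, where
\[
r := (-1,1) \times (-1, 0) \cup (-1, 0) \times \{0\}, \qquad r' := (-1, 1) \times (-1, 0) \cup (-1, 1) \times \{0\},
\]
so it is enough to produce a bi-Lipschitz, area-preserving $\psi: \R^2 \to \R^2$ with $\psi(r) = r'$.

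Because $r$ and $r'$ share the interior $R := (-1, 1) \times (-1, 0)$ and the closure $\overline R$, any homeomorphism $\psi$ of $\R^2$ with $\psi(r) = r'$ must satisfy $\psi(R) = R$ and $\psi(\sigma_0) = \sigma$, where $\sigma_0 := (-1, 0) \times \{0\}$ and $\sigma := (-1, 1) \times \{0\}$; hence the induced boundary action must double the length of $\sigma_0$ and compress the remainder of $\partial R$ by the compensating factor. I would build $\psi$ by finitely triangulating a neighborhood of $\overline R$ and defining $\psi$ piecewise-affinely, with each affine piece sending its triangle onto a triangle of equal area (so that its Jacobian determinant is $\pm 1$) and with matching on shared edges yielding global bi-Lipschitz regularity. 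Outside a slightly larger neighborhood of $\overline R$ I would set $\psi$ equal to the identity, using an annular collar of triangles around $\overline R$ to interpolate piecewise-affinely between the identity on its outer boundary and the non-trivial action on $\partial R$, again preserving area on each piece. A canonical inner piece is the triangle with vertices $(-1, 0), (0, 0), (-1, -1)$ of area $1/2$, sent by $(x_1, x_2) \mapsto (2x_1 + 1, x_2/2)$ to the equal-area triangle $(-1, 0), (1, 0), (-1, -1/2)$: this map has determinant $1$ and carries $\sigma_0$ bijectively onto $\sigma$.

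The principal difficulty is the bookkeeping required to meet all three constraints simultaneously: unit-determinant on every affine piece (forcing equal source/target areas), bi-Lipschitz gluing across every interior cut, and the correct global boundary correspondence — i.e.\ the length-doubling on $\sigma_0$, the induced compression on $\partial R \setminus \sigma_0$, and identity matching along the outer boundary of the collar. Once the first triangle incident to $\sigma_0$ is fixed as above, the remaining area-$3/2$ region inside $\overline R$ together with the collar outside it must be triangulated and mapped piece by piece with compatible vertex identifications, reducing the construction to a finite and explicit combinatorial matching problem.
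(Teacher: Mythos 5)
Your reduction to the planar case via $\phi(x_1,\dots,x_d) = (x_1,\dots,x_{d-2},\psi(x_{d-1},x_d))$ and the dilation to $\alpha = 1$ are exactly right and match the paper. The problem is that after this reduction you do not actually construct $\psi$: apart from one sample affine piece (the triangle on $\overline{\sigma_0}$ sent to the triangle on $\overline{\sigma}$ by $(x_1,x_2)\mapsto(2x_1+1,x_2/2)$), the entire construction is deferred to ``a finite and explicit combinatorial matching problem.'' That matching problem \emph{is} the content of the lemma. It is not automatic that a collection of affine maps, each carrying a source triangle onto an equal-area target triangle, assembles into a globally injective, bi-Lipschitz, measure-preserving homeomorphism of $\R^2$: you must exhibit compatible triangulations of source and target (and of the annular collar, if you insist on the identity outside a neighborhood) with consistent vertex correspondences, matching edge maps, and target triangles that tile without overlap in the correct combinatorial order. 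None of this is done, and your own text flags it as ``the principal difficulty.'' So the proposal is a plausible plan, not a proof.

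For comparison, the paper sidesteps the triangulation bookkeeping entirely by composing a small number of \emph{globally defined} piecewise-linear maps: a four-sector shear $\rho_1$ of the lower half-plane (reflected to the upper half-plane) that turns $K_-\cup\Sigma_0$ into a triangle, a two-sector shear $\rho_2$, a rotation $\rho_3$, and a diagonal affine map $\rho_4$, each given by explicit formulas with Jacobian determinant of modulus one and each visibly continuous across the cut lines, so that bi-Lipschitz continuity and volume preservation are immediate. If you want to salvage your approach, the cleanest fix is to replace the open-ended triangulation scheme by such an explicit composition of piecewise-linear shears; otherwise you must actually write down the full triangulation, the affine map on every piece, and verify the gluing and global injectivity.
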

%
\begin{proof}
Up to a homothety we may focus on the case $\alpha =1$. Let us first consider
the case $d = 2$. We define on the lower halfspace $\{ (x,y) \in \R^2 \with y \le 0 \}$
\[ \rho_1(x,y) :=
        \begin{cases} 
                (x-y/2,y/2),  \; &\text{if } x \le 0, \ y \ge x, \\
                (x/2,-x/2+y), \; &\text{if } x \le 0, \ y < x,\\
                (x/2,x/2+y),  \; &\text{if } x > 0, \ y < -x,\\
                (x+y/2,y/2),  \; &\text{if } x > 0, \ y \ge -x.
        \end{cases}
\]
Observing that $\rho_1$ acts as the identity on the $x$-axis, we may define
$\rho_1$ on the upper half space $\{(x,y) \in \R^2 \with y > 0 \}$ by $\rho_1(x,y) =
(x_0, -y_0)$ with $(x_0,y_0) = \rho_1(x, -y)$. In this way we obtain a globally
bi-Lipschitz transformation $\rho_1$ from $\R^2$ onto itself that transforms
$K_- \cup \Sigma_0$ onto the triangle shown in \figref{fig-nachrho1}.

\begin{figure}[htbp]
  \includegraphics[scale=0.3]{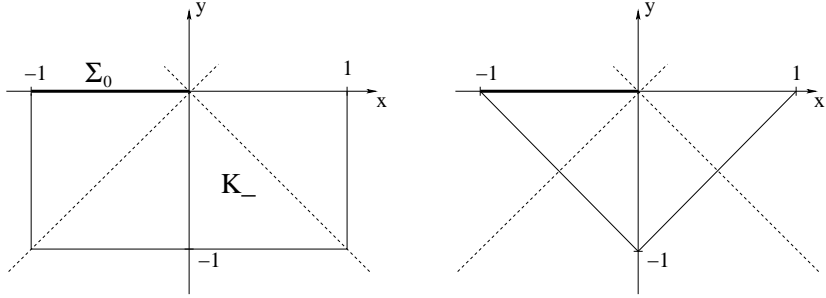}
  \caption{\label{fig-nachrho1} $K_- \cup \Sigma_0$ and $\rho_1(K_- \cup
        \Sigma_0)$}
\end{figure}

Next we define the bi-Lipschitz mapping $\rho_2 : \R^2 \to \R^2$ by
\[ \rho_2(x,y) :=
        \begin{cases}
                (x,x+2y+1)  , \; &\text{if } x \le 0, \\
                (x,-x+2y+1) , \; &\text{if } x > 0,
        \end{cases}
\]
in order to get the geometric constellation in \figref{fig-nachrho2}.

\begin{figure}[htbp]
  \centerline{\includegraphics[scale=0.28]{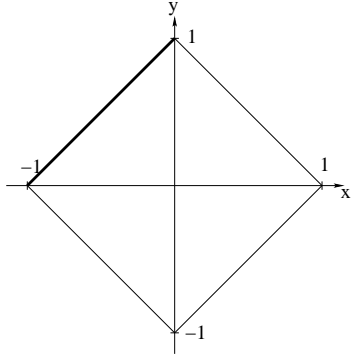}}
  \caption{\label{fig-nachrho2} $\rho_2(\rho_1(K_- \cup \Sigma_0))$}
\end{figure}

If $\rho_3$ is the (clockwise) rotation of $\pi/4$, we thus achieved that
$\rho := \rho_3 \rho_2 \rho_1 : \R^2 \to \R^2$ is bi-Lipschitzian and satisfies
\[ \rho(K_- \cup \Sigma_0) = \Bigl\{ (x,y) \in \R^2 \with -\frac{1}{\sqrt 2} < x <
        \frac{1}{\sqrt 2},\ - \frac{1}{\sqrt 2} < y \le \frac{1}{\sqrt 2}
        \Bigr\}.
\]
Let $\rho_4 : \R^2 \to \R^2$ be the affine mapping $(x,y) \mapsto (\sqrt 2 x,
\frac{1}{\sqrt 2 } y - \frac {1}{2})$. Then $\phi = \phi_2 := \rho_4 \rho$  maps $K_-
\cup \Sigma_0$ bi-Lipschitzian onto $ K_- \cup \Sigma$ in the $2$-$d$ case. As
is easy to check, the modulus of the determinant of the Jacobian is identically
one a.e. Hence, $\phi_2$ is volume-preserving.

If $d \ge 3$, one simply puts $\phi(x_1, \dots, x_d) := (x_1,\dots, x_{d-2},
\phi_2(x_{d-1}, x_d))$.
\end{proof}
Thus, the proof of Theorem~\ref{t-mainsectdown} in the case $\Gamma = \alpha
\Sigma_0$ results from the case $\Gamma = \alpha \Sigma$, Lemma~\ref{l-wurzel}
and Lemma~\ref{l-umform}.
%
\begin{remark} \label{r-missfort}
Let us mention that Lemma~\ref{l-wurzel}, only applied to $\Omega =K$ and $\Gamma =
\emptyset$ (the pure Dirichlet case) already provides a zoo of geometries
which is not covered by \cite{ausch/tcha01}. Notice in this context that the
image of a strongly Lipschitz domain under a bi-Lipschitz transformation needs
not to be a strongly Lipschitz domain at all, cf.
Subsection~\ref{subsec-balks}, see also \cite[Ch.~1.2]{grisvard85}.
\end{remark}
%
%
%
%
%
%
\section{Maximal parabolic regularity for $A$}
\label{sec-Consequences}
%
%
%
%
In this section we intend to prove the first main result of this work
announced in the introduction.
Let us first recall the notion of maximal parabolic $L^s$ regularity.
%
\begin{definition} \label{d-maxreg}
Let $1 < s < \infty$, let $X$ be a Banach space and let $J := \left]T_0, T \right[ 
\subseteq \R$ be a bounded interval. Assume that $B$ is a closed
operator in $X$ with dense domain $ D$ (in the sequel always equipped with the graph norm).
We say that $B$ satisfies \emph {maximal parabolic $L^s(J; X)$ regularity}, if for any
$f\in L^s(J; X)$ there exists a unique function $u \in W^{1,s}(J; X) \cap
L^s(J; D)$ satisfying
\[  u' + Bu = f,\quad \quad u(T_0) = 0,
\]
where the time derivative is taken in the sense of $X$-valued distributions on
$J$ (see \cite[Ch~III.1]{amannbuch}).
\end{definition}
%
%
\begin{remark} \label{r-ebed}
\begin{enumerate}
\item It is well known that the property of maximal parabolic regularity of
        an operator $B$ is independent of $s \in \left] 1, \infty \right[$ and
        the specific choice of the interval $J$ (cf. \cite{dore}). Thus, in the
        following we will say for short that $B$ admits maximal parabolic
        regularity on $X$.
\item If an operator satisfies maximal parabolic regularity on a Banach space
	$X$, then its negative generates an analytic semigroup on $X$ (cf.
	\cite{dore}). In particular, a suitable left half plane belongs to its
	resolvent set.
\item If $X$ is a Hilbert space, the converse is also true: The negative of
	every generator of an analytic semigroup on $X$ satisfies maximal
	parabolic regularity, cf. \cite{deSimon} or \cite{dore}.
\item 
       If $-B$ is a generator of an analytic semigroup on a Banach space $X$,
	we define
	\[  B \bigl( \frac{\partial}{\partial t} + B
		\bigr)^{-1} : C(\overline{J}; dom_X(B)) \to L^s(J; X)
	\]
	by
	\[\Bigl ( B \bigl( \frac{\partial}{\partial t}
		+ B \bigr)^{-1} f \Bigr) (t) := B \int_{T_0}^t \e^{(s-t)B}
		f(s) \,\dd s.
	\]
	Then, by definition of the distributional time derivative, it is easy
	to see that $B$ has maximal parabolic regularity on $X$ if and only if
	the operator $ B\bigl (\frac{\partial}{\partial t} + B \bigr)^{-1}$ continuously
         extends to an operator from $L^s(J; X)$ into itself.
\item Observe that
        \begin{equation} \label{e-embedcont}
          W^{1,s}(J; X) \cap L^s(J; D) \hookrightarrow
                C(\overline{J}; (X, D)_{1-\frac{1}{s},s}).
        \end{equation}
\end{enumerate}
\end{remark}
%
Let us first formulate the following lemma, needed in the sequel. 
%
\begin{lemma} \label{l-maxint}
Suppose that $X,Y$ are Banach spaces, which are contained in a third
Banach space $Z$ with continuous injections. Let $B$ be a linear operator on
$Z$ whose restriction to each of the spaces $X,Y$ induce closed,
densely defined operators there. Assume that the induced operators fulfill
maximal parabolic regularity on $X$ and $Y$, respectively. Then $B$ satisfies
maximal parabolic regularity on each of the interpolation spaces
$[X, Y]_{\theta}$ and $(X,Y)_{\theta,s}$ with $\theta \in \left] 0, 1
\right[$, $s \in \left] 1, \infty \right[$.
 \end{lemma}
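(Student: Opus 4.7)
The plan is to exploit the characterisation of maximal parabolic regularity recalled in Remark~\ref{r-ebed}~iv): $B$ has maximal parabolic regularity on a Banach space $E$ precisely when the operator $T := B\bigl(\frac{\partial}{\partial t} + B\bigr)^{-1}$, originally defined on $C(\overline{J}; dom_E(B))$ via the variation-of-constants formula, extends to a bounded linear operator on $L^s(J;E)$. Thus the task reduces to interpolating the boundedness of $T$ from $L^s(J;X)$ and $L^s(J;Y)$ to $L^s(J;E_\theta)$, where $E_\theta$ denotes either $[X,Y]_\theta$ or $(X,Y)_{\theta,s}$.

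Before doing so, I would verify that $B$ remains closed and densely defined on $E_\theta$, as required by Definition~\ref{d-maxreg}. Closedness is inherited from $B$ on $Z$, because $E_\theta$ embeds continuously into $X+Y \subseteq Z$ and $B$ is already closed on $Z$. For density of the domain, I would use that, by Remark~\ref{r-ebed}~ii), $-B$ generates analytic semigroups on both $X$ and $Y$; hence, for large $\lambda$, the families $\{\lambda R(\lambda,B)\}$ are uniformly bounded on $X$ and on $Y$, interpolate to a uniformly bounded family on $E_\theta$, and converge strongly to the identity on $dom_X(B) \cap dom_Y(B)$. Since $X \cap Y$ is dense in $E_\theta$ and the family is uniformly bounded, this convergence extends to all of $E_\theta$, producing approximations of any $u \in E_\theta$ by elements of $dom(B)$.

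The central step is the interpolation of $T$ itself. By hypothesis and Remark~\ref{r-ebed}~iv), $T$ lies in $\mathcal{L}(L^s(J;X)) \cap \mathcal{L}(L^s(J;Y))$, and the two realisations agree on the common dense subspace $L^s(J; X \cap Y)$, since both are built from the same semigroup $\e^{-tB}$ on $Z$ restricted to $X$ and $Y$, respectively. Combining this with the standard identities for vector-valued Bochner spaces,
\[
[L^s(J;X),L^s(J;Y)]_\theta = L^s(J;[X,Y]_\theta), \qquad (L^s(J;X),L^s(J;Y))_{\theta,s} = L^s(J;(X,Y)_{\theta,s}),
\]
the interpolation theorem for linear operators (in its complex, respectively real, form) yields that $T$ extends boundedly to $L^s(J;E_\theta)$. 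Invoking Remark~\ref{r-ebed}~iv) once more in the reverse direction then gives maximal parabolic regularity of $B$ on $E_\theta$.

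The main obstacle is essentially bookkeeping: one must ensure that the two extensions of $T$ are mutually compatible as operators on the sum space, so that the interpolation theorem actually applies. This compatibility is, however, automatic from the assumption that all realisations come from a \emph{single} ambient operator $B$ on $Z$ and hence from one common analytic semigroup on $Z$; the integral formula defining $T$ on $C(\overline J; dom(B))$ can be evaluated in $Z$ without reference to any of the ambient norms, so no ambiguity of definition arises. Once this is noted, the argument reduces to the well-known interpolation machinery.
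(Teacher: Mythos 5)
Your proposal is correct and follows essentially the same route as the paper: reduce maximal regularity to boundedness of $B\bigl(\frac{\partial}{\partial t}+B\bigr)^{-1}$ via Remark~\ref{r-ebed}~iv), interpolate this operator using the identities $[L^s(J;X),L^s(J;Y)]_\theta = L^s(J;[X,Y]_\theta)$ and $(L^s(J;X),L^s(J;Y))_{\theta,s}=L^s(J;(X,Y)_{\theta,s})$, and obtain the semigroup generation on the interpolation spaces from the interpolated resolvent estimates. The only small inaccuracy is your claim that closedness on $E_\theta$ is ``inherited from $B$ on $Z$'': the hypotheses do not assert that $B$ is closed on $Z$, but this is harmless since the interpolated resolvent bounds already show that $-B$ generates an analytic semigroup on $E_\theta$, which gives closedness.
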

%
\begin{proof}
By supposition, $(X,Y)$ forms an interpolation couple. In this case it is known 
 (see \cite[Ch.~1.18.4]{triebel}) that one has for any
$\theta \in \left] 0, 1 \right[$ and any $s \in \left] 1, \infty \right[$ the
interpolation identities
\begin{align} \label{e-int01}
  \bigl[ L^s(J;X), L^s(J;Y) \bigr]_{\theta} &= L^s(J;[X, Y]_{\theta})
  \intertext{and}
  \label{e-int02}
	\bigl( L^s(J;X), L^s(J;Y) \bigr)_{\theta,s} &= L^s(J;(X,Y)_{\theta,s}).
\end{align}
Due to Remark~\ref{r-ebed}~ii), $-B$ generates an analytic semigroup on $X$ and 
$Y$, respectively. Obviously, the corresponding resolvent estimates are maintained under
real and complex interpolation, so $-B$ also generates an analytic semigroup on the 
corresponding interpolation spaces. 
Taking into account \eqref{e-int01} or \eqref{e-int02} and invoking Remark~\ref{r-ebed}~iv),
 the operators
\begin{align*}
 B \bigl( \frac{\partial}{\partial t} + B \bigr)^{-1} &: L^s(J;X)
	\to L^s(J;X)
 \intertext{and}
 B \bigl( \frac{\partial}{\partial t} + B \bigr)^{-1} &:
        L^s(J;Y) \to L^s(J;Y)
\end{align*}
are continuous, if $s \in \left] 1, \infty \right[$. Thus, interpolation
together with \eqref{e-int01} (\eqref{e-int02}, respectively) tells us that
$ B  \bigl( \frac{\partial}{\partial t} + B \bigr)^{-1}$ also maps
$L^s(J;[X, Y]_{\theta})$ and $L^s(J;(X, Y)_{\theta,s})$ continuously into
itself. So the assertion follows again by Remark~\ref{r-ebed}~iv).
\end{proof}
This lemma will lead to the main result of this section, maximal regularity of
$A$ in various distribution spaces, as soon as we can show this in the space
$\breve H^{-1, q}_\Gamma$, what we will do now. Precisely, we will show the following
result.
%
\begin{theorem} \label{t-central}
 Let $\Omega$, $\Gamma$ fulfill
Assumption~\ref{a-groegerregulaer} and set $q_{\mathrm{iso}}:= \sup M_{\mathrm{iso}}$, where
\[ M_{\mathrm{iso}}:= \{ q \in \left[ 2, \infty \right[ \with - \nabla
	\cdot \mu \nabla + 1 : H^{1,q}_\Gamma \to \breve H^{-1, q}_\Gamma \text{ is a
	topological isomorphism} \}.
\]
Then $-\nabla \cdot \mu \nabla$ satisfies maximal parabolic regularity on
$\breve H^{-1,q}_\Gamma$ for all $q \in \left[ 2, q_{\mathrm{iso}}^* \right[$, where
by $r^*$ we denote the Sobolev conjugated index of $r$, i.e.
\[ r^* = \begin{cases}
             \infty, &\text{if } r \ge d, \\
              \bigl (\frac {1}{r}-\frac {1}{d}\bigr )^{-1}, &\text{if } r \in
		\left[ 1, d \right[.
        \end{cases}
\]
\end{theorem}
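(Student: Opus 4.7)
The plan is to prove the theorem in three stages: first establish it in the model-set situation of Assumption~\ref{l-locmodel}, then reduce the general Gr\"oger setting to that by a localisation argument, and finally extend the range up to the Sobolev conjugate $q_{\mathrm{iso}}^*$ by interpolation with the known $L^p$-result. For the first stage, combining Theorems~\ref{t-mainsect} and~\ref{t-mainsectdown} with the $\mathcal{H}^\infty$-calculus of Proposition~\ref{p-infcalc} shows that, on a model set, $(A_0+1)^{1/2}$ is a topological isomorphism from $L^q$ onto $\breve H^{-1,q}_\Gamma$ for every $q\in[2,\infty)$. Since $(A_0+1)^{1/2}$ is a function of $A_0$ in the bounded holomorphic calculus, it commutes with the $L^q$-resolvents of $A_0$ and hence with the parabolic solution operator $A_0(\partial_t+A_0)^{-1}$ from Remark~\ref{r-ebed}~iv). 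Maximal parabolic regularity on $L^q$, known from \cite{arendt,hie/reh}, therefore transfers to $\breve H^{-1,q}_\Gamma$ in the model-set case.

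For the general Gr\"oger setting, I would cover $\overline{\Omega}$ by a finite family of interior balls together with the boundary charts $(\Upsilon_j,\phi_j)$ supplied by Assumption~\ref{a-groegerregulaer} and pick a subordinate partition of unity $(\eta_j)\subset C^\infty(\R^d)$. For a candidate solution $u$ one studies each piece $\eta_j u$: its push-forward by $\phi_j$ lives on a model set, where by Proposition~\ref{p-transform} it satisfies a parabolic equation of the same form with transformed coefficient $\mu_\vartriangle$ and an extra right-hand side built from the commutator
\[
  [\eta_j,A]u = -\nabla\cdot(\mu\, u\,\nabla\eta_j) - \mu\,\nabla\eta_j\cdot\nabla u.
\]
On the scale $H^{1,q}_\Gamma\to\breve H^{-1,q}_\Gamma$ this commutator is genuinely of lower order, but in order to absorb it one must bound $\|u\|_{H^{1,q}_\Gamma}$ by $\|(A+1)u\|_{\breve H^{-1,q}_\Gamma}$, which is exactly the content of the hypothesis $q\in M_{\mathrm{iso}}$. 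Applying the first stage on each model set, summing the local estimates, and closing the resulting Neumann series produces maximal parabolic regularity on $\breve H^{-1,q}_\Gamma$ for every $q\in[2,q_{\mathrm{iso}})$.

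To reach the remaining range $q\in[q_{\mathrm{iso}},q_{\mathrm{iso}}^*)$ I invoke Lemma~\ref{l-maxint}. The Sobolev embedding $H^{1,q_{\mathrm{iso}}'}_\Gamma\hookrightarrow L^{(q_{\mathrm{iso}}^*)'}$ dualises via Remark~\ref{r-antili} to an embedding $L^{q_{\mathrm{iso}}^*}\hookrightarrow \breve H^{-1,q_{\mathrm{iso}}}_\Gamma$, and Corollary~\ref{c-antili} together with Proposition~\ref{p-interpol} then realises $\breve H^{-1,r}_\Gamma$ for $r\in(q_{\mathrm{iso}},q_{\mathrm{iso}}^*)$ as a complex interpolation space between $\breve H^{-1,q_0}_\Gamma$ for some $q_0<q_{\mathrm{iso}}$ and a suitable $L^p$. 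Maximal parabolic regularity holds at both endpoints, on $L^p$ by \cite{arendt} and on $\breve H^{-1,q_0}_\Gamma$ by the preceding step, so Lemma~\ref{l-maxint} delivers it on every intermediate $\breve H^{-1,r}_\Gamma$, yielding the full range $[2,q_{\mathrm{iso}}^*)$.

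I expect the localisation step to be the main obstacle, as is indeed signalled by the authors in the introduction. The genuine difficulty is to extend the transformed coefficient $\mu_\vartriangle$ and the Dirichlet part from the chart image to the full model cube without destroying ellipticity or the isomorphism property, and then to arrange the local estimates so that the perturbation from the commutators is small enough for the Neumann series in $\mathcal{L}(L^s(J;\breve H^{-1,q}_\Gamma))$ to converge. The hypothesis $q\in M_{\mathrm{iso}}$ enters precisely here: it turns the elliptic a priori estimate into the absorption tool that the commutators require, and the impossibility of enlarging $M_{\mathrm{iso}}$ by purely local reasoning is what limits the direct argument to $q<q_{\mathrm{iso}}$ and forces the concluding interpolation step.
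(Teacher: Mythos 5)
Your first stage is sound and close in spirit to the paper's: on a model set the square-root isomorphism of Theorems~\ref{t-mainsect} and \ref{t-mainsectdown} conjugates the $\breve H^{-1,q}_\Gamma$-realization to the $L^q$-realization (the paper implements this via bounded imaginary powers and the Dore--Venni theorem rather than by commuting with the solution operator, but either route is viable). The second and third stages, however, contain a genuine gap, and it concerns exactly the point the theorem is about: where the Sobolev conjugate $q_{\mathrm{iso}}^*$ comes from. Your concluding interpolation step cannot produce it. Complex interpolation between an $L^p$ space and $\breve H^{-1,q_0}_\Gamma$ with parameter $\theta\in\left]0,1\right[$ yields, by \eqref{e-interpol02} and Remark~\ref{r-charact}, a space $\breve H^{-\theta,r}_\Gamma$ of smoothness index $-\theta>-1$; it is never $\breve H^{-1,r}_\Gamma$. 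The only interpolation identity that stays on the smoothness-$(-1)$ scale is \eqref{e-interpol0003}, which interpolates between two spaces $\breve H^{-1,q_i}_\Gamma$ and therefore only reaches exponents between ones where maximal regularity is already known. So the range $\left[q_{\mathrm{iso}},q_{\mathrm{iso}}^*\right[$ is not accessible by your stage three, while your stage two, by your own account, stops at $q<q_{\mathrm{iso}}$.

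In the paper the gain up to $q^*$ is produced inside the localization itself. The commutator $[\eta,A]v=-\mu\nabla v\cdot\nabla\eta+I_v$ is not merely of lower order on the $H^{1,q}\to\breve H^{-1,q}$ scale: by Sobolev embedding it maps $H^{1,q}_\Gamma$ continuously into $\breve H^{-1,r}_{\Gamma_\bullet}$ for \emph{every} $r<q^*$ (Lemma~\ref{l-project}~iii)). Accordingly one does not absorb the commutator at the level $q$ at all; one treats it as a given right-hand side of the better class $r$, solves the local problem on the model set at the exponent $r$ (where maximal regularity holds for all $r\in\left[2,\infty\right[$), and identifies the resulting solution with $\eta V$ by uniqueness of the local parabolic problem in the weaker $q$-scale (Lemma~\ref{l-shrinkextend}); a contradiction argument with $q_{\mathrm{MR}}:=\sup M_{\mathrm{MR}}$ then closes the bootstrap. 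This replaces your Neumann series, whose convergence you do not establish and which is genuinely problematic: once $\|u\|_{H^{1,q}_\Gamma}\sim\|(A+1)u\|_{\breve H^{-1,q}_\Gamma}$ via the isomorphism, the relative bound of $[\eta_j,A]$ with respect to $A$ is of order one rather than small, and $\|\nabla\eta_j\|_\infty$ grows when the covering is refined, so smallness would have to be extracted from shrinking the time interval or from a compactness/Ehrling argument, neither of which you carry out. A further structural point: the local problems are posed on $\Omega\cap\Upsilon_j$ with the restricted coefficient $\mu_j$ directly, and the chart $\phi_j$ maps $\Upsilon_j\cap(\Omega\cup\Gamma)$ exactly onto a model set, so the extension problem for $\mu_\vartriangle$ that you flag as the main obstacle does not in fact arise.
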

%
%
\begin{remark} \label{r-darunter}
\begin{enumerate}
\item If $\Omega$, $\Gamma$ fulfill Assumption~\ref{a-groegerregulaer}~a), then
	$q_{\mathrm{iso}} > 2$, see \cite{groe/reh} and also \cite{groeger89}.
\item It is clear by Lax-Milgram and interpolation (see
	Proposition~\ref{p-interpol} and Corollary \ref{c-antili}) that $ M_{\mathrm{iso}}$ is the
	interval $\left[ 2, q_{\mathrm{iso}} \right[$ or $\left[ 2,
	q_{\mathrm{iso}} \right]$. Moreover, it can be concluded from a deep theorem
        of Sneiberg \cite{snei} (see also \cite[Lemma~4.16]{auschmem}) that the second
       case cannot occur.
\end{enumerate}
\end{remark}
In a first step we show
%
\begin{theorem} \label{t-maxparloc}
Let $\Omega, \Gamma$ fulfill Assumption~\ref{l-locmodel}. Then $- \nabla \cdot
\mu \nabla$ satisfies maximal parabolic regularity on $\breve H^{-1,q}_\Gamma$ for
all $q \in \left[ 2, \infty \right[$.
\end{theorem}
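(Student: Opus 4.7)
The strategy is to transfer maximal parabolic regularity from the $L^q$-realization of $A_0:=-\nabla\cdot\mu\nabla$, where it is already known, to the $\breve H^{-1,q}_\Gamma$-realization, using the square root $A_0^{1/2}$ as an intertwining operator.

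A preliminary observation is that in each of the three admissible model constellations of Assumption~\ref{l-locmodel} the Dirichlet boundary part $\partial\Omega\setminus\Gamma$ has strictly positive boundary measure (in the first case $\Gamma=\emptyset$; in the second and third cases $\Gamma$ corresponds to $\alpha\Sigma$ or $\alpha\Sigma_0$, leaving the remaining faces of $\alpha K_-$ as Dirichlet part). Consequently $0$ does not belong to the spectrum of $A_0$ on $L^2$, and by the $p$-independence of the spectrum coming from the Gaussian estimates of Theorem~\ref{t-gausss} (see \cite{Kun99}), the same is true on every $L^p$, $p\in\left]1,\infty\right[$. Since Assumption~\ref{l-locmodel} is in force, both Theorem~\ref{t-mainsect} and Theorem~\ref{t-mainsectdown} are available, and together they yield that, for every $q\in\left[2,\infty\right[$, the operator $A_0^{1/2}$ is a topological isomorphism $L^q\to\breve H^{-1,q}_\Gamma$, with inverse $A_0^{-1/2}$.

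The $L^q$-realization of $A_0$ satisfies maximal parabolic regularity for every $q\in\left]1,\infty\right[$: this follows from the bounded $\mathcal{H}^\infty$-calculus with $\mathcal{H}^\infty$-angle zero of Proposition~\ref{p-infcalc}~i) together with the UMD property of $L^q$, or, equivalently, directly from the Gaussian estimate of Theorem~\ref{t-gausss} via the results \cite{arendt,hie/reh} cited in the introduction. With this in hand the transfer is straightforward: given $f\in L^s(J;\breve H^{-1,q}_\Gamma)$, set $g:=A_0^{-1/2}f\in L^s(J;L^q)$, solve $v'+A_0v=g$, $v(T_0)=0$ in $L^q$ by maximal $L^q$-regularity, and put $u:=A_0^{1/2}v$. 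Since $A_0^{1/2}$ commutes with $A_0$ and (as a time-independent bounded operator) with the distributional time derivative, one obtains
\[ u'+A_0u=A_0^{1/2}(v'+A_0v)=A_0^{1/2}g=f,\qquad u(T_0)=0, \]
in $L^s(J;\breve H^{-1,q}_\Gamma)$, while the identity $A_0^{1/2}(dom_{L^q}(A_0))=dom_{\breve H^{-1,q}_\Gamma}(A_0)$ secures the required spatial regularity $u\in L^s(J;dom_{\breve H^{-1,q}_\Gamma}(A_0))$. Uniqueness in $\breve H^{-1,q}_\Gamma$ follows by applying $A_0^{-1/2}$ to any candidate solution and invoking uniqueness in $L^q$.

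I expect the main obstacle to be not a single deep step but the bookkeeping: one has to justify carefully the commutation $A_0^{1/2}A_0=A_0A_0^{1/2}$ and the domain identification $dom_{\breve H^{-1,q}_\Gamma}(A_0)=A_0^{1/2}(dom_{L^q}(A_0))$, since the various realizations of $A_0$ and of $A_0^{1/2}$ on the scale $L^2,\,L^q,\,\breve H^{-1,q}_\Gamma$ must be shown to be compatible in the sense of a joint functional calculus. These consistency issues rely on the $p$-independence of the spectrum of $A_0$ (a consequence of the Gaussian estimates) and on the density of a common core such as $C^\infty_\Gamma(\Omega)$ in all the spaces under consideration.
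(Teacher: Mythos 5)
Your proof is correct and rests on exactly the same key ingredient as the paper's: the isomorphism $A_0^{1/2}\colon L^q \to \breve H^{-1,q}_\Gamma$ furnished by Theorems~\ref{t-mainsect} and \ref{t-mainsectdown}, used to intertwine the $L^q$- and $\breve H^{-1,q}_\Gamma$-realizations of $-\nabla\cdot\mu\nabla$. The only (immaterial) difference is that the paper conjugates the imaginary powers and the resolvents (Lemma~\ref{l-bounded}) and then invokes the Dore--Venni theorem on the UMD space $\breve H^{-1,q}_\Gamma$, whereas you conjugate the solution operator directly, which transfers maximal regularity from $L^q$ without a second appeal to Dore--Venni.
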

%
This will be a consequence of the following lemma.
%
\begin{lemma} \label{l-bounded}
Let $\Omega, \Gamma$ satisfy Assumption~\ref{l-locmodel}. Then for all $q \in
\left[ 2, \infty \right[$ the set $\{ (-\nabla \cdot \mu \nabla)^{is} \with s
\in \R \}$ forms a strongly continuous group on $\breve H^{-1,q}_\Gamma$, satisfying
the estimate
\begin{equation} \label{e-estibound}
  \| (- \nabla \cdot \mu \nabla)^{is} \|_{\mathcal L(\breve H^{-1,q}_\Gamma)} \le
        c \e^{|s| \vartheta}, \quad s \in \R,
\end{equation}
for some $\vartheta  \in [0, \frac {\pi}{2}[$.

Moreover, we have the following resolvent estimate
\begin{equation} \label{e-resolv}
  \| (-\nabla \cdot \mu \nabla + \lambda)^{-1}
        \|_{\mathcal L(\breve H^{-1,q}_\Gamma)} \le \frac{c}{1+|\lambda|},
        \quad \Re \lambda \ge 0.
\end{equation}
\end{lemma}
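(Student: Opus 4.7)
The central observation is that under Assumption~\ref{l-locmodel}, Theorems~\ref{t-mainsect} and~\ref{t-mainsectdown} together yield that
\[
A_0^{1/2} : L^q \longrightarrow \breve H^{-1,q}_\Gamma
\]
is a topological isomorphism for every $q \in [2,\infty[$. My plan is to use this isomorphism as a similarity transformation to transfer the estimates already available for $A_0$ on $L^q$ -- namely the resolvent bound of Proposition~\ref{p-basicl2}~v) and the imaginary powers bound of Proposition~\ref{p-infcalc}~ii), both of which apply here since each of the three model configurations has $\partial\Omega\setminus\Gamma$ of positive surface measure -- to the distribution space $\breve H^{-1,q}_\Gamma$.

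Concretely, I would first establish the operator identity
\[
(A_0 + \lambda)^{-1}\big|_{\breve H^{-1,q}_\Gamma} = A_0^{1/2}\,(A_0 + \lambda)^{-1}\big|_{L^q}\,A_0^{-1/2}, \qquad \Re\lambda \ge 0,\ \lambda\ne 0,
\]
by first checking it on $L^2 \cap \breve H^{-1,q}_\Gamma$, where $A_0^{1/2}$ commutes with $(A_0+\lambda)^{-1}$ through the selfadjoint $L^2$-functional calculus of $A_0$, and then extending it by density, using continuity of $A_0^{\pm 1/2}$ and $(A_0+\lambda)^{-1}$ on the relevant spaces. Combined with $\|(A_0+\lambda)^{-1}\|_{\mathcal L(L^q)}\le c/|\lambda|$, this gives the decay part of \eqref{e-resolv}. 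The strengthening to $c/(1+|\lambda|)$ requires that $0\notin\sigma(A_0)$ on $\breve H^{-1,q}_\Gamma$; this follows because in each of the three model cases a Poincar\'e inequality on $L^2$ renders $A_0$ invertible, and the same similarity transports this invertibility to $\breve H^{-1,q}_\Gamma$.

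An identical template handles the imaginary powers: the analogous identity
\[
A_0^{is}\big|_{\breve H^{-1,q}_\Gamma} = A_0^{1/2}\,A_0^{is}\big|_{L^q}\,A_0^{-1/2}, \qquad s\in\R,
\]
together with Proposition~\ref{p-infcalc}~ii) immediately delivers \eqref{e-estibound}. Strong continuity at $s=0$ on $\breve H^{-1,q}_\Gamma$ then follows by a standard density argument: \eqref{e-estibound} yields uniform boundedness on compact $s$-intervals, and on the dense range $A_0^{1/2}(L^q)=\breve H^{-1,q}_\Gamma$ one has $A_0^{is}(A_0^{1/2}g)=A_0^{1/2}(A_0^{is}g)\to A_0^{1/2}g$ as $s\to 0$, by strong continuity of $s\mapsto A_0^{is}$ on $L^q$ and continuity of $A_0^{1/2}$. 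The group law $A_0^{i(s+t)} = A_0^{is}A_0^{it}$ on $\breve H^{-1,q}_\Gamma$ is inherited in the same manner from the $L^2$-functional calculus.

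The main obstacle I anticipate is the careful bookkeeping behind these similarity identities, since $A_0$ is realized simultaneously on $L^2$, on $L^q$, and on $\breve H^{-1,q}_\Gamma$, and one must check that $A_0^{1/2}$ genuinely intertwines these three realizations. The cleanest strategy is to perform every manipulation first on the intersection $L^2 \cap \breve H^{-1,q}_\Gamma$, which is dense in $\breve H^{-1,q}_\Gamma$ -- where the full $L^2$-functional calculus of the selfadjoint operator is available and all compositions make classical sense -- and only then to extend each identity by density and continuity. Once this consistency is in place, both estimates \eqref{e-resolv} and \eqref{e-estibound}, together with the $C_0$-group property, drop out of the two propositions cited above.
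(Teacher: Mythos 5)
Your proposal is correct and follows essentially the same route as the paper: the key step in both is to use the isomorphism $A_0^{1/2}:L^q\to \breve H^{-1,q}_\Gamma$ (from Theorems~\ref{t-mainsect} and \ref{t-mainsectdown}) as a similarity transformation carrying the $L^q$ resolvent bound of Proposition~\ref{p-basicl2}~v) and the imaginary-powers bound of Proposition~\ref{p-infcalc}~ii) over to $\breve H^{-1,q}_\Gamma$, with a density argument to extend from $L^q$ to all of $\breve H^{-1,q}_\Gamma$. The only cosmetic difference is that the paper justifies the intertwining identities via the bounded $\mathcal H^\infty$-calculus on $L^q$ rather than by first checking them on $L^2\cap\breve H^{-1,q}_\Gamma$, but these amount to the same commutation.
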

%
\begin{proof}
We first note that Assumption~\ref{l-locmodel} in particular implies that
the Dirichlet boundary part $\partial \Omega \setminus \Gamma$ has non-zero
boundary measure. Thus, by Proposition~\ref{p-infcalc}~i), we may fix some
$\epsilon > 0$, such that $-\nabla \cdot \mu \nabla - \epsilon$ has
a bounded $\mathcal H^\infty$-calculus on $L^q$.
Since the functions $z \mapsto (z + \epsilon)^{is} =
(z + \epsilon)^{1/2} (z+\epsilon)^{is} (z+\epsilon)^{-1/2}$, $s \in \R$, and $z \mapsto
(z +\epsilon + \lambda)^{-1} = (z + \epsilon)^{1/2} (\lambda + z +
\epsilon)^{-1} (z + \epsilon)^{-1/2}$, $\mathrm{Re}\,\lambda \ge 0$, are in
$\mathcal H^\infty(\Sigma_\phi)$ for all $\phi \in \left] 0, \pi \right[$, one has the
operator identities
\begin{align} \label{e-wurz}
  \bigl( -\nabla \cdot \mu \nabla \bigr)^{is} &= \bigl( -\nabla \cdot \mu
        \nabla \bigr)^{1/2} \bigl( -\nabla \cdot \mu \nabla \bigr)^{is} \bigl(
        -\nabla \cdot \mu \nabla \bigr)^{-1/2}, \quad s \in \R,
  \intertext{and}
  \label{e-wurz1}
  \bigl( -\nabla \cdot \mu \nabla + \lambda \bigr)^{-1} &= \bigl( -\nabla \cdot
        \mu \nabla \bigr)^{1/2} \bigl( -\nabla \cdot \mu \nabla + \lambda
        \bigr)^{-1} \bigl( -\nabla \cdot \mu \nabla \bigr)^{-1/2}, \quad
        \Re \lambda \ge 0,
\end{align}
on $L^q$.
Under Assumption~\ref{l-locmodel} $(-\nabla \cdot \mu \nabla)^{1/2}$ is a
topological isomorphism between $L^q$ and $\breve H^{-1, q}_\Gamma$
for every $q \in \left[ 2, \infty \right[$, thanks to Theorem~\ref{t-mainsect}
and Theorem~\ref{t-mainsectdown}. Thus, one can estimate for every $f \in L^q$
\begin{align*}
  & \| (-\nabla \cdot \mu \nabla)^{is} f \|_{\breve H^{-1,q}_\Gamma} \\
  \le\;& \| (-\nabla \cdot \mu \nabla)^{1/2}
        \|_{\mathcal L(L^q,\breve H^{-1,q}_\Gamma)}
        \| (-\nabla \cdot \mu \nabla)^{is} \|_{\mathcal L(L^q)}
        \| (-\nabla \cdot \mu \nabla)^{-1/2}
        \|_{\mathcal L(\breve H^{-1,q}_\Gamma,L^q)} \|f\|_{\breve H^{-1,q}_\Gamma}.
\end{align*}
Since $L^q$ is dense in $\breve H^{-1,q}_\Gamma$, this inequality extends to all of
$\breve H^{-1,q}_\Gamma$. Together with Proposition~\ref{p-infcalc}~ii) this yields
the estimate~\eqref{e-estibound}, which also implies the group property, see 
\cite[Thm. III.4.7.1 and Cor. III.4.7.2]{amannbuch}.

\eqref{e-resolv} is proved analogously to \eqref{e-estibound}, only
using \eqref{e-wurz1} instead of \eqref{e-wurz} and the corresponding
resolvent estimate in $L^q$, cf. Proposition~\ref{p-basicl2}~v) (note that
here $-\nabla \cdot \mu \nabla$ is continuously invertible).
\end{proof}
%

%
It follows the proof of Theorem \ref{t-maxparloc}: By Theorems~\ref{t-mainsect} and
 \ref{t-mainsectdown}, $\breve H^{-1,q}_\Gamma$ is an isomorphic image of the UMD~space $L^q$ and,
 hence, a UMD~space itself. Since by Lemma~\ref{l-bounded} the operator $-\nabla \cdot \mu \nabla$
 generates an analytic semigroup and has bounded imaginary powers with the right bound, maximal
 parabolic regularity follows by the Dore-Venni result \cite{dorevenni}.

\medskip

Now we intend to 'globalize' Theorem~\ref{t-maxparloc}, in other words: We
prove that $- \nabla \cdot \mu \nabla$ satisfies maximal parabolic regularity
on $\breve H^{-1,q}_\Gamma$ for suitable $q$ if $\Omega$, $\Gamma$ satisfy only
Assumption~\ref{a-groegerregulaer}, i.e. if $\alpha K_-$, $\alpha (K_- \cup
\Sigma)$ and $\alpha (K_- \cup \Sigma_0)$ need only to be model sets for the
constellation around boundary points. Obviously, then the variety of
admissible $\Omega$'s and $\Gamma$'s increases considerably, in particular,
$\Gamma$ may have more than one connected component.
\subsection{Auxiliaries}
We continue with some results which in essence allow to restrict distributions
to subdomains and, on the other hand, to extend them to a larger domain --
including the adequate boundary behavior.
%
\begin{lemma} \label{l-restr/ext}
Let $\Omega, \Gamma$ satisfy Assumption~\ref{a-groegerregulaer} and let
$\Upsilon \subseteq \R^d$ be open, such that $\Omega_\bullet := \Omega \cap
\Upsilon$ is also a Lipschitz domain. Furthermore, we put $\Gamma_\bullet :=
\Gamma \cap \Upsilon$ and fix an arbitrary function $\eta \in
C^\infty_0(\R^d)$ with $\supp(\eta) \subseteq \Upsilon$. Then for any $q \in
\left] 1, \infty \right[$ we have the following assertions.
\begin{enumerate}
\item \label{l-restr/ext:enum-i} If $v \in H^{1,q}_\Gamma(\Omega)$, then $\eta
        v|_{\Omega_\bullet} \in H^{1,q}_{\Gamma_\bullet}(\Omega_\bullet)$ and
        the mapping 
        \[ H^{1,q}_\Gamma(\Omega) \ni v \mapsto \eta v|_{\Omega_\bullet} \in
        H^{1,q}_{\Gamma_\bullet}(\Omega_\bullet)
        \]
        is continuous.
\item Let for any $v \in L^1(\Omega_\bullet) $ the symbol $\tilde v$
        indicate the  extension of $v$ to $\Omega$ by zero.
        Then the mapping
        \[ H^{1,q}_{\Gamma_\bullet}(\Omega_\bullet) \ni v \mapsto
                \widetilde{\eta v}
        \]
        has its image in $H^{1,q}_\Gamma (\Omega)$ and is continuous.
\end{enumerate}
\end{lemma}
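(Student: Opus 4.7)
The plan is to reduce both assertions to the smooth case by using the very definition of the spaces involved: $C^\infty_\Gamma(\Omega)$ is dense in $H^{1,q}_\Gamma(\Omega)$, and likewise for $\Omega_\bullet$, $\Gamma_\bullet$. So I would first check that the pointwise operations $v\mapsto \eta v|_{\Omega_\bullet}$ and $v\mapsto \widetilde{\eta v}$ produce elements of the corresponding smooth classes when applied to $C^\infty_\Gamma$- respectively $C^\infty_{\Gamma_\bullet}$-functions, together with an $H^{1,q}$-norm bound coming from the Leibniz rule and the fact that $\eta$ is bounded in $C^1$. The density argument then extends everything to the full Sobolev spaces, using that zero-extension from $\Omega_\bullet$ to $\Omega$ is $L^1$-continuous so that limits are identified correctly.

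For part (i), given $v=\psi|_\Omega$ with $\psi\in C^\infty(\R^d)$ and $\supp(\psi)\cap(\partial\Omega\setminus\Gamma)=\emptyset$, the product $\eta\psi$ lies in $C^\infty(\R^d)$, and I claim $\supp(\eta\psi)\cap(\partial\Omega_\bullet\setminus\Gamma_\bullet)=\emptyset$. The point is to decompose this relative boundary: any $\mathrm{x}\in\partial\Omega_\bullet$ either belongs to $\partial\Upsilon\cap\overline{\Omega}$, in which case $\eta$ vanishes in a neighborhood of $\mathrm x$ (as $\supp(\eta)$ is a compact subset of $\Upsilon$), or belongs to $\partial\Omega\cap\Upsilon$. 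In the second case, if moreover $\mathrm x\notin\Gamma_\bullet=\Gamma\cap\Upsilon$, then $\mathrm x\in\partial\Omega\setminus\Gamma$, so $\psi$ vanishes near $\mathrm x$. Hence $\eta v|_{\Omega_\bullet}\in C^\infty_{\Gamma_\bullet}(\Omega_\bullet)$, with the obvious norm estimate.

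For part (ii), the key preliminary observation is that for $v=\psi|_{\Omega_\bullet}$ with $\psi\in C^\infty(\R^d)$ such that $\supp(\psi)\cap(\partial\Omega_\bullet\setminus\Gamma_\bullet)=\emptyset$, the zero extension $\widetilde{\eta v}$ coincides on all of $\Omega$ with the restriction $\eta\psi|_\Omega$: indeed, on $\Omega\setminus\Omega_\bullet\subseteq\Omega\setminus\Upsilon$ the factor $\eta$ is zero, and on $\Omega_\bullet$ both functions obviously agree. Then the same case analysis as in (i), applied to points of $\partial\Omega\setminus\Gamma$, shows that $\eta\psi$ vanishes in a neighborhood of each such point (those in $\Upsilon$ automatically belong to $\partial\Omega_\bullet\setminus\Gamma_\bullet$, where $\psi$ vanishes; those outside $\supp(\eta)$ are killed by $\eta$). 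Hence $\widetilde{\eta v}=\eta\psi|_\Omega\in C^\infty_\Gamma(\Omega)$, and the Leibniz-type bound $\|\widetilde{\eta v}\|_{H^{1,q}(\Omega)}=\|\eta v\|_{H^{1,q}(\Omega_\bullet)}\le c(\eta)\|v\|_{H^{1,q}(\Omega_\bullet)}$ gives continuity.

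The main obstacle, or rather the step requiring care, is the geometric bookkeeping of $\partial\Omega_\bullet\setminus\Gamma_\bullet$: since $\Omega_\bullet=\Omega\cap\Upsilon$ is a Lipschitz domain by hypothesis, its relative boundary is contained in $(\partial\Omega\cap\overline\Upsilon)\cup(\partial\Upsilon\cap\overline\Omega)$, and one has to verify that each of the two pieces is annihilated either by the prescribed vanishing of $\psi$ on $\partial\Omega\setminus\Gamma$ or by the compact containment $\supp(\eta)\Subset\Upsilon$. Once this is settled, the passage from smooth representatives to general elements of $H^{1,q}_\Gamma$ and $H^{1,q}_{\Gamma_\bullet}$ is a routine density argument: for (ii) one approximates $v$ by a sequence $v_n\in C^\infty_{\Gamma_\bullet}(\Omega_\bullet)$, observes that $(\widetilde{\eta v_n})$ is Cauchy in $H^{1,q}_\Gamma(\Omega)$ by the estimate, and identifies its limit with $\widetilde{\eta v}$ via $L^1$-convergence.
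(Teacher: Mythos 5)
Your argument is correct and follows essentially the same route as the paper: the same inclusion $\partial\Omega_\bullet\subseteq(\partial\Omega\cap\Upsilon)\cup(\overline\Omega\cap\partial\Upsilon)$ (and, for part (ii), the reverse containment $\Upsilon\cap(\partial\Omega\setminus\Gamma)\subseteq\partial\Omega_\bullet\setminus\Gamma_\bullet$), the same case analysis on which factor of $\eta\psi$ vanishes near a given boundary point, and the same density-plus-Leibniz argument to pass from smooth representatives to the full spaces. The only difference is cosmetic: you carry out the boundary bookkeeping pointwise, whereas the paper writes it as a chain of set identities.
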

%
\begin{proof}
For the proof of both items we will employ the following well known set
inclusion (cf. \cite[Ch.~3.8]{dieu})
\begin{equation} \label{e-setincl}
  (\partial \Omega \cap  \Upsilon) \cup ( \Omega \cap  \partial \Upsilon)
        \subseteq \partial \Omega_\bullet \subseteq (\partial \Omega \cap
        \Upsilon) \cup ( \overline{\Omega} \cap  \partial \Upsilon).
\end{equation}
\begin{enumerate}
\item First one observes that the multiplication with $\eta$ combined with the
        restriction is a continuous mapping from $H^{1,q}_\Gamma(\Omega)$ into
        $H^{1,q}(\Omega_\bullet)$. Thus, we only have to show that the image
        is contained in $H^{1,q}_{\Gamma_\bullet}(\Omega_\bullet)$, which, in
	turn, is sufficient to show for elements of the dense subset
        \[ \left\{ v|_\Omega \with v \in C^\infty(\R^d), \supp(v) \cap
                (\partial \Omega \setminus \Gamma) = \emptyset \right\}
        \]
	only. By \eqref{e-setincl} we get for such functions
        \[ \supp (\eta v) \cap ( \partial \Omega_\bullet \setminus
                \Gamma_\bullet) \subseteq \supp (\eta) \cap \supp (v) \cap
                \bigl[ \bigl( (\partial \Omega \cap \Upsilon) \cup
                ( \overline{\Omega} \cap \partial \Upsilon) \bigr) \setminus
                \bigl( \Gamma \cap \Upsilon \bigr) \bigr].
        \]
        Since $(\overline{\Omega} \cap \partial \Upsilon) \cap (\Gamma \cap
        \Upsilon) = \emptyset$, we see
        \begin{align*}
          \bigl( (\partial \Omega \cap \Upsilon) \cup
                ( \overline{\Omega} \cap \partial \Upsilon) \bigr) \setminus
                \bigl( \Gamma \cap \Upsilon \bigr) &= \bigl( (\partial
                \Omega \cap \Upsilon) \setminus (\Gamma \cap \Upsilon)
                \bigr) \cup \bigl( (\overline{\Omega} \cap \partial \Upsilon)
                \setminus (\Gamma \cap \Upsilon) \bigr) \\
          &= \bigl( (\partial \Omega \setminus \Gamma) \cap \Upsilon \bigr)
                \cup (\overline{\Omega} \cap \partial \Upsilon).
        \end{align*}
        This, together with $\supp (\eta) \subseteq \Upsilon$, yields
        \begin{align*}
        \supp (\eta v) \cap ( \partial \Omega_\bullet \setminus
                \Gamma_\bullet) &\subseteq \supp (\eta) \cap \supp (v) \cap
                \bigl( (\partial \Omega \setminus \Gamma) \cap \Upsilon
                \bigr) = \emptyset.
        \end{align*}
\item Let $v \in C^\infty(\R^d)$ with $\supp(v) \cap (\partial \Omega_\bullet
        \setminus \Gamma_\bullet) = \emptyset$. Since by the left hand side of
        \eqref{e-setincl} we have
        \[ \partial \Omega_\bullet \setminus \Gamma_\bullet \supseteq (\partial
                \Omega \cap \Upsilon) \setminus \Gamma_\bullet = \Upsilon \cap
                (\partial \Omega \setminus \Gamma),
        \]
        it follows $\supp(v) \cap \bigl( \Upsilon \cap (\partial \Omega
        \setminus \Gamma) \bigr) = \emptyset$. Combining this with $\supp(\eta)
        \subseteq \Upsilon$, we obtain
        \[ \supp (\eta v) \cap (\partial \Omega \setminus \Gamma) = \supp (\eta
                v) \cap \bigl( \Upsilon \cap (\partial \Omega \setminus
                \Gamma \bigr) = \emptyset,
        \]
        so $\eta v|_\Omega \in H^{1,q}_\Gamma(\Omega)$. Furthermore, it is not
        hard to see that $\|\eta v \|_{H^{1,q}(\Omega)} \le c_\eta
        \|v\|_{H^{1,q}(\Omega_\bullet)}$, where the constant
	$c_\eta$ is independent from
	$v$. Thus, the assertion follows, since $\{ v|_{\Omega_\bullet} \with v
	\in C^\infty(\R^d), \ \supp(v) \cap (\partial \Omega_\bullet \setminus
	\Gamma_\bullet ) = \emptyset \}$ is dense in
	$H^{1,q}_{\Gamma_\bullet}(\Omega_\bullet)$ and
	$H^{1,q}_\Gamma(\Omega)$ is closed in $H^{1,q}(\Omega)$.
        \qedhere
\end{enumerate}
\end{proof}
%
\begin{lemma} \label{l-project}
Let $\Omega$, $\Gamma$, $\Upsilon$, $\eta$, $\Omega_\bullet$ and
$\Gamma_\bullet$ be as in the preceding lemma, but assume $\eta$ to be real
valued. Denote by $\mu_\bullet$ the restriction of the coefficient function
$\mu$ to $\Omega_\bullet$ and assume $v \in H^{1,2}_\Gamma(\Omega)$ to be the
solution of
\[ - \nabla \cdot \mu \nabla v = f \in \breve H_\Gamma^{-1,2}(\Omega).
\]
Then the following holds true:
\begin{enumerate}
\item For all $q \in \left]1, \infty \right[$ the anti-linear form
        \[ f_\bullet : w \mapsto \langle f, \widetilde {\eta w}
                \rangle_{\breve H^{-1,2}_\Gamma}
        \]
        (where $\widetilde {\eta w}$ again means the extension of $\eta
	w$ by
	zero to the whole $\Omega$) is well defined and continuous on
        $H^{1,q'}_{\Gamma_\bullet}(\Omega_\bullet)$, whenever $f$ is an
        anti-linear form from $\breve H^{-1,q}_\Gamma(\Omega)$. The mapping
        $\breve H^{-1,q}_\Gamma(\Omega) \ni f \mapsto f_\bullet \in
	\breve H^{-1,q}_{\Gamma_\bullet}(\Omega_\bullet)$
        is continuous.
\item If we denote the anti-linear form
        \[ H^{1,2}_{\Gamma_\bullet}(\Omega_\bullet) \ni w \mapsto
                \int_{\Omega_\bullet} v \mu_\bullet \nabla \eta \cdot \nabla
                \overline w \,\dd \mathrm{x}
        \]
        by $I_v$, then $u := \eta v|_{\Omega_\bullet}$ satisfies
        \[ - \nabla \cdot \mu_\bullet \nabla u = - \mu_\bullet \nabla
                v|_{\Omega_\bullet} \cdot \nabla \eta|_{\Omega_\bullet} + I_v
                + f_\bullet.
        \]
\item  For every $q \ge 2$ and all $r \in \left[ 2, q^* \right[$ ($q^*$
	denoting again the Sobolev conjugated index of $q$) the mapping
        \begin{equation} \label{e-stoerterm}
          H^{1,q}_\Gamma(\Omega) \ni v \mapsto - \mu_\bullet \nabla
                v|_{\Omega_\bullet} \cdot \nabla \eta|_{\Omega_\bullet} + I_v
                \in \breve H^{-1,r}_{\Gamma_\bullet}(\Omega_\bullet)
        \end{equation}
        is well defined and continuous.
\end{enumerate}
\end{lemma}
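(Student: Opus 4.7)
The plan is to handle the three items in order; the essential tool throughout is Lemma~\ref{l-restr/ext}~ii), which supplies a continuous extension operator $E\colon H^{1,q'}_{\Gamma_\bullet}(\Omega_\bullet) \to H^{1,q'}_\Gamma(\Omega)$ defined by $w \mapsto \widetilde{\eta w}$. For item~i), one simply observes that the anti-linear form $f_\bullet$ factors as $f \circ E$, so
\[
\|f_\bullet\|_{\breve H^{-1,q}_{\Gamma_\bullet}(\Omega_\bullet)} \le \|E\| \cdot \|f\|_{\breve H^{-1,q}_\Gamma(\Omega)},
\]
which establishes both the well-definedness of $f_\bullet$ and the continuity of $f \mapsto f_\bullet$.

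For item~ii), I would test $-\nabla \cdot \mu_\bullet \nabla u$ against an arbitrary $w \in H^{1,2}_{\Gamma_\bullet}(\Omega_\bullet)$ via the definition \eqref{e-defellip} (with $\varkappa=0$) and apply the Leibniz rule twice. Expanding $\nabla u = v\nabla\eta + \eta\nabla v$ yields $I_v(w)$ plus a residual $\int_{\Omega_\bullet} \eta\,\mu_\bullet \nabla v \cdot \nabla \overline{w}\,\dd\mathrm{x}$. In the residual I would write $\eta\nabla \overline{w} = \nabla(\eta\overline{w}) - \overline{w}\nabla\eta$; since $\eta$ is real, $\widetilde{\eta\overline{w}} = \overline{\widetilde{\eta w}}$, and Lemma~\ref{l-restr/ext}~ii) places this zero-extension in $H^{1,2}_\Gamma(\Omega)$. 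The support condition on $\eta$ then allows one to lift the integral from $\Omega_\bullet$ to $\Omega$:
\[
\int_{\Omega_\bullet} \mu_\bullet \nabla v \cdot \nabla(\eta\overline{w})\,\dd\mathrm{x} = \int_\Omega \mu \nabla v \cdot \nabla\overline{\widetilde{\eta w}}\,\dd\mathrm{x} = \langle f, \widetilde{\eta w}\rangle_{\breve H^{-1,2}_\Gamma} = f_\bullet(w),
\]
while the subtraction gives $-\int_{\Omega_\bullet} \overline{w}\,\mu_\bullet \nabla v \cdot \nabla \eta\,\dd\mathrm{x}$, i.e. the $L^2$-pairing of $w$ with $-\mu_\bullet \nabla v|_{\Omega_\bullet} \cdot \nabla\eta|_{\Omega_\bullet}$. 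Collecting the three contributions yields the claimed identity in $\breve H^{-1,2}_{\Gamma_\bullet}(\Omega_\bullet)$.

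For item~iii), the two summands in \eqref{e-stoerterm} are estimated separately. Since $\mu$ and $\nabla\eta$ are bounded, $\mu_\bullet \nabla v|_{\Omega_\bullet} \cdot \nabla\eta|_{\Omega_\bullet}$ belongs to $L^q(\Omega_\bullet)$ with norm controlled by $\|v\|_{H^{1,q}_\Gamma(\Omega)}$. The condition $r < q^*$ furnishes precisely the Sobolev embedding $H^{1,r'}_{\Gamma_\bullet}(\Omega_\bullet) \hookrightarrow L^{q'}(\Omega_\bullet)$, whose dual is the embedding $L^q(\Omega_\bullet) \hookrightarrow \breve H^{-1,r}_{\Gamma_\bullet}(\Omega_\bullet)$ needed to absorb this first summand. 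The same Sobolev range controls $I_v$: by H\"older and $H^{1,q}_\Gamma(\Omega) \hookrightarrow L^r(\Omega)$,
\[
|I_v(w)| \le c\,\|\mu\|_\infty \|\nabla\eta\|_\infty\, \|v\|_{L^r(\Omega_\bullet)}\, \|\nabla w\|_{L^{r'}(\Omega_\bullet)} \le c'\,\|v\|_{H^{1,q}_\Gamma(\Omega)}\,\|w\|_{H^{1,r'}_{\Gamma_\bullet}(\Omega_\bullet)}.
\]

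The main obstacle is the algebraic identity in~ii): one must justify the Leibniz manipulations at the level of $H^{1,2}$ Sobolev functions rather than smooth ones and, more delicately, ensure that $\widetilde{\eta w}$ really lies in $H^{1,2}_\Gamma(\Omega)$ so that it is a legitimate argument of $\langle f,\,\cdot\,\rangle_{\breve H^{-1,2}_\Gamma}$. Both concerns are settled by Lemma~\ref{l-restr/ext}~ii); once this extension property is in hand, items~i) and~iii) reduce to standard duality and Sobolev arguments.
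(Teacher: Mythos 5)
Your proposal is correct and follows essentially the same route as the paper's proof: item~i) via the adjoint of the extension map $w\mapsto\widetilde{\eta w}$ from Lemma~\ref{l-restr/ext}~ii), item~ii) via the same Leibniz expansion of $\int_{\Omega_\bullet}\mu_\bullet\nabla(\eta v)\cdot\nabla\overline w\,\dd\mathrm{x}$ into the three terms $-\mu_\bullet\nabla v\cdot\nabla\eta$, $I_v$ and $f_\bullet$, and item~iii) via the same H\"older/Sobolev-duality estimates using $r<q^*$. No gaps.
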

%
\begin{proof}
\begin{enumerate}
\item The mapping $f \mapsto f_\bullet$ is the adjoint to $v \mapsto
        \widetilde {\eta v }$ which maps by the preceding lemma
        $H^{1,q'}_{\Gamma_\bullet}(\Omega_\bullet)$ continuously into
        $H^{1,q'}_\Gamma(\Omega)$.
\item  For every $w \in H^{1,2}_{\Gamma_\bullet}(\Omega_\bullet)$ we have
        \begin{align}
          & \langle -\nabla \cdot \mu_\bullet \nabla u, w
                \rangle_{\breve H^{-1,2}_{\Gamma_\bullet}(\Omega_\bullet)} =
                \int_{\Omega_\bullet} \mu_\bullet \nabla (\eta v) \cdot \nabla
                \overline w \,\dd\mathrm x \nonumber \\
          \label{e-localizeeq}
                =& -\int_{\Omega_\bullet} \overline w \; \mu_\bullet \nabla v
                \cdot \nabla \eta \,\dd\mathrm x + \int_{\Omega_\bullet} v
                \mu_\bullet \nabla \eta \cdot \nabla \overline w
                \,\dd\mathrm x + \int_{\Omega} \mu \nabla v \cdot \nabla
                \widetilde {(\overline{\eta w})} \,\dd\mathrm x.
        \end{align}
        An application of the definitions of $I_v$ and $f_\bullet $ yields the
        assertion.
\item 
	We regard the terms on the right hand side of \eqref{e-stoerterm} from
        left to right: $|\nabla \eta| \in L^\infty(\Omega_\bullet)$ and
        $|\mu_\bullet \nabla v|_{\Omega_\bullet}| \in L^{q}(\Omega_\bullet)$,
        consequently $\mu_\bullet \nabla v|_{\Omega_\bullet} \cdot \nabla
	\eta|_{\Omega_\bullet} \in L^{q}(\Omega_\bullet)$. This gives by
	Sobolev embedding and duality $\mu_\bullet \nabla v|_{\Omega_\bullet}
	\cdot \nabla \eta|_{\Omega_\bullet} \in (H^{1,r'}(\Omega_\bullet))'
	\hookrightarrow \breve H^{-1,r}_{\Gamma_\bullet}(\Omega_\bullet)$. On the
	other hand, we have $v \in H^{1,q}_\Gamma(\Omega) \hookrightarrow
	L^r(\Omega)$. Thus, concerning $I_v$, we can estimate
        \[ | \langle I_v, w \rangle_{\breve H_{\Gamma_\bullet}^{-1,r}(\Omega_\bullet)}
                | \le \| v \|_{L^{r}(\Omega_\bullet)} \; \| \mu
                \|_{L^\infty(\Omega;\mathcal L(\C^d))} \; \| \nabla \eta
                \|_{L^\infty(\Omega_\bullet)} \; \| w
                \|_{H_{\Gamma_\bullet}^{1,r'}(\Omega_\bullet)},
        \]
what implies the assertion.
        \qedhere
\end{enumerate}
\end{proof}
%
\begin{remark} \label{r-limit}
It is the lack of integrability for the gradient of $v$ (see the
counterexample in \cite[Ch.~4]{e/r/s}) together with the quality of the needed
Sobolev embeddings which limits the quality of the correction terms. In the
end it is this effect which prevents the applicability of the localization
procedure in Subsection~\ref{subsec-core} in higher dimensions -- at least
when one aims at a $q>d$.
\end{remark}
%
%
\begin{remark} \label{r-shrink}
If $v \in L^2(\Omega)$ is a regular distribution, then $v_\bullet$ is the regular 
distribution $(\eta v)|_{\Omega_\bullet}$.
\end{remark}
%
%
\begin{lemma} \label{l-reinterpret}
Let in the terminology of Lemma \ref{l-project} $\chi \in C^\infty(\R^d)$ be a
function with $\supp (\chi) \subseteq \Upsilon$ and $\chi \equiv 1$ in a
neighborhood of $\supp (\eta)$. Furthermore, for $q \in \left] 1, \infty
\right[$, we define for every $f \in
\breve H^{-1,q}_{\Gamma_\bullet}(\Omega_\bullet)$ the element
$f^\bullet \in \breve H^{-1,q}_\Gamma(\Omega)$ by $\langle f^\bullet, \psi
\rangle_{\breve H^{-1,q}_\Gamma(\Omega)} := \langle f, (\chi \psi)|_{\Omega_\bullet}
\rangle_{\breve H^{-1,q}_{\Gamma_\bullet}(\Omega_\bullet)}$, $\psi \in
H^{1,q'}_{\Gamma}(\Omega)$. (The definition is justified by Lemma
\ref{l-restr/ext}.) Then
\begin{enumerate}
\item For every $f \in \breve H^{-1,q}_{\Gamma_\bullet}(\Omega_\bullet)$ one has
        $f^\bullet \in \breve H^{-1,q}_{\Gamma}(\Omega)$, and the mapping 
        \[ 
\breve H^{-1,q}_{\Gamma_\bullet}(\Omega_\bullet) \ni f \mapsto f^\bullet
        \in \breve H^{-1,q}_{\Gamma}(\Omega) 
        \]
        is continuous.
\item For any $f \in \breve H^{-1,q}_{\Gamma}(\Omega)$ one has the identity $\bigl(
        f_\bullet \bigr)^\bullet = \eta f \in \breve H^{-1,q}_{\Gamma}(\Omega)$.
\item If $v \in H^{1,2}_\Gamma(\Omega)$ and $-\nabla \cdot \mu_\bullet \nabla
        (\eta v|_{\Omega_\bullet} ) \in
        \breve H^{-1,q}_{\Gamma_\bullet}(\Omega_\bullet)$, then
        \[ \bigl( -\nabla \cdot \mu_\bullet \nabla ( \eta  v|_{\Omega_\bullet}
                ) \bigr)^\bullet = -\nabla \cdot \mu \nabla ( \eta v ) \in
                \breve H^{-1,q}_{\Gamma}(\Omega).
        \]
\end{enumerate}
\end{lemma}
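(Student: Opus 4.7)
The plan is to treat the three claims in order, with (iii) requiring the most care.

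For (i), Lemma~\ref{l-restr/ext}~i) applied with $\chi$ in place of $\eta$ (which is permissible because $\supp(\chi) \subseteq \Upsilon$) shows that the map $\psi \mapsto (\chi\psi)|_{\Omega_\bullet}$ is continuous from $H^{1,q'}_\Gamma(\Omega)$ into $H^{1,q'}_{\Gamma_\bullet}(\Omega_\bullet)$. Hence $f^\bullet$ is the composition of this continuous linear map with the continuous antilinear form $f$, and $f \mapsto f^\bullet$ is precisely its adjoint, so continuous as an operator $\breve H^{-1,q}_{\Gamma_\bullet}(\Omega_\bullet) \to \breve H^{-1,q}_\Gamma(\Omega)$.

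For (ii), I unwind the definitions. For $\psi \in H^{1,q'}_\Gamma(\Omega)$, the definitions of $(\cdot)^\bullet$ and $(\cdot)_\bullet$ give
\[
\langle (f_\bullet)^\bullet, \psi\rangle_{\breve H^{-1,q}_\Gamma(\Omega)}
 = \langle f_\bullet, (\chi\psi)|_{\Omega_\bullet}\rangle_{\breve H^{-1,q}_{\Gamma_\bullet}(\Omega_\bullet)}
 = \bigl\langle f, \widetilde{\eta\,(\chi\psi)|_{\Omega_\bullet}}\bigr\rangle_{\breve H^{-1,q}_\Gamma(\Omega)}.
\]
Since $\chi\equiv 1$ on a neighborhood of $\supp(\eta)$, the identity $\eta\chi = \eta$ holds on all of $\R^d$, so $\eta\,(\chi\psi)|_{\Omega_\bullet} = (\eta\psi)|_{\Omega_\bullet}$. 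Moreover $\supp(\eta\psi) \subseteq \supp(\eta) \subseteq \Upsilon$, so the extension by zero to $\Omega$ of this function coincides with $\eta\psi$ itself. Thus the last pairing equals $\langle f, \eta\psi\rangle_{\breve H^{-1,q}_\Gamma(\Omega)} = \langle \eta f, \psi\rangle_{\breve H^{-1,q}_\Gamma(\Omega)}$, and (ii) follows.

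For (iii), I test against $\psi \in C^\infty_\Gamma(\Omega)$, which is dense in $H^{1,q'}_\Gamma(\Omega)$ by definition of the latter space. Note $\eta v \in H^{1,2}_\Gamma(\Omega)$ by Proposition~\ref{p-basicl2}~iv) and $\eta v|_{\Omega_\bullet} \in H^{1,2}_{\Gamma_\bullet}(\Omega_\bullet)$ by Lemma~\ref{l-restr/ext}~i). For such $\psi$ both pairings may be expressed as form integrals:
\begin{align*}
\bigl\langle \bigl(-\nabla \cdot \mu_\bullet \nabla(\eta v|_{\Omega_\bullet})\bigr)^\bullet, \psi\bigr\rangle
  &= \int_{\Omega_\bullet} \mu_\bullet \nabla(\eta v|_{\Omega_\bullet}) \cdot \nabla\overline{(\chi\psi)|_{\Omega_\bullet}}\,\dd\mathrm x, \\
\langle -\nabla \cdot \mu \nabla(\eta v), \psi\rangle
  &= \int_\Omega \mu \nabla(\eta v) \cdot \nabla\overline\psi\,\dd\mathrm x.
\end{align*}
Since $\nabla(\eta v) = v\,\nabla\eta + \eta\,\nabla v$ vanishes almost everywhere outside $\supp(\eta)\cap\Omega \subseteq \Omega_\bullet$, the second integral reduces to one over $\Omega_\bullet$ with integrand $\mu_\bullet\nabla(\eta v|_{\Omega_\bullet})\cdot\nabla\overline{\psi|_{\Omega_\bullet}}$. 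Expanding $\nabla(\chi\psi) = \chi\nabla\psi + \psi\nabla\chi$ in the first integral and using that $\chi\equiv 1$ (hence $\nabla\chi = 0$) on a neighborhood of $\supp(\eta)$, the two integrands agree a.e.\ where $\nabla(\eta v|_{\Omega_\bullet})$ does not vanish. The two pairings thus coincide on $C^\infty_\Gamma(\Omega)$; part (i) guarantees that the left hand side is a continuous antilinear form on $H^{1,q'}_\Gamma(\Omega)$, and density extends the identity to all of $H^{1,q'}_\Gamma(\Omega)$, simultaneously showing that $-\nabla\cdot\mu\nabla(\eta v)$ extends to an element of $\breve H^{-1,q}_\Gamma(\Omega)$.

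The main technical obstacle resides in (iii): at the $H^{1,q'}$ regularity level, the form integral for $-\nabla\cdot\mu\nabla(\eta v)$ need not even converge classically when $q > 2$, since $\nabla(\eta v)$ only lies in $L^2$. It is the combination of testing on the smooth dense subspace (where everything is unambiguous) with the continuity supplied by (i) that allows the identification to be propagated to the full space $H^{1,q'}_\Gamma(\Omega)$.
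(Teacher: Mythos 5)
Your proof is correct and follows essentially the same route as the paper's: (i) via the adjoint of $\psi \mapsto (\chi\psi)|_{\Omega_\bullet}$, (ii) by unwinding the definitions using $\eta\chi = \eta$, and (iii) by the form-integral computation exploiting $\supp(\nabla(\eta v)) \subseteq \supp(\eta)$ and $\nabla\chi = 0$ near $\supp(\eta)$. Your only deviations are cosmetic improvements: in (ii) you compute directly for general $f$ where the paper reduces to $f \in L^q$ by density, and in (iii) you make explicit the density step (testing on $C^\infty_\Gamma(\Omega)$ and extending via the continuity from (i)) that the paper's formal computation for arbitrary $\psi \in H^{1,q'}_\Gamma(\Omega)$ glosses over when $q>2$.
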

%
\begin{proof}
\begin{enumerate}
\item The mapping $f \mapsto f^\bullet$ is the adjoint to
        $H^{1,q'}_\Gamma(\Omega) \ni v \mapsto (\chi v)|_{\Omega_\bullet}$
        which acts continuously into
        $H^{1,q'}_{\Gamma_\bullet}(\Omega_\bullet)$, see
        Lemma~\ref{l-restr/ext}.
\item We only need to prove the assertion for elements $f \in L^q(\Omega)$,
        because $L^q(\Omega)$ is dense in $\breve H^{-1,q}_\Gamma(\Omega)$ and the
        mappings $\breve H^{-1,q}_\Gamma(\Omega) \ni f \mapsto \bigl( f_\bullet
        \bigr)^\bullet \in \breve H^{-1,q}_\Gamma(\Omega)$ and
        $\breve H^{-1,q}_\Gamma(\Omega) \ni f \mapsto \eta f \in
        \breve H^{-1,q}_\Gamma(\Omega)$ are both continuous. For $f \in L^q(\Omega)$
        the assertion follows directly from the definitions of $f_\bullet$ and
        $f^\bullet$.
\item For any $\psi \in H^{1,q'}_\Gamma (\Omega)$ we have
        \begin{align*}
          \bigl\langle \bigl( -\nabla \cdot \mu_\bullet \nabla ( \eta
                v|_{\Omega_\bullet} ) \bigr)^\bullet, \psi
                \bigr\rangle_{\breve H^{-1,q}_\Gamma(\Omega)} &= \bigl\langle -\nabla
                \cdot \mu_\bullet \nabla ( \eta v|_{\Omega_\bullet} ), ( \chi
                \psi ) |_{\Omega_\bullet}
                \bigr\rangle_{\breve H^{-1,q}_{\Gamma_\bullet} (\Omega_\bullet)} \\
          &= \int_{\Omega_\bullet} \mu_\bullet \nabla (\eta v) \cdot \nabla
                (\chi \psi) \, \dd \mathrm x = \int_{\Omega} \mu \nabla (\eta
                v) \cdot \nabla (\chi \psi) \, \dd \mathrm x \\
          &= \int_{\Omega} \mu \nabla (\eta v) \cdot \nabla \psi \, \dd
                \mathrm x = \langle -\nabla \cdot \mu \nabla (\eta v), \psi
                \rangle_{\breve H^{-1,q}_\Gamma (\Omega)},
        \end{align*}
        because $\eta \equiv 0$ on $\Omega \setminus \Upsilon$ and $\chi
        \equiv 1$ on $\supp(\eta)$.
        \qedhere
\end{enumerate}
\end{proof}
\subsection{Core of the proof of Theorem \ref{t-central}}\label{subsec-core}
We are now in the position to start the proof of Theorem~\ref{t-central}. We
first note that in any case the operator $- \nabla \cdot \mu \nabla$ admits
maximal parabolic regularity on the Hilbert space $\breve H^{-1,2}_\Gamma$,
since its negative generates an analytic semigroup on this space by
Proposition~\ref{p-basicl2}, cf. Remark~\ref{r-ebed}~iii). Thus, defining
\[ M_{\mathrm{MR}}:= \{ q \ge 2 \with - \nabla \cdot \mu \nabla
	\text{ admits maximal regularity on } \breve H^{-1, q}_\Gamma \}
\]
and $q_{\mathrm{MR}} := \sup M_{\mathrm{MR}}$, yields $q_{\mathrm{MR}}\ge 2$.
In the same way as for $q_{\mathrm{iso}}$ and using Lemma~\ref{l-maxint}, we
see by interpolation that $ M_{\mathrm{MR}}$ is $\{2 \}$ or an interval with
left endpoint $2$.

Our aim is to show that in fact $q_{\mathrm{MR}} \ge q_{\mathrm{iso}}^*$, so we assume that
 $q_{\mathrm{MR}} < q_{\mathrm{iso}}^*$. The main step towards a contradiction is contained
 in the following lemma.
%
\begin{lemma} \label{l-shrinkextend}
Let $\Omega$, $\Gamma$, $\Upsilon$, $\eta$, $\Omega_\bullet$,
$\Gamma_\bullet$, $\mu_\bullet$ be as before. Assume that $-\nabla \cdot
\mu_\bullet \nabla$ satisfies maximal parabolic regularity on
$\breve H^{-1,q}_{\Gamma_\bullet}(\Omega_\bullet)$ for all $q \in \left[ 2, \infty
\right[$ and that $-\nabla \cdot \mu \nabla$ satisfies maximal parabolic
regularity on $\breve H^{-1,q}_\Gamma(\Omega)$ for some $q \in \left[ 2, q_{\mathrm{iso}}
\right[$. If $r \in \left[ q, q^* \right[$ and $G \in
L^s(J;\breve H^{-1,r}_{\Gamma}(\Omega)) \hookrightarrow
L^s(J;\breve H^{-1,q}_{\Gamma}(\Omega))$, then the unique solution $V \in
W^{1,s}(J; \breve H^{-1,q}_\Gamma(\Omega)) \cap
L^s(J; dom_{\breve H_\Gamma^{-1,q}(\Omega)}(- \nabla \cdot \mu \nabla))$ of
\begin{equation} \label{e-parequation}
  V' - \nabla \cdot \mu \nabla V = G, \qquad V(T_0) = 0,
\end{equation}
even satisfies
\[  \eta V \in W^{1,s}(J;\breve H^{-1,r}_{\Gamma}(\Omega)) \cap
        L^s(J;dom_{\breve H^{-1,r}_{\Gamma}(\Omega)}(-\nabla \cdot \mu \nabla)).
\]
\end{lemma}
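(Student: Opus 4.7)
The plan is to localise the parabolic problem \eqref{e-parequation} to the subdomain $\Omega_\bullet$ via the cut-off $\eta$, to cash in the hypothesised stronger maximal regularity on the smaller domain, and finally to glue back to $\Omega$ by means of the operator $(\cdot)^\bullet$ of Lemma~\ref{l-reinterpret}.

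Since $q<q_{\mathrm{iso}}$, Remark~\ref{r-darunter}~ii) identifies $dom_{\breve H^{-1,q}_\Gamma(\Omega)}(-\nabla\cdot\mu\nabla)$ with $H^{1,q}_\Gamma(\Omega)$, so $V\in L^s(J;H^{1,q}_\Gamma(\Omega))$. Set $U(t):=\eta V(t)|_{\Omega_\bullet}\in H^{1,q}_{\Gamma_\bullet}(\Omega_\bullet)$ (Lemma~\ref{l-restr/ext}~i). Applying Lemma~\ref{l-project}~ii) pointwise in $t$ to the right-hand side $f=-\nabla\cdot\mu\nabla V(t)=G(t)-V'(t)$ and exploiting that the continuous, $t$-independent map $(\cdot)_\bullet$ commutes with the distributional time derivative (so that $(V')_\bullet=U'$), one arrives at the localised Cauchy problem
\[
U' - \nabla\cdot\mu_\bullet\nabla U \,=\, F \,:=\, -\mu_\bullet\nabla V|_{\Omega_\bullet}\cdot\nabla\eta|_{\Omega_\bullet} + I_V + G_\bullet,\qquad U(T_0)=0,
\]
posed a priori in $\breve H^{-1,q}_{\Gamma_\bullet}(\Omega_\bullet)$.

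Next, Lemma~\ref{l-project}~i) places $G_\bullet$ in $L^s(J;\breve H^{-1,r}_{\Gamma_\bullet}(\Omega_\bullet))$, while Lemma~\ref{l-project}~iii), applied with the chosen $r\in[q,q^*)$, puts the two remaining summands of $F$ in the same space (using $V\in L^s(J;H^{1,q}_\Gamma)$). The hypothesis that $-\nabla\cdot\mu_\bullet\nabla$ admits maximal parabolic regularity on $\breve H^{-1,r}_{\Gamma_\bullet}$ then produces a solution $\widetilde U\in W^{1,s}(J;\breve H^{-1,r}_{\Gamma_\bullet})\cap L^s(J;dom_{\breve H^{-1,r}_{\Gamma_\bullet}}(-\nabla\cdot\mu_\bullet\nabla))$ of the same Cauchy problem. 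Because $r\ge q$, $\widetilde U$ also lies in the corresponding class at level $q$, so uniqueness in that class (itself granted by the blanket assumption on $\Omega_\bullet$ at level $q$) forces $\widetilde U=U$.

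It remains to transport the gained regularity back to $\Omega$. Choosing $\chi\in C^\infty_0(\R^d)$ with $\supp\chi\subset\Upsilon$ and $\chi\equiv 1$ on $\supp\eta$ activates the operator $(\cdot)^\bullet$ from Lemma~\ref{l-reinterpret}. A short direct computation on regular distributions yields $U^\bullet=\eta V$, Lemma~\ref{l-reinterpret}~iii) gives $(-\nabla\cdot\mu_\bullet\nabla U)^\bullet=-\nabla\cdot\mu\nabla(\eta V)$, and Lemma~\ref{l-reinterpret}~i) guarantees that both lie in $L^s(J;\breve H^{-1,r}_\Gamma(\Omega))$. The equation thus globalises to $(\eta V)'-\nabla\cdot\mu\nabla(\eta V)=F^\bullet\in L^s(J;\breve H^{-1,r}_\Gamma(\Omega))$, which recovers the time derivative of $\eta V$ in the target space. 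The main technical obstacle is the identification $\widetilde U=U$: the two functions live a priori in different regularity scales, and one must drop down to the common class $\breve H^{-1,q}_{\Gamma_\bullet}$ via the embedding $\breve H^{-1,r}_{\Gamma_\bullet}\hookrightarrow\breve H^{-1,q}_{\Gamma_\bullet}$ and invoke uniqueness there; a secondary subtlety is the careful bookkeeping of the three operators $(\cdot)|_{\Omega_\bullet}$, $(\cdot)_\bullet$ and $(\cdot)^\bullet$ on regular distributions versus on abstract elements of the negative Sobolev scale, which is what underlies both $U^\bullet=\eta V$ and $(V')_\bullet=U'$.
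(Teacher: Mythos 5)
Your proposal is correct and follows the same route as the paper's proof: localise via Lemma~\ref{l-project}, commute $(\cdot)_\bullet$ with the time derivative, solve the localised Cauchy problem by maximal regularity at level $r$ on $\Omega_\bullet$, identify the two solutions by uniqueness at level $q$, and globalise with $(\cdot)^\bullet$ using Lemma~\ref{l-reinterpret}. The only (harmless) cosmetic differences are that you read $\eta V\in L^s(J;\breve H^{-1,r}_\Gamma)$ off $U^\bullet$ rather than off the Sobolev embedding $H^{1,q}_\Gamma\hookrightarrow L^r$, and you recover $(\eta V)'$ from the globalised equation rather than transporting $((V_\bullet)')^\bullet$ directly.
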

%
\begin{proof}
$V \in L^s(J;dom_{\breve H^{-1,q}_\Gamma(\Omega)}(-\nabla \cdot \mu \nabla))$ implies, due to
 our supposition $q \in \left[ 2, q_{\mathrm{iso}} \right[$ and
Remark~\ref{r-darunter}~ii), $V \in L^s(J;H^{1,q}_\Gamma(\Omega))$.
Of course, equation \eqref{e-parequation} is to be read as follows: For almost
all $t \in J$ it holds $-\nabla \cdot \mu \nabla \bigl( V(t) \bigr) = G(t) -
V'(t)$, where $V'$ is the derivative in the sense of $\breve H^{-1,q}_\Gamma$-valued
distributions. Hence, Lemma \ref{l-project}~ii) implies for almost all $t \in
J$
\begin{equation} \label{e-absch35}
  (V'(t))_\bullet - \nabla \cdot \mu_\bullet \nabla \bigl( (\eta
        V(t))|_{\Omega_\bullet} \bigr) = - \mu_\bullet \nabla
        V(t)|_{\Omega_\bullet} \cdot \nabla \eta|_{\Omega_\bullet} + I_{V(t)}
        + (G(t))_\bullet.
\end{equation}
Since by Lemma~\ref{l-project}~i) the mapping $\breve H^{-1,r}_\Gamma(\Omega) \ni
f \mapsto f_\bullet \in \breve H^{-1,r}_{\Gamma_\bullet}(\Omega_\bullet)$ is
continuous, we have $\bigl( G(\cdot) \bigr)_\bullet \in
L^s(J;\breve H^{-1,r}_{\Gamma_\bullet}(\Omega_\bullet))$. Moreover, the property $V
\in L^s(J;H^{1,q}_\Gamma(\Omega))$ and iii) of Lemma~\ref{l-project} assure
$- \mu_\bullet \nabla V(\cdot)|_{\Omega_\bullet} \cdot \nabla
\eta|_{\Omega_\bullet} + I_{V(\cdot)} \in
L^s(J;\breve H^{-1,r}_{\Gamma_\bullet}(\Omega_\bullet))$. Thus, the right hand side
of \eqref{e-absch35} is contained in
$L^s(J;\breve H^{-1,r}_{\Gamma_\bullet}(\Omega_\bullet))
\hookrightarrow L^s(J;\breve H^{-1,q}_{\Gamma_\bullet}(\Omega_\bullet))$.

Let us next inspect the term $(V'(t))_\bullet$: Since
$\breve H^{-1,q}_\Gamma(\Omega) \ni w \mapsto w_\bullet \in
\breve H^{-1,q}_{\Gamma_\bullet}(\Omega_\bullet)$ is linear and continuous, it equals
$(V_\bullet)'(t)$. But by Remark~\ref{r-shrink} the function $t \mapsto V_\bullet(t)$ is
 identical to the function $t \mapsto \bigl( \eta V(t)\bigr)
|_{\Omega_\bullet}$. Hence, $\bigl( \eta V(\cdot) \bigr)|_{\Omega_\bullet}$
satisfies the following equation in
$\breve H^{-1,q}_{\Gamma_\bullet}(\Omega_\bullet)$:
\begin{equation} \label{e-absch36}
  \bigl( (\eta V )|_{\Omega_\bullet} \bigr)' (t) - \nabla \cdot \mu_\bullet
        \nabla \bigl( (\eta V(t))|_{\Omega_\bullet} \bigr) = - \mu_\bullet
        \nabla V(t)|_{\Omega_\bullet} \cdot \nabla \eta|_{\Omega_\bullet} +
        I_{V(t)} + (G(t))_\bullet.
\end{equation}
By supposition, $-\nabla \cdot \mu_\bullet \nabla$ fulfills maximal parabolic
regularity in $\breve H^{-1,r}_{\Gamma_\bullet}(\Omega_\bullet)$. As the right
hand side of \eqref{e-absch36} is in fact from
$L^s(J;\breve H^{-1,r}_{\Gamma_\bullet}(\Omega_\bullet))$, this implies that there is
a unique function $U \in W^{1,s}(J;\breve H^{-1,r}_{\Gamma_\bullet}(\Omega_\bullet))
\cap L^s(J;dom_{\breve H^{-1,r}_{\Gamma_\bullet}(\Omega_\bullet)}(-\nabla \cdot
\mu_\bullet \nabla))$ which satisfies $U(T_0) = 0$ and
\begin{equation} \label{e-absch37}
  U'(t) - \nabla \cdot \mu_\bullet \nabla \bigl( U(t) \bigr) = - \mu_\bullet
        \nabla V(t)|_{\Omega_\bullet} \cdot \nabla \eta|_{\Omega_\bullet} +
        I_{V(t)} + (G(t))_\bullet
\end{equation}
as an equation in $L^s(J;\breve H^{-1,r}_{\Gamma_\bullet}(\Omega_\bullet))$. However,
this last equation can (using the embedding $\breve H^{-1,r}_{\Gamma_\bullet}(\Omega_\bullet)
\hookrightarrow \breve H^{-1,q}_{\Gamma_\bullet}(\Omega_\bullet)$) also be read as an
equation in $L^s(J;\breve H^{-1,q}_{\Gamma_\bullet}(\Omega_\bullet))$. Since the
solution is unique in $L^s(J;\breve H^{-1,q}_{\Gamma_\bullet}(\Omega_\bullet))$,
\eqref{e-absch36} and \eqref{e-absch37} together imply  $U = \bigl( \eta
V(\cdot) \bigr)|_{\Omega_\bullet}$ and, consequently,
\begin{equation} \label{e-contain07}
  \bigl( V(\cdot) \bigr)_\bullet = \bigl( \eta V(\cdot) \bigr)
        |_{\Omega_\bullet} \in
        W^{1,s}(J;\breve H^{-1,r}_{\Gamma_\bullet}(\Omega_\bullet)) \cap
        L^s(J;dom_{\breve H^{-1,r}_{\Gamma_\bullet}(\Omega_\bullet)}(-\nabla \cdot
        \mu_\bullet \nabla)),
\end{equation}
see Remark~\ref{r-shrink}.

We now aim at a re-interpretation of this regularity in terms of the space
$W^{1,s}(J;\breve H^{-1,r}_{\Gamma}(\Omega)) \cap 
L^s(J;dom_{\breve H^{-1,r}_{\Gamma}(\Omega)}(-\nabla \cdot \mu \nabla))$.
Observe that \eqref{e-contain07} implies 
$-\nabla \cdot \mu_\bullet \nabla \bigl( \bigl( \eta V(\cdot) \bigr)|_{\Omega_\bullet} \bigr) \in
L^s(J;\breve H^{-1,r}_{\Gamma_\bullet}(\Omega_\bullet))$. Applying Lemma~\ref{l-reinterpret}~iii),
 this gives
\begin{equation} \label{e-re0}
 -\nabla \cdot \mu \nabla \bigl( \eta V(\cdot) \bigr) \in
        L^s(J;\breve H^{-1,r}_{\Gamma}(\Omega)).
\end{equation}
Obviously, $V \in L^s(J;H^{1,q}_\Gamma)$ yields $\eta V \in
L^s(J;H^{1,q}_\Gamma)$, while $r \in \left] q, q^* \right[$ implies the
embedding $H^{1,q}_\Gamma \hookrightarrow L^{r} \hookrightarrow
\breve H^{-1,r}_\Gamma$. Hence, one obtains
\begin{equation} \label{e-imbedhq}
  \eta V \in L^s(J;H^{1,q}_\Gamma) \hookrightarrow  L^s(J;\breve H^{-1,r}_\Gamma).
\end{equation}
Combining this with \eqref{e-re0}, we find
\[ \eta V(\cdot) \in L^s(J;dom_{\breve H^{-1,r}_{\Gamma}(\Omega)}(-\nabla
	\cdot \mu \nabla)).
\]
On the other hand, \eqref{e-contain07} implies
\[ \bigl( \bigl( V(\cdot) \bigr)_\bullet \bigr)' \in
	L^{s}(J;\breve H^{-1,r}_{\Gamma_\bullet}(\Omega_\bullet)).
\]
By Lemma~\ref{l-reinterpret}~i), we have
$\bigl( \bigl( ( V(\cdot) )_\bullet \bigr)' \bigr)^\bullet \in L^s(J;\breve H^{-1,r}_{\Gamma}(\Omega))$.
 But as before $\bigl( \bigl( ( V(\cdot)
)_\bullet \bigr)' \bigr)^\bullet$ equals $\bigl( \bigl( ( V(\cdot) )_\bullet
\bigr)^\bullet \bigr)'$, which, by Lemma~\ref{l-reinterpret}~ii), is
$\bigl( \eta V(\cdot) \bigr)'$. Summing up, we get
\[ \bigl( \eta V(\cdot) \bigr)' \in L^{s}(J;\breve H^{-1,r}_{\Gamma}(\Omega)).
\]
Taking into account \eqref{e-imbedhq} again, this gives
\[ \eta V(\cdot) \in W^{1,s}(J;\breve H^{-1,r}_{\Gamma}(\Omega)),
\]
what proves the lemma.
\end{proof}

\begin{proof}[Proof of Theorem~\ref{t-central}]
For every $x \in \Omega$ let $\Xi_{\mathrm{x}} \subseteq \Omega$ be an open
cube, containing ${\mathrm{x}}$. Furthermore, let for any point ${\mathrm{x}}
\in \partial \Omega$ an open neighborhood be given according to the
supposition of the theorem (see Assumption \ref{a-groegerregulaer}).
Possibly shrinking this neighborhood to a smaller one, one obtains a new
neighborhood $\Upsilon_{\mathrm{x}}$, and a bi-Lipschitz, volume-preserving
mapping $\phi_{\mathrm{x}}$ from a neighborhood of
$\overline{\Upsilon_{\mathrm{x}}}$ into $\R^d$ such that $\phi_{\mathrm{x}}
( \Upsilon_{\mathrm{x}} \cap (\Omega \cup \Gamma) ) = \beta K_-$, $\beta
(K_- \cup \Sigma) $ or $\beta (K_- \cup \Sigma_0)$ for some $\beta
= \beta(\mathrm x)>0$. 

Obviously, the $\Xi_{\mathrm{x}}$ and $\Upsilon_{\mathrm{x}}$ together form an
open covering of $\overline \Omega$. Let $\Xi_{{\mathrm{x}}_1}, \ldots,
\Xi_{{\mathrm{x}}_k}, \Upsilon_{\mathrm{x}_{k+1}}, \ldots,
\Upsilon_{\mathrm{x}_l}$ be a finite subcovering and $\eta_1, \ldots, \eta_l$
a $C^\infty$ partition of unity, subordinate to this subcovering. Set
$\Omega_j := \Xi_{\mathrm{x}_j} = \Xi_{\mathrm{x}_j} \cap \Omega$ for $j \in
\{1, \ldots, k\}$ and $\Omega_j := \Upsilon_{\mathrm{x}_j} \cap \Omega$ for $j
\in \{k+1, \ldots, l\}$. Moreover, set $\Gamma_j := \emptyset$ for $j \in \{1,
\ldots, k\}$ and $\Gamma_j := \Upsilon_{\mathrm{x}_j} \cap \Gamma$ for $j \in
\{ k+1, \ldots, l \}$.

Denoting the restriction of $\mu$ to $\Omega_j$ by $\mu_j$, each operator
$-\nabla \cdot \mu_j \nabla$ satisfies maximal parabolic regularity in
$\breve H^{-1,q}_{\Gamma_j}(\Omega_j)$ for all $q \in [2,\infty[$ and all $j$, according to
 Theorem~\ref{t-maxparloc}.

Assuming now $q_{\mathrm{MR}} < q_{\mathrm{iso}}^*$, we may choose some $q \in
\left[2, q_{\mathrm{iso}} \right[$ with $q_{\mathrm{MR}} < q^*$. In order to
see this, we first observe that
\begin{equation} \label{e-SobMon}
  p \le q \iff p^* \le q^*
\end{equation}
holds, whenever $p^* < \infty$. Setting $q = \max \{ 2, d \tilde q/(d +
\tilde q) \}$ for some $\tilde q \in \left] q_{\mathrm{MR}},
q_{\mathrm{iso}}^* \right[$, this, together with $(d \tilde q / ( d +
\tilde q))^* = \tilde q$, yields immediately that $q^* = \max \{ 2^*,
\tilde q \} \ge \tilde q > q_{\mathrm{MR}}$. Furthermore, again by
\eqref{e-SobMon}, we have $q < q_{\mathrm{iso}}$, since $q^* <
q_{\mathrm{iso}}^*$ and finally $q \ge 2$ is guaranteed by the choice of
$q$.
Having the so chosen $q$ at hand, we take some $ r \in \left] \max \{ q,
q_{\mathrm{MR}} \}, q^* \right[$, which is possible due to $q < q^*$. Now, let
$G \in L^s(J;\breve H^{-1,r}_\Gamma)$ be given. Then by Lemma~\ref{l-shrinkextend}
the unique solution $V \in W^{1,s}(J;\breve H^{-1,q}_\Gamma) \cap
L^s(J;H^{1,q}_\Gamma)$ of \eqref{e-parequation} satisfies $\eta_j V \in
W^{1,s}(J;\breve H^{-1,r}_{\Gamma}(\Omega)) \cap L^s(J;dom_{\breve H^{-1,r}_{\Gamma}(\Omega)}
(-\nabla \cdot \mu \nabla))$ for every
$j$. This implies maximal parabolic regularity for $-\nabla \cdot \mu \nabla$ on 
$\breve H^{-1,r}_\Gamma$, in contradiction to $r > q_{\mathrm{MR}}$. Thus we
have $q_{\mathrm{MR}} \ge q_{\mathrm{iso}}^*$ and the proof is finished.
\end{proof}
%
\begin{remark} \label{r-REMArk}
Note that Theorem~\ref{t-central} already yields maximal regularity
of $-\nabla \cdot \mu \nabla$ on $\breve H^{-1, q}_\Gamma$ for all $q \in [2, 2^*[$
without any additional information on $dom_{\breve H^{-1,q}_{\Gamma}}(-\nabla \cdot
\mu \nabla)$ nor on $dom_{\breve H^{-1,q}_{\Gamma_j}(\Omega_j)}(-\nabla
\cdot \mu_j \nabla)$.

In the 2-$d$ case this already implies maximal regularity for every $q \in
[2,\infty[$. Taking into account Remark~\ref{r-darunter}~i), without further
knowledge on the domains we get in the 3-$d$ case every $q \in [2, 6 +
\varepsilon[$ and in the $4$-$d$ case every $q \in [2, 4 + \varepsilon[$,
where $\varepsilon$ depends on $\Omega, \Gamma, \mu$.
\end{remark}
\subsection{The operator $A$}
Next we carry over the maximal parabolic regularity result, up to
now proved for $-\nabla \cdot \mu \nabla$ on the spaces $\breve H^{-1,q}_\Gamma$, to
the operator $A$ and to a much broader class of distribution spaces. For this
we need the following perturbation result.

\begin{lemma} \label{l-relativ}
Suppose $q \ge 2$, $\varsigma \in \bigl] 1 -  \frac{1}{q}, 1 \bigr]$ and
$\varkappa \in L^\infty(\Gamma,\dd\sigma)$ and let $\Omega, \Gamma$ satisfy
Assumption~\ref{a-groegerregulaer}. 
If we define the mapping $Q
:dom_{\breve H^{-\varsigma,q}_\Gamma}(-\nabla \cdot \mu \nabla) \to \breve H^{-\varsigma,q}_\Gamma$ by
\[ \langle Q \psi, \varphi \rangle_{H^{-\varsigma,q}_\Gamma} := \int_\Gamma
        \varkappa \, \psi \, \overline \varphi \,\dd\sigma, \quad \varphi \in
        H^{\varsigma,q'}_\Gamma,
\]
then $Q$ is well defined and continuous. Moreover, it is relatively bounded with respect to
$-\nabla \cdot \mu \nabla$, when considered on the space
$\breve H^{-\varsigma,q}_\Gamma$, and the relative bound may be taken arbitrarily small.
\end{lemma}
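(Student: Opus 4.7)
My plan is to factor $Q$ through trace maps to obtain continuity from a Sobolev space $H^{\tau,q}(\Omega)$ into $X:=\breve H^{-\varsigma,q}_\Gamma$, then to show that $dom_X(-\nabla\cdot\mu\nabla)$ embeds into some such $H^{\tau,q}$ with $\tau>1/q$, and finally to upgrade to an arbitrarily small relative bound via interpolation. For the first point, if $\psi$ has a trace $\psi|_\Gamma\in L^q(\Gamma,\sigma)$, then for any $\varphi\in H^{\varsigma,q'}_\Gamma$ H\"older's inequality gives $|\langle Q\psi,\varphi\rangle| \le \|\varkappa\|_\infty\|\psi|_\Gamma\|_{L^q(\Gamma,\sigma)}\|\varphi|_\Gamma\|_{L^{q'}(\Gamma,\sigma)}$. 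Since $\varsigma>1-1/q=1/q'$, Theorem~\ref{t-embedbound} applied with $\Pi=\Gamma$ and $\varpi=\sigma$ (permissible because $\sigma$ satisfies the required decay by its construction in Section~3) controls the $\varphi$-factor by $\|\varphi\|_{H^{\varsigma,q'}_\Gamma}$, and a second application of Theorem~\ref{t-embedbound} yields that $Q$ extends continuously as a map $H^{\tau,q}(\Omega)\to X$ for every $\tau>1/q$.

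It remains to control $\|\psi\|_{H^{\tau,q}}$ for $\psi$ in the domain. Here I would combine two regularity estimates. First, since $\varsigma\le 1$ and $q\ge 2$ one has $X\hookrightarrow\breve H^{-1,2}_\Gamma$, and the Lax–Milgram isomorphism $-\nabla\cdot\mu\nabla+1:H^{1,2}_\Gamma\to\breve H^{-1,2}_\Gamma$ then gives $\|\psi\|_{H^{1,2}_\Gamma}\le c\|(-\nabla\cdot\mu\nabla+1)\psi\|_X$. Second, Proposition~\ref{p-basicl2}~vi) combined with $X\hookrightarrow\breve H^{-1,q}_\Gamma$ yields $\|\psi\|_{L^q}\le c\|(-\nabla\cdot\mu\nabla+1)\psi\|_X$. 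Complex interpolation of the pair $(H^{1,2}_\Gamma,L^q)$ together with Sobolev embedding produces, for $\theta$ small enough, an embedding into $H^{\tau,q}$ for some $\tau>1/q$, together with the interpolation inequality $\|\psi\|_{H^{\tau,q}}\le c\|\psi\|_{H^{1,2}}^{1-\theta}\|\psi\|_{L^q}^{\theta}$.

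To pass from $A_0$-boundedness of $Q$ to an arbitrarily small relative bound I would use that the embedding $dom_X(-\nabla\cdot\mu\nabla)\hookrightarrow X$ is compact (Proposition~\ref{p-basicl2}~vi)) and invoke an Ehrling-type inequality, or equivalently the moment inequality for sectorial operators (noting that $-\nabla\cdot\mu\nabla$ is sectorial on $X$ by interpolation between its analytic semigroup generation on $L^q$ and on $\breve H^{-1,2}_\Gamma$, cf.\ Proposition~\ref{p-basicl2}~i),~v)). Either route gives $\|\psi\|_{L^q}\le\epsilon\|(-\nabla\cdot\mu\nabla)\psi\|_X+C_\epsilon\|\psi\|_X$ for every $\epsilon>0$, which combined with the interpolation bound above and the continuity from the first paragraph yields $\|Q\psi\|_X\le c\epsilon\|(-\nabla\cdot\mu\nabla)\psi\|_X + cC_\epsilon\|\psi\|_X$.

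The main obstacle is the embedding of the domain into $H^{\tau,q}$ with $\tau>1/q$: the balance of Sobolev and integrability indices is delicate and need not work uniformly for all $q\ge 2$ in high dimensions. To cover that case, I would refine the interpolation by using as an additional endpoint the stronger mapping $(-\nabla\cdot\mu\nabla+1)^{-1}:\breve H^{-1,q}_\Gamma\to L^\infty$ available for $q\ge d+1$ (Proposition~\ref{p-basicl2}~vi)), which supplies arbitrarily good integrability at the cost of no extra differentiability and thus always allows one to reach a trace-admissible fractional Sobolev space.
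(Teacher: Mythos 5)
Your overall architecture is the same as the paper's: bound $\|Q\psi\|_{\breve H^{-\varsigma,q}_\Gamma}$ by $\|\varkappa\|_\infty\,\|\psi\|_{L^q(\partial\Omega,\dd\sigma)}\sup\|\varphi\|_{L^{q'}(\partial\Omega,\dd\sigma)}$, control the $\varphi$-factor by Theorem~\ref{t-embedbound} using $\varsigma>1-\tfrac1q$, reduce the $\psi$-factor to an embedding of $dom_X(-\nabla\cdot\mu\nabla)$ into a trace-admissible space, and finish with Ehrling's lemma (this is exactly how the paper obtains the arbitrarily small relative bound, so that last step is fine, modulo the minor point that Ehrling is most cleanly applied to the full trace-controlling norm rather than to $\|\psi\|_{L^q(\Omega)}$ alone, since otherwise you still have to absorb the $\|\psi\|_{H^{1,2}}^{1-\theta}$ factor by an extra Young-inequality step).

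The genuine gap is in the middle step, and it is quantitative. Interpolating the two bounds $\|\psi\|_{H^{1,2}}\le c\|(A_0+1)\psi\|_X$ and $\|\psi\|_{L^q}\le c\|(A_0+1)\psi\|_X$ gives $\psi\in[H^{1,2}_\Gamma,L^q]_\theta=H^{1-\theta,r}_\Gamma$ with $\tfrac1r=\tfrac{1-\theta}{2}+\tfrac\theta q$, and the Sobolev shift needed to land in $H^{\tau,q}$ with $\tau>\tfrac1q$ costs $d(\tfrac1r-\tfrac1q)$; one checks that a suitable $\theta$ exists only for $q<\tfrac{2(d-1)}{d-2}$ when $d\ge3$. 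Your fallback endpoint $(-\nabla\cdot\mu\nabla+1)^{-1}:\breve H^{-1,q}_\Gamma\to L^\infty$ is only available for $q>d$. For $d=2,3$ the two regimes overlap, but for $d\ge4$ they leave the range $\tfrac{2(d-1)}{d-2}\le q\le d$ uncovered (e.g.\ $d=4$, $q\in[3,4]$), and the lemma is claimed for all $q\ge2$ in every dimension. The paper closes exactly this gap by a different interpolation: it writes the \emph{base space} as $\breve H^{-1,q}_\Gamma=[\breve H^{-1,2d}_\Gamma,\breve H^{-1,2}_\Gamma]_\theta$ with $\theta=\tfrac1q\cdot\tfrac{2d-q}{d-1}$, deduces the corresponding identity for the domains, and then uses the H\"older regularity $dom_{\breve H^{-1,2d}_\Gamma}(-\nabla\cdot\mu\nabla)\hookrightarrow C^\alpha\hookrightarrow L^r$ at the far endpoint $2d>d$ together with $dom_{\breve H^{-1,2}_\Gamma}(-\nabla\cdot\mu\nabla)=H^{1,2}_\Gamma$ at the other; this makes the $L^\infty$-type endpoint usable even when $q$ itself does not exceed $d$, and a direct check shows the resulting space $H^{\theta,2/\theta-\delta}_\Gamma$ is trace-admissible for all $q\in\left]2,d+\tfrac12\right]$. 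You need this (or an equivalent) maneuver; interpolating target spaces for the fixed exponent $q$ does not suffice.
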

%
\begin{proof}
One has for every $\psi \in dom_{\breve H^{-\varsigma,q}_\Gamma}(-\nabla \cdot \mu
\nabla) \hookrightarrow dom_{\breve H^{-1,q}_\Gamma}(-\nabla \cdot \mu \nabla)
\hookrightarrow H^{1,2}_\Gamma$
\begin{align}
  \| Q \psi \|_{\breve H^{-\varsigma,q}_\Gamma} &=
        \sup_{\|\varphi\|_{H^{\varsigma,q'}_\Gamma} = 1} | \langle Q \psi,
        \varphi \rangle_{\breve H^{-\varsigma,q}_\Gamma} | =
        \sup_{\|\varphi\|_{H^{\varsigma,q'}_\Gamma} = 1} \biggl| \int_\Gamma
        \varkappa \psi \overline \varphi \,\dd\sigma \biggr| \nonumber \\
  \label{e-Qesti}
        &\le \|\varkappa\|_{L^\infty(\Gamma,\dd\sigma)} 
       \| \psi \|_{L^q(\partial \Omega,\dd\sigma)}
        \sup_{\|\varphi\|_{H^{\varsigma,q'}_\Gamma} = 1} \| \varphi
        \|_{L^{q'}(\partial \Omega,\dd\sigma)},
\end{align}
where the last factor is finite according to Theorem \ref{t-embedbound}.
Let us first consider the case $q = 2$. Then \eqref{e-Qesti} can be further estimated (see \eqref{e-mazyaS68})
\[ \le  c \| \psi \|_{L^2(\partial \Omega,\dd\sigma)} \le c \| \psi
	\|_{H_\Gamma^{1,2}}^{1/2} \| \psi \|_{L^{2}}^{1/2}  \le c\| \psi
	\|_{H_\Gamma^{1,2}}^{3/4} \| \psi \|_{\breve H_\Gamma^{-1,2}}^{1/4} \le
	\epsilon \| \psi \|_{H_\Gamma^{1,2}} + \frac{c}{\epsilon^3} \| \psi
	\|_{\breve H_\Gamma^{-1,2}}
\]
by Young's inequality. Taking into account $dom_{\breve H^{-1,2}_\Gamma}(-\nabla
\cdot \mu \nabla) = H^{1,2}_\Gamma$, this proves the case $q = 2$. Concerning
the case $q > 2$, we make use of the embedding
\begin{equation} \label{e-hoelderemb}
  dom_{\breve H^{-\varsigma,q}_\Gamma} \bigl( -\nabla \cdot \mu \nabla \bigr)
	\hookrightarrow dom_{\breve H^{-1,q}_\Gamma} \bigl( -\nabla \cdot \mu \nabla
	\bigr) \hookrightarrow C^\alpha(\Omega)\quad \text{for some } \alpha
	= \alpha(q) > 0,
\end{equation}
if $q > d$ (see \cite{griehoel}). Thus, for $q > d + \frac{1}{2}$ the term
$\|\psi\|_{L^q(\partial \Omega,\dd\sigma)}$ in \eqref{e-Qesti} can be
estimated by $(\sigma(\partial \Omega))^\frac{1}{q} \| \psi
\|_{C(\overline \Omega)}$, what shows, due to \eqref{e-hoelderemb}, the
asserted continuity of $Q$, if $q > d + \frac{1}{2}$. Since
$dom_{\breve H^{-\varsigma,q}_\Gamma} \bigl( -\nabla \cdot \mu \nabla \bigr)
\hookrightarrow C^\alpha(\Omega) \hookrightarrow C(\overline \Omega)$ is
compact and $C(\overline \Omega) \hookrightarrow \breve H^{-\varsigma,q}_\Gamma$ is
continuous and injective, we may apply Ehrling's lemma (see \cite[Ch.~I,
Prop.~7.3]{wloka}) and estimate
\[ 
\| \psi \|_{C(\overline \Omega)} \le \epsilon \| \psi \|_{dom_{\breve H^{-\varsigma,q}_\Gamma}
   (-\nabla\cdot\mu\nabla )} + \beta(\epsilon) \| \psi \|_{\breve H^{-\varsigma,q}_\Gamma},
 \quad \psi \in dom_{\breve H^{-\varsigma,q}_\Gamma} ( -\nabla \cdot \mu \nabla ),
\]
for arbitrary $\epsilon > 0$. Together with \eqref{e-Qesti} this yields the
second assertion for $q > d + \frac{1}{2}$.

Concerning the remaining case $q \in \left] 2, d + \frac{1}{2} \right]$, we employ the representation
\begin{equation} \label{e-thetadef}
  \breve H_\Gamma^{-1,q} = [\breve H_\Gamma^{-1,2d}, \breve H_\Gamma^{-1,2}]_{\theta} \quad \text
	{with } \theta = \frac{1}{q} \cdot \frac{2d-q}{d-1}
\end{equation}
(see Corollary \ref{c-antili}) and will invest the knowledge
$dom_{\breve H_\Gamma^{-1,2d}}(-\nabla\cdot\mu\nabla ) \hookrightarrow L^\infty$ and
$dom_{\breve H_\Gamma^{-1,2}}(-\nabla\cdot\mu\nabla ) = H^{1,2}_\Gamma$. Clearly,
\eqref{e-thetadef} implies
\begin{equation} \label{e-interpoldefber}
  dom_{\breve H_\Gamma^{-1,q}}(-\nabla \cdot \mu \nabla) = 
	[dom_{\breve H_\Gamma^{-1,2d}}(-\nabla \cdot \mu \nabla),
	dom_{\breve H_\Gamma^{-1,2}}(-\nabla \cdot \mu \nabla)]_\theta.
\end{equation}
Taking $q = 2d$ in \eqref{e-hoelderemb} and combining this with
the embedding $C^\alpha \hookrightarrow L^r$ for
any finite $r$, \eqref{e-interpoldefber} yields
\[ dom_{\breve H_\Gamma^{-1,q}}(-\nabla \cdot \mu \nabla) \hookrightarrow
[L^r, H^{1,2}_\Gamma]_\theta = H_\Gamma^{\theta, \frac{2}{\theta} - \delta(r, \theta)},
\]
where $\delta(r, \theta) \searrow 0$ for $r \to \infty$, see Proposition \ref{p-interpol}.
If  $q \in \left] 2, d + \frac{1}{2} \right]$, then it is clear from the definition of $\theta$
 that $\theta \ge \frac{1}{q} \cdot \frac{d-\frac{1}{2}}{d-1} > \frac{1}{q}$. On the other hand,
 one easily verifies $\frac{2}{\theta} \in \bigl] q, q \frac{2(d-1)}{d-\frac{1}{2}} \bigr]$.
 Thus, choosing $r$ large enough, one gets for every $q \in \bigl] 2,d + \frac{1}{2} \bigr]$ a
 continuous embedding
\[ dom_{\breve H_\Gamma^{-1,q}}(-\nabla \cdot \mu \nabla) \hookrightarrow
 H_\Gamma^{\frac{1}{q} \frac{d-\frac{1}{2}}{d-1},q},
\]
what gives a compact embedding
\begin{equation} \label{e-glei2}
  dom_{\breve H_\Gamma^{-\varsigma,q}}(-\nabla \cdot \mu \nabla) \hookrightarrow
	dom_{\breve H_\Gamma^{-1,q}}(-\nabla \cdot \mu \nabla) \hookrightarrow
 H_\Gamma^{\frac{1}{q} \frac{d-\frac{3}{4}}{d-1},q}.
\end{equation}
Due to Theorem \ref{t-embedbound}, the term $\| \psi
\|_{L^q(\partial \Omega,\dd\sigma)}$ in \eqref{e-Qesti} may be estimated by
$c\| \psi \|_{H_\Gamma^{\frac {1}{q}\frac {d-\frac{3}{4}}{d-1},q}}$.
But, in view of the compactness of the mapping \eqref{e-glei2} and the
continuity of the injection
$H_\Gamma^{\frac{1}{q} \frac{d-\frac{3}{4}}{d-1},q} \hookrightarrow
\breve H_\Gamma^{-\varsigma,q}$ one may also here apply Ehrling's lemma and estimate
\[ \| \psi \|_{H_\Gamma^{\frac {1}{q}\frac {d-\frac {3}{4}}{d-1},q}} \le
	\epsilon \| \psi \|_{dom_{\breve H_\Gamma^{-\varsigma,q}}(-\nabla \cdot \mu
	\nabla)} + \beta(\epsilon) \|\psi\|_{\breve H_\Gamma^{-\varsigma,q}}
\]
for $\epsilon$ arbitrarily small. Together with \eqref{e-Qesti} this shows the
assertion in the last case.
\end{proof}
%
\begin{theorem} \label{t-randterm}
Suppose $q \ge 2$, $\varkappa \in L^\infty(\Gamma,\dd\sigma)$ and let $\Omega, \Gamma$ satisfy
Assumption~\ref{a-groegerregulaer}.
\begin{enumerate} 
\item If $\varsigma \in \bigl] 1 - \frac {1}{q}, 1 \bigr]$, then
        $dom_{\breve H^{-\varsigma,q}_\Gamma}(-\nabla \cdot \mu \nabla) =
        dom_{\breve H^{-\varsigma,q}_\Gamma}(A)$.
\item If $\varsigma \in \bigl] 1 - \frac{1}{q}, 1 \bigr]$ and $-\nabla \cdot
	\mu \nabla$ satisfies maximal parabolic regularity on
	$\breve H^{-\varsigma,q}_\Gamma$, then $A$ also does.
\item The operator $A$ satisfies maximal parabolic regularity on $L^2$. If
        $\varkappa \ge 0$, then $A$ satisfies maximal parabolic regularity on
        $L^p$ for all $p \in \left] 1, \infty \right[$.
\item   Suppose that $-\nabla \cdot \mu \nabla$ satisfies maximal parabolic
	regularity on $\breve H^{-1,q}_\Gamma$. Then $A$ satisfies maximal parabolic
	regularity on any of the interpolation spaces
        \begin{align*}
          [L^2, \breve H^{-1,q}_\Gamma]_\theta &, \quad  \theta \in
                [0,1],
          \intertext{or}
                (L^2, \breve H^{-1,q}_\Gamma)_{\theta,s} &, \quad 
                \theta \in [0,1], \ s \in \left] 1, \infty \right[.
        \end{align*}
	Let $\varkappa \ge 0$ and $p \in \left] 1, \infty \right[$ in case of
	$d = 2$ or $p \in [\bigl( \frac{1}{2} + \frac{1}{d} \bigr)^{-1},
	\infty[$ if $d \ge 3$. Then $A$ also satisfies maximal parabolic
	regularity on any of the interpolation spaces
        \begin{equation} \label{e-interpolXc0}
          [L^p, \breve H^{-1,q}_\Gamma]_\theta , \quad  \theta \in [0,1],
\end{equation}
          or
\begin{equation} \label{e-interpolXmodi}
                (L^p, \breve H^{-1,q}_\Gamma)_{\theta,s} , \quad \theta \in [0,1], \
		s \in \left] 1, \infty \right[.
        \end{equation}
\end{enumerate}
\end{theorem}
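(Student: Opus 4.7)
The four assertions are organised hierarchically: (i) is a domain identification, (ii) transfers maximal parabolic regularity from $-\nabla \cdot \mu \nabla$ to $A$, (iii) treats $A$ on the $L^p$ scale directly, and (iv) combines (ii) and (iii) via interpolation. The central observation throughout is that $A = -\nabla \cdot \mu \nabla + Q$, where $Q$ is the boundary operator introduced in Lemma~\ref{l-relativ}. That lemma asserts that $Q$ is $(-\nabla \cdot \mu \nabla)$-bounded on $\breve H^{-\varsigma,q}_\Gamma$ (for $\varsigma > 1 - 1/q$) with arbitrarily small relative bound. For (i) this forces the graph norms of $A$ and of $-\nabla \cdot \mu \nabla$ on $\breve H^{-\varsigma,q}_\Gamma$ to be equivalent, so the domains coincide. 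For (ii) I would invoke the classical perturbation theorem asserting that maximal parabolic regularity is preserved under additive perturbations that are relatively bounded with sufficiently small relative bound (a Neumann series in the max-regularity operator $B(\partial_t+B)^{-1}$ on $L^s(J;X)$, cf.\ \cite[Ch.~III]{amannbuch}); the freedom in the relative bound supplied by Lemma~\ref{l-relativ} makes this directly applicable.

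For (iii) on $L^2$, Proposition~\ref{p-basicl2}~ii) tells us that $A$ is selfadjoint and bounded below, so $-A$ generates an analytic semigroup on the Hilbert space $L^2$, and Remark~\ref{r-ebed}~iii) converts this into max parabolic regularity. The $L^p$ case with $\varkappa \ge 0$ goes through Gaussian estimates and a bounded $\mathcal H^\infty$-calculus. Since $\varkappa \ge 0$, the semigroup $\mathrm{e}^{-tA}$ is dominated in the lattice sense by $\mathrm{e}^{t\nabla \cdot \mu \nabla}$ by Ouhabaz's domination criterion, so the Gaussian upper bounds of Theorem~\ref{t-gausss} transfer to the kernel of $\mathrm{e}^{-tA}$. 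Applying \cite{duong/robin} to a suitable shift $A - \gamma$ then yields a bounded $\mathcal H^\infty$-calculus on every $L^p$, $p \in \left]1,\infty\right[$, and the $p$-independence-of-spectrum/shift-back argument of Proposition~\ref{p-infcalc}~i) removes the auxiliary shift. In particular $A$ has bounded imaginary powers with angle strictly less than $\pi/2$ on the UMD space $L^p$, and Dore--Venni \cite{dorevenni} yields max parabolic regularity on $L^p$.

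Part (iv) results by coupling the previous pieces with Lemma~\ref{l-maxint}. Specialising (ii) to $\varsigma = 1$, the hypothesis of max regularity of $-\nabla \cdot \mu \nabla$ on $\breve H^{-1,q}_\Gamma$ transfers max regularity to $A$ on $\breve H^{-1,q}_\Gamma$. Together with max regularity on $L^2$ from (iii), Lemma~\ref{l-maxint} produces max regularity for $A$ on every complex or real interpolation space of the couple $(L^2, \breve H^{-1,q}_\Gamma)$, which is indeed an interpolation couple inside the common ambient space $\breve H^{-1,2}_\Gamma$. The families \eqref{e-interpolXc0}, \eqref{e-interpolXmodi} are handled identically, now using (iii) on $L^p$ together with max regularity on $\breve H^{-1,q}_\Gamma$; the stated lower bound on $p$ is precisely what the Sobolev embedding $H^{1,2}_\Gamma \hookrightarrow L^{p'}$ requires so that $L^p \hookrightarrow \breve H^{-1,2}_\Gamma$, i.e., $(L^p,\breve H^{-1,q}_\Gamma)$ is again an interpolation couple in $\breve H^{-1,2}_\Gamma$. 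The principal technical obstacle I expect is the $\mathcal H^\infty$-step for $A$ on $L^p$ in (iii): one must verify carefully that the Ouhabaz domination still goes through in the presence of the Robin-type boundary form defining $A$, and that the shift-back procedure needed to eliminate the auxiliary $\gamma$ is unaffected by the additional $Q$-term.
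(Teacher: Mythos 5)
Your proof is correct and follows the same overall architecture as the paper's: (i) via the small relative bound from Lemma~\ref{l-relativ} and Kato's perturbation theorem, (ii) via a perturbation theorem for maximal regularity, (iii) via selfadjointness on $L^2$ and Gaussian-estimate technology on $L^p$, and (iv) via Lemma~\ref{l-maxint} with exactly the right embedding condition $L^p\hookrightarrow \breve H^{-1,2}_\Gamma$ explaining the threshold $p\ge(\frac12+\frac1d)^{-1}$. Two local deviations are worth recording. For (ii) the paper does not use a Neumann-series argument but invokes the Kunstmann--Weis perturbation theorem \cite{kunst/weis}, which it describes as highly nontrivial and which requires the underlying space to be UMD; accordingly a nonnegligible portion of the paper's proof is spent verifying that $\breve H^{-\varsigma,q}_\Gamma$ is UMD (as dual of a closed subspace of a UMD space, transported through the anti-linear/linear identification and complex interpolation with $L^q$). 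Your elementary fixed-point argument does go through here because Lemma~\ref{l-relativ} supplies an \emph{arbitrarily small} relative bound, but you should make explicit that the lower-order constant $C_\delta$ blows up as $\delta\to 0$, so one must first shrink the time interval to absorb the term $C_\delta\|(\partial_t+B)^{-1}\|_{\mathcal L(L^s(J;X))}$ and then restart using the trace-space embedding \eqref{e-embedcont}; with that caveat your route avoids the UMD discussion entirely. For the $L^p$ part of (iii) the paper simply cites \cite[Thm.~7.4]{gkr}, whereas you reconstruct the argument: Ouhabaz's domination criterion (applicable since both forms live on $H^{1,2}_\Gamma$ and $\varkappa\ge 0$ makes the perturbed form larger on nonnegative elements) transfers the Gaussian bounds of Theorem~\ref{t-gausss} to $\e^{-tA}$, then \cite{duong/robin} and Dore--Venni \cite{dorevenni} apply to a suitable shift $A+\delta$; since maximal regularity is stable under bounded perturbations, the shift is harmless even when $0\in\sigma(A)$ (e.g.\ pure Neumann with $\varkappa=0$), which is a cleaner way to handle the shift-back than worrying about spectral independence. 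Both routes are sound; yours is more self-contained, the paper's is shorter by citation.
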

%
\begin{proof}
\begin{enumerate}
\item By Lemma~\ref{l-relativ}, if $\psi \in dom_{\breve H^{-\varsigma,q}}(-\nabla
        \cdot \mu \nabla)$, then $Q \psi$ is well defined and one has the
        equality $A \psi = -\nabla \cdot \mu \nabla \psi + Q \psi$ by
        definition of $A$. Thus, the assertion follows from the relative
        boundedness with relative bound smaller than $1$, shown in
        Lemma~\ref{l-relativ}, and a classical perturbation theorem, see
        \cite[Ch.~IV.1]{kato}.
\item The assertion is also proved by means of a -- highly nontrivial --
        perturbation theorem (see \cite{kunst/weis}), which states that, if
	$X$ is a UMD space and a densely defined, closed operator $B$
	satisfies maximal parabolic regularity on $X$, then $B + B_0$ also
	satisfies maximal parabolic regularity on $X$, provided $dom_X(B_0)
	\supseteq dom_X(B)$ and $B_0$ is relatively bounded with respect to
	$B$ with arbitrarily small relative bound. In our case,
	$H^{-1,q}_\Gamma$ is -- as the dual of the closed subspace
	$H^{1,q'}_\Gamma$ of the UMD space $H^{1,q'}$ -- itself a UMD space,
	see \cite[Ch.~III.4.5]{amannbuch} and \cite[Ch.~6.1]{arend}.
        $H^{-1,q}_\Gamma$ is the isometric image of $ \breve H^{-1,q}_\Gamma$
	under the mapping which assigns to $f \in  \breve H^{-1,q}_\Gamma$ the
	linear form $ H^{1,q'}_\Gamma \ni \psi \to \langle f, \overline
	\psi\rangle _{\breve H^{-1,q}_\Gamma}$. Hence, $ \breve
	H^{-1,q}_\Gamma$ is also a UMD space. Finally, $\breve
	H^{-\varsigma,q}_\Gamma$ is a  complex interpolation space between the
	UMD space $\breve H^{-1,q}_\Gamma$ and the UMD space $L^q$ (see Remark
	\ref{r-charact} below), and consequently also a UMD space. Hence, an
	application  of Lemma~\ref{l-relativ} yields the result.
\item The first assertion follows from Proposition~\ref{p-basicl2}~ii) and
	Remark \ref{r-ebed} iii). The second is shown in \cite[Thm.~7.4]{gkr}.
\item   Under the given conditions on $p$, we
	have the embedding $L^p \hookrightarrow \breve H^{-1,2}_\Gamma$.
        Thus, the assertion follows from the preceding points and Lemma~\ref{l-maxint}.
	\qedhere
\end{enumerate}
\end{proof}
%
\begin{remark} \label{r-charact}
The interpolation spaces 
$ [L^p, H^{-1,q}_\Gamma]_\theta $  ($\theta \in [0,1]$) and 
$(L^p, H^{-1,q}_\Gamma)_{\theta,s}$ ($ \theta \in [0,1], s \in \left] 1, \infty \right[$)
 are characterized in \cite{ggkr}, see in particular Remark~3.6. Identifying each 
$f \in L^q$ with the anti-linear form $ L^{q'} \ni \psi \to \int_\Omega f \overline{\psi} \;
\dd \mathrm x$ and using again the retraction/coretraction theorem with the coretraction
from Corollary \ref{c-antili}, one easily identifies the interpolation spaces in \eqref{e-interpolXc0}
and  \eqref{e-interpolXmodi}.
 In particular, this yields $  \bigl[ L^{q_0},\breve H_\Gamma^{-1,q_1}
 \bigr]_{\theta}=\breve H_\Gamma^{-\theta,q}$ if $ \theta \neq 1 - \frac{1}{q}$.
\end{remark}
%
%
\begin{corollary} \label{c-analyt}
Let $\Omega$ and $\Gamma$ satisfy Assumption~\ref{a-groegerregulaer}. The operator
$-A$ generates analytic semigroups on all spaces $\breve H^{-1,q}_\Gamma$ if $q \in
[2,q^*_{\mathrm{iso}}[$ and on all the interpolation spaces occurring in Theorem~\ref{t-randterm},
there $q$ also taken from $[2,q^*_{\mathrm{iso}}[$. Moreover, if $\varkappa
\ge 0$, the following resolvent estimates are valid:
\begin{equation} \label{e-resol}
  \| (A + 1 + \lambda)^{-1} \|_{\mathcal L(\breve H^{-1,q}_\Gamma)} \le
        \frac{c_q}{1 + |\lambda|}, \quad \Re \lambda \ge 0 .
\end{equation}
\end{corollary}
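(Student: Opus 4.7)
The analytic semigroup generation part is essentially free. Theorem~\ref{t-central} combined with Theorem~\ref{t-randterm} establishes maximal parabolic regularity of $A$ on $\breve H^{-1,q}_\Gamma$ for $q \in [2,q_{\mathrm{iso}}^*[$ and on all the relevant interpolation spaces; by Remark~\ref{r-ebed}~ii), maximal parabolic regularity on a Banach space $X$ forces $-A$ to generate an analytic semigroup on $X$. So the first assertion is immediate.

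For the resolvent estimate \eqref{e-resol}, I would start from the $L^p$ estimate of Proposition~\ref{p-basicl2}~v), which under the hypothesis $\varkappa\ge 0$ reads
\[
 \| (A+\lambda)^{-1}\|_{\mathcal L(L^p)} \le \frac{c}{|\lambda|},\quad \Re\lambda\ge 0,\quad p\in\,]1,\infty[.
\]
Since $|1+\lambda|\ge (1+|\lambda|)/2$ for $\Re\lambda\ge 0$, a shift yields $\|(A+1+\lambda)^{-1}\|_{\mathcal L(L^p)}\le 2c/(1+|\lambda|)$. In particular, the $L^p$-spectrum of $A$ is contained in $[0,\infty)$. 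The plan is to transfer this to $\breve H^{-1,q}_\Gamma$ by combining (a) consistency of the various realizations of $A$, which all arise from the single sesquilinear form \eqref{e-defellip} on $L^2$, with (b) the spectral independence of consistent semigroup families under Gaussian estimates, cf.\ Kunstmann~\cite{Kun99} (already invoked in the proof of Proposition~\ref{p-infcalc}). This identifies $\sigma_{\breve H^{-1,q}_\Gamma}(A)=\sigma_{L^p}(A)\subseteq[0,\infty)$, so that $A+1$ is bounded away from $0$. Together with the sectoriality granted by the analyticity of the semigroup on $\breve H^{-1,q}_\Gamma$, the estimate $\|(A+1+\lambda)^{-1}\|\le c_q/(1+|\lambda|)$ for $\Re\lambda\ge 0$ follows by the standard resolvent calculus for sectorial operators whose spectrum is separated from the imaginary axis.

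The main obstacle I anticipate is the rigorous justification of the spectral independence step, because the cited Kunstmann-type theorems are formulated on $L^p$-scales, while $\breve H^{-1,q}_\Gamma$ lies outside this scale. A cleaner alternative would be to imitate the structure of Lemma~\ref{l-bounded}: on the local model sets $\Omega_j$ from the covering used in the proof of Theorem~\ref{t-central}, Assumption~\ref{l-locmodel} is in force, so Theorems~\ref{t-mainsect} and \ref{t-mainsectdown} together with the factorization $(-\nabla\cdot\mu_j\nabla+\lambda)^{-1}=(-\nabla\cdot\mu_j\nabla)^{1/2}(-\nabla\cdot\mu_j\nabla+\lambda)^{-1}(-\nabla\cdot\mu_j\nabla)^{-1/2}$ produce the local resolvent bound directly, and this bound then propagates to the global $\breve H^{-1,q}_\Gamma$ by the same localization/gluing machinery as in Subsection~\ref{subsec-core}. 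Finally, the boundary contribution $Q$ in $A=-\nabla\cdot\mu\nabla+Q$ is handled by Lemma~\ref{l-relativ}: being relatively bounded with arbitrarily small relative bound, it only shifts the sector by a harmless amount and therefore preserves the estimate after choosing the shift constant $1$ sufficiently large to reabsorb it (or more precisely, invoking the perturbation result of~\cite{kunst/weis} as in the proof of Theorem~\ref{t-randterm}~ii)).
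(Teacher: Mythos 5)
Your treatment of the first assertion is exactly the paper's: maximal parabolic regularity from Theorems~\ref{t-central} and \ref{t-randterm} plus Remark~\ref{r-ebed}~ii) gives generation of an analytic semigroup on $\breve H^{-1,q}_\Gamma$ and on the interpolation spaces. That part is fine.

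For the resolvent estimate \eqref{e-resol} there is a genuine gap. Your primary route hinges on transferring $\sigma_{L^p}(A)\subseteq[0,\infty)$ to $\breve H^{-1,q}_\Gamma$ by Kunstmann-type spectral independence, and, as you yourself note, those theorems live on the $L^p$-scale and do not apply to $\breve H^{-1,q}_\Gamma$; so the step that actually separates the spectrum from $\{\Re\le 0\}$ in the target space is unproved. Your fallback (re-running the square-root factorization of Lemma~\ref{l-bounded} on the model sets and then ``propagating by the localization machinery of Subsection~\ref{subsec-core}'') does not close this either: the localization argument there is built for the parabolic problem, and gluing local resolvent bounds into a global one for \emph{all} $\Re\lambda\ge 0$ would still require showing that $A+1+\lambda$ is injective/invertible on the compact region near the imaginary axis that the analyticity sector does not cover --- which is precisely the point at issue. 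The ingredient you are missing is much more elementary: by Proposition~\ref{p-basicl2}~vi) the resolvent of $-\nabla\cdot\mu\nabla$ is \emph{compact} on $\breve H^{-1,q}_\Gamma$, and by Lemma~\ref{l-relativ} (relative bound $<1$) this persists for $A$ \cite[Ch.~IV.1]{kato}. Hence the spectrum of $A$ on $\breve H^{-1,q}_\Gamma$ consists of eigenvalues only, and no eigenvalue lies in $\{\Re\le 0\}$ after the shift by $1$. Analyticity already yields \eqref{e-resol} for $\lambda$ in some $\gamma+\Sigma_\kappa$ with $\kappa>\pi/2$; the leftover set $\{\lambda:\Re\lambda\ge 0\}\setminus(\gamma+\Sigma_\kappa)$ is compact, the resolvent exists there by the eigenvalue argument, and continuity of $\lambda\mapsto(1+|\lambda|)\|(A+1+\lambda)^{-1}\|$ on a compact set gives the uniform bound. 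No spectral-independence theorem and no second localization pass are needed.
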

%
\begin{proof}
The first assertion is implied by Theorem \ref{t-central} and
Remark~\ref{r-ebed} ii), which gives \eqref{e-resol} for $\lambda \in \gamma
+\Sigma_\kappa$ with a fixed $\gamma \in \R$ and fixed $\kappa >\pi/2$. On the
other hand, the resolvent of $A_0$ is compact (see
Proposition~\ref{p-basicl2}), what, due to Lemma \ref{l-relativ}, remains true
also for $A$, see \cite[Ch.~IV.1]{kato}. Since no $\lambda$ with $\Re \lambda
\le 0$ is an eigenvalue,
\[
\sup_{\lambda \in \{\lambda: \Re \lambda \ge 0\}\setminus (\gamma +\Sigma_\kappa)} (|\lambda| + 1)
 \| (A + 1 + \lambda)^{-1} \|_{\mathcal L(\breve H^{-1,q}_\Gamma)} < \infty, 
\]
because $\{\lambda: \Re \lambda \ge 0\}\setminus (\gamma +\Sigma_\kappa)$ is compact.
\end{proof}
%
%
%
%
\section{Nonlinear parabolic equations} \label{sec-Quasilin}
%
%
%
%
In this chapter we will apply maximal parabolic regularity for the treatment
of quasilinear parabolic equations which are of the (formal) type
\eqref{e-quasiformal}. Concerning all the occurring operators we will formulate
precise requirements in Assumption~\ref{a-daten} below.

The outline of the chapter is as follows: First we give a motivation for the
choice of the Banach space we will regard
\eqref{e-quasiformal}/\eqref{e-boundcond77} in. Afterwards we show that
maximal parabolic regularity, combined with regularity results for the
elliptic operator, allows to solve this problem. Below we will transform
\eqref{e-quasiformal}/\eqref{e-boundcond77} to a problem
\begin{equation} \label{5.2}
  \left\{ \begin{aligned}
        u'(t) + \mathcal B \bigl( u(t) \bigr) u(t) &= \mathcal S(t,u(t)),\quad
		t \in J, \\
        u(T_0) &= u_0.
        \end{aligned} \right.
\end{equation}
To give the reader already here an idea what properties of the operators
$-\nabla \cdot \mathcal G(u) \mu \nabla$ and of the corresponding Banach space
are required, we first quote the result on existence and uniqueness for
abstract quasilinear parabolic equations (due to Cl\'ement/Li \cite{cle/li} and
Pr\"uss \cite{pruess}) on which our subsequent considerations will base.
%
\begin{proposition} \label{p-pruess}
Suppose that $B$ is a closed operator on some Banach space $X$ with dense
domain $ D$, which satisfies maximal parabolic regularity on $X$.
Suppose further $u_0 \in (X, D)_{1-\frac {1}{s},s}$ and
$\mathcal{B}: J \times (X, D)_{1-\frac {1}{s},s} \to \mathcal{L}
({D}, X)$ to be continuous with $B = \mathcal{B} (T_0,u_0)$. Let,
in addition, $\mathcal S : J \times (X, D)_{1-\frac {1}{s},s} \to X$ be a
Carath\'eodory map and assume the following Lipschitz conditions on
$\mathcal B$ and $\mathcal S$:
\begin{itemize}
\item[$({\bf B})$] For every $M > 0$ there exists a constant $C_M > 0$, such
        that for all $t \in J$
        \[ \| \mathcal B(t,u) - \mathcal B(t, \tilde{u}) \|_{\mathcal
                L( D,X)} \le C_M \; \| u - \tilde{u}
                \|_{(X, D)_{1-\frac {1}{s},s}} \;\; \text {if} \;\;
                \|u\|_{(X, D)_{1-\frac {1}{s},s}},\; \| \tilde{u}
                \|_{(X, D)_{1-\frac {1}{s},s}} \le M.
        \]
\item[$({\bf R})$] $\mathcal S(\cdot, 0) \in L^s(J;X)$ and for each $M>0$ there is a
        function $h_M \in L^s(J)$, such that
        \[ \| \mathcal S(t,u) - \mathcal S(t,\tilde{u})\|_X \le h_M(t) \; \| u
		- \tilde{u} \|_{(X, D)_{1-\frac {1}{s},s}}
        \]
        holds for a.a. $t \in J$, if
        $\|u\|_{(X, D)_{1-\frac {1}{s},s}},
        \|\tilde{u}\|_{(X, D)_{1-\frac {1}{s},s}} \leq M$.
\end{itemize}
Then there exists $T^* \in J$, such that \eqref{5.2} admits a unique solution
$u$ on $]T_0,T^*[$ satisfying
\[ u \in W^{1,s}(]T_0,T^*[;X) \cap L^s(]T_0,T^*[; D).
\]
\end{proposition}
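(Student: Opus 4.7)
The plan is to reduce the quasilinear problem to a fixed-point equation for the \emph{linear} problem with frozen operator $B = \mathcal B(T_0, u_0)$, and then apply Banach's fixed-point theorem on a short time interval. I introduce the maximal-regularity space
\[
  \mathbb E(J^*) := W^{1,s}(J^*; X) \cap L^s(J^*; D), \qquad J^* := \left]T_0, T^*\right[,
\]
which by Remark~\ref{r-ebed}~v) embeds continuously into $C(\overline{J^*}; (X, D)_{1-1/s, s})$. Since $B$ has maximal parabolic regularity on $X$, for any $f \in L^s(J^*; X)$ the initial value problem $v' + Bv = f$, $v(T_0) = u_0$ admits a unique solution $v \in \mathbb E(J^*)$ (nonzero initial data are absorbed by splitting off $t \mapsto \e^{-(t - T_0) B} u_0$, which lies in $\mathbb E(J^*)$ precisely because $u_0$ belongs to the real interpolation space $(X, D)_{1-1/s, s}$, cf.\ \eqref{e-embedcont}). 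Denote by $\Lambda: (f, u_0) \mapsto v$ the corresponding continuous solution operator.

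Next, I rewrite \eqref{5.2} as
\[
  u' + B u = [B - \mathcal B(t, u)] u + \mathcal S(t, u), \qquad u(T_0) = u_0,
\]
and define $\Psi(u) := \Lambda\bigl([B - \mathcal B(\cdot, u)] u + \mathcal S(\cdot, u),\; u_0\bigr)$ on a closed ball
\[
  \mathcal M_R := \bigl\{ u \in \mathbb E(J^*) \with \|u - u_\flat\|_{\mathbb E(J^*)} \le R \bigr\},
\]
where $u_\flat(t) := \e^{-(t - T_0) B} u_0$ is the reference trajectory. For $u \in \mathcal M_R$, the embedding $\mathbb E(J^*) \hookrightarrow C(\overline{J^*}; (X, D)_{1-1/s, s})$ together with continuity of $u_\flat$ at $T_0$ gives
\[
  \sup_{t \in \overline{J^*}} \|u(t) - u_0\|_{(X, D)_{1-1/s, s}} \le c R + \omega(T^* - T_0),
\]
with $\omega(\tau) \to 0$ as $\tau \to 0$. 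Hence by continuity of $\mathcal B$ at $(T_0, u_0)$ the operator norm $\|B - \mathcal B(t, u(t))\|_{\mathcal L(D, X)}$ can be made arbitrarily small, uniformly in $t \in \overline{J^*}$ and $u \in \mathcal M_R$, by shrinking $R$ and $T^* - T_0$.

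Combining this with the Lipschitz conditions $({\bf B})$ and $({\bf R})$ and the fact that the norm of $\Lambda(\cdot, 0): L^s(J^*; X) \to \mathbb E(J^*)$ is uniformly bounded in $T^*$, one shows that for $u, \tilde u \in \mathcal M_R$
\[
  \|\Psi(u) - \Psi(\tilde u)\|_{\mathbb E(J^*)} \le \Bigl(\epsilon + C_M R + \|h_M\|_{L^s(J^*)}\Bigr) \|u - \tilde u\|_{\mathbb E(J^*)},
\]
with $\epsilon$ arising from the frozen-operator perturbation. Choosing $R$ small, then $T^* - T_0$ small (using absolute continuity of $t \mapsto \int_{T_0}^t h_M(r)^s \, \dd r$), the prefactor drops below $1$, and an analogous estimate shows $\Psi(\mathcal M_R) \subseteq \mathcal M_R$. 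Banach's fixed-point theorem yields the unique solution in $\mathbb E(J^*)$.

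The main obstacle is the threefold calibration that has to succeed simultaneously on a single short interval: making the frozen-coefficient perturbation $B - \mathcal B(t, u(t))$ small in $\mathcal L(D, X)$ via the $C(\overline{J^*}; (X, D)_{1-1/s, s})$-embedding, absorbing the merely $L^s$-Lipschitz constant $h_M$ via shortness of $J^*$, and at the same time ensuring self-mapping of the ball $\mathcal M_R$. The first of these is the most delicate, since it hinges on the trace-space valued continuity provided by \eqref{e-embedcont} and on the hypothesis that $\mathcal B$ be continuous (not merely measurable) in both variables at $(T_0, u_0)$.
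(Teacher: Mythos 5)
The paper offers no proof of this proposition to compare against: it is quoted verbatim as a known result of Cl\'ement--Li and Pr\"uss, and the authors only use it. Your argument is, in substance, the standard proof from those references: freeze the operator at $(T_0,u_0)$, use maximal parabolic regularity of $B$ to obtain the solution operator $\Lambda$ of the linearized problem, and run a contraction on a small ball around the reference trajectory $u_\flat(t)=\e^{-(t-T_0)B}u_0$ in $\mathbb E(J^*)=W^{1,s}(J^*;X)\cap L^s(J^*;D)$, with smallness supplied by three independent sources (joint continuity of $\mathcal B$ at $(T_0,u_0)$ together with $({\bf B})$, the decay $\|h_M\|_{L^s(]T_0,T^*[)}\to 0$ as $T^*\downarrow T_0$, and the choice of $R$). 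The structure is correct and the calibration you single out as the main obstacle is indeed the heart of the matter.

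Two points should be tightened. First, the estimate $\sup_{t}\|u(t)-u_0\|_{(X,D)_{1-1/s,s}}\le cR+\omega(T^*-T_0)$ with $c$ independent of $T^*$ requires that you build the constraint $u(T_0)=u_0$ into the definition of $\mathcal M_R$: the embedding constant of $\mathbb E(J^*)\hookrightarrow C(\overline{J^*};(X,D)_{1-\frac{1}{s},s})$ from \eqref{e-embedcont} is uniform in the interval length only on the subspace of functions vanishing at $T_0$ (apply it to $u-u_\flat$ after extension by zero to the fixed interval $J$); without this constraint the constant blows up as $T^*\downarrow T_0$ and the calibration fails. Since this closed condition is preserved by $\Psi$, the fixed-point argument is unaffected. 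Second, Banach's theorem gives uniqueness only within $\mathcal M_R$, whereas the proposition asserts uniqueness among all solutions in $W^{1,s}(]T_0,T^*[;X)\cap L^s(]T_0,T^*[;D)$; this is repaired by the standard continuation argument, namely that any such solution $v$ satisfies $\|v-u_\flat\|_{\mathbb E(]T_0,\tau[)}\to 0$ as $\tau\downarrow T_0$ by absolute continuity of the integrals, hence agrees with the fixed point near $T_0$, and the supremum of times up to which the two solutions coincide must equal $T^*$. Both are routine additions rather than gaps in the approach.
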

%
%
\begin{remark} \label{r-reell}
Up to now we were free to consider complex Banach spaces.
But the context of equations like \eqref{e-quasiformal} requires real spaces, in particular in
view of the quality of the superposition operator $\mathcal F$. Therefore, from
this moment on we use the real versions of the spaces. In particular,
$H^{-\varsigma,q}_\Gamma$ is now understood as the dual of the real space
$H^{\varsigma,q'}_\Gamma$ and clearly can be identified with the set of
anti-linear forms on the complex space $H^{\varsigma, q'}_\Gamma$ that take
real values when applied to real functions.

Fortunately, the property of maximal parabolic regularity is maintained for the
restriction of the operator $A$ to the real spaces in case of a real function
$\varkappa$, as $A$ then commutes with complex conjugation.
\end{remark}
%
We will now give a motivation for the choice of the Banach space $X$ we will
use later. It is not hard to see that $X$ has -- in view of the applicability
of Proposition~\ref{p-pruess} -- to fulfill the subsequent demands:
\begin{itemize}
\item[a)] The operators $A$, or at least the operators $-\nabla \cdot \mu
        \nabla$, defined in \eqref{e-defellip}, must satisfy maximal parabolic
        regularity on $X$.
\item[b)] As in the classical theory (see \cite{lady}, \cite{gia/stru},
        \cite{stru} and references therein) quadratic gradient terms of the
        solution should be admissible for the right hand side.
\item[c)] The operators $-\nabla \cdot \mathcal G(u) \mu \nabla$ should behave
        well concerning their dependence on $u$, see condition ({\bf B})
        above.
\item[d)] $X$ has to contain certain measures, supported on Lipschitz
        hypersurfaces in $\Omega$ or on $\partial \Omega$ in order to allow
        for surface densities on the right hand side or/and for inhomogeneous
        Neumann conditions.
\end{itemize}
The condition in a) is assured by Theorem~\ref{t-central} and
Theorem~\ref{t-randterm} for a great variety of Banach spaces, among them
candidates for $X$.
Requirement b) suggests that one should have $dom_X(-\nabla \cdot \mu \nabla)
\hookrightarrow H^{1,q}_\Gamma$ and $L^\frac{q}{2} \hookrightarrow X$. Since
$- \nabla \cdot \mu \nabla$ maps $H^{1,q}_\Gamma$ into $H^{-1,q}_\Gamma$, this
altogether leads to the necessary condition
\begin{equation} \label{e-hypoth} 
  L^{\frac{q}{2}} \hookrightarrow X \hookrightarrow H^{-1,q}_\Gamma.
\end{equation}
Sobolev embedding shows that $q$ cannot be smaller than the space dimension
$d$. Taking into account d), it is clear that $X$ must be a space of
distributions which (at least) contains surface densities.
In order to recover the desired property $dom_X(- \nabla \cdot \mu \nabla)
\hookrightarrow H^{1,q}_\Gamma$ from the necessary condition in
\eqref{e-hypoth}, we make for all what follows
this general
%
\begin{assumption} \label{a-q}
There is a $q > d$, such that $-\nabla \cdot \mu \nabla + 1:
H^{1,q}_\Gamma \to H^{-1,q}_\Gamma$ is a topological isomorphism.
\end{assumption}
%
%
\begin{remark} \label{r-exmp}
By Remark~\ref{r-darunter}~i) Assumption~\ref{a-q} is always fulfilled for
$d=2$. On the other hand for $d \ge 4$ it is generically false in case of mixed
boundary conditions, see \cite{shamir} for the famous counterexample.
Moreover, even in the Dirichlet case, when the domain $\Omega$ has only a
Lipschitz boundary or the coefficient function $\mu$ is constant within
layers, one cannot expect $q \ge 4$, see \cite{jer/ke} and
\cite{e/k/r/s}.

In Section~\ref{sec-Examples} we will present examples for domains $\Omega$,
coefficient functions $\mu$ and Dirichlet boundary parts $\Omega \setminus
\Gamma$, for which Assumption~\ref{a-q} is fulfilled.
\end{remark}
%
From now on we fix some $q > d$, for which Assumption~\ref{a-q} holds.

As a first step we will show that Assumption~\ref{a-q} carries over to a broad
class of modified operators.
%
\begin{lemma} \label{l-modi}
Assume that $\xi$ is a real valued, uniformly continuous function on $\Omega$
that admits a lower bound $\underline \xi > 0$. Then the operator $-\nabla
\cdot \xi \mu \nabla + 1$ also is a topological isomorphism between
$H^{1,q}_\Gamma$ and $H^{-1,q}_\Gamma$.
\end{lemma}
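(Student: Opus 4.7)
The plan is a homotopy-continuation argument running from $\xi\equiv 1$, where Assumption~\ref{a-q} supplies the isomorphism, to the given $\xi$. Three preparatory facts come first: (a) the coefficient $\xi\mu$ is in $L^\infty$ with ellipticity constant at least $\underline{\xi}$ times that of $\mu$, so $T_\xi := -\nabla\cdot\xi\mu\nabla+1$ is bounded from $H^{1,q}_\Gamma$ to $H^{-1,q}_\Gamma$; (b) $T_\xi$ is injective on $H^{1,2}_\Gamma$ by Lax--Milgram and hence on $H^{1,q}_\Gamma\subset H^{1,2}_\Gamma$ (valid since $q>d\ge 2$); (c) for any constant $c>0$, $T_c=c(-\nabla\cdot\mu\nabla+c^{-1})$ is an isomorphism, because $-\nabla\cdot\mu\nabla+c^{-1}$ differs from the isomorphism $-\nabla\cdot\mu\nabla+1$ of Assumption~\ref{a-q} by the operator $(c^{-1}-1)\operatorname{Id}$, which factors through the compact embedding $H^{1,q}_\Gamma\hookrightarrow L^q\hookrightarrow H^{-1,q}_\Gamma$, is therefore Fredholm of index~$0$, and injective.

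Now I introduce the linear path $\xi_t:=(1-t)+t\xi$, $t\in[0,1]$, consisting of uniformly continuous functions with common lower bound $\min(1,\underline{\xi})>0$ and satisfying $\|\xi_t-\xi_s\|_\infty\le|t-s|\,\|\xi-1\|_\infty$, so that $t\mapsto T_{\xi_t}\in\mathcal{L}(H^{1,q}_\Gamma,H^{-1,q}_\Gamma)$ is Lipschitz continuous. Set $M:=\{t\in[0,1]:T_{\xi_t}\text{ is a topological isomorphism}\}$: by Assumption~\ref{a-q}, $0\in M$, and a Neumann-series argument shows $M$ is open, with radius $(2\|T_{\xi_{t_0}}^{-1}\|\,\|\xi-1\|_\infty\|\mu\|_\infty)^{-1}$ around $t_0\in M$. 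The argument closes once I establish a uniform a priori estimate
\[
\|u\|_{H^{1,q}_\Gamma}\le C\,\|T_{\xi_t}u\|_{H^{-1,q}_\Gamma},\qquad u\in H^{1,q}_\Gamma,\ t\in[0,1],
\]
with $C$ depending on $\mu,\Omega,\Gamma,q,\underline{\xi}$ and the modulus of continuity of $\xi$, but not on $t$. Such a bound makes $\|T_{\xi_t}^{-1}\|$ uniformly bounded on $M$, hence the openness radius is independent of $t_0$, so $M$ is both open and closed in $[0,1]$, giving $M=[0,1]$ and in particular $1\in M$.

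I expect this uniform a priori estimate to be the main obstacle, and I would prove it by localization in the spirit of the proof of Theorem~\ref{t-central}. For a small $\epsilon>0$ choose a finite partition of unity $\{\eta_j\}_{j=1}^N$ subordinate to a cover of $\overline\Omega$ by sets of diameter less than the modulus-of-continuity radius of $\xi$ at level $\epsilon$, and basepoints $x_j$ such that $|\xi_t(y)-\xi_t(x_j)|<\epsilon$ on $\supp\eta_j$ uniformly in $t$. Using the localization and reinterpretation identities from Lemmas~\ref{l-project} and~\ref{l-reinterpret}, $T_{\xi_t}u=f$ yields on each piece
\[
\bigl(-\nabla\cdot\xi_t(x_j)\mu\nabla+1\bigr)(\eta_j u)=\eta_j f+R_{j,t}(u)-\nabla\cdot\bigl[(\xi_t-\xi_t(x_j))\mu\nabla(\eta_j u)\bigr],
\]
whose left-hand operator is a constant-coefficient isomorphism by fact~(c) with inverse bounded uniformly in $j,t$ thanks to $\xi_t(x_j)\ge\min(1,\underline{\xi})$. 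The last term has $H^{-1,q}_\Gamma$-norm at most $\epsilon\|\mu\|_\infty\|\eta_j u\|_{H^{1,q}_\Gamma}$, while the commutator $R_{j,t}(u)$ is controlled by $\|u\|_{L^q}$ plus terms involving $\|\nabla\eta_j\|_\infty$. Summing over $j$, choosing $\epsilon$ small enough to absorb the $\epsilon$-term into the left-hand side, and finally eliminating the residual $\|u\|_{L^q}$ via Ehrling's lemma (using the compact embedding $H^{1,q}_\Gamma\hookrightarrow L^q\hookrightarrow L^2$) together with the $L^2$-bound $\|u\|_{L^2}\le c\|f\|_{H^{-1,2}_\Gamma}\le c'\|f\|_{H^{-1,q}_\Gamma}$ from Lax--Milgram (uniform in $t$), yields the desired uniform estimate and completes the proof.
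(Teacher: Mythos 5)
Your proposal is correct in substance and shares the decisive ingredients with the paper's proof -- a uniform bound on the inverses of the frozen operators $-\nabla\cdot c\,\mu\nabla+1$ for $c$ in the range of $\xi$ (the paper gets this from Assumption~\ref{a-q} together with the resolvent estimates of Corollary~\ref{c-analyt}), a partition of unity subordinate to a cover on which $\xi$ oscillates by less than $(\gamma\|\mu\|_\infty)^{-1}$, and a Neumann-series perturbation of the frozen operator -- but you wrap them in machinery the paper does not need. The paper argues directly: given $f\in H^{-1,q}_\Gamma$, Lax--Milgram produces $v\in H^{1,2}_\Gamma$; each $\eta_j v$ then solves an equation for the operator $-\nabla\cdot\xi_j\mu\nabla+1$ (with $\xi_j$ equal to $\xi$ near $x_j$ and constant elsewhere), which is already known to be an $H^{1,q}_\Gamma\to H^{-1,q}_\Gamma$ isomorphism by the Neumann series, and whose right-hand side lies in $H^{-1,q}_\Gamma$ by Lemma~\ref{l-project}~iii); hence $v\in H^{1,q}_\Gamma$, surjectivity follows, and the open mapping theorem finishes. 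This makes your continuity method in $t$ and the quantitative a priori estimate redundant: qualitative regularity of the $H^{1,2}$ solution already yields bijectivity. Your route is viable, but two points in the estimate need more care than your sketch suggests. First, the commutator $R_{j,t}(u)$ contains the term $\xi_t\mu\nabla u\cdot\nabla\eta_j$, which is \emph{not} controlled by $\|u\|_{L^q}$; it must be measured in $H^{-1,q}_\Gamma$ via an embedding $L^r\hookrightarrow H^{-1,q}_\Gamma$ for some $r<q$ and then interpolated between $\|\nabla u\|_{L^q}$ and the Lax--Milgram-controlled $\|\nabla u\|_{L^2}$ before absorption. Second, the number of pieces $N$ grows as $\epsilon\to0$, so the absorption of the frozen-coefficient error $\epsilon\|\mu\|_\infty\|\eta_j u\|_{H^{1,q}}$ must be done per piece (using only that $\epsilon\gamma\|\mu\|_\infty<1/2$) before summing, and the second smallness parameter for the commutator terms must be chosen after $N$ is fixed; otherwise the argument is circular. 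Both issues are fixable by standard bookkeeping, so the proof goes through, just at greater length than the paper's.
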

%
\begin{proof}
We identify $\xi$ with its (unique) continuous continuation to the closure
$\overline{\Omega}$ of $\Omega$. Furthermore, we observe that for any
coefficient function $\omega$ the inequality
\begin{equation} \label{e-koeffz}
  \|\nabla \cdot \omega \nabla\|_{\mathcal L(H^{1,q}_\Gamma,H^{-1,q}_\Gamma)}
        \le \|\omega\|_{L^\infty(\Omega;\mathcal L(\R^d))}
\end{equation}
holds true. Next, by Assumption~\ref{a-q} and Corollary~\ref{c-analyt} it is
clear that
\[ \sup_{\mathrm{y} \in \overline{\Omega}} \|\bigl( -\nabla \cdot
        \xi(\mathrm{y}) \mu \nabla + 1 \bigr)^{-1}
        \|_{\mathcal L(H^{-1,q}_\Gamma,H^{1,q}_\Gamma)} \le
        \frac{1}{\underline \xi} \sup_{\mathrm{y} \in \overline{\Omega}} \|
        \bigl( -\nabla \cdot \mu \nabla + (\xi(\mathrm{y}))^{-1} \bigr)^{-1}
        \|_{\mathcal L(H^{-1,q}_\Gamma,H^{1,q}_\Gamma)} =: \gamma
\]
is finite. Let for any $\mathrm{x} \in \overline{\Omega}$ a ball
$\mathcal B_{\mathrm{x}}$ around $\mathrm{x}$ be given, such that
\begin{equation} \label{e-defini}
  \gamma \; \sup_{\mathrm{y} \in \mathcal B_\mathrm{x} \cap \overline\Omega}
        | \xi(\mathrm{x}) - \xi(\mathrm{y}) |
        \|\mu\|_{L^\infty(\Omega;\mathcal L(\R^d))} < 1.
\end{equation}
Then, we choose a finite subcovering $\mathcal B_{\mathrm x_1}, \dots, \mathcal B_{\mathrm x_k}$
 of $\overline \Omega$ and a partition of unity $\eta_1,
\dots, \eta_k$ subordinate to this subcovering, and we set
$\Lambda_{\mathrm x} := \mathcal B_{\mathrm x} \cap \Omega$.

Assume that $f \in H^{-1,q}_\Gamma \subseteq H^{-1,2}_\Gamma$ and $v \in
H^{1,2}_\Gamma$ is a solution of $-\nabla \cdot \xi \mu \nabla v + v = f$.
Then a calculation, completely analogous to \eqref{e-localizeeq} (choose there
$\Upsilon$ so big that $\overline {\Omega} \subseteq \Upsilon$) shows that the
function $u := \eta_j v$ satisfies the equation
\begin{equation} \label{e-locglei00}
  - \nabla \cdot \xi \mu \nabla u + u = \eta_j f - \xi \mu \nabla v \cdot
        \nabla \eta_j + I_j
\end{equation}
in $H^{-1,2}_\Gamma$, where $I_j$ is the distribution $w \mapsto \int_\Omega v
\xi \mu \nabla \eta_j \cdot \nabla w \, \dd \mathrm{x}$. Then applying Lemma~\ref{l-project}~iii)
 with the same 'big' $\Upsilon$, we get that the
right hand side of \eqref{e-locglei00} is from $H^{-1,q}_{\Gamma}$, since $f
\in H^{-1,q}_\Gamma$. If we define the function $\xi_j$ on $\Omega$ by
\[ \xi_j(\mathrm{y}) = \begin{cases}
                \xi(\mathrm{y}), &\text{if } \mathrm{y} \in
                        \Lambda_{\mathrm x_j} \\
                \xi(\mathrm{x}_j), &\text{elsewhere in } \Omega,
        \end{cases}
\]
then $u = \eta_j v$ satisfies besides \eqref{e-locglei00} also the equation
\[ - \nabla \cdot \xi_j \mu \nabla u + u = \eta_j f - \xi \mu \nabla v \cdot
        \nabla \eta_j + I_j,
\]
because $\xi_j = \xi$ on the support of $u$.
But we have, according to \eqref{e-koeffz} and \eqref{e-defini}
\begin{align*}
  & \bigl\| \bigl( -\nabla \cdot \xi_j \mu \nabla + 1 - (-\nabla \cdot
        \xi(\mathrm{x}_j) \mu \nabla + 1) \bigr) \bigl( - \nabla \cdot
        \xi(\mathrm{x}_j) \mu \nabla + 1 \bigr)^{-1}
        \bigr\|_{\mathcal L(H^{-1,q}_\Gamma)} \\
  \le\;& \| -\nabla \cdot \xi_j \mu \nabla + 1 - (-\nabla \cdot
        \xi(\mathrm{x}_j) \mu \nabla + 1)
        \|_{\mathcal L(H^{1,q}_\Gamma, H^{-1,q}_\Gamma)} \| (- \nabla \cdot
        \xi(\mathrm{x}_j) \mu \nabla + 1 )^{-1}
        \|_{\mathcal L(H^{-1,q}_\Gamma, H^{1,q}_\Gamma)} \\
  \le\;& \gamma \; \sup_{\mathrm{y} \in \Lambda_{\mathrm x_j}}
        | \xi(\mathrm{x}_j) - \xi(\mathrm{y}) |
        \|\mu\|_{L^\infty(\Omega;\mathcal L(\R^d))} < 1.
\end{align*}
Thus, by a classical perturbation result (see \cite[Ch.~IV.1]{kato}), the
operator $-\nabla \cdot \xi_j \mu \nabla +1$ also provides a topological
isomorphism between $H^{1,q}_\Gamma$ and $H^{-1,q}_\Gamma$. Hence, for every
$j$ we have $\eta_j v \in H^{1,q}_\Gamma$, and, hence, $v \in H^{1,q}_\Gamma$.
So the assertion is implied by the open mapping theorem.
\end{proof}
In this spirit, one could now suggest $X := H^{-1,q}_\Gamma$ to be a good
choice for the Banach space, but in view of condition ({\bf R}) the right hand
side of \eqref{5.2} has to be a continuous mapping from an interpolation space
$(dom_X(A),X)_{1 - \frac{1}{s},s}$ into $X$. Chosen  $X := H^{-1,q}_\Gamma$,
for elements $\psi \in (dom_X(A),X)_{1 - \frac{1}{s},s} = (H^{1,q}_\Gamma,
H^{-1,q}_\Gamma)_{1 - \frac{1}{s},s}$ the expression $|\nabla \psi|^2$ cannot
be properly defined and, if so, will not lie in $H^{-1,q}_\Gamma$ in general.
This shows that $X := H^{-1,q}_\Gamma$ is not an appropriate choice, but we
will see that $X := H^{-\varsigma,q}_\Gamma$, with $\varsigma$ properly
chosen, is.
%
\begin{lemma} \label{l-space}
Put $X := H^{-\varsigma,q}_\Gamma$ with $\varsigma \in \left[ 0, 1 \right[ \setminus
\{\frac {1}{q}, 1-\frac {1}{q}\}$.
Then
\begin{enumerate}
\item For every $\tau \in \bigl] \frac{1+\varsigma}{2},1 \bigr[$ there is a continuous embedding
        $(X, dom_X(-\nabla \cdot \mu \nabla))_{\tau,1}
        \hookrightarrow H^{1,q}_\Gamma$.
\item If $\varsigma \in [\frac {d}{q}, 1]$, then $X$ has a predual $X_* =
        H^{\varsigma,q'}_\Gamma$ which admits the continuous, dense injections
        $H^{1,q'}_\Gamma \hookrightarrow X_* \hookrightarrow
        L^{(\frac {q}{2})'}$ that by duality clearly imply \eqref{e-hypoth}.
        Furthermore, $H^{1,q}_\Gamma$ is a multiplier space for $X_*$.
\end{enumerate}
\end{lemma}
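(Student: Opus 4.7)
For part (i), set $B := -\nabla\cdot\mu\nabla$. The plan is to identify a fractional-power domain of $B+1$ on $X = H^{-\varsigma,q}_\Gamma$ that already sits inside $H^{1,q}_\Gamma$, and then dominate the real-interpolation space by this fractional-power domain. First observe that $dom_X(B) \hookrightarrow H^{1,q}_\Gamma$: if $u \in dom_X(B)$, then $Bu \in X \hookrightarrow H^{-1,q}_\Gamma$, and Assumption~\ref{a-q} forces $u \in H^{1,q}_\Gamma$. By Corollary~\ref{c-analyt}, $-(B+1)$ generates an analytic semigroup on $X$; by an adaptation of Proposition~\ref{p-infcalc}~ii) in the style of Lemma~\ref{l-bounded}, one expects $B+1$ to have bounded imaginary powers on $X$, yielding the identification $[X, dom_X(B)]_{\tau'} = dom_X\bigl((B+1)^{\tau'}\bigr)$ for $\tau' \in (0,1)$. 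Independently, complex interpolation of the Assumption~\ref{a-q} isomorphism $B+1 : H^{1,q}_\Gamma \to H^{-1,q}_\Gamma$ with the identity on $L^q$, using Proposition~\ref{p-interpol} and Corollary~\ref{c-antili}, produces a scale of isomorphisms
\[
(B+1)^{\tau'} \colon H^{1,q}_\Gamma \;\longrightarrow\; H^{1-2\tau',q}_\Gamma,
\]
for $\tau'$ outside the exceptional values $1/q$ and $1 - 1/q$.

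Given $\tau > (1+\varsigma)/2$, pick $\tau' \in \bigl((1+\varsigma)/2, \tau\bigr)$ avoiding the exceptional values; then $1 - 2\tau' < -\varsigma$, so $X = H^{-\varsigma,q}_\Gamma \hookrightarrow H^{1-2\tau',q}_\Gamma$, and every $u \in dom_X((B+1)^{\tau'})$ satisfies $u = (B+1)^{-\tau'}\bigl((B+1)^{\tau'} u\bigr) \in H^{1,q}_\Gamma$. Combined with the standard real-into-complex embedding $(X, dom_X(B))_{\tau,1} \hookrightarrow [X, dom_X(B)]_{\tau'}$, the claim follows. The main obstacle is the pair of identifications underlying this scheme: the bounded imaginary powers of $B+1$ on $X$ (requiring a transfer from Proposition~\ref{p-infcalc}~ii) through the interpolation scale), and the matching of the complex-interpolation action of $(B+1)^{\tau'}$ with the functional-calculus fractional power; both are delicate because the boundary conditions encoded in $H^{\pm 1,q}_\Gamma$ interact nontrivially with $\mu$.

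For part (ii), the predual identification $X_* = H^{\varsigma,q'}_\Gamma$ is immediate from the Section~2 conventions. The injection $H^{1,q'}_\Gamma \hookrightarrow X_*$ follows from $\varsigma \le 1$ and the fact that both spaces are closures of $C^\infty_\Gamma$; the injection $X_* \hookrightarrow L^{(q/2)'}$ is the Sobolev embedding $H^{\varsigma,q'}(\Omega) \hookrightarrow L^{(q/2)'}(\Omega)$, whose condition $\varsigma - d/q' \ge -d/(q/2)'$ simplifies exactly to $\varsigma \ge d/q$. Density in each embedding is inherited from $C^\infty_\Gamma$. For the multiplier property, the endpoint $\varsigma = 0$ reduces to H\"older's inequality combined with $H^{1,q}(\Omega) \hookrightarrow L^\infty(\Omega)$ (valid because $q > d$), while the endpoint $\varsigma = 1$ reduces to verifying $\nabla(fg) = f \nabla g + g \nabla f \in L^{q'}$ for $f \in H^{1,q}_\Gamma$, $g \in H^{1,q'}_\Gamma$: the first summand lies in $L^{q'}$ by $f \in L^\infty$, the second by H\"older combined with the Sobolev embedding $H^{1,q'} \hookrightarrow L^r$, $1/r = 1/q' - 1/q$, which holds precisely because $q \ge d$. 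Bilinear interpolation via Proposition~\ref{p-interpol} then extends the multiplier property to all intermediate $\varsigma \in (0,1)$.
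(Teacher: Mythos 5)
Part~(ii) of your argument is fine and in fact more self-contained than the paper's (which simply cites Sobolev embedding and the multiplier literature): the exponent count $\varsigma\ge d/q$ for $H^{\varsigma,q'}\hookrightarrow L^{(q/2)'}$ is correct, and the endpoint-plus-interpolation proof of the multiplier property works, with the excluded value $\varsigma\ne 1-\tfrac1q=\tfrac{1}{q'}$ exactly what is needed to apply \eqref{e-interpol01}.

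Part~(i), however, contains a genuine gap. Your argument hinges on the ``scale of isomorphisms'' $(B+1)^{\tau'}\colon H^{1,q}_\Gamma\to H^{1-2\tau',q}_\Gamma$, and in particular on the boundedness of $(B+1)^{-\tau'}\colon H^{1-2\tau',q}_\Gamma\to H^{1,q}_\Gamma$. This cannot be obtained by ``interpolating the Assumption~\ref{a-q} isomorphism with the identity'': what you are interpolating is not a single operator between two couples but the analytic family $z\mapsto (B+1)^z$, so you would need Stein interpolation with uniform bounds on $(B+1)^{is}$ on the endpoint spaces. Such bounded imaginary powers are established in this paper only on $L^p$ (Proposition~\ref{p-infcalc}) and, on $\breve H^{-1,q}_\Gamma$, only under the global model Assumption~\ref{l-locmodel} (Lemma~\ref{l-bounded}) --- neither is available under the hypotheses of Lemma~\ref{l-space}. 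Worse, at $\tau'=1/2$ your claimed isomorphism is exactly the square-root isomorphism $(B+1)^{1/2}\colon H^{1,q}_\Gamma\to L^q$ for a $q>d\ge 2$, which the introduction and Remark~\ref{r-nottrue} identify as precisely the statement the authors could \emph{not} prove in this generality (it is the reason for the localization machinery of Section~\ref{sec-Consequences}). The same objection applies to your proposed identification $[X,dom_X(B)]_{\tau'}=dom_X((B+1)^{\tau'})$ via BIP on $X$.

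The good news is that nothing this strong is needed, and your scheme can be repaired into the paper's proof. One only requires the \emph{one-sided} embedding
$X=H^{-\varsigma,q}_\Gamma=[H^{-1,q}_\Gamma,H^{1,q}_\Gamma]_{\frac{1-\varsigma}{2}}\hookrightarrow dom_{H^{-1,q}_\Gamma}\bigl((B+1)^{\varrho}\bigr)$ for $\varrho<\tfrac{1-\varsigma}{2}$, which holds for any positive operator (no BIP; \cite[Ch.~1.15.2]{triebel}, using \eqref{e-interpol04} and Assumption~\ref{a-q} to identify $H^{1,q}_\Gamma$ with $dom_{H^{-1,q}_\Gamma}(B+1)$). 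Composing $(B+1)^{\varrho-1}=(B+1)^{-1}(B+1)^{\varrho}$ with the single isomorphism $(B+1)^{-1}\colon H^{-1,q}_\Gamma\to H^{1,q}_\Gamma$ of Assumption~\ref{a-q} gives $(B+1)^{-\tau}\colon X\to H^{1,q}_\Gamma$ for $\tau=1-\varrho>\tfrac{1+\varsigma}{2}$, i.e. $dom_X((B+1)^{\tau})\hookrightarrow H^{1,q}_\Gamma$; the conclusion then follows from $(X,dom_X(B))_{\tau,1}\hookrightarrow dom_X((B+1)^{\tau})$, which again holds for positive operators without any imaginary-power input. In short: replace the unproven two-sided scale by the restriction of $(B+1)^{-\tau}$ to $X$, obtained from the one-sided interpolation embedding plus the single isomorphism of Assumption~\ref{a-q}.
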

%
\begin{proof}
\begin{enumerate}
\item
        $-\nabla \cdot \mu \nabla$ satisfies resolvent estimates
        \begin{equation} \label{e-interpol00}
          \| \bigl( -\nabla \cdot \mu \nabla + 1 + \lambda \bigr)^{-1}
                \|_{\mathcal L(Y)} \le \frac{c}{1+\lambda}, \quad \lambda \in
                \left[ 0, \infty \right[,
        \end{equation}
        if $Y = H^{-1,q}_\Gamma$ or $Y = L^q$, see Corollary~\ref{c-analyt}.
        In view of \eqref{e-interpol02} then \eqref{e-interpol00} also holds
        for $X$. This enables us to define fractional
        powers for $-\nabla \cdot \mu \nabla + 1$ on each of the occurring
        spaces. According to \eqref{e-interpol04} and Assumption~\ref{a-q} one
	has
        \begin{align*}
          H^{-\varsigma,q}_\Gamma &= [H^{-1,q}_\Gamma, H^{1,q}_\Gamma
                ]_\frac{1-\varsigma}{2} = [H^{-1,q}_\Gamma,
                dom_{H^{-1,q}_\Gamma}(-\nabla \cdot \mu \nabla + 1)
                ]_\frac{1-\varsigma}{2} \\
          &\hookrightarrow
                dom_{H^{-1,q}_\Gamma}((-\nabla \cdot \mu \nabla + 1)^\varrho),
        \end{align*}
        if $\varrho \in \left] 0, \frac{1 - \varsigma}{2} \right[$, see
        \cite[Ch.~1.15.2]{triebel}. Thus, $(-\nabla \cdot \mu \nabla + 1
        )^\varrho \in \mathcal L(H^{-\varsigma,q}_\Gamma,H^{-1,q}_\Gamma)$, if
        $\varrho \in \left] 0, \frac{1 - \varsigma}{2} \right[$.
        Consequently, we can estimate
        \begin{align*}
          & \| ( -\nabla \cdot \mu \nabla + 1)^{\varrho-1}
                \|_{\mathcal L(H^{-\varsigma,q}_\Gamma,H^{1,q}_\Gamma)} \\
          \le\;& \| ( -\nabla \cdot \mu \nabla + 1)^\varrho
                \|_{\mathcal L(H^{-\varsigma,q}_\Gamma,H^{-1,q}_\Gamma)}
                \| ( -\nabla \cdot \mu \nabla + 1)^{-1}
                \|_{\mathcal L(H^{-1,q}_\Gamma,H^{1,q}_\Gamma)} < \infty.
        \end{align*}
        Clearly, this means $dom_{H^{-\varsigma,q}_\Gamma} \bigl( (-\nabla
	\cdot \mu \nabla + 1)^{1-\varrho} \bigr) \hookrightarrow
	H^{1,q}_\Gamma$. Putting $\tau:=1-\varrho$, this implies
        \[ \bigl(H^{-\varsigma,q}_\Gamma, dom_{H^{-\varsigma,q}_\Gamma}
                (-\nabla \cdot \mu \nabla + 1) \bigr)_{\tau,1}
                \hookrightarrow dom_{H^{-\varsigma,q}_\Gamma} \bigl( ( -\nabla
                \cdot \mu \nabla + 1)^{\tau} \bigr) \hookrightarrow
                H^{1,q}_\Gamma
        \]
        for $\tau \in \bigl] \frac{1 + \varsigma}{2}, 1 \bigr[$, see 
        \cite[Ch.~1.15.2]{triebel}.
        \item The first assertion is clear by Sobolev embedding.
        The second follows from known multiplier results, see
        \cite[Ch.~1.4]{grisvard85} or \cite{mazyamult}.\qedhere
\end{enumerate}
\end{proof}
Next we will consider requirement c), see condition ({\bf B}) in
Proposition~\ref{p-pruess}.
%
\begin{lemma} \label{l-definv}
Let $q$ be a number from Assumption \ref{a-q} and let $X$ be a Banach space
with predual $X_*$ that admits the continuous and dense injections
\begin{equation} \label{e-densinj}
  H^{1,q'}_\Gamma \hookrightarrow X_* \hookrightarrow L^{(\frac {q}{2})'}.
\end{equation}
\begin{enumerate}
\item If $\xi \in H^{1,q}$ is a multiplier on $X_*$, then
        $dom_X(-\nabla \cdot \mu \nabla) \hookrightarrow dom_X(-\nabla \cdot
        \xi \mu \nabla)$.
\item If $H^{1,q}$ is a multiplier space for $X_*$, then the (linear) mapping
        $H^{1,q} \ni \xi \mapsto -\nabla \cdot \xi \mu \nabla \in
        \mathcal{L}(dom_X(- \nabla \cdot \mu \nabla),X)$ is well defined and
        continuous.
\end{enumerate}
\end{lemma}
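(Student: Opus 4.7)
The plan is to reduce both claims to the distributional Leibniz-type identity
\[
  -\nabla\cdot(\xi\mu\nabla u)\;=\;\xi\,(-\nabla\cdot\mu\nabla u)\;-\;(\mu\nabla u)\cdot\nabla\xi,
\]
and to check that each summand separately represents an element of $X$ whose norm is controlled linearly in $\|\xi\|_{H^{1,q}}$ and $\|u\|_{dom_X(-\nabla\cdot\mu\nabla)}$. A preliminary step I would settle first is the continuous embedding $dom_X(-\nabla\cdot\mu\nabla)\hookrightarrow H^{1,q}_\Gamma$: dualising \eqref{e-densinj} gives $L^{q/2}\hookrightarrow X\hookrightarrow H^{-1,q}_\Gamma$, and writing $u=(-\nabla\cdot\mu\nabla+1)^{-1}(-\nabla\cdot\mu\nabla+1)u$ yields, by Assumption~\ref{a-q}, $\|u\|_{H^{1,q}_\Gamma}\le C\|u\|_{dom_X(-\nabla\cdot\mu\nabla)}$. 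In particular $\nabla u\in L^q$ with the corresponding control of norms.

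To make the Leibniz identity precise I would test against $\varphi\in H^{1,q'}_\Gamma$. Since $q>d$, the argument of Proposition~\ref{p-basicl2}~iv) shows $\xi\varphi\in H^{1,q'}_\Gamma$, so
\[
  \int_\Omega \xi\,\mu\nabla u\cdot\nabla\varphi\,\dd\mathrm{x}\;=\;\langle -\nabla\cdot\mu\nabla u,\,\xi\varphi\rangle\;-\;\int_\Omega (\mu\nabla u\cdot\nabla\xi)\,\varphi\,\dd\mathrm{x}.
\]
For (i), with $\xi$ a multiplier on $X_*$, the first term extends by density of $H^{1,q'}_\Gamma$ in $X_*$ to every $\varphi\in X_*$ and equals $\langle m_\xi^{*}(-\nabla\cdot\mu\nabla u),\varphi\rangle$, whose defining element lies in $X$. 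Hölder gives $(\mu\nabla u)\cdot\nabla\xi\in L^{q/2}$ (since both factors are in $L^q$), and $L^{q/2}\hookrightarrow X$ by the other half of the dualised \eqref{e-densinj}; hence the second term is also represented by an element of $X$. Summing, $-\nabla\cdot(\xi\mu\nabla u)\in X$, so $u\in dom_X(-\nabla\cdot\xi\mu\nabla)$, which proves~(i).

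For (ii), when every $\xi\in H^{1,q}$ multiplies $X_*$, the closed graph theorem applied to $\xi\mapsto m_\xi$ from $H^{1,q}$ into $\mathcal{L}(X_*)$ yields a constant $M$ with $\|m_\xi\|_{\mathcal{L}(X_*)}\le M\|\xi\|_{H^{1,q}}$; closedness is verified via pointwise testing, using that $H^{1,q}\hookrightarrow C(\overline\Omega)$ for $q>d$ and the continuous embedding $X_*\hookrightarrow L^{(q/2)'}$. Combining $\|m_\xi^{*}(-\nabla\cdot\mu\nabla u)\|_X\le M\|\xi\|_{H^{1,q}}\|u\|_{dom_X(-\nabla\cdot\mu\nabla)}$ with $\|(\mu\nabla u)\cdot\nabla\xi\|_{L^{q/2}}\le\|\mu\|_\infty\|\nabla u\|_{L^q}\|\nabla\xi\|_{L^q}$ and the preliminary embedding then yields the announced linear dependence; linearity of $\xi\mapsto -\nabla\cdot\xi\mu\nabla$ is evident.

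The only real technical point, and therefore the expected main obstacle, is to verify that the ``dualised'' multiplication $m_\xi^{*}$ on $X$ coincides with the ordinary distributional multiplication by $\xi$ that figures in the Leibniz identity. This is not a priori obvious because the two operations are defined in different ways, but since $q>d$ forces $\xi\in C(\overline\Omega)$ and $X\hookrightarrow H^{-1,q}_\Gamma$ is compatible with pointwise multiplication by continuous functions, the two agree, as one sees by testing against elements of $C^\infty_\Gamma\subset H^{1,q'}_\Gamma$ (dense in $X_*$). Once this compatibility is in place, all remaining steps above are purely mechanical.
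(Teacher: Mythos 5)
Your proof is correct and follows essentially the same route as the paper: the same product-rule splitting of $\int_\Omega \xi\mu\nabla u\cdot\nabla\varphi\,\dd\mathrm{x}$ into the term $\langle -\nabla\cdot\mu\nabla u,\xi\varphi\rangle$ (controlled by the multiplier norm of $\xi$ on $X_*$) and the term $(\mu\nabla u)\cdot\nabla\xi\in L^{q/2}\hookrightarrow X$, together with the key embedding $dom_X(-\nabla\cdot\mu\nabla)\hookrightarrow H^{1,q}_\Gamma$ obtained from \eqref{e-densinj} and Assumption~\ref{a-q}. The closed-graph argument you add for (ii) is a harmless (and slightly more careful) way to get the uniform bound $m_\xi\le C\|\xi\|_{H^{1,q}}$ that the paper implicitly reads off from the phrase ``multiplier space''.
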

%
\begin{proof}
The supposition $q > d \ge 2$ and \eqref{e-densinj} imply the existence of a
continuous and dense injection $H^{1,2}_\Gamma \hookrightarrow X_*$. Thus, it
is not hard to see that $\psi$ belongs to $dom_X(-\nabla \cdot \mu \nabla)$
iff the linear form
\[ \varphi \mapsto \int_\Omega \nabla \psi \cdot \mu \nabla {\varphi} \; \dd
        \mathrm x
\]
is continuous on $H^{1,2}_\Gamma$, when $H^{1,2}_\Gamma$ is equipped with the
$X_*$ topology. We denote the set $H^{1,2}_\Gamma \cap \{\varphi \in X_* :
\|\varphi\|_{X_*} = 1\}$ by $\mathcal M$. Assuming $\psi \in
dom_X(-\nabla \cdot \mu \nabla)$, we can estimate
\begin{align}
  \|- \nabla \cdot \xi \mu \nabla \psi \|_X &= \sup_{\varphi \in \mathcal M}
        \biggl| \int_\Omega \xi \mu \nabla \psi \cdot \nabla {\varphi}
        \; \dd \mathrm x \biggr| \nonumber \\
  &\le \sup_{\varphi \in \mathcal M} \biggl| \int_\Omega \nabla \psi \cdot
        \mu \nabla (\xi \varphi) \; \dd \mathrm x \biggr| +
        \sup_{\varphi \in \mathcal M} \biggl| \int_\Omega \nabla \psi \cdot
        \mu \varphi \nabla \xi \; \dd \mathrm x \biggr| \nonumber \\
 &\le \| \psi \|_{dom_X(-\nabla \cdot \mu \nabla)}
        \sup_{\varphi \in \mathcal M} \|\xi \varphi \|_{X_*} + \| \psi
        \|_{H^{1,q}} \|\mu\|_{L^\infty} \| \xi \|_{H^{1,q}}
        \sup_{\varphi \in \mathcal M} \|\varphi \|_{L^{(\frac{q}{2})'}}.
        \label{e-esti001}
\end{align}
We observe that the supposition $H^{1,q'}_\Gamma \hookrightarrow X_*$
together with Assumption~\ref{a-q} leads to the continuous embedding
$dom_X(-\nabla \cdot \mu \nabla) \hookrightarrow H^{1,q}$. Thus,
\eqref{e-esti001} is not larger than
\[ m_\xi \, \| \psi\|_{dom_X(-\nabla \cdot \mu \nabla)} + \| \xi \|_{H^{1,q}}
        \|\mu\|_{L^\infty} \Emb \bigl( dom_X(-\nabla \cdot \mu \nabla),
        H^{1,q} \bigr) \Emb(X_*, L^{(\frac{q}{2})'}) \|\psi
        \|_{dom_X(-\nabla \cdot \mu \nabla)},
\]
where $m_\xi$ denotes the norm of the multiplier on $X_*$ induced by $\xi$ and
$\Emb(\cdot, \cdot)$ stands again for the corresponding embedding constants.

Assertion ii) also results from the estimates in the proof of i).
\end{proof}
\begin{corollary} \label{c-defber}
If $\xi$ additionally to the hypotheses of
Lemma~\ref{l-definv} i) has a
positive lower bound, then
\[ dom_X(-\nabla \cdot \xi \mu \nabla) = dom_X(-\nabla \cdot \mu \nabla).
\]
\end{corollary}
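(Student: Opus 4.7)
The inclusion $dom_X(-\nabla \cdot \mu \nabla) \hookrightarrow dom_X(-\nabla \cdot \xi \mu \nabla)$ is precisely the content of Lemma~\ref{l-definv}~i). The task is therefore to establish the reverse embedding by running the same argument with the roles of $\mu$ and $\xi \mu$ interchanged, using $1/\xi$ in place of $\xi$ as the multiplier.

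First, I would verify that the modified coefficient $\xi \mu$ again meets the standing Assumption~\ref{a-q}. Since $q > d$, Sobolev embedding yields $\xi \in C(\overline{\Omega})$; combined with the positive lower bound $\underline{\xi}$, this shows that $\xi$ satisfies the uniform continuity and positivity hypotheses of Lemma~\ref{l-modi}, whence $-\nabla \cdot \xi \mu \nabla + 1 : H^{1,q}_\Gamma \to H^{-1,q}_\Gamma$ is a topological isomorphism. Consequently the crucial embedding $dom_X(-\nabla \cdot \mu \nabla) \hookrightarrow H^{1,q}$ used inside the proof of Lemma~\ref{l-definv} remains valid with $\xi \mu$ in place of $\mu$.

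Second, I would check that $1/\xi$ plays the role previously played by $\xi$ in Lemma~\ref{l-definv}~i). The chain rule identity $\nabla(1/\xi) = -\nabla \xi / \xi^2$ together with $\xi \ge \underline{\xi} > 0$ and $\xi \in H^{1,q} \cap L^\infty$ immediately yields $1/\xi \in H^{1,q}$. That $1/\xi$ is a multiplier on $X_*$ is the one subtle point; however, in the situations of actual interest (e.g.\ $X_* = H^{\varsigma,q'}_\Gamma$ with $\varsigma \in [d/q,1]$), Lemma~\ref{l-space}~ii) asserts that $H^{1,q}$ is entirely a multiplier space for $X_*$, and the claim follows automatically from $1/\xi \in H^{1,q}$.

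With these two ingredients in hand, a direct re-application of Lemma~\ref{l-definv}~i) to the coefficient $\xi \mu$ and the multiplier $1/\xi$ yields
\[
 dom_X(-\nabla \cdot \xi \mu \nabla) \hookrightarrow
 dom_X\bigl(-\nabla \cdot (1/\xi)(\xi \mu) \nabla\bigr) = dom_X(-\nabla \cdot \mu \nabla),
\]
which together with the direct inclusion gives the asserted equality of domains. The main obstacle in the argument is really confined to the multiplier property of $1/\xi$; once this is granted, the corollary reduces to a symmetry argument around Lemma~\ref{l-definv}~i), the positive lower bound being precisely what is needed for the reciprocal operation $\xi \mapsto 1/\xi$ to preserve the hypotheses of that lemma.
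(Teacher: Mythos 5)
Your proposal is correct and follows essentially the same route as the paper: the direct inclusion is Lemma~\ref{l-definv}~i), and the reverse inclusion is obtained by invoking Lemma~\ref{l-modi} to secure Assumption~\ref{a-q} for the coefficient $\xi\mu$ and then reapplying Lemma~\ref{l-definv}~i) with $\tilde\mu = \xi\mu$ and $\tilde\xi = 1/\xi$. Your extra checks that $1/\xi \in H^{1,q}$ and that it is a multiplier on $X_*$ are details the paper leaves implicit, and they are verified correctly.
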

%
\begin{proof}
According to Lemma~\ref{l-definv}~i) one has only to show $dom_X(-\nabla \cdot
\xi \mu \nabla) \hookrightarrow dom_X(-\nabla \cdot \mu \nabla)$. By
Lemma~\ref{l-modi} we have $dom_{H^{-1,q}_\Gamma}(-\nabla \cdot \xi \mu
\nabla) = H^{1,q}_\Gamma$. Thus, one can apply Lemma~\ref{l-definv} to the
situation $\tilde \mu = \xi \mu$ and $\tilde \xi = \frac {1}{\xi}$.
\end{proof}
Next we will show that functions on $\partial \Omega$ or on a Lipschitz
hypersurface, which belong to a suitable summability class, can be understood
as elements of the distribution space $H^{-\varsigma,q}_\Gamma$.
%
\begin{theorem} \label{t-invtrace}
Assume $q \in \left] 1, \infty \right[$, $\varsigma \in \bigl] 1 -
\frac{1}{q}, 1 \bigr[ \setminus \{ \frac{1}{q} \}$ and let $\Pi, \varpi$ be as
in Theorem~\ref{t-embedbound}. Then the adjoint trace operator
$(\mathrm{Tr})^*$ maps $L^q(\Pi)$ continuously into $\bigl(
H^{\varsigma, q'}(\Omega) \bigr)' \hookrightarrow H^{-\varsigma,q}_\Gamma$.
\end{theorem}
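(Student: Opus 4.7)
The proof plan is essentially a dualization of Theorem~\ref{t-embedbound}. The key observation is that the exponent conditions line up perfectly: Theorem~\ref{t-embedbound} requires $\theta > 1/q$, and here the dualized version will need $\varsigma > 1/q' = 1 - 1/q$, which is precisely the hypothesis $\varsigma \in \bigl] 1 - \frac{1}{q}, 1 \bigr[$.

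First, I would apply Theorem~\ref{t-embedbound} with the roles of $q$ and $\theta$ played by $q'$ and $\varsigma$: since $\varsigma > 1 - 1/q = 1/q'$, the theorem yields continuity of the trace map
\[
  \mathrm{Tr} : H^{\varsigma, q'}(\Omega) \to L^{q'}(\Pi, \varpi).
\]
Second, I would dualize this statement. Since $\varpi$ is a bounded positive Radon measure on $\Pi$ and hence $\sigma$-finite, the standard $L^p$ duality gives the isometric identification $\bigl(L^{q'}(\Pi, \varpi)\bigr)' = L^q(\Pi, \varpi)$ via the pairing $\langle f, g \rangle = \int_\Pi f\, \overline g\, \dd\varpi$. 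Taking adjoints of the continuous trace map then produces a continuous operator
\[
  (\mathrm{Tr})^* : L^q(\Pi, \varpi) \to \bigl( H^{\varsigma, q'}(\Omega) \bigr)'.
\]

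Third, I would conclude with the embedding $\bigl( H^{\varsigma, q'}(\Omega) \bigr)' \hookrightarrow H^{-\varsigma, q}_\Gamma$. By definition, $H^{\varsigma, q'}_\Gamma(\Omega)$ is a closed subspace of $H^{\varsigma, q'}(\Omega)$, and $H^{-\varsigma, q}_\Gamma$ is its dual. Hence the restriction mapping, which sends a continuous linear form on $H^{\varsigma, q'}(\Omega)$ to its restriction to $H^{\varsigma, q'}_\Gamma$, is continuous of norm at most one (and, by Hahn--Banach, surjective). Composing with $(\mathrm{Tr})^*$ yields the asserted continuous map from $L^q(\Pi)$ into $H^{-\varsigma, q}_\Gamma$.

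There is no real obstacle here; the proof is a three-line duality argument built on top of Theorem~\ref{t-embedbound}. The only points worth double-checking are the matching of indices (the subtracted $\varsigma \ne 1/q$ does not enter this particular proof, but is presumably needed for compatibility with the interpolation identities in Proposition~\ref{p-interpol} when this result is used later) and the verification that the duality $\bigl(L^{q'}(\Pi, \varpi)\bigr)' = L^q(\Pi, \varpi)$ is available, which is immediate from $\sigma$-finiteness of $\varpi$.
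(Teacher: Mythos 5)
Your proof is correct and is exactly the argument the paper intends: the paper's own proof consists of the single sentence that the result ``is obtained from Theorem~\ref{t-embedbound} by duality,'' and your three steps (apply Theorem~\ref{t-embedbound} with exponents $q'$, $\varsigma$ using $\varsigma > 1/q'$, dualize via $(L^{q'}(\Pi,\varpi))' = L^q(\Pi,\varpi)$, then compose with the dual of the subspace inclusion $H^{\varsigma,q'}_\Gamma \hookrightarrow H^{\varsigma,q'}(\Omega)$) are precisely the details being suppressed. Your side remark that the condition $\varsigma \neq 1/q$ plays no role in this proof but is needed for the interpolation identities elsewhere is also accurate.
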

%
\begin{proof}
The result is obtained from Theorem~\ref{t-embedbound} by duality.
\end{proof}
%
\begin{remark} \label{r-unglatt}
Here we restricted the considerations to the case of Lipschitz hypersurfaces, since
this is the most essential insofar as it gives the possibility of prescribing
jumps in the normal component of the current $j := \mathcal G(u) \mu \nabla u$
along hypersurfaces where the coefficient function jumps. This case is of high
relevance in view of applied problems and has attracted much attention also
from the numerical point of view, see e.g. \cite{adams}, \cite{cagin} and
references therein.

In fact, it is possible to include much more general sets where distributional right
hand sides live. For the identification of (singular) measures as distribtions
on lower dimensional sets, see also \cite[Ch.~4]{ziemer} and
\cite[Ch.~VI.]{jons}. We did not make explicit use of this here, because
at present we do not see direct applications.
\end{remark}
%
From now on we fix once and for all a number $\varsigma \in \bigl] \max \{ 1 -
\frac{1}{q}, \frac{d}{q} \}, 1 \bigr[$ and set for all what follows $X :=
H^{-\varsigma,q}_\Gamma$.

\medskip

Next we introduce the requirements on the data of problem
\eqref{e-quasiformal}/\eqref{e-boundcond77}.
%
\begin{assumption} \label{a-daten}
\begin{enumerate}
\item[{\bf Op)}] For all what follows we fix a number $s >
        \frac{2}{1-\varsigma}$.
\item[{\bf Su)}] There exists $f \in C^2(\R)$, positive, with strictly
        positive derivative, such that $\mathcal F$ is the superposition
        operator induced by $f$.
\item[{\bf Ga)}] The mapping $\mathcal G: H^{1,q} \to H^{1,q}$ is locally
        Lipschitz continuous.
\item[{\bf Gb)}] For any ball in $H^{1,q}$ there exists $\delta > 0$, such
        that $\mathcal G(u) \ge \delta$ for all $u$ from this ball.
\item[{\bf Ra)}] The function $\mathcal R: J \times H^{1,q} \to X$ is of
	Carath\'eodory type, i.e. $\mathcal R(\cdot,u)$ is measurable for all
	$u \in H^{1,q}$ and $\mathcal R(t,\cdot)$ is continuous for a.a. $t
	\in J$.
\item[{\bf Rb)}] $\mathcal R(\cdot, 0) \in L^s(J;X)$ and for $M > 0$ there
	exists $h_M \in L^s(J)$, such that
        \[ \| \mathcal R(t,u) - \mathcal R(t, \tilde{u}) \| _{X} \le h_M(t) \|
		u - \tilde{u} \|_{H^{1,q}}, \quad t \in J,
        \]
        provided $\max(\|u\|_{H^{1,q}}, \|\tilde{u}\|_{H^{1,q}}) \leq M$.
\item[{\bf BC)}] $b$ is an operator of the form $b(u)= Q(b_\circ(u))$, where
	$b_\circ$ is a (possibly nonlinear), locally Lipschitzian operator from
	$C(\overline \Omega)$ into itself (see Lemma \ref{l-relativ}).
\item[{\bf Gg)}] $g \in L^q(\Gamma)$.
\item[{\bf IC)}] $u_0 \in (X, dom_X(-\nabla \cdot \mu \nabla))_{1-\frac{1}{s},s}$.
\end{enumerate}
\end{assumption}
%
%
\begin{remark} \label{r-gterm}
At the first glance the choice of $s$ seems indiscriminate. The point is,
however, that generically in applications the explicit time dependence of the
reaction term $\mathcal R$ is essentially bounded. Thus, in view of condition
{\bf Rb)} it is justified to take $s$ as any arbitrarily large number, whose magnitude
 needs not to be controlled explicitely, see Example~\ref{ex-zerlegung}.

Note that the requirement on $\mathcal G$ allows for
nonlocal operators. This is essential if the current depends on an additional
potential governed by an auxiliary equation, what is usually the case in
drift-diffusion models, see \cite{amann}, \cite{gaj/groe95} or
\cite{selberherr84}.

The conditions~{\bf Ra)} and {\bf Rb)} are always satisfied if $\mathcal R$ is
a mapping into $L^{q/2}$ with the analog boundedness and continuity properties,
see Lemma~\ref{l-space}~ii).

The estimate in \eqref{e-Qesti} shows that $Q$ in fact is well defined on $C(\overline \Omega)$, therefore condition {\bf BC)} makes sense, see also
\eqref{e-hoelderemb}. In particular, $b_\circ$ may be a superposition operator,
induced by a $C^1(\R)$ function. Let us emphasize that in this case the
inducing function needs not to be positive. Thus, non-dissipative boundary
conditions are included.

Finally, the condition {\bf IC)} is an 'abstract' one and hardly to verify,
because one has no explicit characterization of
$(X,dom_X(-\nabla \cdot \mu \nabla))_{1-\frac{1}{s},s}$ at hand. Nevertheless,
the condition is reproduced along the trajectory of the solution by means of
the embedding \eqref{e-embedcont}.
\end{remark}
%
In order to solve \eqref{e-quasiformal}/\eqref{e-boundcond77}, we will
consider instead \eqref{5.2} with
\begin{equation} \label{e-opdef}
  \mathcal B(u) := -\nabla \cdot \frac{\mathcal G(u)}{\mathcal F'(u)} \mu
        \nabla
\end{equation}
and the right hand side $\mathcal S$
\begin{equation} \label{e-quasiformal1}
  \mathcal S(t,u) := \frac{\mathcal R(t,u)}{\mathcal F'(u)} + \Bigl( \nabla
        \frac{1}{\mathcal F'(u)} \Bigr) \cdot \Bigl( \mathcal G(u) \mu \nabla
        u \Bigr) - \frac{Q(b_\circ(u))}{\mathcal F'(u)} +
        \frac{(\mathrm{Tr})^*g}{\mathcal F'(u)},
\end{equation}
seeking the solution in the space $W^{1,s}(J;X) \cap L^s(J;dom_X(-\nabla \cdot
\mu \nabla))$.
%
\begin{remark} \label{r-justify}
Let us explain this reformulation: as is well known in the theory of  boundary
value problems, the boundary condition \eqref{e-boundcond77} is incorporated
by introducing the boundary terms $-\varkappa b_\circ(u)$ and $g$ on the right
hand side. In order to understand both as elements from $X$, we write
$Q(b_\circ(u))$ and $(\mathrm{Tr})^*g$, see Lemma \ref{l-relativ} and
Theorem~\ref{t-invtrace}. On the other hand, our aim was to eliminate the
nonlinearity under the time derivation: we formally differentiate
$(\mathcal F(u))'=\mathcal F'(u)u'$ and afterwards divide the whole equation
by $\mathcal F'(u)$. Finally, we employ the equation
\begin{equation} \label{e-umform}
  - \frac{1}{\mathcal F'(u)} \nabla \cdot {\mathcal G(u)} \mu \nabla u =
        -\nabla \cdot \frac{\mathcal G(u)}{\mathcal F'(u)} \mu \nabla u -
        \Bigl( \nabla \frac{1}{\mathcal F'(u)} \Bigr) \cdot \Bigl(
        \mathcal G(u) \mu \nabla u \Bigr),
\end{equation}
which holds for any $u \in dom_X(-\nabla \cdot \mathcal G(u) \mu \nabla) =
dom_X(-\nabla \cdot \mu \nabla)$ as an equation in $X$, compare Lemma \ref{l-space} ii) 
and Corollary~\ref{c-defber}.
\end{remark}
%
%
\begin{theorem} \label{t-existence}
Let Assumption~\ref{a-q} be satisfied and assume that
the data of the problem satisfy Assumption~\ref{a-daten}. Then \eqref{5.2}
has a local in time, unique solution in $W^{1,s}(J;X) \cap L^s(J;dom_X(-\nabla
\cdot \mu \nabla))$, provided that $\mathcal B$ and $\mathcal S$ are given by
\eqref{e-opdef} and \eqref{e-quasiformal1}, respectively.
\end{theorem}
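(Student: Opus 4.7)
The plan is to verify all hypotheses of Proposition~\ref{p-pruess} with $X = H^{-\varsigma,q}_\Gamma$, $D := dom_X(-\nabla\cdot\mu\nabla)$ and $\mathcal B$, $\mathcal S$ given by \eqref{e-opdef}/\eqref{e-quasiformal1}. The condition $s > 2/(1-\varsigma)$ in \textbf{Op)} together with Lemma~\ref{l-space}~i) yields the fundamental embedding
\[
  (X, D)_{1-1/s,\,s} \hookrightarrow H^{1,q}_\Gamma \hookrightarrow C(\overline\Omega),
\]
where the last step uses $q > d$. Consequently, for any $u$ in the interpolation space \textbf{Ga)}, \textbf{Gb)}, \textbf{Su)}, the Banach-algebra property of $H^{1,q}$, and the chain rule applied to $\mathcal F' \in C^1(\R)$ show that $\xi(u) := \mathcal G(u)/\mathcal F'(u)$ belongs to $H^{1,q}$ and is uniformly continuous on $\overline\Omega$ with a positive lower bound that is uniform on bounded subsets.

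For the maximal regularity of the base operator $\mathcal B(u_0) = -\nabla\cdot\xi_0\mu\nabla$ with $\xi_0 := \xi(u_0)$, Lemma~\ref{l-modi} shows that Assumption~\ref{a-q} is preserved when $\mu$ is replaced by $\xi_0\mu$. Theorem~\ref{t-central} then gives maximal parabolic regularity of $\mathcal B(u_0)$ on $\breve H^{-1,q}_\Gamma$, while Theorem~\ref{t-randterm}~iii) (applied in its $\varkappa = 0$ form) does so on $L^q$. Interpolating by Lemma~\ref{l-maxint} and identifying $[L^q,\breve H^{-1,q}_\Gamma]_\varsigma = \breve H^{-\varsigma,q}_\Gamma = X$ via Remark~\ref{r-charact} delivers maximal regularity of $\mathcal B(u_0)$ on $X$; Corollary~\ref{c-defber} ensures $dom_X(\mathcal B(u_0)) = D$.

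Condition (B) follows by combining the continuity of the linear map $H^{1,q} \ni \xi \mapsto -\nabla\cdot\xi\mu\nabla \in \mathcal L(D,X)$ (Lemma~\ref{l-definv}~ii)) with the local Lipschitz continuity of $u \mapsto \xi(u)$ from $H^{1,q}$ into itself and the embedding from the first paragraph. Condition (R) is checked by decomposing \eqref{e-quasiformal1} into four summands:
\begin{enumerate}
\item $\mathcal R(t,u)/\mathcal F'(u)$ is Carathéodory into $X$ by \textbf{Ra)}, \textbf{Rb)}, since $1/\mathcal F'(u) \in H^{1,q}$ multiplies $X_* = H^{\varsigma,q'}_\Gamma$ into itself (Lemma~\ref{l-space}~ii));
\item $\nabla(1/\mathcal F'(u))\cdot\mathcal G(u)\mu\nabla u$ lies in $L^{q/2}$ (the three factors lying respectively in $L^q$, $L^\infty$, $L^q$), and $L^{q/2} \hookrightarrow X$ by Lemma~\ref{l-space}~ii);
\item $Q(b_\circ(u))/\mathcal F'(u) \in X$ thanks to \textbf{BC)} together with Lemma~\ref{l-relativ};
\item $(\mathrm{Tr})^*g/\mathcal F'(u) \in X$ by \textbf{Gg)} and Theorem~\ref{t-invtrace}.
\end{enumerate}
The four pieces are each locally Lipschitz in the $H^{1,q}$-norm (and hence in the $(X,D)_{1-1/s,s}$-norm), and the bound $\mathcal S(\cdot,0) \in L^s(J;X)$ follows from the analogous property of $\mathcal R(\cdot,0)$ together with the time-independence of $g$ and $b_\circ(0)$. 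Proposition~\ref{p-pruess} then produces the desired local solution.

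The principal technical hurdle is summand (b) above: the quadratic gradient term only sits in $L^{q/2}$ rather than in $L^q$, which is precisely the obstruction that forced us to work not in $\breve H^{-1,q}_\Gamma$ but in the strictly larger space $X = H^{-\varsigma,q}_\Gamma$ with $\varsigma \ge d/q$, thereby securing $L^{q/2} \hookrightarrow X$. A subsidiary bookkeeping issue is the careful propagation of local Lipschitz constants through the nonlinear superposition operators $\mathcal F$, $\mathcal G$, $b_\circ$ after composition with the embedding into $C(\overline\Omega)$, but none of these computations pose genuine difficulty once the framework is in place.
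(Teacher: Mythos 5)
Your proposal is correct and follows essentially the same route as the paper: verify the hypotheses of Proposition~\ref{p-pruess} via the embedding $(X,D)_{1-\frac{1}{s},s}\hookrightarrow H^{1,q}$, obtain maximal regularity of $\mathcal B(u_0)$ on $X$ with domain $D$ from Lemma~\ref{l-modi} and Corollary~\ref{c-defber} together with the interpolation machinery (the paper simply cites Theorem~\ref{t-randterm}~iv) with $p=q$ where you unfold it into Theorem~\ref{t-central}, Lemma~\ref{l-maxint} and Remark~\ref{r-charact}), and then check conditions (B) and (R) term by term exactly as the paper does, including the observation that the quadratic gradient term only lands in $L^{q/2}\hookrightarrow X$, which is the reason for the choice $\varsigma\ge d/q$.
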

%
\begin{proof}
First of all we note that, due to {\bf Op)}, $1 - \frac{1}{s} >
\frac{1+\varsigma}{2}$. Thus, if $\tau \in ]\frac{1+\varsigma}{2},1 - \frac{1}{s}[$
by a well known interpolation result (see
\cite[Ch.~1.3.3]{triebel}) and Lemma~\ref{l-space}~i) we have
\begin{equation} \label{e-embeddin}
  (X, dom_X(-\nabla \cdot \mu \nabla))_{1-\frac{1}{s},s} \hookrightarrow
        (X, dom_X(-\nabla \cdot \mu \nabla))_{\tau ,1}
        \hookrightarrow H^{1,q}.
\end{equation}
Hence, by {\bf IC)}, $u_0 \in H^{1,q}$. Consequently, due to the suppositions
on $\mathcal F$ and $\mathcal G$, both the functions
$\frac{\mathcal G(u_0)}{\mathcal F'(u_0)}$ and
$\frac{\mathcal F'(u_0)}{\mathcal G(u_0)}$ belong to $H^{1,q}$ and are bounded
from below by a positive constant. Denoting $-\nabla \cdot
\frac{\mathcal G(u_0)}{\mathcal F'(u_0)} \mu \nabla$ by $B$,
Corollary~\ref{c-defber} gives $dom_X(-\nabla \cdot \mu \nabla) = dom_X(B)$. 
This implies $u_0 \in (X, dom_X(B))_{1-\frac{1}{s},s}$. Furthermore, the so
defined $B$ has maximal parabolic regularity on $X$, thanks to
\eqref{e-interpolXc0} in Theorem~\ref{t-randterm} with $p=q$.

Condition ({\bf B}) from Proposition~\ref{p-pruess} is implied by
Lemma~\ref{l-definv}~ii) in cooperation with ii) of Lemma~\ref{l-space}, the
fact that the mapping $H^{1,q} \ni \phi \mapsto \frac{\mathcal G(\phi)}
{\mathcal F'(\phi)} \in H^{1,q}$ is boundedly Lipschitz and \eqref{e-embeddin}.

It remains to show that the 'new' right hand side $\mathcal S$ satisfies condition
({\bf R}) from Proposition~\ref{p-pruess}. We do this for every term in \eqref{e-quasiformal1}
 separately, beginning from the left: concerning the first, one again uses
\eqref{e-embeddin}, the asserted conditions {\bf Ra)} and {\bf Rb)} on $\mathcal R$,
the local Lipschitz continuity of the mapping $H^{1,q} \ni u \mapsto
\frac {1}{\mathcal F'(u)} \in H^{1,q}$ and the fact that $H^{1,q}$ is a
multiplier space over $X$. The second term can be treated in the same spirit, if
one takes into account the embedding $L^{q/2} \hookrightarrow X$ and applies
H\"older's inequality. The assertion for the last two terms results
from \eqref{e-embeddin}, the assumptions {\bf BC)}/{\bf Gg)}, Lemma \ref{l-relativ} and
Theorem~\ref{t-invtrace}.
\end{proof}
%
%
\begin{remark} \label{r-justi}
According to \eqref{e-umform} it is clear that the solution $u$ satisfies the
equation
\begin{equation} \label{e-backtransf}
  \mathcal F'(u) u' - \nabla \cdot \mathcal G(u) \mu \nabla u + Q(b_\circ(u))
	= \mathcal R(t,u) + (\mathrm{Tr})^*g
\end{equation}
as an equation in $X$. Note that, if $\mathcal R$ takes its values only
in the space $L^{q/2} \hookrightarrow X$, then -- in the light of
Lemma~\ref{l-relativ} -- the elliptic operators incorporate the boundary
conditions \eqref{e-boundcond77} in a generalized sense, see
\cite[Ch.~II.2]{ggz} or \cite[Ch.~1.2]{cia}.
\end{remark}
%
The remaining problem is to identify $\mathcal F'(u) u'$ with $\bigl(
\mathcal F(u) \bigr)'$ where the prime has to be understood as the
distributional derivative with respect to time. This identification (technically rather involved) is proved in \cite{hie/reh} for the case where
the Banach space $X$ equals $L^{q/2}$, but can be carried over to the case $X =
H^{-\varsigma,q}_\Gamma$ -- word by word.

We will now show that the solution $u$ is H\"older continuous simultaneously
in space and time, even more:
%
\begin{corollary} \label{c-hoel}
There exist $\alpha, \beta > 0$ such that the solution $u$ of
\eqref{e-backtransf} belongs to the space $C^\beta(J; H^{1,q}_\Gamma(\Omega))
\hookrightarrow C^\beta(J; C^\alpha(\Omega))$.
\end{corollary}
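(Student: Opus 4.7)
The plan is to combine three ingredients: (i) the mixed regularity $u \in W^{1,s}(J;X) \cap L^s(J; \mathrm{dom}_X(-\nabla \cdot \mu \nabla))$ produced by Theorem~\ref{t-existence}; (ii) a standard interpolation embedding that turns such mixed regularity into H\"older continuity with values in a real interpolation space; (iii) the embedding from Lemma~\ref{l-space}~i) together with Sobolev embedding.

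First, recall $X = H^{-\varsigma,q}_\Gamma$, $q > d$, and by Assumption~\ref{a-daten} {\bf Op)} we have $s > \frac{2}{1-\varsigma}$, which is equivalent to $1-\frac{1}{s} > \frac{1+\varsigma}{2}$. Choose any $\tau$ with $\frac{1+\varsigma}{2} < \tau < 1 - \frac{1}{s}$; this interval is nonempty. The key embedding is the classical one
\[
W^{1,s}(J;X) \cap L^s\bigl(J;\mathrm{dom}_X(-\nabla \cdot \mu \nabla)\bigr)
\hookrightarrow C^\beta\bigl(\overline{J};(X,\mathrm{dom}_X(-\nabla \cdot \mu \nabla))_{\tau,1}\bigr),
\]
valid with $\beta := 1 - \frac{1}{s} - \tau > 0$. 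This follows from the fundamental theorem of calculus combined with H\"older's inequality to get a $C^{1-1/s}(\overline{J};X)$ estimate, together with the interpolation inequality $\|v\|_{(X_0,X_1)_{\tau,1}} \le C\|v\|_{X_0}^{1-\tau}\|v\|_{X_1}^{\tau}$ applied to increments of $u$; see \cite[Ch.~III.1]{amannbuch} for details in precisely this framework (it is the H\"older-valued sharpening of \eqref{e-embedcont}).

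Second, by Lemma~\ref{l-space}~i), the choice $\tau > \frac{1+\varsigma}{2}$ gives
\[
\bigl(X,\mathrm{dom}_X(-\nabla \cdot \mu \nabla)\bigr)_{\tau,1}
\hookrightarrow H^{1,q}_\Gamma(\Omega),
\]
so we obtain $u \in C^\beta(\overline{J};H^{1,q}_\Gamma(\Omega))$. Finally, since $q > d$, the Sobolev embedding theorem yields $H^{1,q}_\Gamma(\Omega) \hookrightarrow C^\alpha(\overline{\Omega})$ with $\alpha = 1 - d/q > 0$, proving the claim.

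There is no serious obstacle; the only point worth verifying is that the embedding in the first displayed formula is truly available for the parameter range we need. This amounts to the fact that, after subtracting a suitable constant, increments of $u$ are controlled in $X$ by $(t-r)^{1-1/s}\|u'\|_{L^s(J;X)}$ and simultaneously bounded in $\mathrm{dom}_X(-\nabla \cdot \mu \nabla)$ in the $L^s$-sense, and this produces membership in the real interpolation space $(X,\mathrm{dom}_X(-\nabla \cdot \mu \nabla))_{\tau,1}$ uniformly in $t$ along with the claimed H\"older exponent. This is a standard maximal regularity consequence and does not require any further structure of the operator beyond what has already been established.
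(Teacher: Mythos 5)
Your argument is correct and follows essentially the same route as the paper: H\"older continuity in $X$ from $W^{1,s}(J;X)$, a uniform bound coming from $L^s(J;D)$, interpolation into $(X,D)_{\tau,1}$ with $\tau>\frac{1+\varsigma}{2}$, then Lemma~\ref{l-space}~i) and Sobolev embedding; your exponent $\beta=1-\frac{1}{s}-\tau$ agrees with the paper's $\delta(1-\lambda)$ under $\tau=\lambda(1-\frac1s)$. The only point to tighten is that the interpolation inequality cannot be applied to increments directly against $D$ (there is no pointwise bound of $\|u(t)\|_D$); as in the paper, one interpolates against the trace space $(X,D)_{1-\frac1s,s}$, in which $u$ is uniformly bounded by \eqref{e-embedcont}, and then identifies $(X,(X,D)_{1-\frac1s,s})_{\lambda,1}=(X,D)_{\lambda(1-\frac1s),1}$ by reiteration.
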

%
\begin{proof}
During this proof we write for short $D := dom_X(B)$. A straightforward
application of H\"older's inequality yields the embedding
\[ W^{1,s}(J; X) \hookrightarrow C^\delta ( J; X ) \quad \text {with} \quad
        \delta = 1- \frac {1}{s}.
\]
Take $\lambda$ from the interval $\bigl] \frac{1+\varsigma}{2} \bigl( 1 -\frac{1}{s} \bigr)^{-1},
 1 \bigr[$, which is nonempty in view of {\bf Op)}. Using Lemma~\ref{l-space}~i) and the reiteration
 theorem for real interpolation, one can estimate
\begin{align*}
  \frac{\| u(t_1) - u(t_2) \|_{H^{1,q}}}{|t_1 - t_2|^{\delta (1-\lambda)}} &\le
        c \,\frac{\| u(t_1) - u(t_2) \|_{(X, D)_{\lambda (1-\frac {1}{s}),1 }}}
        {|t_1 - t_2|^{\delta (1-\lambda)}} \le c \,
        \frac{\| u(t_1) - u(t_2) \|_{(X,(X,D)_{1-\frac {1}{s},s})_{\lambda,1}}}
        {|t_1 - t_2|^{\delta(1- \lambda)}} \\
  &\le c \, \frac{\| u(t_1) - u(t_2) \|^{1-\lambda}_X}
        {|t_1 - t_2|^{\delta(1- \lambda)}} \, \| u(t_1) - u(t_2)
        \|^{\lambda}_{(X,D)_{1-\frac {1}{s},s}} \\
  &\le c \, \Bigl( \frac{\|u(t_1) - u(t_2) \|_{X}}{|t_1-t_2|^\delta}
        \Bigr)^{1-\lambda} \, \Bigl( 2 \sup_{t \in J} \|u(t)
        \|_{(X,D)_{1-\frac {1}{s},s}} \Bigr)^\lambda.
\end{align*}
\end{proof}
Finally, we will have a closer look at the semilinear case. It turns out that one can achieve
 satisfactory results here without Assumption \ref{a-q},
at least when the nonlinear term depends only on the 
function itself and not on its gradient.
%
\begin{theorem} \label{t-semil}
Assume that $-\nabla \cdot \mu \nabla$ satisfies maximal parabolic regularity
on $H^{-1,q}_\Gamma$ for some $q>d$. Suppose further that the function
$\mathcal R : J \times C(\overline \Omega) \to H^{-1,q}_\Gamma$ is of
Carath\'eodory type, i.e. $\mathcal R(\cdot,u)$ is measurable for all $u \in
C(\overline \Omega)$ and $\mathcal R(t,\cdot)$ is continuous for a.a. $t \in
J$ and, additionally, obeys the following condition: $\mathcal R(\cdot, 0) \in
L^s(J;H^{-1,q}_\Gamma)$ and for all $M > 0$ there exists $h_M \in L^s(J)$, such that
\[ \| \mathcal R(t,u) - \mathcal R(t, \tilde{u}) \| _{H^{-1,q}_\Gamma} \le
	h_M(t) \| u - \tilde{u} \|_{C(\overline \Omega)}, \quad t \in J.
\]
Then the equation
\[ u' -\nabla \cdot \mu \nabla u = \mathcal R(t,u) ,\quad \quad u(T_0) = 0
\]
admits exactly one local in time solution.
\end{theorem}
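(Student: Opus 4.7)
The plan is to apply Proposition~\ref{p-pruess} to this semilinear problem, taking $X := H^{-1,q}_\Gamma$, $D := dom_X(-\nabla \cdot \mu \nabla)$, $\mathcal{B} \equiv -\nabla \cdot \mu \nabla$ (independent of $u$), $u_0 := 0$, and $\mathcal{S}(t,u) := \mathcal{R}(t,u)$. The hypothesis guarantees that $\mathcal B$ fits the abstract framework, and the $u$-independence of $\mathcal B$ renders condition~$({\bf B})$ trivial. Only the Lipschitz condition~$({\bf R})$ on $\mathcal S$ requires real work.

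For this, the hypothesis gives $\mathcal R$ locally Lipschitz from $C(\overline{\Omega})$ into $X$. Condition~$({\bf R})$ instead requires the analogous estimate in the trace-space norm of $(X,D)_{1-\frac{1}{s},s}$, so it suffices to establish the continuous embedding
\[
(X, D)_{1-\frac{1}{s},\, s} \;\hookrightarrow\; C(\overline{\Omega}),
\]
possibly after enlarging $s$ (harmless since $J$ is bounded, so that any $h_M$ in the given $L^s(J)$ remains in all smaller $L^{\tilde s}(J)$). The essential input is $q > d$, which by the Hölder-regularity result of Griepentrog \cite{griehoel} already invoked in \eqref{e-hoelderemb} yields $D \hookrightarrow C^{\alpha}(\overline{\Omega})$ for some $\alpha > 0$. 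Using the maximal parabolic regularity assumption together with a bounded $\mathcal{H}^{\infty}$-calculus for $-\nabla\cdot\mu\nabla + 1$ on $X$ (inherited from Proposition~\ref{p-infcalc} along the lines used in the proof of Lemma~\ref{l-space}~i)), one constructs the fractional powers of $-\nabla\cdot\mu\nabla + 1$ on $X$; then the trace space embeds continuously into $dom_X((-\nabla\cdot\mu\nabla+1)^{\tau})$ for any $\tau < 1-\frac{1}{s}$, and the Hölder regularity of $D$ propagates by interpolation to give the desired embedding into $C(\overline{\Omega})$ once $\tau$ is sufficiently close to one.

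The principal obstacle is precisely this last embedding, because without Assumption~\ref{a-q} the domain $D$ is not explicitly identified; the argument must exploit the abstract Hölder regularity of $D$ together with the full strength of the functional calculus on $X$. Once the embedding is in place, condition~$({\bf R})$ follows immediately from the assumed Lipschitz property of $\mathcal{R}$, and Proposition~\ref{p-pruess} produces a unique local solution in $W^{1,s}(\,]T_0,T^*[\,;X)\cap L^s(\,]T_0,T^*[\,;D)$.
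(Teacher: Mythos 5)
Your skeleton coincides with the paper's: apply Proposition~\ref{p-pruess} with $X=H^{-1,q}_\Gamma$, $u$-independent $\mathcal B$ (so $({\bf B})$ is trivial), and reduce everything to embedding the trace space into $C(\overline\Omega)$, with the H\"older estimate $D:=dom_X(-\nabla\cdot\mu\nabla)\hookrightarrow C^\alpha$ from \cite{griehoel} (valid because $q>d$) as the essential input. The paper finishes by pure interpolation: the trace space sits in $[X,D]_\theta$ for $\theta<1-\frac1s$ (a general fact for interpolation couples, no functional calculus required), reiteration gives $[X,D]_\theta=\bigl[[X,D]_{1/2},D\bigr]_{2\theta-1}$, the inner space is absorbed via $X\hookrightarrow H^{-1,2}_\Gamma$ and $D\hookrightarrow H^{1,2}_\Gamma$ into $[H^{-1,2}_\Gamma,H^{1,2}_\Gamma]_{1/2}=L^2$, and $[L^2,C^\alpha]_{2\theta-1}$ embeds into a H\"older space for $\theta$ close to $1$ by \cite[Ch.~7]{gkr}, \cite{triebelc}.

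Two points in your execution of that step are genuinely problematic. First, the fractional-power/$\mathcal H^\infty$ route is not available under the hypotheses of this theorem: Proposition~\ref{p-infcalc} gives the calculus on $L^p$ only, Lemma~\ref{l-bounded} transports it to $\breve H^{-1,q}_\Gamma$ only under the global model-set Assumption~\ref{l-locmodel}, and Lemma~\ref{l-space} presupposes Assumption~\ref{a-q} -- precisely what this theorem is designed to avoid. Fortunately it is also unnecessary, as the direct inclusion of the real trace space into $[X,D]_\theta$ shows. Second, the assertion that ``the H\"older regularity of $D$ propagates by interpolation'' is the entire content of the lemma and cannot be left at that: since $X$ has negative smoothness, one must actually compute (or estimate) $[X,D]_\theta$, and the paper's reduction to $[L^2,C^\alpha]_{2\theta-1}$ is the mechanism that makes the final citation applicable. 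A smaller slip: your remark about ``enlarging $s$'' points the wrong way -- on bounded $J$ one has $L^s(J)\subseteq L^{\tilde s}(J)$ only for $\tilde s\le s$, so the hypothesis $h_M\in L^s(J)$ lets you decrease $s$, whereas pushing $1-\frac1s$ towards $1$ requires increasing it, which is not free.
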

%
\begin{proof}
It is clear that $\mathcal R$ satisfies the abstract conditions on the
reaction term, posed in Proposition~\ref{p-pruess}, if we can show
$[H^{-1,q}_\Gamma,dom_{H^{-1,q}_\Gamma}(-\nabla \cdot \mu \nabla)]_\theta
\hookrightarrow C(\overline \Omega)$ for some large $\theta \in \left]
0, 1 \right[$. This we will do: using the embedding
$dom_{H^{-1,q}_\Gamma}(-\nabla \cdot \mu \nabla) \hookrightarrow C^\alpha$ for
some positive $\alpha$ (see \cite{griehoel}) and the reiteration theorem for
complex interpolation, one can write
\begin{align*}
  [H^{-1,q}_\Gamma,dom_{H^{-1,q}_\Gamma}(-\nabla \cdot \mu \nabla)]_\theta
	&= \bigl[ [H^{-1,q}_\Gamma,
	dom_{H^{-1,q}_\Gamma}(-\nabla \cdot \mu \nabla) ]_\frac{1}{2},
	dom_{H^{-1,q}_\Gamma}(-\nabla \cdot \mu \nabla) \bigr]_{2 \theta -1} \\
  &\hookrightarrow \bigl[ [H^{-1,2}_\Gamma,H^{1,2}_\Gamma ]_\frac{1}{2},
	C^\alpha \bigr]_{2 \theta -1} = [L^2,C^\alpha]_{2 \theta -1}.
\end{align*}
But based on the results of Triebel \cite{triebelc}, in \cite[Ch.~7]{gkr} it
is shown that this last space continuously embeds into another H\"older space,
if $\theta $ is chosen large enough.
\end{proof}
%
%
%
%
\section{Examples} \label{sec-Examples}
%
%
%
%
\noindent
In this section we describe geometric configurations for which our
Assumption~\ref{a-q} holds true and we present concrete examples of mappings
$\mathcal G$ and reaction terms $\mathcal R$ fitting into our framework. Another part
of this section is then devoted to the special geometry of two crossing beams
that is interesting, since this is not a domain with Lipschitz boundary, but
it falls into the scope of our theory, as we will show.
\subsection{Geometric constellations}
While our results in Sections~\ref{sec-WurzelIso} and \ref{sec-Consequences}
on the square root of $-\nabla \cdot \mu \nabla$ and maximal parabolic
regularity are valid in the general geometric framework of
Assumption~\ref{a-groegerregulaer}, we additionally had to impose
Assumption~\ref{a-q} for the treatment of quasilinear equations in
Section~\ref{sec-Quasilin}. Here we shortly describe geometric constellations,
in which this additional condition is satisfied.

Let us start with the observation that the 2-$d$ case is covered by
Remark~\ref{r-darunter}~i).
%

%
Admissible three-dimensional settings may be described as follows.
%
\begin{proposition}\label{p-geo3d}
Let $\Omega \subseteq \R^3$ be a bounded Lipschitz domain. Then there exists a
$q > 3$ such that $- \nabla \cdot \mu \nabla +1$ is a topological isomorphism
from $H_{\Gamma}^{1,q}$ onto $H_{\Gamma}^{-1,q}$, if one of the following
conditions is satisfied:
\begin{enumerate}
\item $\Omega $ has a Lipschitz boundary. $\Gamma = \emptyset$ or $\Gamma
        = \partial \Omega$. $\Omega_\circ \subseteq \Omega$ is another domain
        which is $C^1$ and which does not touch the boundary of $\Omega$.
        $\mu|_{\Omega_\circ} \in BUC(\Omega_\circ)$ and
        $\mu|_{\Omega \setminus \overline{\Omega_\circ}} \in
        BUC(\Omega \setminus \overline{\Omega_\circ})$.
\item $\Omega$ has a Lipschitz boundary. $\Gamma = \emptyset$.
        $\Omega_\circ \subseteq \Omega$ is a Lipschitz domain, such that
        $\partial \Omega_\circ \cap \Omega$ is a $C^1$ surface and 
        $\partial \Omega$ and $\partial \Omega_\circ $ meet suitably (see
	\cite{e/r/s} for details).
        $\mu|_{\Omega_\circ} \in BUC(\Omega_\circ)$ and
        $\mu|_{\Omega \setminus \overline{\Omega_\circ}} \in
        BUC(\Omega \setminus \overline{\Omega_\circ})$.
\item $\Omega$ is a three dimensional Lipschitzian polyhedron. $\Gamma =
        \emptyset$. There are hyperplanes $\mathcal H_1, \dots,\mathcal H_n$
        in $\R^3$  which meet at most in a vertex of the polyhedron such that
        the coefficient function $\mu$ is constantly a real, symmetric,
        positive definite $3 \times 3$ matrix on each of the connected
        components of $\Omega \setminus \cup_{l=1}^n \mathcal H_l$. Moreover,
        for every edge on the boundary, induced by a hetero interface
        $\mathcal H_l$, the angles between the outer boundary plane and the
        hetero interface do not exceed $\pi$ and at most one of them may equal
        $\pi$.
\item $\Omega$ is a convex polyhedron, $\overline{\Gamma} \cap
        (\partial \Omega \setminus \Gamma)$ is a finite union of line
        segments. $\mu \equiv 1$.
\item $\Omega \subseteq \R^3$ is a prismatic domain with a triangle as basis.
        $\Gamma$ equals either one half of one of the rectangular sides or one
        rectangular side or two of the three rectangular sides. There is a
        plane which intersects $\Omega$ such that the coefficient function
        $\mu$ is constant above and below the plane.
\item $\Omega$ is a bounded domain with Lipschitz boundary.
        Additionally, for each  $x \in \overline{\Gamma} \cap
        (\partial\Omega\setminus \Gamma)$ the mapping $\phi_{\mathrm x}$
        defined in Assumption~\ref{a-groegerregulaer} is a
        $C^1$-diffeomorphism from $\Upsilon_{\mathrm x}$ onto its image. $\mu
        \in BUC(\Omega)$.
\end{enumerate}
\end{proposition}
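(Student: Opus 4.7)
My plan is to address each of the six cases separately by reducing it to a known $W^{1,q}$-elliptic regularity result in the literature, combining this where necessary with the perturbation lemma for BUC coefficients (Lemma~\ref{l-modi}) and, in cases involving hetero interfaces, with the transformation calculus from Proposition~\ref{p-transform}.

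Cases (i)--(iii), which feature pure Dirichlet or pure Neumann boundary conditions together with a jumping coefficient, I would reduce to the results of \cite{e/r/s} (for the first two) and \cite{e/k/r/s} (for the third). In case (i), since the interior interface $\partial \Omega_\circ$ is $C^1$ and strictly separated from $\partial \Omega$, standard transmission-problem theory yields $W^{1,q}$-regularity in a neighbourhood of $\partial \Omega_\circ$; away from this interface the $W^{1,q}$ theory on Lipschitz domains with BUC coefficients applies on each side, and one globalises via a finite cover and Lemma~\ref{l-modi}. Case (ii) is exactly the setting of the main theorem of \cite{e/r/s}. Case (iii) is covered by \cite{e/k/r/s}, where the angle condition at the edges induced by the hetero interfaces is precisely what is required to avoid Shamir-type counterexamples (compare Remark~\ref{r-exmp}).

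Case (iv) follows from the Dauge--Grisvard analysis of corner and edge singularities for $-\Delta$ on a convex polyhedron with piecewise linear Dirichlet--Neumann interface: the convexity forces all singular exponents to exceed $1 - 3/q$ for some $q > 3$. Case (v) I would handle by first flattening the jump plane of $\mu$ with a bi-Lipschitz, volume-preserving transformation (Proposition~\ref{p-transform}) and then localising on each resulting cell; the prismatic geometry with triangular base permits reducing each cell either to the 2-$d$ case of Remark~\ref{r-darunter}~i) or to a subcase of (iii). For case (vi), the $C^1$-diffeomorphism assumption at the Dirichlet--Neumann transition set implies that after the local change of variables $\phi_{\mathrm x}$ the transformed coefficient function $\mu_\vartriangle$ from \eqref{e-transf} is BUC in the model neighbourhood; the mixed-boundary $W^{1,q}$-result of Griepentrog \cite{griepar} combined with Lemma~\ref{l-modi} then yields the isomorphism locally.

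The main obstacle throughout will be the globalisation step: even when the isomorphism property is established in a neighbourhood of each point of $\overline \Omega$, one must patch the local information into a single exponent $q > 3$ that works everywhere. I would exploit the fact that the set $M_{\mathrm{iso}}$ from Theorem~\ref{t-central} is relatively open (Remark~\ref{r-darunter}~ii)) via a Sneiberg-type stability argument, together with a finite partition of unity and the localisation calculus of Lemmas~\ref{l-restr/ext}--\ref{l-reinterpret}, to extract the single $q > 3$ claimed in the proposition. A secondary technical point will arise in cases (v) and (vi), where one must verify that the transformed coefficient function still satisfies the regularity hypotheses of the cited result after applying Proposition~\ref{p-transform}, but this is routine given the explicit formula \eqref{e-transf} and the chain rule for bi-Lipschitz maps.
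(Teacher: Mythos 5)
The paper does not actually prove this proposition; it disposes of it by attribution: cases i) and ii) to \cite{e/r/s}, iii) to \cite{e/k/r/s}, iv) to Dauge \cite{dauge}, v) to \cite{hall}, and vi) to a forthcoming paper. Your treatment of i)--iv) follows the same route and names the same sources, so there is nothing to object to there, beyond noting that you are supplying more detail than the paper itself does.

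For v) and vi), however, your sketches contain genuine gaps. In v) you propose to reduce, after flattening the jump plane of $\mu$, ``either to the 2-$d$ case of Remark~\ref{r-darunter}~i) or to a subcase of (iii)''. Neither reduction works: case iii) requires $\Gamma = \emptyset$, whereas in v) the Neumann part $\Gamma$ is a nonempty union of (half-)faces, so the genuinely mixed constellation along the edges of the prism is not a subcase of iii); and the 2-$d$ result of Remark~\ref{r-darunter}~i) only guarantees \emph{some} $q>2$, which no tensor-product or localisation argument can upgrade to the required $q>3$ in three dimensions. The paper instead relies on the dedicated analysis of \cite{hall}, which treats exactly these prismatic model constellations. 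In vi) you invoke ``the mixed-boundary $W^{1,q}$-result of Griepentrog \cite{griepar}'', but that reference concerns maximal parabolic regularity in Sobolev--Morrey spaces and contains no elliptic isomorphism $H^{1,q}_\Gamma \to H^{-1,q}_\Gamma$ for $q>3$; the classical mixed-boundary result \cite{groeger89} yields only some $q>2$. Establishing $q>3$ under the $C^1$-chart hypothesis at $\overline\Gamma \cap (\partial\Omega\setminus\Gamma)$ is precisely the nontrivial content of case vi), which the authors defer to a forthcoming paper, and it cannot be extracted from the references you name. Your globalisation strategy via Sneiberg stability and the localisation lemmas is sound in principle (it is the mechanism behind Corollary~\ref{c-lokal}), but it presupposes that each local model case is already settled, which is exactly what is missing in v) and vi).
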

%
\noindent
The assertions i) and ii) are shown in \cite{e/r/s}, while iii) is proved in \cite{e/k/r/s} and iv)
 is a result of Dauge \cite{dauge}. Recently, v) was
obtained in \cite{hall} and vi) will be published in a forthcoming paper.
\qed
%
\begin{corollary} \label{c-lokal}
The assertion remains true, if there is a finite open covering $\Upsilon_1,
\ldots, \Upsilon_l$ of $\overline \Omega$, such that each of the pairs
$\Omega_j := \Upsilon_j \cap \Omega$, $\Gamma_j := \Gamma \cap \Upsilon_j$
fulfills one of the points i) -- vi).
\end{corollary}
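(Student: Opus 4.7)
The strategy is a localization-plus-bootstrap argument patterned on the proof of Theorem~\ref{t-central} in Subsection~\ref{subsec-core}. Write $q_j>3$ for the exponent provided by the applicable item i)--vi) of Proposition~\ref{p-geo3d} on the triple $(\Omega_j,\Gamma_j,\mu|_{\Omega_j})$, and set $\tilde q:=\min_j q_j>3$. Given $f\in H^{-1,\tilde q}_\Gamma(\Omega)\hookrightarrow H^{-1,2}_\Gamma(\Omega)$, Lax--Milgram furnishes a unique weak $v\in H^{1,2}_\Gamma(\Omega)$ with $(-\nabla\cdot\mu\nabla+1)v=f$; the goal is to upgrade $v$ to $H^{1,\tilde q}_\Gamma(\Omega)$. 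Once this is done, injectivity on $H^{1,\tilde q}_\Gamma$ is inherited from the $H^{1,2}$-uniqueness and the open mapping theorem delivers the topological isomorphism. Fix a $C^\infty$ partition of unity $\eta_1,\dots,\eta_l$ subordinate to $\Upsilon_1,\dots,\Upsilon_l$.

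For each $j$, put $u_j:=\eta_j v|_{\Omega_j}$ and $\mu_j:=\mu|_{\Omega_j}$. Rewriting the equation as $-\nabla\cdot\mu\nabla\,v=f-v$ and applying Lemma~\ref{l-project}~ii) (using Remark~\ref{r-shrink} to identify $(v)_j=u_j$ and absorb the zeroth-order term), one obtains
\[
(-\nabla\cdot\mu_j\nabla+1)\,u_j \;=\; -\mu_j\,\nabla v|_{\Omega_j}\!\cdot\!\nabla\eta_j|_{\Omega_j}\,+\,I_v\,+\,f_j
\]
as an equation in $H^{-1,2}_{\Gamma_j}(\Omega_j)$, where Lemma~\ref{l-project}~i) guarantees $f_j\in H^{-1,\tilde q}_{\Gamma_j}(\Omega_j)$.

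Now for the bootstrap. By Remark~\ref{r-darunter}~i), applied to the Gröger regular pair $(\Omega,\Gamma)$, there exists $q_0\in (2,\tilde q]$ for which the global operator $-\nabla\cdot\mu\nabla+1$ is an isomorphism, whence $v\in H^{1,q_0}_\Gamma(\Omega)$. Assume inductively $v\in H^{1,p}_\Gamma(\Omega)$ for some $p\in[q_0,\tilde q)$. Then Lemma~\ref{l-project}~iii) places the correction terms in $H^{-1,r}_{\Gamma_j}(\Omega_j)$ for every $r\in[p,p^*)$, so the whole right hand side lies in $H^{-1,s}_{\Gamma_j}(\Omega_j)$ with $s:=\min\{\tilde q,r\}\le\tilde q\le q_j$; the local isomorphism then gives $u_j\in H^{1,s}_{\Gamma_j}(\Omega_j)$. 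A suitable cutoff $\chi\in C_0^\infty(\Upsilon_j)$ that equals $1$ on $\supp\eta_j$ satisfies $\chi u_j=u_j$, hence Lemma~\ref{l-restr/ext}~ii) yields $\eta_j v=\widetilde{\chi u_j}\in H^{1,s}_\Gamma(\Omega)$. Summing, $v=\sum_j\eta_j v\in H^{1,s}_\Gamma(\Omega)$. Since $p>2$ gives $p^*>6$, one iteration improves $p$ to any value strictly below $\min\{\tilde q,6\}$, whereupon the next iteration (with $p>3$, hence $p^*=\infty$) reaches $\tilde q$; thus $v\in H^{1,\tilde q}_\Gamma(\Omega)$ after at most two steps.

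The only delicate point is checking that the local $L^p$-isomorphism really applies to the triple $(\Omega_j,\Gamma_j,\mu_j)$ with the boundary part $\partial\Omega_j\setminus\Gamma_j$ (which now includes the artificial pieces $\Omega\cap\partial\Upsilon_j$); but this is precisely what the hypothesis of the corollary postulates, so the argument reduces to routine index bookkeeping without any new functional-analytic input beyond that already assembled in Section~\ref{subsec-core}.
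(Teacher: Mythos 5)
Your proof is correct and follows essentially the same route the paper intends: the paper's own proof is a one-line reference to Lemma~\ref{l-project} and Lemma~\ref{l-restr/ext}, and your localization-plus-bootstrap with the cutoff functions, the correction terms from Lemma~\ref{l-project}, and the gluing via Lemma~\ref{l-restr/ext}~ii) is exactly the elaboration of that. The only step left implicit is that the local isomorphism at the exponent $s\le q_j$ (rather than at $q_j$ itself) requires the interpolation observation of Remark~\ref{r-darunter}~ii) applied to the local pairs, but this is routine given that each $(\Omega_j,\Gamma_j)$ satisfies Assumption~\ref{a-groegerregulaer}.
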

%
\begin{proof}
The corollary can be proved by means of Lemma~\ref{l-project} and
Lemma~\ref{l-restr/ext}.
\end{proof}
%
\begin{remark} \label{r-globalsetting}
Proposition~\ref{p-geo3d} together with Corollary~\ref{c-lokal} provides a
huge zoo of geometries and boundary constellations, for which $-\nabla \cdot
\mu \nabla$ provides the required isomorphism. We intend to complete this in
the future. 
\end{remark}
%
%
\subsection{Nonlinearities and reaction terms}
The most common case is that where $\mathcal F$ is the exponential or the
Fermi-Dirac distribution function $\mathcal F_{1/2}$ given by
\[ \mathcal F_{1/2}(t) := \frac{2}{\sqrt{\pi}} \; \int_0^\infty
        \frac{\sqrt{s}}{1 + \e^{s-t}} \; \dd s
\]
and $\mathcal G$ also is a Nemytzkii operator of the same type. In phase
separation problems, a rigorous formulation as a minimal problem for the free
energy reveals that $\mathcal{G} = \mathcal{F}^\prime$ is appropriate.
This topic has been thoroughly investigated in \cite{quastel:92},
\cite{quastel:99}, \cite{lebowitz:97}, and \cite{lebowitz:98}, see also
\cite{skrypnik:02} and \cite{griepentrog04}. It is noteworthy that in this
case $\frac{\mathcal G}{\mathcal F'} \equiv 1$ (we conjecture that this is not
accidental) and the evolution equation \eqref{e-quasiformal} leads not to a
quasilinear equation \eqref{5.2} but to one which is only semilinear. We
consider this as a hint for the adequateness of our treatment of the parabolic
equations.

As a second example we present a nonlocal operator arising in the diffusion
of bacteria; see \cite{chipot1}, \cite{chipot2} and references therein.
%
\begin{example}
Let $\eta$ be a continuously differentiable function on $\R$ which is bounded
from above and below by positive constants. Assume $\varphi \in L^2(\Omega)$
and define
\[ \mathcal G(u) := \eta \biggl( \int_\Omega u \varphi \; \dd \mathrm x
	\biggr), \quad u \in H^{1,q}.
\]
\end{example}
%
Now we give two examples for mappings $\mathcal R$.
%
\begin{example} \label{ex-zerlegung}
Assume that $\left[ T_0, T \right[ = \cup_{l=1}^j \left[ t_l, t_{l+1} \right[$
is a (disjoint) decomposition of $\left[ T_0, T \right[$ and let for $l
\in \{1, \dots, j \}$
\[ Z_l : \R \times \R^{d} \to \R
\]
be a function which satisfies the following condition: For any compact set $\mathcal K
\subseteq \R$ there is a constant $L_\mathcal K$ such that for any $a, \tilde a \in \mathcal K,
\; b, \tilde b \in \R^d$ the inequality
\[ |Z_l(a, b) - Z_l(\tilde a, \tilde b)| \le L_K |a - \tilde a|_\R \; \bigl(
        |b|_{\R^{d}}^2 + |\tilde b|_{\R^{d}}^2 \bigr) + L_K |b - \tilde b
        |_{\R^{d}} \; \bigl( |b|_{\R^{d}} + |\tilde b|_{\R^{d}} \bigr)
\]
holds. We define a mapping $Z : \left[ T_0, T \right[ \times \R \times \R^{d}
\to \R$ by setting
\[ Z(t, a, b) := Z_l(a, b), \quad \text{if} \quad t \in [t_l,t_{l+1}[.
\]
The function $Z$ defines a mapping $\mathcal R : \left[ T_0, T \right[ \times
{H}^{1,q} \to L^{q/2}$ in the following way: If $\psi$ is the restriction of an
$\R$-valued, continuously differentiable function on $\R^d$ to $\Omega$, then
we put
\[ \mathcal R(t, \psi)(x) = Z(t, \psi(x), (\nabla \psi)(x)) \quad \mbox{for } x
	\in \Omega
\]
and afterwards extend $\mathcal R$ by continuity to the whole set $\left[ T_0, T
\right[ \times H^{1,q}$.
\end{example}
%
%
\begin{example} \label{exam5.2}
Assume $\iota : \R \to \left] 0, \infty \right[$ to be a continuously
differentiable function. Furthermore, let $\mathcal T : H^{1,q} \to H^{1,q}$ be
the mapping which assigns to $v \in H^{1,q}$ the solution $\varphi$ of the
elliptic problem (including boundary conditions)
\begin{equation} \label{e-diri}
  -\nabla \cdot \iota (v) \nabla \varphi = 0.
\end{equation}
If one defines
\[ 
\mathcal R(v) = \iota (v) |\nabla (\mathcal T(v))|^2,
\]
then, under reasonable suppositions on the data of \eqref{e-diri}, the mapping
$\mathcal R$ satisfies Assumption~{\bf Ra)}.
\end{example}
%
This second example comes from a model which describes electrical heat
conduction; see \cite{chipot} and the references therein.
\subsection{An unorthodox example: two crossing beams} \label{subsec-balks}
Finally, we want to present in some detail the example of two beams, mentioned
in the introduction, which is not a domain with Lipschitz boundary, and,
hence, not covered by former theories. Consider in $\R^3$ the set
\[ B_{\Join} := \left] -10, 10 \right[ \times \left] -1, 1 \right[ \times
        \left] -2, 0 \right[ \quad \cup \quad \left] -1, 1 \right[ \times
        \left] -10, 10 \right[ \times \left] 0, 2 \right[ \quad \cup \quad
\left] -1, 1 \right[ \times \left] -1, 1 \right[ \times \{ 0\},
\]
together with a $3\times 3$ matrix $\mu_1$, considered as the coeffcient
matrix on the first beam, and another $3\times 3$ matrix $\mu_2$, considered
as the coefficient function on the other beam. Both matrices are assumed to be
real, symmetric and positive definite. If one defines the coefficient function
$\mu$ as $\mu_1$ on the first beam, and as $\mu_2$ on the other, then,
due to Proposition~\ref{p-geo3d}~iii),
\[ -\nabla \cdot \mu \nabla : H^{1,q}_0 \to H^{-1,q}
\]
provides a topological isomorphism for some $q > 3$, if one can show that
$B_{\Join}$ is a Lipschitz domain. In fact, we will show more, namely:
%
\begin{lemma} \label{l-balks} 
$B_{\Join}$ fulfills Assumption~\ref{a-groegerregulaer}.
\end{lemma}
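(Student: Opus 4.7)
The plan is to verify Assumption~\ref{a-groegerregulaer} at every boundary point of $B_{\Join}$. In the context in which Lemma~\ref{l-balks} is used (the Dirichlet case, $\Gamma=\emptyset$, needed for Proposition~\ref{p-geo3d}~iii), the only admissible model is $\alpha K_-$. Every boundary point of $B_{\Join}$ that lies outside the closure $\overline{S} := [-1,1]^{2} \times \{0\}$ of the junction square belongs to the boundary of exactly one of the two open beams; since each beam is a strongly Lipschitz open box, Remark~\ref{r-groegerreg} directly supplies the required bi-Lipschitz, volume-preserving shear onto $\alpha K_-$. The relative interior of $\overline{S}$ consists entirely of interior points of $B_{\Join}$ and needs no verification. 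The remaining work concerns (b) interior points of the four open edges of $\overline{S}$, typified by $(x_0,1,0)$ with $|x_0|<1$, and (c) the four corners $(\pm 1,\pm 1,0)$.

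For type (b), a direct inspection of the three defining pieces of $B_{\Join}$ near, say, $(0,1,0)$ gives, after the shift $y \mapsto y-1$, that $B_{\Join}$ coincides locally with $\R^{3}\setminus\{y\ge 0,\, z\le 0\}$: a non-convex $3\pi/2$-dihedral wedge extruded in the $x$-direction. Writing $(y-1,z)=(r\cos\theta,r\sin\theta)$ with $\theta\in(0,3\pi/2)$ parametrising the wedge, the map
\[
  \phi(x,y,z) := \bigl(x,\; 1 + \sqrt{3/2}\,r\cos(\tfrac{2}{3}\theta),\; -\sqrt{3/2}\,r\sin(\tfrac{2}{3}\theta)\bigr)
\]
compresses the angular range from $3\pi/2$ to $\pi$ and sends the wedge onto the lower half-plane $\{z<0\}$; the radial dilation by $\sqrt{3/2}$ exactly compensates the angular compression, so the Cartesian Jacobian determinant has absolute value $1$ almost everywhere, while bi-Lipschitz bounds are standard for such a polar change of variables. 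A final affine volume-preserving rescaling places the image onto $\alpha K_-$.

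For type (c), shift $(1,1,0)$ to the origin with local variables $(u,v,w):=(x-1,y-1,z)$. An octant-by-octant check gives, for $r>0$ small enough,
\[
  B_{\Join} \cap B(0,r) \;=\; \bigl(\{v<0,\,w<0\} \cup \{u<0,\,w>0\} \cup \{u<0,\,v<0,\,w=0\}\bigr) \cap B(0,r),
\]
exactly four of the eight octants, sewn together through the `bridging' quadrant in $\{w=0\}$. This set is not the subgraph of any Lipschitz function. Computing the link of the origin in $\partial B_{\Join}$, however, one finds two great semicircles (in $\{v=0,\,w\le 0\}$ and $\{u=0,\,w\ge 0\}$) glued to two great quarter-circles (in the two off-diagonal quadrants of $\{w=0\}$) into a single $4$-cycle, hence a topological $S^{1}$ — the same link as a boundary point of a half-space. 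Of the four dihedral edges meeting at the corner, two — the half-axes $\{u<0,\,v=w=0\}$ and $\{v<0,\,u=w=0\}$ — have opening angle $3\pi/2$ and the other two have opening $\pi/2$, with solid angles summing to $2\pi$.

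Guided by this picture, $\phi_{(1,1,0)}$ is built as the composition of two cut-off angle-scalings of the type~(b) kind, one around each of the concave $3\pi/2$-dihedral edges, each chosen to be the identity outside a wedge-shaped neighbourhood of its own edge. Each factor is bi-Lipschitz and volume-preserving and collapses the corresponding $3\pi/2$-dihedral into a flat boundary piece, and after both have been applied the only dihedral edges remaining at the origin are the two convex $\pi/2$-edges, so the image is a standard right-angle trihedral corner of a strongly Lipschitz domain, to which Remark~\ref{r-groegerreg} applies and delivers the final chart onto $\alpha K_-$. The place where the proof really has to work is the design of the cut-off functions: the two activation wedges must be disjoint so that the two factors commute on the overlap, and the radial rescaling needed for volume preservation must be interpolated consistently with the angular compression across the cut-off zones, so that the Jacobian determinant stays identically $\pm 1$ throughout. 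Constructions of this flavour — mixing volume preservation with non-smooth change of variables — are exactly those developed for the sphere-to-half-ball maps of \cite{ghkr} referred to in Remark~\ref{r-volinv}.
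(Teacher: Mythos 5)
Your reduction is sound up to the four corner points: the octant description of $B_{\Join}$ near $(1,1,0)$, the identification of the two concave ($3\pi/2$) and two convex ($\pi/2$) edges, and the observation that only these four points resist the strongly Lipschitz argument all agree with the paper. (For the edge points the paper simply notes that the local \emph{complement} of $B_{\Join}$ is convex, so $B_{\Join}$ is locally the subgraph of $s\mapsto|s|$ in rotated coordinates and the shear of Remark~\ref{r-groegerreg} applies; your polar angle-compression is only defined on the wedge, and its radial factor $\sqrt{3/2}$ does not match the factor $\sqrt{1/2}$ that would be needed to expand the complementary $\pi/2$-sector volume-preservingly, so extending it to a bi-Lipschitz, volume-preserving map of a full neighbourhood is itself a nontrivial step you have not supplied.)

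The genuine gap is at the corners. Write $U=\{v<0,w<0\}\cup\{u<0,w>0\}\cup\{u<0,v<0,w=0\}$ and suppose $\Psi=\psi_2\circ\psi_1$ with each $\psi_i$ equal to the identity outside a wedge-shaped neighbourhood $C_i$ of the concave edge $E_i$, the $C_i$ disjoint. Then $\Psi$ is the identity on a neighbourhood of the two \emph{convex} half-edges $\{u>0,\,v=w=0\}$ and $\{v>0,\,u=w=0\}$, so $\Psi(U)$ still coincides with the quarter-space $\{v<0,w<0\}$ near the points $(t,0,0)$ and with $\{u<0,w>0\}$ near the points $(0,t,0)$ as $t\downarrow 0$. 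A set that is locally the subgraph of a Lipschitz function at the origin must satisfy a uniform interior cone condition with one fixed cone at all nearby boundary points; here that cone would have to lie in $\{w<0\}$ (from the first family of points) and simultaneously in $\{w>0\}$ (from the second), which is impossible. Hence $\Psi(U)$ is not locally strongly Lipschitz at the origin and Remark~\ref{r-groegerreg} cannot be applied to it: the obstruction at the corner is carried by the convex edges as much as by the concave ones, and no pair of disjointly supported local corrections can remove it. The paper resolves the corner with two \emph{globally} defined, piecewise linear, volume-preserving maps, each consisting of two unimodular linear maps glued along a plane through the corner (one glued along $\{z=x\}$, one along $\{y=-x\}$); these genuinely move the convex edges and the adjacent faces and reassemble the four octants plus bridging quadrant into a convex box, after which the shear of Remark~\ref{r-groegerreg} finishes. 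Your link computation and solid-angle bookkeeping are correct sanity checks, but they do not substitute for exhibiting such a map, and the construction you sketch cannot produce one.
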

%
\begin{proof}
For all points $\mathrm x \in \partial \Omega$ the existence of a corresponding
neighborhood $\Upsilon_\mathrm x$ and a mapping $\Phi_\mathrm x$ can be
deduced easily, except for the points $\mathrm x$ from the set 
\[ \mathrm{Sing} := \{ (-1,-1,0), (-1,1,0), (1,-1,0), (1,1,0) \}.
\]
In fact, for all points $\mathrm x \in B_{\Join} \setminus \mathrm{Sing}$ there
is a neighborhood $\Upsilon_\mathrm x$, such that either $B_{\Join} \cap
\Upsilon_\mathrm x$ or $\Upsilon_\mathrm x \setminus B_{\Join}$ is convex and,
hence, a domain with Lipschitz boundary. Thus, these points can be treated as
in Remark~\ref{r-groegerreg}.

Exemplarily, we aim at a suitable transformation in a neighborhood of the
point $(1,-1,0)$; the construction for the other three points is -- mutatis
mutandis -- the same. For doing so, we first shift $B_{\Join}$ by the vector
$(-1,1,0)$, so that the transformed point of interest becomes the origin. Now
we apply the transformation $\phi_\blacktriangle$ on $\R^3$ that is given in
Figure~\ref{fig-Balken1}.
\begin{figure}[htbp]
  \includegraphics[scale=0.6]{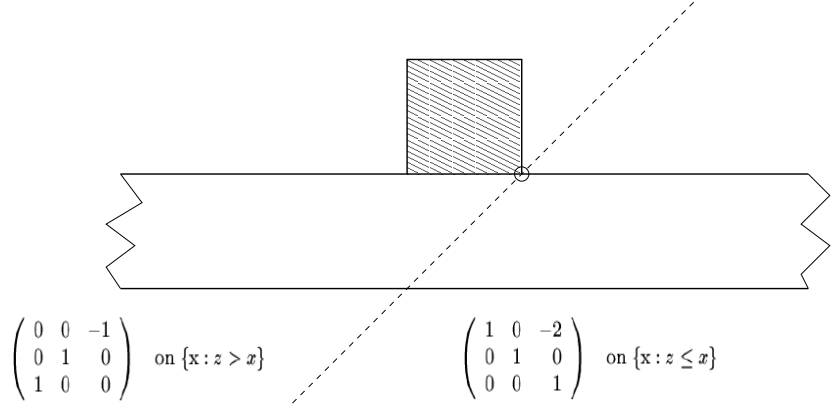}
  \caption{\label{fig-Balken1} Cut through $B_{\Join} + (-1,1,0)$ at a plane $y
      = \delta$ (for $\delta > 0$ small) and the transformation
	$\phi_{\blacktriangle}$}
\end{figure}
The following is straighforward to verify:
\begin{itemize}
\item Both transformations coincide on the plane $\{\mathrm x \with z = x \}$
      and thus together define a globally bi-Lipschitz mapping
	$\phi_\blacktriangle: \R^3 \to \R^3$, which, additionally, is
	volume-preserving.
\item The intersection of $\phi_\blacktriangle \bigl( B_{\Join} + (-1,1,0)
      \bigr)$ with a sufficiently small, paraxial cube $\epsilon K$ around
      $\mathrm 0$ equals the set 
      \[ \{ \mathrm x \with -\epsilon < x < 0,\ -\epsilon < y <
             \epsilon,\ -\epsilon < z < 0 \} \cup \{ \mathrm x \with 0 \le x <
             \epsilon,\ 0 < y < \epsilon,\ -\epsilon < z < 0 \}.
      \]
\end{itemize}
(To prove the latter, note that the $y$-component is left invariant under
$\phi_\blacktriangle$ and that $\phi_\blacktriangle$ acts in the plane $y =
0$ as follows: the vector $(0,1) $ is mapped onto $(-1,0)$ and the vector
$(-1,0)$ onto $(0,-1)$. Finally, the vector $(1,0)$ is left invariant.)
Next we introduce the mapping $\phi_\triangle$ which is defined as the linear mapping
$\left( \begin{array}{rrr}
	2  & 1 &0\\
	-1 & 0 &0\\
	0  &0  &1
\end{array} \right)$ on the set $\{\mathrm x \with -x < y \}$ and as the
identity on the set $\{\mathrm x \with -x \ge y \}$, see
Figure~\ref{fig-Balken2}.

\begin{figure}[htbp]
  \includegraphics[scale=0.6]{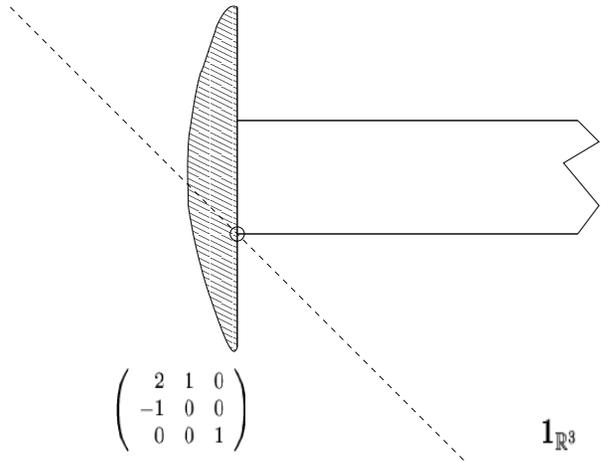}
  \caption{\label{fig-Balken2} Cut through $\phi_\blacktriangle \bigl(
        B_{\Join} + (-1,1,0) \bigr)$ at a plane $z = -\delta$ in a
        neighborhood of $\mathrm 0$ ($\delta > 0$ sufficiently small)}
\end{figure}

One directly verifies that
$\left( \begin{array}{rrr}
	2  & 1 &0\\
	-1 & 0 &0\\
	0  &0  &1
\end{array} \right)$ acts as the identity on the set $\{\mathrm x \with -x = y
\}$; thus $\phi_\triangle$ in fact is a bi-Lipschitz, volume-preserving
mapping from $\R^3$ onto itself. After this transformation the resulting
object, intersected with a sufficiently small paraxial cube $\epsilon K$,
equals the convex set
\[
\{\mathrm x \with -\epsilon <x<\epsilon, 0 < y <\epsilon, -\epsilon < z <0 \}.
\] 
Here again  Remark~\ref{r-groegerreg} applies, what finishes the proof.
\end{proof}
%
%
%
%
%
\section{Concluding Remarks} \label{sec-Remarks}
%
%
\begin{remark} \label{r-conc8}
The reader may have asked himself why we restricted the considerations to
real, symmetric coefficient functions $\mu$. The answer is twofold: first,
we need at all costs Gaussian estimates for our techniques and it is known
that these are not available for complex coefficients in general, see \cite{ACT96}
 and also \cite{Dav97}.
Additionally, Proposition \ref{p-ausch/tcha} also rests on this supposition.
On the other hand, in the applications we have primarly in mind this condition
is satisfied.
\end{remark}
%
%
%
%
\begin{remark} \label{r-conc6}
Under the additional Assumption \ref{a-q}, Theorem~\ref{t-central} implies
maximal parabolic regularity for $-\nabla \cdot \mu \nabla$ on
$H^{-1,q}_\Gamma$ for every $q \in [2,\infty[$, as in the 2-$d$ case.

Besides, the question arises whether the limitation for the exponents, caused
by the localization procedure, is principal in nature or may be overcome when
applying alternative ideas and techniques (cf. Theorem \ref{t-mainsectdown}).
We do not know the answer at present.
\end{remark}
%
%
\begin{remark} \label{r-conc1}
We considered here only the case of one single parabolic equation, but
everything can be carried over in a straightforward way to the case of
diagonal systems; 'diagonal' in this case means that the function $\mathcal G$
is allowed to depend on the vector $u = (u_1, \dots, u_n)$ of solutions and
the right hand side also. In the same spirit one can treat triagonal systems.
\end{remark}
%
%
\begin{remark} \label{r-conc2}
Inspecting Proposition~\ref{p-pruess}, one easily observes that in fact an
additional $t$-dependence of the function $\mathcal G$ would be admissible. We
did not carry this out here for the sake of technical simplicity.
\end{remark}
%
%
\begin{remark} \label{r-conc3}
In \eqref{e-boundcond77} we restricted our setting to the case where the Dirichlet
boundary condition is homogeneous. It is straightforward to generalize this to
the case of inhomogeneous Dirichlet conditions by splitting off the
inhomogeneity, see \cite[Ch.~II.2]{ggz} or \cite[Ch.~1.2]{cia}, see also
\cite{hie/reh} where this has been carried out in detail in the case of
parabolic systems.
\end{remark}
%
%
\begin{remark} \label{r-conc4}
If one knows a priori that the right hand side of \eqref{e-quasiformal}
depends H\"older continuously on the time variable $t$, then one can use other
local existence and uniqueness results for abstract parabolic equations, see
e.g. \cite{luna} for details. In this case the solution $u$ is even strongly
differentiable in the space $X$ (with continuous derivative), what may lead to
a better justification of time discretization then, compare \cite{ashy} and
references therein.
\end{remark} 
%
%
\begin{remark} \label{r-conc5}
Let us explicitely mention that Assumption~\ref{a-q} is not always fulfilled
in the 3-$d$ case. First, there is the classical counterexample of Meyers, see
\cite{mey}, a simpler (and somewhat more striking) one is constructed in
\cite{e/k/r/s}, see also \cite{e/r/s}. The point, however, is that not the
mixed boundary conditions are the obstruction but a somewhat 'irregular'
behavior of the coefficient function $\mu$ in the inner of the domain. If one
is confronted with this, spaces with weight may be the way out.
\end{remark}
%
%
\begin{remark} \label{r-conc7}
In two and three space dimensions one can give the following simplifying
characterization for a set $\Omega \cup \Gamma$ to be regular in the sense of
Gr\"oger, i.e. to satisfy
Assumption~\ref{a-groegerregulaer}~a), see \cite{HalMeyRehb}:

If $\Omega \subseteq \R^2$ is a bounded Lipschitz domain and $\Gamma \subseteq
\partial \Omega$ is relatively open, then $\Omega \cup \Gamma $ is regular in
the sense of Gr\"oger iff $\partial \Omega \setminus  \Gamma$ is the finite
union of (non-degenerate) closed arc pieces.

In $\R^3$ the following
characterization can be proved, heavily resting on a deep result of
Tukia \cite{tukia}:

If $\Omega \subset \R^3$ is a Lipschitz domain and $\Gamma \subset \partial
\Omega$ is relatively open, then $\Omega \cup \Gamma$ is regular in the sense
of Gr\"oger iff the following two conditions are satisfied:
\renewcommand{\labelenumi}{\roman{enumi})}
\begin{enumerate}
\item $\partial \Omega \setminus \Gamma$ is the closure of its interior
	(within $\partial \Omega$).
\item for any ${\mathrm x} \in \overline \Gamma \cap (\partial \Omega
	\setminus \Gamma)$ there is an open neighborhood $\mathcal U \ni
	{\mathrm x}$ and a bi-Lipschitz mapping $\kappa: \mathcal U \cap
	\overline \Gamma \cap (\partial \Omega \setminus \Gamma) \to \left]
	-1, 1 \right[$.
\end{enumerate}
\end{remark}
%

\end{document}